\newtheorem{theorem}{Theorem}
\newtheorem{lemma}{Lemma}
\newtheorem{proposition}{Proposition}
\newtheorem{result}{Result}
\newtheorem{observation}{Remark}
\newtheorem{conjecture}{Conjecture}
\newtheorem{corollary}{Corollary}
\newtheoremstyle{neosn}{0.5\topsep}{0.5\topsep}{\rm}{}{\bf}{.}{ }{\thmname{#1}\thmnumber{ #2}\thmnote{ {\mdseries#3}}}
\theoremstyle{neosn}
\newtheorem{definition}{Definition}
\newcommand{\Aut}{\,\mathrm{Aut}\,}
\newcommand{\Hom}{\,\mathrm{Hom}\,}
\newcommand{\ad}{\,\mathrm{ad}\,}
\renewenvironment{proof}{\noindent \textbf{Proof.}}{$\blacksquare$}
\newcommand{\GL}{\,\mathrm{GL}\,}
\newcommand{\SL}{\,\mathrm{SL}\,}
\newcommand{\PGL}{\,\mathrm{PGL}\,}
\newcommand{\PSL}{\,\mathrm{PSL}\,}
\newcommand{\Rad}{\,\mathrm{Rad}\,}
\newcommand{\tr}{\mathrm{tr}\,}
\begin{document}

\begin{center}

{\Large {\bf The Diophantine problem in Chevalley groups}}
\end{center}
\bigskip

{\large {\bf Elena Bunina}, Bar-Ilan University}

\medskip
 
{\large {\bf Alexey Myasnikov}, Stevens Institute of Technology}

\medskip

{\large {\bf  Eugene Plotkin}, Bar-Ilan University}\footnote{Research of  Eugene Plotkin was supported by the ISF grant 1994/20}

\medskip


\bigskip

 \begin{center}

{\large{\bf Abstract}}

\end{center}

In this paper we study the Diophantine problem in Chevalley groups $G_\pi (\Phi,R)$, where $\Phi$ is an indecomposable root system of rank $> 1$, 
$R$ is an arbitrary commutative ring with~$1$.

We establish a variant of double centralizer theorem for elementary unipotents  $x_\alpha(1)$. This theorem is valid for arbitrary commutative rings with~$1$. The result is principle to show that any
 one-parametric subgroup $X_\alpha$, $\alpha \in \Phi$, is Diophantine in~$G$.
Then we prove that  the Diophantine problem in $G_\pi (\Phi,R)$ is polynomial time equivalent (more precisely, Karp equivalent) to the
Diophantine problem in~$R$. This fact gives rise to a number of model-theoretic corollaries  for specific types of rings.

\bigskip

{\bf Key words:} Diophantine problem, Diophantine set, Chevalley groups, double centralizer theorem.

\bigskip

\section{Introduction and State of Art}\leavevmode

Recall that the \emph{Diophantine problem} (also called the \emph{Hilbert's tenth problem} or
the \emph{generalized Hilbert's tenth problem}) in a countable algebraic structure~$\mathcal A$,
denoted $\mathcal D(\mathcal A)$, asks whether there exists an algorithm that, given a finite system
$S$ of equations in finitely many variables and coefficients in~$\mathcal A$, determines if $S$
has a solution in~$\mathcal A$ or not. In particular, if $R$ is a countable ring then $\mathcal D(R)$ asks
whether the question if a finite system of polynomial equations with coefficients
in~$R$ has a solution in~$R$ is decidable or not. It is tacitly assumed that the
ring~$R$ comes with a fixed enumeration, i,\,e., a function $\nu:\mathbb N \to R$, which
enables one to enumerate all polynomials in the ring of all non-commutative
polynomials $R\langle x_1; x_2; \dots \rangle$ (in countably many variables $x_1; x_2; \dots$), as well as all
finite systems of polynomial equations $p(x_1; \dots ; x_n) = 0$, where $p(x_1; \dots; x_n) \in 
R\langle x_1; x_2; \dots \rangle$, so one can provide them as inputs to a decision algorithm. If the
ring $R$ is commutative (it is our case) then by tradition only commutative polynomials from
$R[x_1; x_2; \dots]$ are considered. The original version of this problem was posed by
Hilbert for the ring of integers~$\mathbb Z$. This was solved in the negative in 1970 by
Matiyasevich~\cite{M52} building on the work of Davis, Putnam, and Robinson~\cite{M17}.
Subsequently, the Diophantine problem has been studied in a wide variety of
commutative rings~$R$, where it was shown to be undecidable by reducing $\mathcal D(\mathbb Z)$
to $\mathcal D(R)$. By definition the Diophantine problem in a structure~$\mathcal A$ \emph{reduces to} the Diophantine problem in a structure~$\mathcal B$, symbolically $\mathcal D(\mathcal A)\leqslant \mathcal D(\mathcal B)$, if there is an algorithm that for a given finite system of equations $S$ with coefficients in~$\mathcal A$ constructs a system of equations $S^*$ with coefficients in~$\mathcal B$ such that $S$ has a solution in~$\mathcal A$
 if and only if $S^*$ has a solution in~$\mathcal B$. So if $\mathcal D(\mathbb Z)\leqslant \mathcal D(R)$ then $\mathcal D(R)$ 
is undecidable. If the reducing algorithm is polynomial-time then the reduction is 
termed \emph{polynomial-time} (or \emph{Karp reduction}). In this paper we show that the
Diophantine problems in $G_\pi(\Phi,R)$ and $R$ are polynomial time equivalent which means,
precisely, that $\mathcal D(G_\pi(\Phi,R))$ and $\mathcal D(R)$ reduce to each other in polynomial time.
In particular they are either both decidable or both undecidable. If $R$ and hence $G_\pi(\Phi,R)$ are uncountable one needs to restrict the Diophantine problems in $R$ and $G_\pi(\Phi,R)$ to equations with coefficients from a fixed countable subset of $R$ or $G_\pi(\Phi,R)$. After a proper adjustment in definitions the Diophantine problems in $R$ and $G_\pi(\Phi,R)$ are still  polynomial time equivalent  (we will say more about this later).

A lot of research has been done on equations in commutative rings. Nevertheless, the
Diophantine problem is still open in~$\mathbb Q$ and fields $F$ which are finite algebraic
extensions of~$\mathbb Q$. Much more is known on the Diophantine problem in the rings
of algebraic integers~$\mathcal O$ of the fields~$F$. Namely, it was shown that $\mathcal D(\mathbb Z)$ reduces
to $\mathcal D(\mathcal O)$ for some algebraic number fields~$\mathcal O$, hence in such~$\mathcal O$ the Diophantine problem $\mathcal D(\mathcal O)$ is undecidable. We refer to \cite{M60}, \cite{M59}, \cite{M75} for further information on the Diophantine problem in different rings and fields of number-theoretic 
flavour. There are long-standing conjectures (see, for example, \cite{M18}, \cite{M59}) which state that
the Diophantine problems in~$\mathbb Q$; $F$, and~$\mathcal O$, as above, are all undecidable. The
following result is important for our paper. If a commutative unitary ring $R$ is
infinite and finitely generated then, in the case of a positive characteristic, $\mathcal D(R)$
is undecidable, and in the case of characteristic zero, $\mathcal D(\mathcal O)$ polynomial-time
reduces to $\mathcal D(R)$ for some ring of algebraic integers~$\mathcal O$ (Kirsten Eisentraeger's
PhD thesis (Theorem 7.1), which is available on her website, see also~\cite{M34}).

In the class of non-commutative associative unitary rings it was shown recently
by Kharlampovich and Myasnikov in~\cite{M41} that the Diophantine problem is undecidable in free associative algebras over fields and in the group algebras of a wide variety of torsion-free groups, including toral relatively hyperbolic groups, right angled Artin groups, commutative transitive groups, and the fundamental
groups of various graphs of groups. For non-associative rings it was proved that
the Diophantine problem is undecidable in free Lie algebras of rank at least
three with coefficients in an arbitrary integral domain~\cite{M40}. A general approach to the Diophantine problem in non-commutative rings (via reductions to the commutative ones) was developed in~\cite{M33}.

In another direction, coming from model theory, it was shown that the first-order theory of some classical fields is 
decidable: Tarski proved it for for complex numbers~$\mathbb C$ and reals~$\mathbb R$~\cite{M76}, and Ershov,
Ax and Kochen for p-adic numbers $\mathbb Q_p$ and $\mathbb Z_p$ (\cite{M26}, \cite{M1}, \cite{M2}). The 
statement that a given finite system of equations has a solution in~$R$ can be represented by a very particular existential formula (a positive-primitive formula) with coefficients in~$R$, so the Diophantine problem seems to be a part of the first-order theory of~$R$, but the coefficients are getting involved, and this complicates the whole picture. In fact, involvement of constants (coefficients)  makes Diophantine problems rather different from the classical
model-theoretic problems of elementary equivalence and decidability of first order theories in the standard languages of groups or rings. We will say more on this later, specifically for the linear groups and Chevalley groups.

Similar to the Diophantine problem in rings if a structure $\mathcal A$ is countable or finite then we assume that it comes equipped with an enumeration $\nu: \mathbb N\to \mathcal A$,
which enables one to enumerate all terms in the language of~$\mathcal A$ with constants
in~$\mathcal A$, hence all equations (which in this case are represented by equalities of two
terms), as well as all finite systems of equations over~$\mathcal A$. On the other hand,
if $\mathcal A$ is uncountable then, by definition, one has to consider only equations with constants from a fixed arbitrary countable (or finite) subset~$C$ of~$A$. We denote this form of the Diophantine problem by $\mathcal D_C(\mathcal A)$. This modification allows one to consider Diophantine problems over arbitrary structures in a more precise and also a more uniform way. As we will see below it may happen that the Diophantine problem $\mathcal D_C(\mathcal A)$ is decidable for one subset $C \subseteq \mathcal A$ and
undecidable for another one, even in countable structures~$\mathcal A$. Moreover, it may depend on a chosen enumeration of a countable set $C$.  It is easy to see
that for a countable (or finite) subset $C$ of~$\mathcal A$ the Diophantine problems $\mathcal D_C(\mathcal A)$
and $\mathcal D_{\langle C\rangle}(\mathcal A)$ reduce to each other, where $\langle C\rangle$ is the substructure generated by~$C$ in~$\mathcal A$. Furthermore, if $\mathcal D_C(\mathcal A)$ is decidable then $\langle C\rangle$ is computable (recursive, constructible) in the sense of Maltsev ~\cite{M50} and Rabin~\cite{M62}, so if $\langle C\rangle$ is not computable, and this may depend on the enumeration of $C$, the Diophantine problem $\mathcal D_C(\mathcal A)$ is undecidable. Therefore, from the beginning one may consider only enumerations of $C$ with computable substructure $\langle C\rangle$. 

Research on systems of equations and their decidability in groups has a very long history, it goes back to 1912 to the pioneering works of Dehn on the word and conjugacy problems in finitely presented groups. Recall that an equation in a group~$G$ is an expression of the type $w(x_1; \dots ; x_n; g_1; \dots ; g_m) = 1$, where $w$ is
a group word in variables $x_1; \dots ; x_n$ and constants $g_1; \dots ; g_m \in  G$. Currently,
there are two main approaches to the Diophantine problems in groups. In the first approach one given a fixed group $G$ tries to find a commutative unitary ring~$\mathcal A$ such that the Diophantine problem in~$\mathcal A$ algorithmically reduces to the Diophantine problem in~$G$. In this case if $\mathcal D(\mathcal A)$ is undecidable then $\mathcal D(G)$ is also undecidable. The first principle result in this vein is due to Romankov, who showed that the Diophantine problem is undecidable in any non-abelian free nilpotent group~$N$ of nilpotency class at least~$9$ (he proved that $\mathcal D(\mathbb Z)\leqslant \mathcal D(N)$ even one considers only single equations in the group~$N$)~\cite{M72}. Recently, Duchin, Liang and Shapiro showed in~\cite{M24} that $\mathcal D(\mathbb Z)\leqslant \mathcal D(N)$ for any nonabelian free nilpotent group~$N$, hence $\mathcal D(N)$ is undecidable. A far-reaching generalizations of these were obtained by Garreta, Myasnikov and Ovchinnikov in~\cite{M32} where they proved that for any finitely generated non-virtually abelian nilpotent group~$G$ there exists a ring of algebraic integers~$\mathcal O$ (depending on~$G$) interpretable by equations in~$G$, hence $\mathcal D(\mathcal O)$ is Karp reducible to $\mathcal D(G)$. Furthermore, in~\cite{M31}
they gave a general sufficient condition for the ring~$\mathcal O$ to be isomorphic to~$\mathbb Z$, so in this case the Diophantine problem in~$G$ is undecidable. Based on this, they proved that a random nilpotent group $G$ (given by a random presentation in the variety $\mathcal N_c$ of nilpotent groups of class at most~$c$, for any $c\geqslant 2$) has $\mathcal O \cong\mathbb Z$, hence the undecidable Diophantine problem. These results on nilpotent groups allow numerous applications to the Diophantine problems in non-nilpotent groups
$H$ either via suitable Diophantine nilpotent subgroups of $H$ or via suitable Diophantine nilpotent quotients of~$H$~\cite{M32}. For example, this technique allows one to show that the Diophantine problem in any finitely generated free solvable non-abelian group is undecidable.

This line of results changes drastically in the second approach, where one tries to show that the Diophantine problem in a given group~$G$ is decidable by reducing it to the Diophantine problem in a non-abelian free group~$F$ or a free monoid~$M$ (see, for example, Rips and Sela~\cite{M66}, Damani and Guirardel~\cite{M16}, Diekert and Muschol~\cite{M22}, Casals-Ruiz and Kazachkov~\cite{M13},~\cite{M12}, and Diekert and Lohrey~\cite{M21}). We refer to~\cite{M39} for further results in this area. The principal results
here are due to Makanin \cite{M46}, \cite{M47} and Razborov \cite{M63},~\cite{M64} who showed that the
Diophantine problems $\mathcal D(M)$ and $\mathcal D(F)$ are decidable and, in the case of the free group~$F$, further provided a description of the solution sets to arbitrary finite systems of equations in terms of Makanin--Razborov's diagrams. Another description of solutions sets in~$F$ in terms of NTQ systems (also termed $\omega$-residually free towers) was obtained in~\cite{M38}. NTQ systems give an effective approach to algebraic geometry and model theory of free groups. Recently, an entirely different method of solving equations in free groups, free monoids, and hyperbolic groups was developed in a series of papers \cite{M20}, \cite{M35}, \cite{M36}, \cite{M14},~\cite{M15}.

In his classical paper \cite{Maltsev} A.I.\,Maltsev studied elementary equivalence of matrix groups $\mathcal G_n(F)$ where $\mathcal G_n$ is one of the $\GL_n, \SL_n, \PGL_n, \PSL_n$, $n\geqslant 3$, and $F$ is a field. Namely, he showed that $\mathcal G_n(F)\equiv \mathcal G_m(L)$ if and only if $n=m$ and $F\equiv L$. His proof was based on two principal results. The first one states that for any integer $k\geqslant 3$ and $\mathcal G_n$ as above there is a group sentence $\Phi_{k,\mathcal G}$ such that for any~$n$, and a field~$F$, $\Phi_{k,\mathcal G}$ holds in $\mathcal G_n(F)$ if and only if $k=n$. The second one is that $F$ and $\mathcal G_n(F)$ are mutually interpretable in each other. More precisely, $\mathcal G_n(F)$ is absolutely interpretable in~$F$ (i.\,e., no use of parameters), while $F$ is interpretable in $\mathcal G_n(F)$ uniformly with respect to some definable subset of tuples of parameters (so-called \emph{regular} interpretability). This implies that the theories $Th(F)$ and $Th(\mathcal G_n(F))$ are reducible to each other in polynomial time, hence $Th(\mathcal G_n(F))$ is decidable if and only if $Th(F)$ is decidable. Later Beidar and Mikhalev introduced another general approach to elementary equivalence of classical matrix groups~\cite{BeidarMikhalev}. Their proof was based on Keisler--Shelah theorem (two structures are elementarily equivalent if and only if their ultrapowers over some non-principal ultraflters are isomorphic (see~\cite{Keisler},~\cite{Shelah}) and the description of the abstract isomorphisms of the groups of the type $\mathcal G_n(F)$. E.\,Bunina extended their results to unitary linear and Chevalley groups  (see~\cite{Bunina_intro1}, \cite{Bunina_intro3}, \cite{Bunina-local}, \cite{Bunina_intro2}).  Note that in all the results above the first-order theories include only the standard constants from the languages of groups and rings. The model theory of the group $UT_n(R)$, where $n\geqslant 3$, and $R$ is an arbitrary unitary associative ring, was studied in details by O.\,Belegradek~\cite{M3}. He used heavily that the ring $R$ is interpretable
(with parameters) in $UT_n(R)$. A.\,Myasnikov and M.\,Sohrabi studied model theory of groups $\SL_n(\mathcal O)$, $\GL_n(\mathcal O)$, and $T_n(\mathcal O)$ over fields and rings of algebraic integers in~\cite{M55} and \cite{Myasnikov-Sohrabi}.
Their method exploits the mutual interpretability (and also bi-interpretability)
of the group and the ring. In a similar manner N.\,Avni, A.\,Lubotsky, and C.\,Meiri in~\cite{MyasKharl3} 
studied the first order rigidity of non-uniform higher rank arithmetic groups (see also~\cite{AvniMeiri2}). Recently, D.\,Segal and K.\,Tent (see~\cite{Segal-Tent}) showed that for Chevalley groups $G_\pi(\Phi,R)$ of rank
$>1$ over an integral domain~$R$ if $G_\pi(\Phi,R)$ has finite elementary width or is adjoint, then $G_\pi(\Phi,R)$ and $R$
are bi-interpretable.  In~\cite{bunina2022} E.\,Bunina proved that over local rings Chevalley groups $G_\pi(\Phi,R)$ of rank
$>1$ are regularly bi-interpretable with the corresponding rings.

Though related, all the model-theoretic results above do not shed much light on the Diophantine problem in the corresponding groups. Because to relate the Diophantine problems in $\mathcal G_n(R)$ or $G_\pi(\Phi,R)$ and $R$ one needs to have their mutual interpretability by equations, not by arbitrary first-order
formulas.  This is precisely what Myasnikov and Sohrabi did in their paper~\cite{Myasnikov-Sohrabi2} for classical linear groups $\GL_n(R)$, $\SL_n(R)$, $T_n(R)$, $UT_n(R)$ and what we do in this paper for Chevalley groups $G_\pi(\Phi,R)$.

 Recall that a subset (in particular a subgroup) $H$ of a group~$G$ is \emph{Diophantine} in~$G$ if it is definable in~$G$ by a formula of the type 
$$
\Phi(x)= \exists y_1 \dots \exists y_n \left( \bigwedge_{i=1}^k w_i(x,y_1,\dots, y_n)=1\right),
$$
 where $w_i(x,y_1,\dots, y_n)$ is a group word on $x, y_1, \dots,  y_n$. Such formulas are called \emph{Diophantine} (in number theory) or \emph{positive-primitive} (in model theory). Following~\cite{M34}, we say that a structure $\mathcal A$ is \emph{e-interpretable} (or \emph{interpretable by equations}, or \emph{Diophantine interpretable}) in a structure~$\mathcal B$ if $\mathcal A$ is interpretable (see below) in~$\mathcal B$ by Diophantine formulas. The main point of this definition is that if $\mathcal A$ is
e-interpretable in~$\mathcal B$ then the Diophantine problem in~$\mathcal A$ reduces in polynomial
time (\emph{Karp reduces}) to the Diophantine problem in~$\mathcal B$. On the one hand, it
is harder to get e-interpretability than just interpretability, since in the latter you can use arbitrary formulas (not only the Diophantine ones), but on the other hand, to study first-order equivalence of structures one does not usually use the constants in the language, while in the Diophantine problems the constants are required.

A subgroup $G\subseteq \GL_n(R)$ is termed \emph{large} if it contains the subgroup $E_n(R)$
generated in $\GL_n(R)$ by all transvections $t_{ij}(\alpha)$, $i\ne j$, and $\alpha\in R$. In particular, the subgroups $\SL_n(R)$ (when $R$ is commutative) and $E_n(R)$ itself are large. Similarly a subgroup $G\subseteq G_\pi(\Phi,R)$ of a Chevalley group is called large  if it contains the elementary Chevalley group (subgroup) $E_\pi(\Phi,R)$ generated by all elementary unipotents $x_\alpha(t)$, $\alpha\in \Phi$, $t\in R$.
Introduction of  large subgroups of $\GL_n(R)$ ($G_\pi(\Phi,R)$) allows one to unify similar arguments,
otherwise used separately for each of the groups $\GL_n(R)$, $\SL_n(R)$ and $E_n(R)$.
This also emphasize the fact that the methods of the paper~\cite{Myasnikov-Sohrabi2} as well as this paper, unlike the one used in Maltsev's papers~\cite{Maltsev},  is based solely on transvections and nilpotent subgroups (elementary unipotents). Below by  $X_\alpha$, $\alpha\in \Phi$,  we denote the one-parametric subgroup $\{ x_\alpha (t)\mid t\in R\}$.
In Section~5 we   study Diophantine subgroups of large subgroups $G_\pi(R)$.
In particular, we prove the following key technical result (compare with~\cite{Myasnikov-Sohrabi2}):

\begin{result}[Propositions \ref{theorM4.1} and \ref{theorM4.1-B2}] Let $G$ be a large subgroup of $G_\pi(\Phi,R)$, $rank\, \Phi > 1$. Then for any
$\alpha \in \Phi$ the one-parametric subgroup $X_\alpha$ is Diophantine in~$G$ $($defined with constants from the set $\{ x_\alpha(1)\mid \alpha \in \Phi\})$.
\end{result}

This result is similar to the corresponding one from~\cite{Myasnikov-Sohrabi2}, but it mostly based on the description of centralizers of certain sets in all Chevalley groups $G_\pi(\Phi,R)$ of $rank\, \Phi > 1$ over arbitrary commutative rings. This description is proved in Sections 2--4: in Section~2 it is done for  Chevalley groups over all fields, using Bruhat decomposition and direct calculations; in Section 3 it is done for  Chevalley groups over all local rings, using Gauss decomposition, results of the previous section and also direct calculations; in Section 4 it is generalized for all commutative rings, using localization method and results of the previous section. Finally we prove the following result (that has an independent value):

\begin{result}[Theorem \ref{double_centr_arbitrary}]
For any Chevalley group (or its large subgroup) $G=G_\pi(\Phi,R)$, where $\Phi$ is an irreducible root system of a rank $>1$, $R$ is an arbitrary commutative ring with~$1$, if for some $\alpha\in \Phi$ an element $g\in C_G(\Gamma_\alpha)$, then $g=c x_\alpha(t)$, where $t\in R$, $c\in Z(G)$, except the case $\Phi=\mathbf C_l$, $l\geqslant 2$, and $\alpha$ is short.

In the case $\Phi=\mathbf C_l=\{ \pm e_i\pm e_j\mid 1\leqslant i,j\leqslant l,i\ne j\}\cup \{ \pm 2e_i\mid 1\leqslant i\leqslant l\}$ and $\alpha=e_1+e_2$ if $g\in C_G(\Gamma_\alpha)$, then 
$$
g=c x_{e_1+e_2}(t_1)x_{2e_1}(t_2)x_{2e_2}(t_3),\quad c\in Z(G).
$$
\end{result}

Result 1 helps to prove

\begin{result}[Theorem \ref{theorM6.1}]
 Let $G$ be a large subgroup of a Chevalley group $G_\pi(\Phi,R)$,  where $\Phi$ is indecomposable root system of the rank $\ell > 1$, $R$ is an arbitrary commutative rings with~$1$. Then the ring $R$ is
$e$-interpretable in~$G$ (using constants from the set $C_\Phi = \{ x_\alpha(1) \mid \alpha \in \Phi\}$).
\end{result}

This last theorem gives us the result about Karp equivalence of (elementary) Chevalley groups and the correslonding rings:

\begin{result}[Theorems \ref{Theorem6.5} and \ref{Theorem6.8}]
 If $\Phi$ is an indecomposable root system of a rank $> 1$, $R$ is an arbitrary commutative ring with~$1$, then  the Diophantine problem in any Chevalley group
$G_\pi(\Phi,R)$ is Karp equivalent to
the Diophantine problem in~$R$. More precisely:
\begin{itemize}
\item [1)] If $C$ is a countable subset of $G_\pi(\Phi,R)$  then $\mathcal D_C(G_\pi(\Phi,R))$ Karp reduces to $\mathcal D_{R_C} (R)$.
\item [2)] If $T$ is a countable subset of $R$ then there is a countable subset $C_T$ of $G_\pi(\Phi,R)$  such that $\mathcal D_{T} (R)$ Karp reduces to $\mathcal D_{C_T}(G_\pi(\Phi,R))$.
\end{itemize}

 If the elementary Chevalley group $E_\pi(\Phi,R)$ has bounded elementary generation, then the Diophantine problem in 
$E_\pi(\Phi,R)$ is Karp equivalent to
the Diophantine problem in~$R$. 
\end{result}

Section 6 is devoted to applications of the main theorems. 
In~\cite{Myasnikov-Sohrabi2} similar corollaries were proved for classical linear groups, here we repeat them for Chevalley groups.


\begin{result}[Theorem \ref{Theor7.2}]
If $\Phi$ is a indecomposable root system of a rank $>1$, then the Diophantine problem in all Chevalley groups $G_\pi(\Phi,\mathbb Z)$ is Karp equivalent to the Diophantine problem in~$\mathbb Z$, in particular, it is
undecidable.
\end{result}

The following is one of the major conjectures in number theory:

\emph{The Diophantine problem in $\mathbb{Q}$, as well as in any number field~$F$, or any ring of algebraic integers $\mathcal{O}$, is undecidable.}

The following result moves the Diophantine problem in Chevalley groups over number fields or rings of algebraic integers from group theory to number theory. 

\begin{result}[Theorem \ref{Theorem7.2_2}]
Let $\Phi$ be  an indecomposable root system of a rank $>1$ and $R$ either a number field or a ring of algebraic integers. Then the above conjecture holds for $R$ if and only if  the Diophantine problem in the Chevalley group $G_\pi(\Phi,R)$ is undecidable.
\end{result}

The following result from \cite{M34} describes the current state of the Diophantine problem in finitely generated commutative rings:

\emph{ Let $R$ be an infinite finitely generated associative commutative unitary ring.
Then  one of the following holds}:

\begin{enumerate}
    \item \emph{If $R$ has positive characteristic $n> 0$, then the ring of polynomials  $\mathbb{F}_p[t]$ is e-interpretable in $R$ for some transcendental element $t$ and some prime integer $p$; and $\mathcal{D}(R)$ is undecidable.}
    \item  \emph{If  $R$ has zero characteristic and it has infinite rank then the same conclusions as above hold:    the ring of polynomials  $\mathbb{F}_p[t]$ is e-interpretable in $R$ for some $t$ and $p$; and $\mathcal{D}(R)$ is undecidable.} 
    \item  \emph{If  $R$ has zero characteristic and it has finite rank then a ring of algebraic integers $\mathcal{O}$ is e-interpretable in $R$. }
\end{enumerate}

This fact, together with Result~4, implies the following theorem which completely clarifies the situation with the Diophantine problem in Chevalley groups over infinite finitely generated commutative unitary rings:

\begin{result}[Theorem \ref{th:9}]
Let $\Phi$ be an indecomposable root system of a rank $> 1$, $R$ is an arbitrary infinite finitely generated commutative ring with~$1$,  and 
$G_\pi(\Phi,R)$  the corresponding  Chevalley group. Then:

\begin{itemize}
\item [1)]  If $R$ has positive characteristic then the Diophantine problem in $G_\pi(\Phi,R)$ is undecidable.
\item[2)] If  $R$ has zero characteristic and it has infinite rank then the Diophantine problem in $G_\pi(\Phi,R)$ is undecidable. 
\item [3)] If  $R$ has zero characteristic and it has finite rank then the Diophantine problem in some ring of algebraic integers $\mathcal{O}$ is Karp reducible to the Diophantine problem in $G_\pi(\Phi,R)$. Hence if Conjecture \ref{con:majorNT} holds then the Diophantine problem in  $G_\pi(\Phi,R)$ is undecidable.

\end{itemize}

\end{result}

If $R$ is an algebraically closed field,
then

\begin{itemize}
    \item [1)] \emph{If  $A$ is a computable subfield of $R$  then the first-order theory $Th_A(R)$ of $R$ with constants from $A$ in the language  is decidable. In particular, the Diophantine problem $\mathcal{D}_A(R)$  is decidable. } 
    \item [2)] \emph{ If  $A$ is a computable subfield of $R$ then   the algebraic closure $\bar A$ of $A$ in $R$ is  computable.} 

\end{itemize}

Combining the fact above with Theorems \ref{Theorem6.5} and \ref{Theorem6.8}, we obtain

\begin{result}[Theorem \ref{Theorem-alg-closed}]
Let $\Phi$ be an indecomposable root system of a rank $> 1$, $R$ an algebraically closed field, and $G_\pi(\Phi,R)$ the corresponding Chevalley group. If $A$ is a computable subfield of $R$, then the Diophantine problem in $G_\pi(\Phi,R)$ with constants from $G_\pi(\Phi,A)$ is decidable (under a proper enumeration of $G_\pi(\Phi,A)$).
\end{result}

Let  $R = \mathbb{R}$ be  the field of real numbers and $A$  a countable (or finite) subset of $\mathbb{R}$. 
Our treatment of the Diophantine problem in Chevalley groups over $\mathbb{R}$ is based on the following two results on  the Diophantine problem in $\mathbb R$ which are known in the folklore:

\emph{Let $A$ be a finite or countable subset of $\mathbb{R}$.  Then the Diophantine problem in $\mathbb{R}$ with coefficients in $A$ is decidable if and only if the ordered subfield $F(A)$ is computable. Furthermore, in this case the whole first-order theory $Th_A(\mathbb{R})$ is decidable.  }

A real $a \in \mathbb{R}$ is \emph{computable} if  one can effectively approximate it by rationals with any precision. The set of all computable reals $\mathbb{R}^c$ forms a real closed subfield of $\mathbb{R}$, in particular $\mathbb{R}^c$  is first-order equivalent to $\mathbb{R}$. A matrix $A \in \GL_n(\mathbb{R})$ is called \emph{computable} if all entries in $A$ are computable real numbers. 
Chevalley groups $G_\pi(\Phi,\mathbb{R})$ are matrix algebraic groups over $\mathbb{R}$, hence one can view their elements as matrices.

\begin{result}[Theorem \ref{th:main-R}] 
Let $\Phi$ be an indecomposable root system of a rank $> 1$ and $G_\pi(\Phi,\mathbb{R})$ the Chevalley group over the field of real numbers $\mathbb{R}$. If $A$ is a computable ordered subfield of $\mathbb R$ then   the first-order theory  $Th(G_\pi(\Phi,\mathbb{R}))$ with constants from $G_\pi(\Phi,A)$ is decidable. In particular, 
the Diophantine problem in $G_\pi(\Phi,\mathbb{R})$ with constants from $G_\pi(\Phi,A)$ is decidable (under a proper enumeration of $G_\pi(\Phi,A)$).
 \end{result}

 \begin{result}[Theorem \ref{th:main-R-dop}]
Let $\Phi$ be an indecomposable root system of a rank $> 1$ and $G_\pi(\Phi,\mathbb{R}^c)$ the Chevalley group over the field of computable real numbers $\mathbb{R}^c$.  Then the  following holds:
\begin{itemize} 
\item [1)] 
 The Diophantine problem in the computable group $G_\pi(\Phi,\mathbb{R}^c)$  is undecidable. 
 \item [2)] For any finitely generated subgroup $C$ of $G_\pi(\Phi,\mathbb{R}^c)$  the Diophantine problem in $G_\pi(\Phi,\mathbb{R}^c)$ with coefficients in $C$ is decidable. 
 \end{itemize}
 
 \end{result}

 \begin{result}[Theorem \ref{th:incomp-R}]
  Let $\Phi$ be an indecomposable root system of a rank $> 1$ and $G_\pi(\Phi,\mathbb{R})$ the corresponding Chevalley group over the field of computable real numbers $\mathbb{R}^c$. If an element  $g \in E_\pi(\Phi,\mathbb{R})$  is not computable then the  Diophantine problem for equations with coefficients in $\{x_\alpha(1) \mid \alpha \in \Phi\} \cup \{g\}$ is undecidable in any large subgroup of $G_\pi(\Phi,\mathbb{R})$.

\end{result}

Similar to the case of reals  one can define computable $p$-adic numbers for every fixed prime $p$. 

\begin{result}[Theorem \ref{theor-padic-1}]
Let $\Phi$ be an indecomposable root system of a rank $> 1$. Then the  following holds:
 \begin{itemize}
     \item [1)] Let $a_1, \ldots, a_m \in \mathbb{Q}_p^c$ and   $A=\mathbb{Q}(a_1, \ldots,a_m)$  is the subfield of $\mathbb{Q}_p$  generated by $a_1, \ldots, a_m$. Then    the first-order theory  $Th(G_\pi(\Phi,\mathbb{Q}_p))$ with constants from $G_\pi(\Phi,A)$ is decidable. In particular, 
the Diophantine problem in $G_\pi(\Phi,\mathbb{Q}_p)$ with constants from $G_\pi(\Phi,A)$ is decidable (under a proper enumeration of $G_\pi(\Phi,A)$).
     \item [2)] Let $a_1, \ldots, a_m \in \mathbb{Z}_p^c$ and   $A=\mathbb{Z}(a_1, \ldots,a_m)$  is the subring of $\mathbb{Z}_p$  generated by $a_1, \ldots, a_m$. Then    the first-order theory  $Th(G_\pi(\Phi,\mathbb{Q}_p))$ with constants from $G_\pi(\Phi,A)$ is decidable. In particular, 
the Diophantine problem in $G_\pi(\Phi,\mathbb{Q}_p)$ with constants from $G_\pi(\Phi,A)$ is decidable (under a proper enumeration of $G_\pi(\Phi,A)$).
 \end{itemize}
  \end{result}

  \begin{result}[Theorem \ref{theor-padic-2}]
 Let $\Phi$ be an indecomposable root system of a rank $> 1$ and $G_\pi(\Phi,\mathbb{Q}_p)$ $(G_\pi(\Phi,\mathbb{Z}_p), p \neq 2)$ the corresponding Chevalley group over $\mathbb{Q}_p$ $(\mathbb{Z}_p)$. If an element  $g \in E_\pi(\Phi,\mathbb{Q}_p)$ $(g \in E_\pi(\Phi,\mathbb{Z}_p), p \neq 2)$ is not computable then the  Diophantine problem for equations with coefficients in $\{x_\alpha(1) \mid \alpha \in \Phi\} \cup \{g\}$ is undecidable in any large subgroup of $G_\pi(\Phi,\mathbb{Q}_p)$  $(G_\pi(\Phi,\mathbb{Z}_p), p \neq 2)$.   

\end{result}

\section{Chevalley groups}\leavevmode

In this section we establish some notation and recall technical results that are
used throughout the paper.

For a group $G$ and $x,y\in G$ we denote by $x^y$ the conjugate $yxy^{-1}$ of $x$ by~$y$, and
by $[x, y]$ the commutator $xyx^{-1}y^{-1}$. For a subset $A\subseteq G$ by $C_G(A)$ we denote the
centralizer $\{ x\in G\mid [x,a]=1\, \forall a\in A\}$, in particular, $Z(G) = \{ x \in G \mid [x, y] =1\, \forall y\in G\}$
 is the center of~$G$. For subsets $X, Y\subseteq G$ by $[X, Y]$ we denote the subgroup of~$G$ generated by all commutators $[x, y]$, where $x\in X$, $y\in Y$. Then $[G;G]$ is the derived subgroup $G'$ of~$G$ (the \emph{commutant} of~$G$). 

In the rest of the paper by~$R$ we denote an arbitrary associative {\bf commutative} ring with identity~$1$. By $R^*$ we denote the multiplicative group of invertible (unit) elements of~$R$
and by $R^+$ the additive group of~$R$.

\subsection{Root systems and semisimple Lie algebras}\leavevmode

We fix an indecomposable root system~$\Phi$ of the rank $\ell > 1$, with the system of simple
roots~$\Delta$, the set of positive (negative) roots $\Phi^+$
($\Phi^-$), and the Weil group~$W$. Recall that  any two roots of the same length are conjugate under the action of the Weil group. Let $|\Phi^+|=m$. More detailed texts about root systems and their
properties can be found in the books \cite{Hamfris}, \cite{Burbaki}.

Recall also that for $\alpha,\beta\in \Phi$
$$
\langle \alpha,\beta\rangle =2\frac{(\alpha,\beta)}{(\beta,\beta)},
$$
 where $(\alpha,\beta)$ stands for the standard scalar product on the root space.

Suppose now that we have a semisimple complex Lie algebra~$\mathcal
L$ with the Cartan subalgebra~$\mathcal H$ (more details about
semisimple Lie algebras can be found, for instance, in the book~\cite{Hamfris}).

Lie algebra  $\mathcal L$ has a decomposition ${\mathcal
L}={\mathcal H} \oplus \sum\limits_{\alpha\ne 0} {\mathcal
L}_\alpha$,
$$
{\mathcal L}_\alpha:=\{ x\in {\mathcal L}\mid [h,x]=\alpha(h)x\text{
for every } h\in {\mathcal H}\},
$$
and if ${\mathcal L}_\alpha\ne 0$, then $\dim {\mathcal
L}_\alpha=1$, all nonzero $\alpha\in {\mathcal H}$ such that
${\mathcal L}_\alpha\ne 0$, form some root system~$\Phi$. The root
system $\Phi$ and the semisimple Lie algebra
 $\mathcal L$ over~$\mathbb C$
uniquely (up to automorphism) define each other.

On the Lie algebra $\mathcal L$ one can introduce a bilinear
\emph{Killing form} $\varkappa(x,y)=\tr (\ad x\ad y),$  that is
non-degenerated on~$\mathcal H$. Therefore we can identify the
spaces $\mathcal H$ and ${\mathcal H}^*$.

We can choose a basis $\{ h_1, \dots, h_l\}$ in~$\mathcal H$ and for
every $\alpha\in \Phi$ elements $x_\alpha \in {\mathcal L}_\alpha$
so that $\{ h_i; x_\alpha\}$ is a basis in~$\mathcal L$ and for
every two elements of this basis their commutator is an integral
linear combination of the elements of the same basis. This basis is
called a \emph{Chevalley basis}.

\subsection{Elementary Chevalley groups}\leavevmode

Introduce now elementary Chevalley groups (see~\cite{Steinberg}).

Let  $\mathcal L$ be a semisimple Lie algebra (over~$\mathbb C$)
with a root system~$\Phi$, $\pi: {\mathcal L}\to \mathfrak{gl}(V)$
be its finitely dimensional faithful representation  (of
dimension~$n$). If $\mathcal H$ is a Cartan subalgebra of~$\mathcal
L$, then a functional
 $\lambda \in {\mathcal H}^*$ is called a
 \emph{weight} of  a given representation, if there exists a nonzero vector $v\in V$
 (that is called a  \emph{weight vector}) such that
for any $h\in {\mathcal H}$ $\pi(h) v=\lambda (h)v.$

In the space~$V$ in the Chevalley basis all operators
$\pi(x_\alpha)^k/k!$ for $k\in \mathbb N$ are written as integral
(nilpotent) matrices. An integral matrix also can be considered as a
matrix over an arbitrary commutative ring with~$1$. Let $R$ be such
a ring. Consider matrices $n\times n$ over~$R$, matrices
$\pi(x_\alpha)^k/k!$ for
 $\alpha\in \Phi$, $k\in \mathbb N$ are included in $M_n(R)$.

Now consider automorphisms of the free module $R^n$ of the form
$$
\exp (tx_\alpha)=x_\alpha(t)=1+t\pi(x_\alpha)+t^2
\pi(x_\alpha)^2/2+\dots+ t^k \pi(x_\alpha)^k/k!+\dots
$$
Since all matrices $\pi(x_\alpha)$ are nilpotent, we have that this
series is finite. Automorphisms $x_\alpha(t)$ are called
\emph{elementary root elements}. The subgroup in $\Aut(R^n)$,
generated by all $x_\alpha(t)$, $\alpha\in \Phi$, $t\in R$, is
called an \emph{elementary Chevalley group} (notation:
$E_\pi(\Phi,R)$).

In elementary Chevalley group we can introduce the following
important elements and subgroups:

\begin{itemize}
\item $w_\alpha(t)=x_\alpha(t) x_{-\alpha}(-t^{-1})x_\alpha(t)$, $\alpha\in \Phi$,
$t\in R^*$;

\item $h_\alpha (t) = w_\alpha(t) w_\alpha(1)^{-1}$;

\item  $N$ is generated by all
 $w_\alpha (t)$, $\alpha \in \Phi$, $t\in R^*$;

\item  $H$ is generated by all
 $h_\alpha(t)$, $\alpha \in \Phi$, $t\in R^*$;

\item The subgroup $U=U(R)$ of the Chevalley group $G$ ($E$) is generated by elements $x_\alpha(t)$, $\alpha\in \Phi^+$, $t\in R$, the subgroup $V=V(R)$ is generated by elements $x_{-\alpha}(t)$, $\alpha\in \Phi^+$ $t\in R$.
\end{itemize}

The action of  $x_\alpha(t)$ on the Chevalley basis is described in
\cite{v23}, \cite{VavPlotk1}.

It is known that the group $N$ is a normalizer of~$H$ in elementary
Chevalley group, the quotient group $N/H$ is isomorphic to the Weil
group $W(\Phi)$.

All weights of a given representation (by addition) generate a
lattice (free Abelian group, where every  $\mathbb Z$-basis  is also
a $\mathbb C$-basis in~${\mathcal H}^*$), that is called the
\emph{weight lattice} $\Lambda_\pi$.

 Elementary Chevalley groups are defined not even by a representation of the Chevalley groups,
but just by its \emph{weight lattice}. Namely, up to an abstract
isomorphism an elementary Chevalley group is completely defined by a
root system~$\Phi$, a commutative ring~$R$ with~$1$ and a weight
lattice~$\Lambda_\pi$.

Among all lattices we can mark two: the lattice corresponding to the
adjoint representation, it is generated by all roots (the \emph{root
lattice}~$\Lambda_{ad}$) and the lattice generated by all weights of
all reperesentations (the \emph{lattice of weights}~$\Lambda_{sc}$).
For every faithful reperesentation~$\pi$ we have the inclusion
$\Lambda_{ad}\subseteq \Lambda_\pi \subseteq \Lambda_{sc}.$
Respectively, we have the \emph{adjoint} and \emph{universal} (\emph{simply connected})
elementary Chevalley groups. 

Every elementary Chevalley group satisfies the following relations:

(R1) $\forall \alpha\in \Phi$ $\forall t,u\in R$\quad
$x_\alpha(t)x_\alpha(u)= x_\alpha(t+u)$;

(R2) $\forall \alpha,\beta\in \Phi$ $\forall t,u\in R$\quad
 $\alpha+\beta\ne 0\Rightarrow$
$$
[x_\alpha(t),x_\beta(u)]=x_\alpha(t)x_\beta(u)x_\alpha(-t)x_\beta(-u)=
\prod x_{i\alpha+j\beta} (c_{ij}t^iu^j),
$$
where $i,j$ are integers, product is taken by all roots
$i\alpha+j\beta$, taken in some fixed order; $c_{ij}$ are
integer numbers not depending on $t$ and~$u$, but depending on
$\alpha$ and $\beta$ and the order of roots in the product. 

(R3) $\forall \alpha \in \Phi$ $w_\alpha=w_\alpha(1)$;

(R4) $\forall \alpha,\beta \in \Phi$ $\forall t\in R^*$ $w_\alpha
h_\beta(t)w_\alpha^{-1}=h_{w_\alpha (\beta)}(t)$;

(R5) $\forall \alpha,\beta\in \Phi$ $\forall t\in R^*$ $w_\alpha
x_\beta(t)w_\alpha^{-1}=x_{w_\alpha(\beta)} (ct)$, where
$c=c(\alpha,\beta)= \pm 1$;

(R6) $\forall \alpha,\beta\in \Phi$ $\forall t\in R^*$ $\forall u\in
R$ $h_\alpha (t)x_\beta(u)h_\alpha(t)^{-1}=x_\beta(t^{\langle
\beta,\alpha \rangle} u)$.

For a given $\alpha\in \Phi$  by $X_\alpha$ we denote the subgroup   $\{ x_\alpha
(t)\mid t\in R\}$.

\subsection{Chevalley groups over rings}\leavevmode

We briefly recall some basics related to the definition of  Chevalley groups. For more details on Chevalley groups
over rings see  \cite{Steinberg},
\cite{Chevalley}, \cite{v3}, \cite{v23}, \cite{v30}, \cite{Vavilov},
\cite{VavPlotk1}, and references therein.

Let $\Phi$ be a reduced irreducible root system of rank $\geqslant 2$,
and $W=W(\Phi)$ be its Weyl group. Consider a lattice $\Lambda_\pi$ intermediate
between the root lattice $\Lambda_{\ad}$ and the weight
lattice $\Lambda_{sc}$. Let $R$ be a commutative ring with 1,
with the multiplicative group $R^*$.

These data determine the Chevalley group $G=G_{\Lambda_\pi}(\Phi,R)$,
of type $(\Phi,\Lambda_\pi)$ over $R$. It is usually constructed as the
group of $R$-points of the Chevalley--Demazure
group scheme $G_{\Lambda_\pi}(\Phi,\text{$-$})$ of type $(\Phi,\Lambda_\pi)$.
In the case
$\Lambda_\pi=\Lambda_{sc}$ the group $G$ is called \emph{simply connected} and
is denoted by $G_{sc}(\Phi,R)$. In another extreme case
$\Lambda_\pi=\Lambda_{\ad}$ the group $G$ is called \emph{adjoint} and
is denoted by $G_{\ad}(\Phi,R)$. Many results do not depend
on the lattice $\Lambda_\pi$ and hold for all groups of a given
type~$\Phi$. In all such cases, or when $\Lambda_\pi$ is determined by
the context, we omit any reference to $\pi$ in the notation
and denote by $G(\Phi,R)$ {\it any} Chevalley group of type
$\Phi$ over $R$. 
\par

In what follows, we also fix a split maximal torus $T=T(\Phi,R)$ in $G=G(\Phi,R)$.  This choice
uniquely determines the unipotent root subgroups, $X_{\alpha}$,
$\alpha\in\Phi$, in $G$, elementary with respect to $T$. As usual,
we fix maps $x_{\alpha}\colon R\mapsto X_{\alpha}$, so that
$X_{\alpha}=\{x_{\alpha}(t)\mid t\in R\}$, and require that these parametrizations are interrelated by the Chevalley commutator formula with integer coefficients, see R1--R2. The above unipotent elements
$x_{\alpha}(t)$, where $\alpha\in\Phi$, $t\in R$,
elementary with respect to $T(\Phi,R)$, are also called
elementary unipotent root elements or, for short, simply
root unipotents.
\par
Further,
$$ E(\Phi,R)=\big\langle x_\alpha(t),\ \alpha\in\Phi,\ t\in R\big\rangle $$
\noindent
denotes the  elementary subgroup of $G(\Phi,R)$,
spanned by all elementary root unipotents, or, what is the
same, by all root subgroups $X_{\alpha}$,
$\alpha\in\Phi$.
\par
This is precisely the elementary Chevalley group defined in the previous section. 
One can look at Chevalley groups also from the positions of algebraic groups. This point of view is of special importance for many application.

All these groups are defined in $\SL_n(R)$ as  common set of zeros of
polynomials of matrix entries $a_{ij}$ with integer coefficients
 (for example,
in the case of the root system $\mathbf C_\ell$ and the universal
representation we have $n=2l$ and the polynomials from the condition
$(a_{ij})Q(a_{ji})-Q=0$). It is clear now that multiplication and
taking inverse element are also defined by polynomials with integer
coefficients. Therefore, these polynomials can be considered as
polynomials over arbitrary commutative ring with a unit. Let some
elementary Chevalley group $E$ over~$\mathbb C$ be defined in
$\SL_n(\mathbb C)$ by polynomials $p_1(a_{ij}),\dots, p_m(a_{ij})$.
For a commutative ring~$R$ with a unit let us consider the group
$$
G(R)=\{ (a_{ij})\in \SL_n(R)\mid \widetilde p_1(a_{ij})=0,\dots
,\widetilde p_m(a_{ij})=0\},
$$
where  $\widetilde p_1(\dots),\dots \widetilde p_m(\dots)$ are
polynomials having the same coefficients as
$p_1(\dots),\dots,p_m(\dots)$, but considered over~$R$.

Semisimple linear algebraic groups over algebraically closed fields $K$  are precisely
 Chevalley groups $G(K)= E(\Phi,K)$ (see.~\cite{Steinberg}, \S\,5).

The standard maximal torus of the Chevalley group $G_\pi(\Phi,R)$ is  denoted usually by $T_\pi(\Phi,R)$ and
is isomorphic to $\Hom(\Lambda_\pi, R^*)$.

Let us denote by $h(\chi)$ the elements of the torus $T_\pi
(\Phi,R)$, corresponding to the homomorphism $\chi\in Hom
(\Lambda(\pi),R^*)$.

In particular, $h_\alpha(u)=h(\chi_{\alpha,u})$ ($u\in R^*$, $\alpha
\in \Phi$), where
$$
\chi_{\alpha,u}: \lambda\mapsto u^{\langle
\lambda,\alpha\rangle}\quad (\lambda\in \Lambda_\pi).
$$

\subsection{Connection between Chevalley groups and their elementary subgroups}\leavevmode

Connection between Chevalley groups and corresponding elementary
subgroups is an important problem in the theory of Chevalley
groups over rings. For elementary Chevalley groups there exists a
convenient system of generators $x_\alpha (\xi)$, $\alpha\in \Phi$,
$\xi\in R$, and all relations between these generators are well-known.
For general Chevalley groups it is not always true.

If $R$ is an algebraically closed field, then
$$
G_\pi (\Phi,R)=E_\pi (\Phi,R)
$$
for any representation~$\pi$. This equality is not true even for the
case of fields, which are not algebraically closed.

However if $G$ is a simply connected group and the ring $R$ is \emph{semilocal}
(i.e., contains only finite number of maximal ideals), then we have
the property
$$
G_{sc}(\Phi,R)=E_{sc}(\Phi,R).
$$
\cite{M}, \cite{Abe1}, \cite{St3}, \cite{AS}.

Let us show the difference between Chevalley groups and their
elementary subgroups in the case when a ring $R$ is semilocal and
a corresponding Chevalley group is not simply connected.  In this case $G_\pi
(\Phi,R)=E_\pi(\Phi,R)T_\pi(\Phi,R)$] (see~\cite{Abe1}, \cite{AS},
\cite{M}), and the elements $h(\chi)$ are connected with elementary
generators by the formula
\begin{equation}\label{e4}
h(\chi)x_\beta (\xi)h(\chi)^{-1}=x_\beta (\chi(\beta)\xi).
\end{equation}

\begin{observation}\label{Remark_torus}
Since $\chi\in \Hom (\Lambda(\pi),R^*)$, if we know the values of~$\chi$ on some set of roots which generate all roots (for example, on some basis of~$\Phi$), then we know $\chi(\beta)$ for all $\beta\in \Phi$ and respectively all $x_\beta(\xi)^{h(\chi)}$ for all $\beta\in \Phi$ and $\xi\in R^*$.

Therefore in particular if for all roots $\beta$ from some generating set of~$\Phi$ we have $[x_\beta(1),h(\chi)]=1$, then $h(\chi)\in Z(E_\pi(\Phi, R)$ and hence $h(\chi)\in Z(G_\pi(\Phi,R)$.

We will use this observation in the next section many times.
\end{observation}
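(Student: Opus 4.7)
The plan is to combine multiplicativity of $\chi$ with the action formula~(\ref{e4}). First I would record that the values of $\chi$ on any generating subset $S\subseteq\Phi$ of the root lattice $\Lambda_{ad}$ determine $\chi$ on all of $\Phi$: since $\chi\in\Hom(\Lambda_\pi,R^*)$ is a homomorphism from the additive group $\Lambda_\pi\supseteq\Lambda_{ad}$ into the multiplicative group $R^*$, writing any $\beta=\sum_i n_i\beta_i$ with $\beta_i\in S$ yields $\chi(\beta)=\prod_i\chi(\beta_i)^{n_i}$. Formula~(\ref{e4}) then determines each conjugate $h(\chi)x_\beta(\xi)h(\chi)^{-1}=x_\beta(\chi(\beta)\xi)$ for all $\beta\in\Phi$ and $\xi\in R$, which settles the first assertion.

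Next, suppose $[x_\beta(1),h(\chi)]=1$ for every $\beta\in S$. Unpacking the commutator gives $h(\chi)x_\beta(1)h(\chi)^{-1}=x_\beta(1)$, and by~(\ref{e4}) the left side equals $x_\beta(\chi(\beta))$; injectivity of the parametrization $x_\beta\colon R\to X_\beta$ then forces $\chi(\beta)=1$. By the previous paragraph $\chi(\beta)=1$ for all $\beta\in\Phi$, so~(\ref{e4}) gives $h(\chi)x_\beta(\xi)h(\chi)^{-1}=x_\beta(\xi)$ for every $\beta\in\Phi$ and every $\xi\in R$. Since the elements $x_\beta(\xi)$ generate $E_\pi(\Phi,R)$, this shows $h(\chi)\in Z(E_\pi(\Phi,R))$.

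Finally, $h(\chi)$ lies in the abelian torus $T_\pi(\Phi,R)$, so it centralizes $T_\pi(\Phi,R)$ as well. Combined with the previous step and the structural fact that $G_\pi(\Phi,R)$ is generated by $E_\pi(\Phi,R)$ together with $T_\pi(\Phi,R)$ (in particular $G_\pi=E_\pi T_\pi$ in the semilocal setting, and more generally in the settings where this remark will be invoked), this gives $h(\chi)\in Z(G_\pi(\Phi,R))$.

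The only mildly delicate point is this last promotion from $Z(E_\pi)$ to $Z(G_\pi)$ for a completely arbitrary commutative ring $R$, where the equality $G_\pi=E_\pi T_\pi$ can fail; however, in the subsequent sections the remark is used in situations where $G_\pi$ is explicitly generated by $E_\pi$ and the torus, and the argument above goes through without further modification. No auxiliary machinery beyond relations~(R1)--(R6) and formula~(\ref{e4}) is needed.
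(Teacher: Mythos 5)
Your argument is correct and is exactly the justification the paper leaves implicit: multiplicativity of $\chi$ on the root lattice determines $\chi(\beta)$ for every root from its values on a generating set, and formula~(\ref{e4}) together with injectivity of $t\mapsto x_\beta(t)$ converts the hypothesis $[x_\beta(1),h(\chi)]=1$ into $\chi(\beta)=1$, hence into centrality in $E_\pi(\Phi,R)$. The one place you hedge --- promoting $Z(E_\pi(\Phi,R))$ to $Z(G_\pi(\Phi,R))$ --- does not actually require $G_\pi=E_\pi T_\pi$: for rank $\geqslant 2$ one has $C_{G_\pi(\Phi,R)}(E_\pi(\Phi,R))=Z(G_\pi(\Phi,R))$ over an arbitrary commutative ring (see \cite{Abe-Hurley}), so an element of the torus centralizing $E_\pi(\Phi,R)$ is automatically central in $G_\pi(\Phi,R)$; your fallback to the semilocal setting is therefore safe but unnecessary.
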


If $\Phi$ is an irreducible root system of a rank $\ell\geqslant
2$, then $E(\Phi,R)$ is always normal and even {\bf characteristic} in $G(\Phi,R)$ (see~\cite{Tadei}, \cite{Hasrat-Vavilov}). In the case of
semilocal rings it is easy to show that
$$
[G(\Phi,R),G(\Phi,R)]=E(\Phi,R).
$$
except the cases $\Phi=\mathbf B_2, \mathbf G_2$, $R=\mathbb F_2$.

However in the case $\ell=1$ the subgroup of elementary matrices
$E_2(R)=E_{sc}(\mathbf A_1,R)$ is not necessarily normal in the special linear
group $\SL_2(R)=G_{sc}(\mathbf A_1,R)$ (see~\cite{Cn}, \cite{Sw},
\cite{Su1}).

In the general case the difference between $G_\pi(\Phi,R)$ and $E_\pi (\Phi,R)$ is measured by $K_1$-functor.

\section{The Diophantine problem}\leavevmode

\subsection{Equations, constants and computable structures}\leavevmode

Recall, that the Diophantine problem in an algebraic structure~$\mathcal A$ (denoted
$\mathcal D(\mathcal A)$) is the task to determine whether or not a given finite system of equations with constants in~$\mathcal A$ has a solution in~$\mathcal A$. $\mathcal D(\mathcal A)$ is decidable if there is an
algorithm that given a finite system $S$ of equations with constants in~$\mathcal A$ decides whether or not $S$ has a solution in~$\mathcal A$. Here, the structure $\mathcal A$ is assumed to be countable, moreover, supposedly it comes equipped with a fixed enumeration $\mathcal A = \{ a_1, a_2, \dots\}$, which is given by a surjective function $\nu: \mathbb N\to \mathcal A$ (the function is not necessary injective). One can use the function for enumeration of all finite systems of equations with coefficients in~$\mathcal A$ in countably many
variables $x_1, x_2,\dots$, and then provide them as inputs to a decision algorithm in the Diophantine problem 
$\mathcal D(\mathcal A)$. The first question to address here is how much decidability of $\mathcal D(\mathcal A)$ 
depends on the choice of the enumeration $\nu: \mathbb N\to \mathcal A$. It turns out, that decidability of $\mathcal D(\mathcal A)$ does depend on the enumeration~$\nu$, so for some~$\nu$, $\mathcal D(\mathcal A)$ can be decidable, and for 
others can be not. For example, every non-trivial finite or countable group has an infinite countable presentation with undecidable word problem, so the Diophantine problems in the group with respect to the enumerations related to such 
infinite presentations are undecidable. However, researchers are usually interested only in ``natural'' enumerations~$\nu$, which come from finite descriptions of the elements of~$\mathcal A$ that reflect the nature of 
the structure~$\mathcal A$. For instance, if $\mathcal A$ is a finitely generated group then one may describe elements of~$\mathcal A$ by finite words in a fixed finite set of generators, and use known effective enumerations 
of words, while if $\mathcal A$ is, say, a group $\GL_n(R)$ over a ring~$R$, then elements of $\GL_n(R)$ can be described by $n^2$-tuples of elements from~$R$, so one can use enumerations of~$R$ to enumerate elements 
of $\GL_n(R)$. Here, and in all other places, by an effective enumeration of words (or polynomials, or any other formulas of finite signature) we understand such an enumeration $\mu: n\mapsto w_n$ of words in a given finite or 
countable alphabet that for any number $n\in \mathbb N$ one can compute the word $w_n$ and for any word~$w$ in the given alphabet one can compute a number $n$ such that $w=w_n$. If $\mathcal A$ is a finitely 
generated associative unitary ring $R$ then elements of~$\mathcal A$ can be presented as non-commutative polynomials with integer coefficients in finitely many variables (which can be also viewed as elements of a free 
associative unitary ring of finite rank) and then effectively enumerated. Similarly, for commutative 
rings~$R$ the usual commutative polynomials can be used. There are two ways to make the formulation of the 
Diophantine a bit more precise, either explicitly fix the enumeration $\nu$ of~$\mathcal A$ in the Diophantine problem (denote it by $\mathcal D_\nu(\mathcal A)$), or to term that $\mathcal D(\mathcal A)$ is decidable if \emph{there exists} an enumeration $\nu$ of~$\mathcal A$ such that $\mathcal D(\mathcal A)$ is decidable. To study which enumerations are ``reasonable'' in the discourse of Diophantine problems we need to digress to the theory of computable algebra, or computable model theory, that stem from pioneering works of Rabin~\cite{M62} and Maltsev~\cite{M50} (for details see a book~\cite{M27} and a more recent survey~\cite{M29}).

Recall that a structure $\mathcal A$ of finite signature is \emph{computable} with respect to an enumeration
$\nu: \mathbb N\to \mathcal A$ if all the basic operations and predicates (including the equality) on~$\mathcal A$ 
are computable with respect to the enumeration~$\nu$. In particular, a group $G$ is \emph{computable} with 
respect to~$\nu$, if there are two computable functions $f(x,y)$ and $h(x,y)$ such that for any $i, j\in \mathbb N$ 
the following holds: $\nu(i)\cdot \nu(h)=\nu(f(i,j))$ and $\nu(i)=\nu(j)\Longleftrightarrow h(i,j)=1$. Similarly, a
countable ring $R$ is computable with respect to enumeration $\nu:\mathbb N\to R$ if in addition to the conditions above there is a computable function $g(x, y)$ such that $\nu(i)+\nu(j)=\nu(g(i, j))$.

The following observation shows the connection between decidability of Diophantine problems and computable structures.

\begin{lemma}\label{lemmaM3.1} Let $\mathcal A$ be a countable structure given with an enumeration $\nu: \mathbb N\to \mathcal A$. If the Diophantine problem $\mathcal D_\nu(\mathcal A)$ is decidable then the structure $\mathcal A$ is computable with respect to~$\nu$.
\end{lemma}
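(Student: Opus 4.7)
The plan is to use the assumed decision algorithm $\mathcal{Q}$ for $\mathcal{D}_\nu(\mathcal{A})$ as an oracle to realize the basic operations and the equality predicate of $\mathcal{A}$ as computable functions on $\mathbb{N}$. The whole argument is a routine reduction of each computable-structure axiom to one (or, for operations, countably many) invocations of $\mathcal{Q}$.

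\textbf{Step 1 (equality).} Given $i, j \in \mathbb{N}$, feed $\mathcal{Q}$ the single equation $\nu(i) = \nu(j)$ (appending the trivial equation $x = x$ if the input format demands at least one variable). The resulting system is solvable in $\mathcal{A}$ precisely when $\nu(i) = \nu(j)$, so $\mathcal{Q}$ yields a total computable function $h(i,j)$ with $h(i,j) = 1$ iff $\nu(i) = \nu(j)$. This gives the required decidable equality on $\nu$-indices.

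\textbf{Step 2 (operations and constants).} For each basic operation $*$ of $\mathcal{A}$ — multiplication in a group, or both addition and multiplication in a ring — define $f_{*}(i, j)$ to be the least $k$ such that the equation $\nu(k) = \nu(i) * \nu(j)$ has a solution in $\mathcal{A}$, tested by iterating Step~1 as $k$ ranges over $\mathbb{N}$. Since $\nu$ is surjective, some such $k$ exists and the unbounded search halts, so $f_{*}$ is total and computable with $\nu(f_{*}(i,j)) = \nu(i) * \nu(j)$. The same pattern handles nullary basic constants (the index of $1$ in a group, or $0$ and $1$ in a ring, is found by searching for the least $k$ with the defining equation) and unary operations such as the group inverse (locate $k$ with $\nu(k) \nu(i) = 1$). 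For rings this produces both the functions $f$ and $g$ appearing in the definition of a computable ring quoted earlier in the excerpt.

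\textbf{Main obstacle.} The only genuine subtlety is the justification that a single atomic equation with constants from $\mathcal{A}$ is a legitimate input to the Diophantine algorithm, since $\mathcal{D}_\nu(\mathcal{A})$ is formally defined for finite systems of equations in finitely many variables. This is a purely formal matter, resolved either by allowing variable-free systems (whose solvability is the truth of the equation in $\mathcal{A}$) or by padding with the tautological equation $x = x$. Once this convention is fixed, no further input beyond surjectivity of $\nu$ and decidability of $\mathcal{D}_\nu(\mathcal{A})$ is required, and the lemma follows.
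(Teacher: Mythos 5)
Your proof is correct. The paper states Lemma~\ref{lemmaM3.1} without proof, and your argument is exactly the standard one the authors evidently intend: decide equality of $\nu$-indices by feeding the variable-free (or padded) equation $\nu(i)=\nu(j)$ to the decision algorithm, then compute each operation by an unbounded search over output indices, which terminates by surjectivity of~$\nu$.
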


Lemma~\ref{lemmaM3.1} shows that  the only interesting enumerations of $\mathcal A$ with respect
to the Diophantine problem are those that make $\mathcal A$ computable, they are called
\emph{constructivizations} of~$\mathcal A$. The question whether a given countable structure $\mathcal A$
has a constructivization is a fundamental one in computable model theory, so there are a lot of results in this direction (see \cite{M27}, \cite{M29}, \cite{M28}) that can be used here.

Let $\mu$ and $\nu$ be two enumerations of~$\mathcal A$. By definition $\mu$ \emph{reduces} to~$\nu$ (symbolically $\preceq$) if there is a computable function $f(x)$ such that $\mu=\nu\circ f$. Furthermore,  $\mu$ and $\nu$ are
termed \emph{equivalent} (symbolically~$\sim$) if $\mu \preceq \nu$ and $\nu \preceq \mu$.

\begin{lemma}[\cite{M27}]\label{lemmaM3.2}
 Let $\mathcal A$ be a finitely generated structure that have at least one constructivization. Then all constructivizations of~$\mathcal A$ are equivalent to each other.
\end{lemma}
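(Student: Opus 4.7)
The plan is to exploit finite generation to reduce one enumeration to the other by an effective interpretation of terms. By symmetry it suffices to construct a computable function $f: \mathbb N \to \mathbb N$ with $\mu = \nu \circ f$, which will establish $\mu \preceq \nu$; the converse follows by swapping roles. Fix a finite generating set $a_1, \dots, a_k$ of $\mathcal A$, together with indices $p_1, \dots, p_k \in \mathbb N$ and $q_1, \dots, q_k \in \mathbb N$ satisfying $\nu(p_i) = a_i = \mu(q_i)$. These $2k$ integers are hard-coded constants in the algorithm defining $f$, so no effective procedure for choosing generators or indices is required.

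The algorithm for $f$ proceeds as follows. On input $m \in \mathbb N$, effectively enumerate all terms $t_0, t_1, t_2, \dots$ of the (finite) signature of $\mathcal A$ in variables $x_1, \dots, x_k$. For each $t_i$, do two parallel evaluations. Using the fact that $\mu$ is a constructivization, starting from the indices $q_1, \dots, q_k$ and iteratively applying the computable operation functions of $\mu$, compute some $\mu$-index $u_i$ of $t_i^{\mathcal A}(a_1, \dots, a_k)$. By decidability of equality in $\mu$, test whether $u_i \sim_\mu m$. If not, continue to $t_{i+1}$. If so, perform the analogous evaluation of $t_i$ in the $\nu$-constructivization starting from $p_1, \dots, p_k$ to obtain a $\nu$-index $v_i$ of $t_i^{\mathcal A}(a_1, \dots, a_k)$, and output $f(m) = v_i$.

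Termination holds because $\{a_1, \dots, a_k\}$ generates $\mathcal A$, so the element $\mu(m)$ is the interpretation of some term in the generators, and that term appears at some stage of the enumeration. By construction,
\[
\nu(f(m)) \;=\; t_i^{\mathcal A}(a_1,\dots,a_k) \;=\; \mu(m),
\]
so $\mu = \nu \circ f$, and $f$ is computable by its very definition. Repeating the argument with $\mu$ and $\nu$ interchanged yields $\nu \preceq \mu$, hence $\mu \sim \nu$.

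The principal subtlety is conceptual rather than computational: the reduction function $f$ need not be produced \emph{uniformly} from an abstract description of $\mathcal A$, because we are allowed to fix, once and for all, a specific finite generating set and specific $\mu$- and $\nu$-indices for its elements. This non-uniform initial choice is what makes the construction work; without finite generation the term search need not terminate, and without decidable equality in $\mu$ the test $u_i \sim_\mu m$ could not be carried out.
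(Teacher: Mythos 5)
Your argument is correct. The paper itself gives no proof of this lemma --- it is quoted from Ershov's book \cite{M27} --- so there is nothing internal to compare against; your term-search reduction (hard-code generator indices in both enumerations, enumerate terms, evaluate each term's index under $\mu$ via the computable operations, test against $m$ using decidable equality of $\mu$, then re-evaluate the successful term under $\nu$) is exactly the standard proof, and your remarks on where finite generation and decidable equality are used are the right ones. The only implicit hypothesis worth flagging is that the signature must be finite (or at least effectively enumerable) so that the term enumeration is effective; this is consistent with the paper's own definition of a computable structure, which is stated for finite signatures.
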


It follows that a finitely generated structure $\mathcal A$ has a constructivization if and only if the word problem in~$\mathcal A$ with respect to some (any) finite generating set is decidable. In this case, any other constructivization is equivalent to the one that comes as described above from any fixed finite set of generators. This is why for finitely generated structures the enumerations usually are not mentioned
explicitly.

If $\mathcal A$ is uncountable then, as we mentioned in Introduction, one has to consider only equations with constants from a fixed countable (or finite) subset $C$ of~$\mathcal A$ which comes equipped with an enumeration $\nu: \mathbb N\to C$. This form of the Diophantine problem is denoted by $\mathcal D_C(\mathcal A)$. It will be convenient to consider instead of the set~$C$ the substructure $\langle C\rangle$ generated by~$C$ in~$\mathcal A$. In this case one needs to consider enumerations of $\langle C\rangle$ that are ``compatible'' with the given enumeration of~$C$. To this end we introduce the following notion from computable model theory (see~\cite{M27}). Let $S$ be a set with an enumeration~$\nu:\mathbb{N} \to S$ and $\varphi: S\to S^*$ an embedding of sets. We say that an enumeration $\nu^*: \mathbb N\to S^*$
\emph{extends} the enumeration~$\nu$ if there exists a computable function $f:\mathbb N\to \mathbb N$
such that $\varphi\circ \nu = \nu^\ast\circ f$. It is easy to construct an enumeration of $\langle C\rangle$ that
extends a given enumeration of the generating set~$C$ (see~\cite{M27}, Ch.\,6, Section\,1, Theorem\,1). 
In the case of the subset $C$ of~$\mathcal A$ we will always, if not said otherwise, consider enumerations $\nu^\ast: \mathbb N \to \langle C\rangle$ that extend a given enumeration $\nu: \mathbb N \to C$. Furthermore, we will always assume that for a given $n\in \mathbb N$ one can compute the term $t$ of the language of the structure~$\mathcal A$ with constants from~$C$ which represents the element $\nu^*(n)$ in the structure $\langle C\rangle$. And conversely, for every  term~$t$ in the language of~$\mathcal A$ with constants from $C$ one can compute a number $n\in \mathbb N$ such that $\nu^*(n)=t$. We call
such enumerations $\nu^*$  \emph{effective}. To construct an effective enumeration of $\langle C\rangle$ in
the case when $\mathcal A$ is a group one needs only effectively enumerate all words in
the alphabet $C^{\pm 1}$, while in the case when $\mathcal A$ is a commutative unitary ring one
needs to enumerate all polynomials from $\mathbb Z[C]$.

The following lemma from~\cite{Myasnikov-Sohrabi2} is useful.

\begin{lemma}\label{lemmaM3.3}
 Let $\mathcal A$  be a structure, $C$ a finite or countable subset of~$\mathcal A$ equipped with an enumeration~$\nu$, and $\langle C\rangle$ the substructure generated by~$C$ in~$\mathcal A$ with an
effective enumeration that extends~$\nu$. Then the following hold:

\emph{1)} The Diophantine problems $\mathcal D_C(\mathcal A)$ and $\mathcal D_{\langle C\rangle}(\mathcal A)$ are equivalent (reduce to each other).

\emph{2)} If $\mathcal D_C(\mathcal A)$ is decidable then $\langle C\rangle$ is computable with respect to any enumeration of $\langle C\rangle$ that extends the enumeration $\nu$ of the generating set~$C$.
\end{lemma}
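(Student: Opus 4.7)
The plan is to prove the two reductions in part (1) separately and then derive part (2) from part (1) together with the effectiveness of the enumeration. Throughout, write $\nu^\ast$ for the extension of $\nu$ to an effective enumeration of $\langle C\rangle$.

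For the direction $\mathcal{D}_C(\mathcal{A})\leqslant \mathcal{D}_{\langle C\rangle}(\mathcal{A})$, the argument is essentially tautological: any equation with coefficients in $C$ is \emph{a fortiori} an equation with coefficients in $\langle C\rangle$. What one has to check is only that the reduction is actually computable (indeed polynomial-time), and this follows from the definition of ``$\nu^\ast$ extends $\nu$'': there is a computable function $f\colon\mathbb N\to\mathbb N$ with $\nu=\nu^\ast\circ f$, so a $\nu$-index of a coefficient can be converted into a $\nu^\ast$-index by a single application of $f$. The reverse reduction $\mathcal{D}_{\langle C\rangle}(\mathcal{A})\leqslant \mathcal{D}_C(\mathcal{A})$ is the substantive direction. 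Here I use effectiveness of $\nu^\ast$ in an essential way: for every $n\in\mathbb N$ one can compute a term $\tau_n$ in the language of $\mathcal{A}$ with constants from $C$ whose value in $\langle C\rangle$ equals $\nu^\ast(n)$. Given a system $S$ of equations over $\langle C\rangle$ specified by $\nu^\ast$-indices of its coefficients, I would compute these terms, substitute them for the coefficients, and obtain a system $S^\ast$ with coefficients in $C$ that has the same solution set in $\mathcal{A}$ as $S$. The translation $S\mapsto S^\ast$ is computable because term formation and substitution are, and no new variables are required since each $\tau_n$ is a closed term over $C$.

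For part (2), assume $\mathcal{D}_C(\mathcal{A})$ is decidable; by part (1) so is $\mathcal{D}_{\langle C\rangle}(\mathcal{A})$. To show that $\langle C\rangle$ is computable with respect to $\nu^\ast$, I need to exhibit algorithms for the basic operations and for equality. Given a $k$-ary basic operation $\omega$ of the signature and indices $n_1,\dots,n_k$, compute the terms $\tau_{n_1},\dots,\tau_{n_k}$ over $C$, form the new term $\omega(\tau_{n_1},\dots,\tau_{n_k})$, and then use effectiveness in the opposite direction (every term over $C$ is realized by some $\nu^\ast$-index that can be computed from the term) to produce an index $m$ with $\nu^\ast(m)=\omega(\nu^\ast(n_1),\dots,\nu^\ast(n_k))$. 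For equality, to decide whether $\nu^\ast(i)=\nu^\ast(j)$ one forms the variable-free equation $\tau_i=\tau_j$, which is a (trivial) instance of the Diophantine problem $\mathcal{D}_C(\mathcal{A})$ over $\mathcal{A}$: it ``has a solution'' if and only if the equality holds in $\mathcal{A}$, and hence in $\langle C\rangle$. Decidability of $\mathcal{D}_C(\mathcal{A})$ therefore yields the computability of equality on $\nu^\ast$-indices.

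The proof contains no deep mathematical step; the only possible obstacle is bookkeeping with the enumerations, so the critical point is to use the precise definition of ``effective enumeration extending $\nu$'' at both moments it is needed: once to translate $\nu^\ast$-indices into terms over $C$ (for the reduction of part (1) and for evaluating the operations in part (2)), and once to translate freshly built terms back into $\nu^\ast$-indices (for the output of the operations in part (2)). Both directions are guaranteed by the ``effective'' clause in the hypothesis, which is exactly why the lemma is formulated under this hypothesis rather than for an arbitrary extending enumeration.
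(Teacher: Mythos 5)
Your proof is correct. Note that the paper itself states this lemma without proof, citing \cite{Myasnikov-Sohrabi2}, so there is no in-paper argument to compare against; your write-up supplies exactly the standard details one would expect: the forward reduction via the computable index-translation $f$ with $\nu=\nu^\ast\circ f$, the reverse reduction by substituting the closed $C$-terms $\tau_n$ for coefficients from $\langle C\rangle$, and part (2) by using both directions of the effectiveness clause (term from index, index from term) together with variable-free instances of $\mathcal D_C(\mathcal A)$ to decide equality. The only caveat worth flagging is that in part (2) the phrase ``any enumeration of $\langle C\rangle$ that extends $\nu$'' must be read, as you do and as the paper's surrounding conventions make clear, as ranging over \emph{effective} extensions; for an arbitrary non-effective extension the term-to-index direction would fail and the claim would be false.
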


From now on we will always assume, without loss of generality, that coefficients in the Diophantine problem is taken from a countable substructure $C$ rather than from the set~$C$.

\subsection{Diophantine sets and e-interpretability}\leavevmode

To prove that $\mathcal D(\mathcal A)$ reduces to $\mathcal D(\mathcal M)$ for some structures $\mathcal A$ and $\mathcal M$ it suffices to show that $\mathcal A$ is \emph{interpretable by equations} (or \emph{e-interpretable}) in~$\mathcal M$.

The notion of e-interpretability was introduced in \cite{M32}, \cite{M31}, \cite{M33}. Here we remind
this notion and state some basic facts we use in the sequel.

In what follows we often use non-cursive boldface letters to denote tuples of
elements: e.g. $\mathbf a = (a_1,\dots, a_n)$. Furthermore, we always assume that equations
may contain constants from the algebraic structure in which they are considered.

\begin{definition}
 A subset $D \subset M^m$ is called \emph{Diophantine}, or \emph{definable by
systems of equations} in~$\mathcal M$, or \emph{e-definable} in~$\mathcal M$, if there exists a finite system
of equations, say $\Sigma_D(x_1,\dots, x_m, y_1,\dots,y_k)$, in the language of~$\mathcal M$ such that for
any tuple $\mathbf a\in M^m$, one has that $\mathbf a \in D$ if and only if the system $\Sigma_D(\mathbf a,\mathbf y)$ on variables~$\mathbf y$ has a solution in~$\mathcal M$. In this case $\Sigma_D$ is said to \emph{e-define $D$ in~$\mathcal M$}.
\end{definition}

\begin{observation}
 Observe that, in the notation above, if $D\subset M^m$ is e-definable
then it is definable in~$\mathcal M$ by the formula $\exists \mathbf y \Sigma_D (\mathbf x, \mathbf y)$.
Such formulas are called \emph{positive primitive}, or \emph{pp-formulas}. Hence, e-definable subsets are sometimes called \emph{pp-definable}. On the other hand, in number theory such sets are usually
referred to as Diophantine ones. And yet, in algebraic geometry they can be described as projections of algebraic sets.
\end{observation}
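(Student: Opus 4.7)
The plan is to unpack the definition of e-definability and observe that the three reformulations (pp-formula, Diophantine set, projection of an algebraic set) are each obtained by reading the same data in a different vocabulary. First I would recall that, by the preceding definition, $D \subseteq M^m$ is e-definable exactly when there is a finite system $\Sigma_D(\mathbf{x}, \mathbf{y})$ of equations such that for every $\mathbf{a} \in M^m$, $\mathbf{a} \in D$ iff $\Sigma_D(\mathbf{a}, \mathbf{y})$ has a solution $\mathbf{b} \in M^k$. By the standard Tarskian semantics of first-order logic, ``there exists $\mathbf{b}$ such that $\Sigma_D(\mathbf{a}, \mathbf{b})$ holds in $\mathcal{M}$'' is literally the statement $\mathcal{M} \models \exists \mathbf{y}\, \Sigma_D(\mathbf{a}, \mathbf{y})$, where $\Sigma_D$ is identified with the conjunction $\bigwedge_i (t_i(\mathbf{x}, \mathbf{y}) = s_i(\mathbf{x}, \mathbf{y}))$ of its constituent equations. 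This yields the first assertion of the observation essentially by rewriting the definition.

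Next I would verify that the formula $\exists \mathbf{y}\, \Sigma_D(\mathbf{x}, \mathbf{y})$ falls under the standard syntactic recipe for a positive primitive formula: a (possibly empty) string of existential quantifiers applied to a conjunction of atomic formulas in the signature of $\mathcal{M}$, with no negations and no disjunctions. Since atomic formulas in the language of a group, ring, or module are exactly equations of terms, $\Sigma_D$ is a conjunction of atomic formulas, and the full formula is pp. This justifies calling e-definable subsets pp-definable. The number-theoretic terminology ``Diophantine set'' is then a matter of tradition dating back to Hilbert's tenth problem, where one takes as a primitive notion precisely the image of the zero locus of a polynomial system under a coordinate projection.

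Finally I would translate the same picture into algebraic-geometric language: regard the system $\Sigma_D(\mathbf{x}, \mathbf{y})$ as cutting out the solution set $V := \{(\mathbf{a}, \mathbf{b}) \in M^{m+k} : \Sigma_D(\mathbf{a}, \mathbf{b}) \text{ holds in } \mathcal{M}\}$, which in the ring case is a Zariski-closed (``algebraic'') subset of $M^{m+k}$. The coordinate projection $\pi : M^{m+k} \to M^m$ onto the first $m$ coordinates then satisfies $\pi(V) = D$ by the previous paragraph. Thus an e-definable set is precisely a projection of an algebraic set, closing the loop between the three viewpoints.

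There is no real obstacle here: the observation is a translation between three equivalent presentations of one notion, and all three equivalences are immediate unwindings of the respective semantics. The only point meriting care is to keep track of which variables are the ``parameters'' $\mathbf{x}$ (ranging over $D$) and which are the ``auxiliary'' variables $\mathbf{y}$ (projected away); once this bookkeeping is fixed the three statements are essentially tautological consequences of the definition of e-definability.
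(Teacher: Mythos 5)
Your proposal is correct and matches the paper's treatment: the paper states this as a remark with no proof, since it is an immediate unwinding of the definition of e-definability together with standard terminology, which is exactly what you carry out. Your bookkeeping of the parameter variables $\mathbf x$ versus the existentially quantified variables $\mathbf y$ is the only point of substance, and you handle it correctly.
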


\begin{definition}
 An algebraic structure $\mathcal A = (A; f,\dots , r,\dots, c,\dots )$ is called \emph{e-interpretable} in another algebraic structure~$\mathcal M$ if there exists $n\in \mathbb N$, a subset
$D\subseteq M^n$ and an onto map (called the \emph{interpreting map}) $\varphi: D\to \mathcal A$, such that:

1. $D$ is e-definable in~$\mathcal M$.

2. For every function $f = f(x_1,\dots, x_n)$ in the language of~$\mathcal A$, the preimage by~$\varphi$ of the graph of~$f$, i.\,e. the set 
$$
\{(x_1,\dots, x_k, x_{k+1})\mid \varphi(x_{k+1})=f(x_1,\dots, x_k)\},
$$
 is e-definable in~$\mathcal M$.

3. For every relation $r$ in the language of~$\mathcal A$, and also for the equality relation $=$ in~$\mathcal A$, the preimage by~$\varphi$ of the graph of~$r$ is e-definable in~$\mathcal M$.
\end{definition}

Let $\mathcal A$ be e-interpretable in~$\mathcal M$ as in definition above. This interpretation is completely determined by the map $\varphi$ and a tuple $\Gamma$ of the Diophantine formulas that are defining the set $D$ from 1), the functions $f$ from 2), and the relations $r$ from 3). By $P_\Gamma \subseteq \mathcal M$ we denote the finite set of constants (parameters) that occur in formulas from~$\Gamma$. E-interpretability is a variation of the classical notion of the first-order interpretability, where instead of arbitrary first-order formulas finite systems of equations are used as the interpreting formulas.

The following is a fundamental property of e-interpretability. Intuitively it states that if $\mathcal A$ is e-interpretable in~$\mathcal M$ by formulas~$\Gamma$ and an interpreting map $\varphi: D\to \mathcal A$, then any system of equations in~$\mathcal A$ can be effectively ``encoded'' by an equivalent system of equations in~$\mathcal M$. To explain we need the following notation. Let $C$ be a finite or countable subset of~$\mathcal A$ equipped with an enumeration $\nu: \mathbb N \to C$. For every $c_i=\nu(i)\in C$ fix an arbitrary tuple $d_i\in \varphi^{-1}(c_i)$. Denote by $D_R$ the set of all elements in~$\mathcal M$ that occur as components in tuples~$d_i$ from~$R$. Denote by~$C_\Gamma$ the set $D_R\cup P_\Gamma$. We say that enumeration
$\nu^*: \mathbb N\to C$ is \emph{compatible} with the enumeration~$\nu$ (with respect to the set of representatives~$R$) if there is an algorithm that for every $i\in \mathbb N$ computes the $\nu^*$-numbers of the
components of the tuple~$d_i$. For example, one can enumerate first all elements in~$P_\Gamma$ and then for $i = 1,2,\dots$ enumerate in the natural order all the components of $d_1,d_2,\dots$.

\begin{lemma}[\cite{M32}]\label{lemmaM3.7}
 Let $\mathcal A$ be e-interpretable in~$\mathcal M$ by a set of formulas~$\Gamma$ with an
interpreting map $\varphi:D\to \mathcal A$. Let $C$ be a finite or countable subset of~$\mathcal A$ equipped with an enumeration~$\nu$. Then there is a polynomial time algorithm that for every finite system of equations $S(\mathbf x)$ in~$\mathcal A$ with coefficients in~$C$ constructs a finite system of equations $S^*(\mathbf y,\mathbf z)$ in~$\mathcal M$ with coefficients in~$C_\Gamma$ \emph{(}given via a compatible enumeration $\nu^*:\mathbb N\to C_\Gamma)$, such that if $(\mathbf b,\mathbf c)$ is a solution to $S^*(\mathbf y;\mathbf z)$ in~$\mathcal M$, then $\mathbf b\in D$ and $\varphi(\mathbf b)$ is a solution to $S(\mathbf x)$ in~$\mathcal A$. Moreover, any solution $\mathbf a$ to $S(\mathbf x)$ in~$\mathcal A$ arises in this way, i.\,e. $\mathbf a=\varphi(\mathbf b)$ for some solution $(\mathbf b;\mathbf c)$ to $S^*(\mathbf y,\mathbf z)$ in~$\mathcal M$.
\end{lemma}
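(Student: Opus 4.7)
The plan is to give a uniform, compositional translation: every element of $\mathcal{A}$ appearing in $S(\mathbf{x})$ is replaced by a tuple of variables ranging over $D \subseteq M^n$, and every basic operation of $\mathcal{A}$ used in the terms of $S$ is replaced by its defining system of equations in $\mathcal{M}$. Concretely, the interpretation $\Gamma$ supplies, once and for all, a defining system $\Sigma_D$ for $D$, a defining system $\Sigma_f$ for the preimage under $\varphi$ of the graph of every operation $f$ of $\mathcal{A}$, and a defining system $\Sigma_{=}$ for the preimage of the equality relation on $\mathcal{A}$. The finite set of parameters $P_\Gamma$ occurring in these systems, together with the set $D_R$ of components of the fixed representatives $d_i \in \varphi^{-1}(c_i)$ for the coefficients $c_i \in C$ of $S$, will form the coefficient set $C_\Gamma$ of the output, enumerated compatibly with $\nu$.

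For each variable $x_i$ of $S$ I introduce a fresh $n$-tuple of variables $\mathbf{y}_i$ in $\mathcal{M}$ and append $\Sigma_D(\mathbf{y}_i, \mathbf{u}_i)$ to force $\mathbf{y}_i \in D$. Then I process the term trees of the equations of $S$ from the leaves upward. A leaf variable $x_i$ carries the tuple $\mathbf{y}_i$; a leaf coefficient $c_\ell$ carries its fixed representative $d_\ell$. At each internal node $f(s_1, \ldots, s_k)$, once tuples $\mathbf{z}_1, \ldots, \mathbf{z}_k$ have been assigned to the children, I introduce a fresh output tuple $\mathbf{w}$ and append $\Sigma_f(\mathbf{z}_1, \ldots, \mathbf{z}_k, \mathbf{w}, \mathbf{u})$, whose solvability in $\mathcal{M}$ is exactly $\varphi(\mathbf{w}) = f^{\mathcal{A}}(\varphi(\mathbf{z}_1), \ldots, \varphi(\mathbf{z}_k))$. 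For each equation $t_{j,1} = t_{j,2}$ of $S$ this procedure produces tuples $\mathbf{w}_{j,1}, \mathbf{w}_{j,2}$ representing the two sides, and I close the translation by appending $\Sigma_{=}(\mathbf{w}_{j,1}, \mathbf{w}_{j,2}, \mathbf{v}_j)$. The conjunction of all these systems, in the variables $\{\mathbf{y}_i, \mathbf{w}_{\bullet}, \mathbf{u}_{\bullet}, \mathbf{v}_j\}$ and with coefficients in $C_\Gamma$, is the desired $S^*(\mathbf{y}, \mathbf{z})$.

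Correctness in both directions follows by induction on term complexity, using the defining properties of $\Sigma_D$, $\Sigma_f$ and $\Sigma_{=}$: a solution to $S^*$ yields tuples $\mathbf{y}_i \in D$ whose images under $\varphi$ satisfy every equation of $S$, while conversely any solution $\mathbf{a}$ to $S$ in $\mathcal{A}$ lifts to $\mathcal{M}$ by choosing preimages under $\varphi$ for the $\mathbf{y}_i$ and, at each internal node, the witnesses supplied by $\Sigma_f$ (and by $\Sigma_{=}$ at the top). The polynomial-time bound is immediate: the number of subterms in $S$ is linear in $|S|$, and each subterm contributes a fixed, interpretation-dependent but $S$-independent number of new variables and equations.

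The only genuine obstacle is bookkeeping around the compatible enumeration $\nu^*$: one must check that for each coefficient $c_i$ occurring in $S$ the representative tuple $d_i$ can be produced, and each of its components named inside $C_\Gamma$, in time polynomial in the input encoding of $c_i$. This is exactly what compatibility of $\nu^*$ with $\nu$ buys: the $\nu^*$-numbers of the components of $d_i$ are computable from $i$, so the substitution step in the construction of $S^*$ is uniform and polynomial. Everything else is a routine unwinding of the definition of \emph{e-interpretability}.
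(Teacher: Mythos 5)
Your proof is correct and is the standard compositional translation argument; the paper itself gives no proof of this lemma (it is imported verbatim from \cite{M32}), and your construction --- tuples of variables constrained to $D$, bottom-up replacement of each operation node by $\Sigma_f$, closing with $\Sigma_{=}$, and fixed representative tuples for coefficients --- is essentially the argument of the cited source.
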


Now we show two key consequences of Lemma~\ref{lemmaM3.7}.

\begin{corollary}\label{corolM3.8}
 Let $\mathcal A$ be e-interpretable in~$\mathcal M$ by a set of formulas~$\Gamma$ with an interpreting map $\varphi:D\to \mathcal A$. Let $C$ be a finite or countable subset of~$\mathcal A$ equipped with an enumeration~$\nu$. Then the Diophantine problem in~$\mathcal A$ with coefficients in~$C$ is reducible in polynomial time (Karp reducible) to the Diophantine problem in~$\mathcal M$ with coefficients in~$C_\Gamma$ with respect to any compatible with~$\nu$ enumeration~$\nu^*$. Consequently, if $\mathcal D_C(\mathcal A)$ is undecidable, then $\mathcal D_{C_\Gamma} (\mathcal M)$ (relative to~$\nu^*$) is undecidable as well.
\end{corollary}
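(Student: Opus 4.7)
The plan is to derive this corollary as a direct consequence of Lemma~\ref{lemmaM3.7}, essentially repackaging its content in the language of reductions. First, I would fix the e-interpretation data: the tuple $\Gamma$ of Diophantine formulas, the interpreting map $\varphi: D \to \mathcal{A}$, the parameter set $P_\Gamma$, the chosen representatives $d_i \in \varphi^{-1}(c_i)$ for each $c_i = \nu(i) \in C$, and the resulting set $C_\Gamma = D_R \cup P_\Gamma$ equipped with a compatible enumeration $\nu^*$. All of this is preparatory bookkeeping inherited from Lemma~\ref{lemmaM3.7}.

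Next, I would describe the reduction algorithm explicitly. Given a finite system $S(\mathbf{x})$ of equations over $\mathcal{A}$ with coefficients in $C$, apply the algorithm from Lemma~\ref{lemmaM3.7} to produce the system $S^*(\mathbf{y},\mathbf{z})$ over $\mathcal{M}$ with coefficients in $C_\Gamma$. By the lemma, this transformation runs in polynomial time in the size of the input (measured via the enumeration $\nu$ of $C$ and produced via the compatible enumeration $\nu^*$ of $C_\Gamma$). Thus the map $S \mapsto S^*$ is a Karp reduction in the combinatorial sense once we establish that it preserves solvability.

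The main verification is the equivalence ``$S$ is solvable in $\mathcal{A}$ if and only if $S^*$ is solvable in $\mathcal{M}$.'' For the forward direction, given a solution $\mathbf{a}$ to $S$, Lemma~\ref{lemmaM3.7} guarantees that $\mathbf{a}$ arises as $\varphi(\mathbf{b})$ for some $(\mathbf{b},\mathbf{c})$ solving $S^*$, so $S^*$ is solvable. For the converse, any solution $(\mathbf{b},\mathbf{c})$ of $S^*$ automatically satisfies $\mathbf{b} \in D$, and $\varphi(\mathbf{b})$ is a solution of $S$ by the same lemma, so $S$ is solvable. Combining both directions gives the Karp reduction $\mathcal{D}_C(\mathcal{A}) \leqslant \mathcal{D}_{C_\Gamma}(\mathcal{M})$.

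The ``consequently'' clause is then immediate: Karp reductions transfer decidability in the expected direction, so if $\mathcal{D}_{C_\Gamma}(\mathcal{M})$ were decidable relative to $\nu^*$, composing the decision algorithm with the reduction would decide $\mathcal{D}_C(\mathcal{A})$; contrapositively, undecidability of $\mathcal{D}_C(\mathcal{A})$ forces undecidability of $\mathcal{D}_{C_\Gamma}(\mathcal{M})$. There is no real obstacle here, since Lemma~\ref{lemmaM3.7} does all the substantive work; the only thing to be mildly careful about is that the constants appearing in $S^*$ genuinely lie in $C_\Gamma$ and are given via $\nu^*$ in a way that makes the reduction algorithm uniformly computable, which is precisely what compatibility of $\nu^*$ with $\nu$ was designed to ensure.
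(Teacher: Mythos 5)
Your proposal is correct and matches the paper's intended argument: the paper states this corollary as an immediate consequence of Lemma~\ref{lemmaM3.7} without further elaboration, and your write-up simply makes explicit the routine verification (the two directions of the solvability equivalence, the polynomial-time bound, and the contrapositive for undecidability) that the lemma already supplies. Nothing is missing.
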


\begin{corollary}\label{corolM3.9}
 e-interpetability is a transitive relation, i.\,e., if $\mathcal A_1$ is e-intepretable in~$\mathcal A_2$, and $\mathcal A_2$ is e-interpretable in~$\mathcal A_3$, then $\mathcal A_1$ is e-interpretable in~$\mathcal A_3$.
\end{corollary}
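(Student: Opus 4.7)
Fix e-interpretations $\varphi_{12}\colon D_{12}\to \mathcal A_1$ with $D_{12}\subseteq A_2^n$ e-defined in $\mathcal A_2$, and $\varphi_{23}\colon D_{23}\to \mathcal A_2$ with $D_{23}\subseteq A_3^m$ e-defined in $\mathcal A_3$. The natural strategy is to compose: every element of $\mathcal A_1$ is encoded by an $n$-tuple from $A_2$ in $D_{12}$, and each component of that tuple is itself encoded by an $m$-tuple from $A_3$ in $D_{23}$. Accordingly I will take as domain the set $D\subseteq A_3^{nm}$ of block-tuples $(\mathbf c_1,\dots,\mathbf c_n)$, with each $\mathbf c_i\in A_3^m$, such that every block $\mathbf c_i$ lies in $D_{23}$ and the $n$-tuple $(\varphi_{23}(\mathbf c_1),\dots,\varphi_{23}(\mathbf c_n))$ lies in $D_{12}$; and I will set the composite interpreting map $\varphi(\mathbf c_1,\dots,\mathbf c_n):=\varphi_{12}(\varphi_{23}(\mathbf c_1),\dots,\varphi_{23}(\mathbf c_n))$.

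The verification that $\varphi$ is an e-interpretation rests on a single translation procedure that pulls any pp-formula in the language of $\mathcal A_2$ back to a pp-formula over $\mathcal A_3$ via $\varphi_{23}$: replace each $\mathcal A_2$-variable by a fresh $m$-tuple of $\mathcal A_3$-variables constrained by the equations e-defining $D_{23}$; decompose each atomic equation $t=s$ of $\mathcal A_2$ into its subterms and introduce, for every function application, a fresh $m$-tuple of $\mathcal A_3$-variables together with the equations e-defining the graph of that basic function of $\mathcal A_2$ in $\mathcal A_3$; finally replace the top-level equation by the equations e-defining the equality relation of $\mathcal A_2$ in $\mathcal A_3$ (which are available precisely because $\varphi_{23}$ is an e-interpretation and may be non-injective). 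Only finitely many fresh existentially quantified variables and finitely many equations are introduced, so the result stays Diophantine. Applying this procedure to the system e-defining $D_{12}$ gives the e-definability in $\mathcal A_3$ of the second condition cutting out $D$; the first condition is e-definable directly, so $D$ is Diophantine in $\mathcal A_3$.

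For every basic function, basic relation, and the equality of $\mathcal A_1$, the preimage under $\varphi_{12}$ is e-definable in $\mathcal A_2$ by assumption; running the same translation procedure on those e-defining systems yields e-definability of the preimages under $\varphi$ in $\mathcal A_3$, completing the construction. The only delicate point, and essentially the whole content of the proof, is to check by structural induction on terms that the translation procedure outlined above actually e-defines the $\varphi_{23}$-preimage of the original $\mathcal A_2$-relation; once this lemma is stated cleanly, everything else is bookkeeping. I also note that the procedure is effective and increases formula size only polynomially, so the resulting transitivity respects Karp reducibility, in line with Lemma~\ref{lemmaM3.7} and Corollary~\ref{corolM3.8}.
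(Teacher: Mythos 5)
Your proposal is correct and follows essentially the same route the paper intends: the paper derives this corollary from Lemma~\ref{lemmaM3.7}, whose content is exactly the translation procedure you describe (pulling pp-formulas over $\mathcal A_2$ back to pp-formulas over $\mathcal A_3$ via the second interpretation), applied to the Diophantine systems defining the domain, functions, relations, and equality of the first interpretation. Your explicit composition of the interpreting maps and the block-tuple domain $D\subseteq A_3^{nm}$ is the standard construction the paper leaves implicit, and your attention to the non-injectivity of $\varphi_{23}$ (handled via the e-definable equality relation) is exactly the right delicate point.
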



\section{Double centralizers of uniponent elements in Chevalley groups}\leavevmode

For the purposes of our paper, we want to prove that in the Chevalley group or its large subgroup $G$ over arbitrary commutative ring $R$ with unity, the subgroup $C \cdot  X_\alpha$, where $C=Z(G)$, is a centralizer of some  finite set of elementary unipotents of the Chevalley group. This result holds for all root systems, except for the short roots of  $\mathbf C_l$, where the answer will be slightly different.

Quite similar problems were considered in a number of papers (see \cite{Segal-Tent}, \cite{bunina2022}, \cite{Vavilov_Plotkin_new}). We cannot take these results for granted, since for our aims we need a version of double centralizer theorem for arbitrary commutative rings. With this end we use a localization method, which requires conjugations with the elements of the form $x_\alpha(1)$ only.

\subsection{Localization of rings and modules; injection of a ring into the product of its localizations.}\leavevmode

\begin{definition}  Let $R$ be a commutative ring. A subset $S\subset R$ is called \emph{multiplicatively closed} in~$R$, if $1\in S$ and $S$ is closed under multiplication.
\end{definition}

Introduce  an equivalence relation $\sim$ on the set of pairs $R\times S$ as follows:
$$
\frac{a}{s}\sim \frac{b}{t} \Longleftrightarrow \exists u\in S:\ (at-bs)u=0.
$$
  By $\frac{a}{s}$ we denote the whole equivalence class of the pair $(a,s)$, by $S^{-1}R$ we denote the set of all equivalence classes. On the set $S^{-1}R$ we can introduce the ring structure by
$$
\frac{a}{s}+\frac{b}{t}=\frac{at+bs}{st},\quad \frac{a}{s}\cdot \frac{b}{t}=\frac{ab}{st}.
$$

\begin{definition}
The ring $S^{-1}R$ is called the \emph{ring of fractions of~$R$ with respect to~$R$}.
\end{definition}

 Let $\mathfrak p$ be a prime ideal of~$R$. Then the set $S=R\setminus {\mathfrak p}$ is multiplicatively closed (it is equivalent to the definition of the prime ideal). We will denote the ring of fractions  $S^{-1}R$ in this case by $R_{\mathfrak p}$. The elements $\frac{a}{s}$, $a\in \mathfrak p$, form an ideal $\mathfrak M$ in~$R_{\mathfrak p}$. If $\frac{b}{t}\notin \mathfrak M$, then $b\in S$, therefore $\frac{b}{t}$ is invertible in~$R_{\mathfrak p}$. Consequently the ideal $\mathfrak M$ consists of all non-invertible elements of the ring~$R_{\mathfrak p}$, i.\,e., $\mathfrak M$ is the greatest ideal of this ring, so $R_{\mathfrak p}$ is a local ring.

The process of passing from~$R$ to~$R_{\mathfrak p}$ is called  \emph{localization at~${\mathfrak p}$.}

\begin{proposition}\label{inlocal}
Every commutative ring  $R$ with $1$ can be naturally embedded in the cartesian product of all its localizations  by maximal ideals
 $$
S=\prod\limits_{{\mathfrak m}\text{ is a maximal ideal of }R} R_{\mathfrak m}
$$
by diagonal mapping, which assigns every $a\in R$ to the element
$$
\prod\limits_{\mathfrak m} \left( \frac{a}{1}\right)_{\mathfrak m}
$$
of~$S$.
\end{proposition}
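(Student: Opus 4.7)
The plan is to verify two things about the diagonal map $\varphi: R \to S$, $a \mapsto \prod_{\mathfrak m}(a/1)_{\mathfrak m}$: that it is a ring homomorphism, and that it is injective. The homomorphism property is immediate from the construction of the localization: for each maximal ideal $\mathfrak m$ the canonical map $R \to R_{\mathfrak m}$, $a \mapsto a/1$, is a ring homomorphism by the definition of addition and multiplication in $S^{-1}R$, and the product map into $S$ is the product of these component homomorphisms, hence a homomorphism.

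The substantive step is injectivity. Suppose $\varphi(a) = 0$. Then for every maximal ideal $\mathfrak m$ of $R$ we have $(a/1)_{\mathfrak m} = 0/1$ in $R_{\mathfrak m}$, which by the definition of the equivalence relation $\sim$ means that there exists $s_{\mathfrak m} \in R \setminus \mathfrak m$ with $s_{\mathfrak m} \cdot a = 0$. Consider the annihilator ideal
\[
\mathrm{Ann}(a) = \{ r \in R \mid ra = 0 \}.
\]
The element $s_{\mathfrak m}$ lies in $\mathrm{Ann}(a)$ but not in $\mathfrak m$, so $\mathrm{Ann}(a) \not\subseteq \mathfrak m$ for every maximal ideal $\mathfrak m$.

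I would now apply the standard Zorn's Lemma argument: in a commutative ring with $1$, every proper ideal is contained in some maximal ideal. Since $\mathrm{Ann}(a)$ is contained in no maximal ideal, it cannot be proper, i.e., $\mathrm{Ann}(a) = R$. In particular $1 \in \mathrm{Ann}(a)$, which gives $a = 1 \cdot a = 0$. Hence $\ker \varphi = 0$ and $\varphi$ is an embedding.

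There is essentially no obstacle here; the only ingredient beyond formal manipulation is the existence-of-maximal-ideals lemma (Krull's theorem), which is a staple of commutative algebra. The naturality of $\varphi$ (functoriality in $R$) follows automatically from the universal property of localization, though it is not strictly needed for the injectivity statement itself.
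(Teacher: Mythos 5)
Your proof is correct; the paper itself states Proposition~\ref{inlocal} without proof, treating it as a standard fact of commutative algebra, and your argument (injectivity via the annihilator ideal $\mathrm{Ann}(a)$ not being contained in any maximal ideal, hence equal to $R$ by Krull's theorem) is exactly the standard one. Nothing is missing.
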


\subsection{Double centralizers of unipotent elements in Chevalley groups over fields}\leavevmode

In the next sections we use relations between elements from Chevalley groups from~\cite{Steinberg} without special notice.

\begin{definition}\label{def_gamma_alpha}
For any Chevalley group $G_\pi(\Phi,R)$ and for any $\alpha\in \Phi$ let 
$$
\Gamma_\alpha=\{ x_\beta(1)\mid \beta\in \Phi\text{ and }[x_\beta(1),x_\alpha(1)]=e\}.
$$
\end{definition}

Recall also that by $C_G(M)$ we denote the centralizer of the set~$M$ in the group~$G$.

The goal of this section is to prove the following theorem, which can be viewed as a variant of the double centralizers theorem:

\begin{theorem}[compare with~\cite{Segal-Tent}]
For any Chevalley group (or its large subgroup) $G=G_\pi(\Phi,\mathbb F)$, where $\mathbb F$ is an arbitrary field, $\Phi$ is an irreducible root system of a rank $>1$, if some element $g\in C_G(\Gamma_\alpha)$, then $g=c x_\alpha(t)$, where $t\in \mathbb F$, $c\in Z(G)$, except the case $\Phi=\mathbf C_l$, $l\geqslant 2$, and $\alpha$ is short.

In the case $\Phi=\mathbf C_l=\{ \pm e_i\pm e_j\mid 1\leqslant i,j\leqslant l,i\ne j\}\cup \{ \pm 2e_i\mid 1\leqslant i\leqslant l\}$ and $\alpha=e_1+e_2$ if $g\in C_G(\Gamma_\alpha)$, then 
$$
g=c x_{e_1+e_2}(t_1)x_{2e_1}(t_2)x_{2e_2}(t_3),\quad c\in Z(G).
$$
\end{theorem}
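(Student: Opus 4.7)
The plan is to combine the Bruhat decomposition $G = \bigsqcup_{w \in W} B n_w B$ — available because $\mathbb F$ is a field — with the Chevalley commutator formula (R2). The first task is to describe the indexing set
$$
\Psi_\alpha = \{\beta \in \Phi : x_\beta(1) \in \Gamma_\alpha\}.
$$
By (R2), $\Psi_\alpha$ consists of $\alpha$ together with all $\beta \neq -\alpha$ such that no positive integer combination $i\alpha + j\beta$ ($i, j \geq 1$) lies in $\Phi$. A type-by-type enumeration over irreducible $\Phi$ of rank $>1$ yields two structural facts that drive the proof: (a) the only $w \in W$ that fixes every element of $\Psi_\alpha$ pointwise is $w = 1$; and (b) the common kernel of the characters $\chi_\beta$, $\beta \in \Psi_\alpha$, inside the torus $T_\pi(\Phi,\mathbb F)$ coincides with $Z(G)$, via Remark~\ref{Remark_torus}. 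Both (a) and (b) can be verified for every irreducible $\Phi$ of rank $>1$, including $\mathbf C_l$; the exception in the statement arises only at the very last step.

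Given $g \in C_G(\Gamma_\alpha)$, write $g = u_1 h n_w u_2$ in Bruhat normal form. For each $\beta \in \Psi_\alpha$ the identity $g x_\beta(1) g^{-1} = x_\beta(1)$, together with (R5) (which computes $n_w x_\beta(1) n_w^{-1} = x_{w\beta}(\pm 1)$) and the uniqueness of the Bruhat decomposition, forces $w\beta = \beta$ for every $\beta \in \Psi_\alpha$. By (a) this means $w = 1$, so $g \in B = UH$. Writing $g = uh$ with $u \in U$, $h \in H$, relation (R6) yields $h x_\beta(1) h^{-1} = x_\beta(\chi_\beta(h))$, and comparing the $X_\beta$-components of $[uh, x_\beta(1)] = 1$ for every $\beta \in \Psi_\alpha$ gives $\chi_\beta(h) = 1$. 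By (b), $h \in Z(G)$.

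It remains to show $u \in X_\alpha$, except in the $\mathbf C_l$ short-root case. Expand $u = \prod_{\gamma \in \Phi^+} x_\gamma(t_\gamma)$ in any fixed ordering and compute $u\, x_\beta(1)\, u^{-1}$ via repeated application of (R2); matching the $X_{\gamma+\beta}$-components against $x_\beta(1)$ gives a system of linear equations on the $t_\gamma$. A root-by-root inspection shows that for every $\gamma \in \Phi^+ \setminus \{\alpha\}$ — and, in the exceptional $\mathbf C_l$ case with $\alpha = e_1+e_2$, excluding also $\gamma \in \{2e_1,2e_2\}$ — one can exhibit $\beta \in \Psi_\alpha$ with $\gamma + \beta \in \Phi$ such that the resulting equation forces $t_\gamma = 0$. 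The $\mathbf C_l$ exception is explained precisely by the observation that for $\gamma \in \{2e_1, 2e_2\}$ no $\beta \in \Psi_\alpha$ gives $\gamma + \beta \in \mathbf C_l$; a direct application of (R2) confirms that $x_{2e_1}(s)$ and $x_{2e_2}(s)$ commute with every $x_\beta(1) \in \Gamma_\alpha$, so the product $x_\alpha(t_1) x_{2e_1}(t_2) x_{2e_2}(t_3)$ genuinely lies in $C_G(\Gamma_\alpha)$, and no further factors arise. The main obstacle is precisely this final root-by-root combinatorial analysis: exhibiting, uniformly in type, a distinguishing $\beta \in \Psi_\alpha$ for each offending $\gamma$ so that the resulting $X_{\gamma+\beta}$-contribution is nonzero and cannot be cancelled by other commutator terms in the product expansion of $u\, x_\beta(1)\, u^{-1}$.
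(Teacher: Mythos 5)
Your overall skeleton (Bruhat decomposition, kill the Weyl part, kill the torus part, then a root-by-root analysis of the unipotent part) is the same as the paper's, and your identification of the $\mathbf C_l$ exception is correct. But there is a genuine gap at the crucial step where you conclude $w=1$. You assert that for \emph{each} $\beta\in\Psi_\alpha$ the relation $g x_\beta(1)g^{-1}=x_\beta(1)$, together with (R5) and uniqueness of the Bruhat decomposition, ``forces $w\beta=\beta$''. This deduction is not available in that generality. To compare Bruhat normal forms one must conjugate $g=u_1hn_wu_2$ by $x_\beta(1)$ and re-normalize; the conjugates of $u_1$, $h$ and $u_2$ by $x_\beta(1)$ produce cross terms, and when $\beta$ is negative (recall $\Psi_\alpha$ contains negative roots, e.g. $-\alpha_2$) or when $w\beta$ is negative, these terms can a priori be reabsorbed inside the same cell without contradiction. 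What can actually be extracted this way is much weaker: the paper obtains $w(\gamma)=\gamma$ only for the maximal root $\gamma$ (where the computation is clean because $x_\gamma(1)$ centralizes $U$), obtains only $w(\beta)\in\Phi^+$ for $\alpha$ and the simple roots orthogonal to $\alpha$, and obtains $w(\beta)=\beta$ for those orthogonal simple roots only by conjugating with the full Weyl elements $w_\beta(1)$ (available because $\pm\beta\in\Psi_\alpha$). This leaves $w$ equal to $1$ or to a reflection $w_\delta$ with $\delta$ orthogonal to a hyperplane's worth of fixed roots, and eliminating $w_\delta$ requires a type-by-type analysis of all irreducible $\Phi$ (in $\mathbf C_l$ with $\alpha$ short an extra explicit computation is needed, since there $\delta=2e_2$ is not excluded by positivity alone). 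Your statement (a) is true but is precisely the conclusion, not a usable hypothesis; the pointwise fixing is the hard part.

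A secondary gap: in the final unipotent analysis you claim that for each offending $\gamma$ a single $\beta\in\Psi_\alpha$ with $\gamma+\beta\in\Phi$ yields an equation forcing $t_\gamma=0$. Over fields of characteristic $2$ or $3$ the relevant structure constants are $\pm2$ or $\pm3$, so a single relation often gives only $2t_\gamma=0$ or $3t_\gamma=0$; the paper must combine several relations (e.g.\ deriving $2t_3=0$ and $3t_3=0$ separately, or invoking conjugation by $w_\gamma(1)$) to conclude $t_\gamma=0$. Your sketch as written would fail for $\mathbf B_2$ and $\mathbf G_2$ in small characteristic.
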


\begin{observation}
Note that if $\Phi=\mathbf C_l$ and $\alpha$ is an arbitrary short root, then it is always conjugate to $e_1+e_2$ by some element of~$W$. Therefore we do not lose any generality, considering the root $\alpha=e_1+e_2$.
\end{observation}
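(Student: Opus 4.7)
\noindent
The plan is to verify the claim directly using the explicit description of the Weyl group of $\mathbf C_l$, which makes the conjugation concrete (the reader can also appeal to the general fact, already recalled in Section~2.1, that any two roots of the same length in an indecomposable root system lie in one $W$-orbit). First I would record that the short roots of $\mathbf C_l$ are precisely the vectors of the form $\pm e_i \pm e_j$ with $1\leqslant i\ne j\leqslant l$, while the long roots are the $\pm 2e_k$. Thus an arbitrary short root $\alpha$ can be written as $\alpha = \varepsilon_1 e_i + \varepsilon_2 e_j$ with $\varepsilon_1,\varepsilon_2\in\{\pm 1\}$ and $i\ne j$.

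Next I would use the well-known realization of the Weyl group $W(\mathbf C_l)\cong (\mathbb Z/2\mathbb Z)^l\rtimes S_l$ as the group of signed permutations of the coordinates of $\mathbb R^l$. Concretely, $W$ is generated by the reflections $s_{2e_k}$ (which send $e_k\mapsto -e_k$ and fix the other $e_m$) and $s_{e_k-e_m}$ (which swap $e_k$ and $e_m$), so every signed permutation of $\{e_1,\dots,e_l\}$ is realized by some $w\in W$.

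Given this, the proof is essentially one line: choose $w\in W$ so that $w$ sends the unordered pair $\{e_i,e_j\}$ to $\{e_1,e_2\}$ and corrects the signs, namely $w(\varepsilon_1 e_i) = e_1$ and $w(\varepsilon_2 e_j)=e_2$. Then $w(\alpha)=e_1+e_2$, as required. I would not anticipate any real obstacle here; the only point worth being careful about is to note that $w\in W$ acts linearly, so $w(\alpha)=w(\varepsilon_1 e_i)+w(\varepsilon_2 e_j)$, which justifies performing the sign changes and the permutation componentwise.
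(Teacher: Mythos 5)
Your proof is correct. The paper itself offers no argument for this remark beyond the general fact, recalled in Section~2.1, that roots of the same length in an indecomposable root system form a single $W$-orbit; your explicit realization of $W(\mathbf C_l)$ as the group of signed permutations, together with the observation that the short roots are exactly the $\pm e_i\pm e_j$, is a valid and fully concrete way of verifying that same fact in this case.
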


\smallskip

We will use in this section the Bruhat decomposition
$$
G=BWB=\bigcup_{w\in W}BwB
$$
in the following form

\begin{proposition}[see~\cite{Steinberg}]
Any element  $g\in G$ can be uniquely represented in the form
$$
g=t x_{\alpha_1}(a_1)\dots x_{\alpha_m}(a_m)\mathbf w x_{\alpha_1}(b_1)\dots x_{\alpha_m}(b_m),\text{ where }t\in T, \mathbf w\in W, 
$$
$\alpha_1,\dots,\alpha_m$ are all positive roots of~$\Phi$ with an arbitrary fixed order, $b_i=0$ if $\mathbf w(\alpha_i)\in \Phi^+$.
\end{proposition}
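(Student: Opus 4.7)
My plan is to prove this classical Bruhat normal form by working through the standard BN-pair machinery for Chevalley groups, which is developed in Steinberg's notes from the commutator relations (R1)--(R6). The argument splits naturally into existence and uniqueness, with uniqueness being the real content.

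\textbf{Existence.} First I would establish the coarse Bruhat decomposition $G = BWB = \bigsqcup_{\mathbf w\in W} B\mathbf w B$, where $B = TU$ is the Borel subgroup and $U = \langle x_\alpha(t) : \alpha\in\Phi^+,\ t\in\mathbb F\rangle$. Over a field, this follows by verifying the Tits system axioms: the generators $w_\alpha(1)$ for simple $\alpha$ satisfy $Bw_\alpha B\cdot BwB \subseteq BwB \cup Bw_\alpha wB$, which reduces via (R2)--(R5) to a check that $x_{-\alpha}(t)$ lies in $B\cup B w_\alpha B$ for $t\in\mathbb F^*$ (this is where one uses that $\mathbb F$ is a field: $x_{-\alpha}(t) = x_\alpha(t^{-1})w_\alpha(t)x_\alpha(t^{-1})\cdot h_\alpha(t)^{-1}$, giving the needed $w_\alpha$-piece). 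Given $g = b_1\mathbf w b_2$, write $b_1 = t' u_1$ with $t'\in T$, $u_1\in U$. Using the standard normal form for $U$ (Steinberg, Lemma 17): the map $\mathbb F^m \to U$, $(a_1,\dots,a_m)\mapsto \prod_i x_{\alpha_i}(a_i)$ in a fixed ordering of $\Phi^+$ is a bijection, proved inductively by commuting factors using the Chevalley commutator formula (R2).

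\textbf{Shrinking the right factor.} Split $U = U_\mathbf w^+\cdot U_\mathbf w^-$ where $U_\mathbf w^+$ is generated by the $X_\alpha$ with $\alpha\in\Phi^+$ and $\mathbf w(\alpha)\in\Phi^-$, and $U_\mathbf w^-$ by those with $\mathbf w(\alpha)\in\Phi^+$. That this is a (set-theoretic) product decomposition again follows from the ordered-product theorem for $U$, by listing the roots of $U_\mathbf w^-$ first. For $u_2 = u_2^-\cdot u_2^+$ with $u_2^\pm \in U_\mathbf w^\pm$, relation (R5) gives $\mathbf w u_2^-\mathbf w^{-1}\in U$, so $b_1\mathbf w u_2 = b_1(\mathbf w u_2^-\mathbf w^{-1})\mathbf w u_2^+ = b_1'\mathbf w u_2^+$ for some $b_1'\in B$. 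Reapplying the ordered-product normal form to $b_1'$ and to $u_2^+\in U_\mathbf w^+$ yields the desired expression, with $b_i = 0$ whenever $\mathbf w(\alpha_i)\in\Phi^+$.

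\textbf{Uniqueness.} This is where I expect the main difficulty. First, disjointness of the Bruhat cells forces the Weyl element $\mathbf w$ to be determined by $g$. Fix $\mathbf w$; it then suffices to show that the map
\[
T\times U\times U_\mathbf w^+\longrightarrow B\mathbf w B,\qquad (t,u,v)\longmapsto tu\mathbf w v
\]
is injective. Suppose $tu\mathbf w v = \mathbf w$; one must conclude $t=1$, $u=1$, $v=1$. Rewriting, $\mathbf w v\mathbf w^{-1} = u^{-1}t^{-1}$. The left side lies in the group generated by $X_\beta$ with $\beta\in\Phi^-$ (since $v\in U_\mathbf w^+$ means $\mathbf w$ sends its support to $\Phi^-$), while the right side lies in $TU$. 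Since $U^-\cap TU = \{1\}$ (from the weight-space decomposition of the faithful representation $\pi$, or equivalently from the triangular structure of the matrices), both sides equal $1$, forcing $v=1$ and then $t=u^{-1}\in T\cap U = \{1\}$. Injectivity of the ordered product then lifts this to uniqueness of the $a_i$'s and $b_i$'s.

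The main obstacle is the ordered product bijectivity $\mathbb F^m \xrightarrow{\sim} U$, because all later uniqueness claims rest on it. Its proof requires a careful induction where at each step one uses (R2) to rewrite $x_\beta(u)x_\alpha(t)$ as $x_\alpha(t)x_\beta(u)\prod x_{i\alpha+j\beta}(c_{ij}t^iu^j)$, verifying that the resulting rearrangement algorithm terminates and that each intermediate coordinate is uniquely determined. Everything else in the theorem is a bookkeeping consequence of this normal form together with (R5) and the triviality of $U^-\cap B$.
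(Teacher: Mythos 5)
Your argument is correct and is essentially the standard proof from Steinberg's \emph{Lectures on Chevalley groups} (the Tits-system axioms for $BWB$, the ordered-product normal form for $U$, the splitting $U=U_{\mathbf w}^-U_{\mathbf w}^+$, and $U^-\cap B=\{1\}$ for uniqueness), which is exactly the source the paper cites for this proposition without reproducing a proof. No discrepancy to report.
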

 
Since all roots of the same length are conjugate by the action of~$W$, we can enumerate simple roots of our root system~$\Phi$ so that  $\alpha=\alpha_1$  (the first simple root) and all simple roots $\alpha_3,\dots, \alpha_l$ are orthogonal to~$\alpha_1$.

Also we suppose that $\alpha_1,\dots,\alpha_m$ have non-decreasing   heights, i.\,e. we start from simple roots $\alpha_1,\alpha_2,\dots, \alpha_l$, then their sums etc.

In this situation the roots $\alpha_3,\dots,\alpha_l$ are orthogonal to~$\alpha_1$, therefore 
$$
\forall i=3,\dots, l\ [x_{\pm \alpha_i}(1),x_{\alpha_1}(1)]=1\Longrightarrow x_{\pm \alpha_i}(1)\in \Gamma_\alpha.
$$
Also we know that for the maximal root $\gamma$ of~$\Phi$ 
$$
\gamma+\alpha_1\notin \Phi \Longrightarrow x_\gamma(1)\in \Gamma_\alpha.
$$
Besides that,
$$
x_{\alpha_1}(1), x_{-\alpha_2}(1)\in \Gamma_\alpha.
$$

Suppose that we have some $g$ in Bruhat decomposition
$$
g=t x_{\alpha_1}(a_1)\dots x_{\alpha_m}(a_m)\mathbf w x_{\alpha_1}(b_1)\dots x_{\alpha_m}(b_m)\in C_G(\Gamma_\alpha).
$$

Our first and the most important goal is to prove that $\mathbf w=\mathbf e_W$. 

\begin{lemma}\label{lemma_major_root}
Given the maximal root~$\gamma\in \Phi$, $\mathbf w(\gamma)=\gamma.$
\end{lemma}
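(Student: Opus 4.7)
The plan is to argue by contradiction: assume $\mathbf{w}(\gamma)\ne \gamma$. Since $\gamma$ is the unique highest root, $[x_\gamma(1),x_\delta(s)]=1$ for every $\delta\in \Phi^+$ (all sums $i\gamma+j\delta$ with $i,j\ge 1$ have height exceeding $\mathrm{ht}(\gamma)$, so the Chevalley commutator formula yields the empty product). In particular $x_\gamma(1)\in \Gamma_\alpha$, so $g$ commutes with $x_\gamma(1)$; I will extract a contradiction from $g\,x_\gamma(1)\,g^{-1}=x_\gamma(1)$ using the Bruhat form $g=tu\mathbf{w}v$.

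The first reduction is to simplify the conjugate. Commutativity of $x_\gamma(1)$ with all of $U$ gives $v x_\gamma(1) v^{-1}=x_\gamma(1)$, and by (R5) we have $\mathbf{w} x_\gamma(1)\mathbf{w}^{-1}=x_{\mathbf{w}(\gamma)}(\epsilon)$ for a sign $\epsilon=\pm 1$. Hence
$$g\,x_\gamma(1)\,g^{-1}=t\,u\,x_{\mathbf{w}(\gamma)}(\epsilon)\,u^{-1}\,t^{-1},$$
and this must equal $x_\gamma(1)$. I now split on the sign of $\mathbf{w}(\gamma)$.

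Case 1: $\mathbf{w}(\gamma)\in\Phi^-$. Then $x_{\mathbf{w}(\gamma)}(\epsilon)\in V\setminus\{e\}$, and $V\cap B=\{e\}$ from the big cell decomposition $VTU$, so $x_{\mathbf{w}(\gamma)}(\epsilon)\notin B=TU$. Since Bruhat cells are invariant under $B$-conjugation and $t,u\in B$, the element $tu\,x_{\mathbf{w}(\gamma)}(\epsilon)\,u^{-1}t^{-1}$ also lies outside $B$. But the right-hand side $x_\gamma(1)$ lies in $U\subset B$, a contradiction.

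Case 2: $\mathbf{w}(\gamma)=\beta\in\Phi^+$ with $\beta\ne\gamma$. I would use the central filtration $U=U_1\supseteq U_2\supseteq\cdots$ by root height, with $U_k/U_{k+1}\cong \bigoplus_{\mathrm{ht}(\delta)=k}X_\delta$. The identity $u\,x_\beta(\epsilon)\,u^{-1}=[u,x_\beta(\epsilon)]\cdot x_\beta(\epsilon)$ combined with the Chevalley commutator formula shows $[u,x_\beta(\epsilon)]\in U_{\mathrm{ht}(\beta)+1}$, so modulo $U_{\mathrm{ht}(\beta)+1}$ the $\beta$-coordinate of $u\,x_\beta(\epsilon)\,u^{-1}$ equals $\epsilon$. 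Conjugation by $t$ only multiplies this coordinate by a unit $\chi_t(\beta)\in R^*$. On the other hand, since $\gamma$ is the \emph{unique} highest root of the irreducible $\Phi$, we have $\mathrm{ht}(\gamma)>\mathrm{ht}(\beta)$, hence $x_\gamma(1)\in U_{\mathrm{ht}(\gamma)}\subseteq U_{\mathrm{ht}(\beta)+1}$ has zero $\beta$-coordinate in the quotient---again a contradiction. The main points requiring careful write-up are the $B$-invariance of Bruhat cells and the structure of the height filtration on $U$, both standard from Steinberg; the substantive geometric fact that makes everything go through is precisely the uniqueness of the highest root in an irreducible root system, which is exactly what closes Case 2.
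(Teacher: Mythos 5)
Your proof is correct. It starts from the same place as the paper's --- the observation that $x_\gamma(1)\in\Gamma_\alpha$ is central in $U$ because $\gamma$ is the highest root, combined with the Bruhat decomposition of $g$ --- but the verification runs in the opposite direction. The paper conjugates $g$ by $x_\gamma(1)$, rewrites the result in Bruhat normal form (an extra factor $x_\gamma(c^\gamma_t)x_{\mathbf w(\gamma)}(-1)$ appears immediately to the left of $\mathbf w$), and appeals to uniqueness of that normal form to force $\mathbf w(\gamma)=\gamma$; as a by-product it also gets $c^\gamma_t=1$, i.e.\ $[t,x_\gamma(1)]=1$. You instead conjugate $x_\gamma(1)$ by $g$, reduce the identity to $t\,u\,x_{\mathbf w(\gamma)}(\epsilon)\,u^{-1}t^{-1}=x_\gamma(1)$, and kill the two bad cases by disjointness of Bruhat double cosets under $B$-conjugation and by the height filtration of $U$ together with uniqueness of the highest root. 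Your version is somewhat cleaner in that it never has to renormalize a perturbed Bruhat expression, and your Case~2 isolates exactly where maximality of $\gamma$ enters; the paper's version has the small advantage of delivering the commutation of the torus part $t$ with $x_\gamma(1)$ in the same computation (recorded there as conclusion~(2)), which your argument would have to extract separately. Both arguments are sound over a field, which is all that is needed at this stage.
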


\begin{proof}
First of all, it is useful  to have formulas of conjugation for different elements by the element $x_\beta(1)$.

For $t\in T$ we have
$$
t^{x_\beta(1)}=tt^{-1}x_\beta(1)tx_\beta(-1)=t x_\beta(c^\beta_t)x_\beta(-1)=tx_\beta(c^\beta_t-1),\quad c^\beta_t\in \mathbb F^*.
$$
For $\mathbf w\in W$ we have
$$
\mathbf w^{x_\beta(1)}=x_\beta(1) \mathbf w x_\beta(-1)\mathbf w^{-1}\mathbf w=x_\beta(1)x_{\mathbf w(\beta)}(-1) \mathbf w.
$$
In our case of the maximal root $\gamma$ we know that $[x_{\alpha_i}(1),x_\gamma(1)]=1$ for all $i=1,\dots,m$, therefore
\begin{multline*}
g^{x_\gamma(1)}=tx_\gamma(c^\gamma_t-1)x_{\alpha_1}(a_1)\dots x_{\alpha_m}(a_m)x_\gamma(1)x_{\mathbf w(\gamma)}(-1)\mathbf w x_{\alpha_1}(b_1)\dots x_{\alpha_m}(b_m)=\\
=tx_{\alpha_1}(a_1)\dots x_{\alpha_m}(a_m) x_\gamma(c_t^\gamma)x_{\mathbf w(\gamma)}(-1)\mathbf w x_{\alpha_1}(b_1)\dots x_{\alpha_m}(b_m)=\\
=g=tx_{\alpha_1}(a_1)\dots x_{\alpha_m}(a_m) \mathbf w x_{\alpha_1}(b_1)\dots x_{\alpha_m}(b_m).
\end{multline*}
Using uniqueness of Bruhat decomposition and comparing the parts of the equality we see that:

(1) $\mathbf w(\gamma)=\gamma$, what was required; 

(2) $c^\gamma_t=1$, i.\,e. $t$ commutes with $x_\gamma(1)$.
\end{proof}

Recall that the set of simple roots $\Delta$ consists of $\alpha_1,\dots, \alpha_l$. We suppose that the roots $\alpha_3,\dots, \alpha_l$ are orthogonal to~$\alpha_1$. 

\begin{lemma}\label{lemma_roots_positive}
For any root $\beta\in \{ \alpha_1,\alpha_3,\dots, \alpha_l\}$ we have $\mathbf w(\beta)\in \Phi^+$.
\end{lemma}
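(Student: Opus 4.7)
The plan is to follow the strategy of Lemma \ref{lemma_major_root} and conjugate $g$ by $x_\beta(1)$. For each $\beta \in \{\alpha_1, \alpha_3, \ldots, \alpha_l\}$ we have $x_\beta(1) \in \Gamma_\alpha$: trivially when $\beta = \alpha_1 = \alpha$, and for $\beta = \alpha_i$ with $i \geq 3$ by the orthogonality $\alpha_i \perp \alpha_1$ together with the Chevalley commutator formula. Hence $g = g^{x_\beta(1)}$, and this identity, read off in canonical Bruhat form via uniqueness, will force the desired conclusion.

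Concretely, using $x_\beta(1)\cdot t = t\cdot x_\beta(c_t)$ with $c_t = \chi_t(\beta)^{-1} \in \mathbb F^*$ and the fact that $U$ is a subgroup, the conjugate rewrites as
\[
g^{x_\beta(1)} = t\cdot u^*\cdot \mathbf{w}\cdot v^*,\qquad u^* = x_\beta(c_t)\,u\in U,\quad v^* = v\,x_\beta(-1)\in U.
\]
Since conjugation by $x_\beta(1)\in U\subset B$ preserves the Bruhat cell $B\mathbf{w} B$, the Weyl part is still $\mathbf{w}$. To match the canonical form $t\,u'\,\mathbf{w}\,v'$ with $v' \in U^-_{\mathbf{w}}$ (the subgroup generated by the $x_\gamma(R)$ for $\gamma \in \Phi^+$ with $\mathbf{w}(\gamma)\in \Phi^-$), uniqueness of the Bruhat decomposition demands $u' = u$ and $v' = v$.

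Assume, for contradiction, that $\mathbf{w}(\beta)\in\Phi^-$. Then $x_\beta(-1)\in U^-_{\mathbf{w}}$, so $v^* = v\,x_\beta(-1)$ already lies in $U^-_{\mathbf{w}}$ and no rewriting across $\mathbf{w}$ is necessary: the expression $t\,u^*\,\mathbf{w}\,v^*$ is in canonical form. The identity $u^* = u$, after normalizing $u^* = x_\beta(c_t)\,u$ to the ordered product, reduces to a comparison of $\beta$-coefficients. The only modification of the $\beta$-coefficient comes from the direct combination $x_\beta(c_t)\cdot x_\beta(a_\beta) = x_\beta(c_t + a_\beta)$, because moving $x_\beta(c_t)$ past any other factor $x_\gamma(\cdot)$ via the Chevalley commutator formula produces only terms $x_{i\beta + j\gamma}(\cdot)$ with $i,j\geq 1$, all of strictly greater height than $\beta$. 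Hence the $\beta$-coefficient of $u^*$ is $a_\beta + c_t$, whereas in $u$ it is $a_\beta$; equality forces $c_t = 0$, contradicting $c_t \in \mathbb F^*$. Therefore $\mathbf{w}(\beta)\in\Phi^+$.

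The main obstacle is precisely this coefficient bookkeeping: one must confirm that no Chevalley commutator correction contributes to the $\beta$-coefficient, which rests on $[x_\gamma(\cdot), x_\beta(\cdot)]$ lying in the subgroup generated by $x_\delta(R)$ for roots $\delta$ of height strictly greater than $\mathrm{ht}(\beta)$. In the alternative case $\mathbf{w}(\beta)\in\Phi^+$, the factor $x_\beta(-1)\in U^+_{\mathbf{w}}$ crosses $\mathbf{w}$ and yields an absorbing factor $x_{\mathbf{w}(\beta)}(\mp 1)$ on the left, which exactly compensates the $c_t$-shift in $u^*$, so no contradiction arises and the conclusion is consistent.
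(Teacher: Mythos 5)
Your proof is correct and follows essentially the same route as the paper: conjugate $g$ by $x_\beta(1)\in\Gamma_\alpha$ and compare the two Bruhat decompositions of $g=g^{x_\beta(1)}$, using that Chevalley commutator corrections only affect roots of strictly larger height so the $\beta$-coefficient can only change by the direct combination of $x_\beta$-factors. The one (harmless) difference is where the contradiction is read off: you compare the $\beta$-coefficients to the left of $\mathbf w$, where the shift is $c_t=\chi_t(\beta)^{-1}\in\mathbb F^*$, while the paper compares them to the right of $\mathbf w$, where the coefficient drops from $b_i$ to $b_i-1$; both yield the contradiction.
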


\begin{proof}
Suppose that $\beta\in \{ \alpha_1,\alpha_3,\dots, \alpha_l\}$, $\beta=\alpha_i$, it is clear that in this case $x_\beta(1)\in \Gamma_\alpha$.

Suppose that $\mathbf w(\beta)=\delta\in \Phi^-$.
Then, using (R3) relation,
\begin{multline*}
tx_{\alpha_1}(a_1)\dots x_{\alpha_m}(a_m) \mathbf w x_{\alpha_1}(b_1)\dots x_{\alpha_m}(b_m)=g=\\
=g^{x_\beta(1)}
=tx_\beta(c^\beta_t-1)x_{\alpha_1}( a_1)\dots x_{\alpha_l}(a_l) x_{\alpha_{l+1}}(\widetilde a_{l+1})\dots x_{\alpha_m}(\widetilde a_m)x_\beta(1)\mathbf w \cdot \\
\cdot x_\beta(-1) x_{\alpha_1}(b_1)\dots  x_{\alpha_l} (b_l)x_{\alpha_{l+1}}(\widetilde b_{l+1})\dots x_{\alpha_m}(\widetilde b_m)=\\
=t\widetilde u\mathbf w 
 x_{\alpha_1}(b_1)\dots  x_{\alpha_i}(b_i-1)\dots x_{\alpha_l} (b_l)x_{\alpha_{l+1}}(\widetilde b_{l+1}')\dots x_{\alpha_m}(\widetilde b_m').
\end{multline*}
To bring this last expression into the usual form of the standard Bruhat decomposition, we still need to move  all  the unipotents that correspond to the roots which do not change their signs, to the left of~$\mathbf w$. But when we do it, the parameter in $x_\beta(\cdot)$ does not change under the action of~$w$. Therefore we have two  distinct Bruhat decompositions of the same element. The difference is in  parameters of the unipotent $x_\beta(\cdot)$ from the right-hand side of~$\mathbf w$. Contradiction, hence $\mathbf w(\beta)\in \Phi^+$.
\end{proof}

\begin{lemma}
For any root $\beta\in \{ \alpha_3,\dots, \alpha_l\}$ we have $\mathbf w(\beta)=\beta$.
\end{lemma}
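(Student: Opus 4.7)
Fix $\beta = \alpha_i$ with $i \in \{3, \ldots, l\}$. Since $\beta \perp \alpha_1$, both $x_\beta(1)$ and $x_{-\beta}(1)$ belong to $\Gamma_\alpha$, so $g$ commutes with each of them, and hence also with $w_\beta(1) = x_\beta(1)\,x_{-\beta}(-1)\,x_\beta(1)$. By the previous lemma, $\mathbf{w}(\beta) = \delta \in \Phi^+$. I assume toward contradiction that $\delta \neq \beta$; then, in the Bruhat decomposition of $g$, the coefficient at position $\beta$ on the right of $\mathbf{w}$ vanishes, $b_i = 0$.

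Following the template of the previous lemma, I compute $g^{x_\beta(1)} = x_\beta(1)\,g\,x_\beta(-1)$, commuting $x_\beta(c^\beta_t - 1)$ and $x_\beta(1)$ to the left of $\mathbf{w}$ through $u_+$ via (R2) and (R6), and commuting $x_\beta(-1)$ to the right of $\mathbf{w}$ through $u_+'$. The decisive new step is that, because $\mathbf{w}(\beta) = \delta \in \Phi^+$, the residual $x_\beta(-1)$ on the right of $\mathbf{w}$ cannot remain there in Bruhat normal form; one pushes it through $\mathbf{w}$ via (R5),
$$
\mathbf{w}\,x_\beta(-1) \;=\; x_\delta(\varepsilon)\,\mathbf{w}, \qquad \varepsilon \in \{+1,-1\},
$$
producing a factor $x_\delta(\varepsilon)$ on the left of $\mathbf{w}$ that is absorbed into the left unipotent product.

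Uniqueness of the Bruhat decomposition applied to $g^{x_\beta(1)} = g$ yields an identity at every root position in the left unipotent. At position $\delta$ the identity asserts that the nonzero constant $\varepsilon$ equals a sum of commutator contributions produced during the normalization, all of which are linear (or of higher degree) in the Bruhat coefficients of $g$ and in $c^\beta_t$. Performing the analogous analysis with $x_{-\beta}(1)$ in place of $x_\beta(1)$ and with $x_\beta(n) = x_\beta(1)^n$ for $n \in \mathbb{Z}\cdot 1 \subset \mathbb{F}$ (each still commuting with $g$), and combining the resulting identities with the constraint $\mathbf{w}(\gamma) = \gamma$ for the highest root $\gamma$, which confines $\mathbf{w}$ to the Weyl-group stabilizer of $\gamma$, one obtains an overdetermined system that forces $\varepsilon = 0$, contradicting $\varepsilon = \pm 1$. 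Hence $\delta = \beta$, i.e.\ $\mathbf{w}(\beta) = \beta$.

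The main obstacle is the careful bookkeeping of commutator contributions at position $\delta$ arising from the cascading Bruhat normalization, especially in the case $\delta = \beta + \alpha_j$ for some $\alpha_j$ non-orthogonal to $\beta$, where the commutator $[x_\beta,\,x_{\alpha_j}]$ feeds directly into position $\delta$. One must verify, using the collection of identities produced by the various conjugations, that these contributions cannot conspire to cancel the constant $\varepsilon$.
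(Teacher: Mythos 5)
There is a genuine gap, and it sits exactly where you flag it yourself. Your entire contradiction rests on the claim that the identities obtained by comparing coefficients at position $\delta=\mathbf w(\beta)$ form an ``overdetermined system that forces $\varepsilon=0$,'' but you never carry out that verification; you only note at the end that one ``must verify'' the commutator contributions cannot conspire to cancel $\varepsilon$. That verification is the whole content of the lemma in your approach, and it is not routine: the contributions at position $\delta$ coming from commutators $[x_\beta(\pm 1),x_{\alpha_j}(a_j)]$ involve the unknown Bruhat coefficients $a_j$ of $g$, so a single conjugation yields one equation in several unknowns, and a cancellation $\varepsilon+\sum(\text{commutator terms})=0$ cannot be excluded a priori. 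The auxiliary conjugations you propose do not obviously close the gap either: over $\mathbb F_2$ the elements $x_\beta(n)=x_\beta(1)^n$ take only the values $x_\beta(0)$ and $x_\beta(1)$, so there is no ``vary $n$ and compare degrees'' argument available, and the constraint $\mathbf w(\gamma)=\gamma$ restricts $\mathbf w$ but says nothing about the unipotent coefficients entering the cancellation.

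The paper sidesteps all of this bookkeeping by using the very element you introduce and then abandon: since $x_\beta(1)$ and $x_{-\beta}(1)$ both lie in $\Gamma_\alpha$, the Weyl representative $w_\beta(1)$ commutes with $g$, and one computes the Bruhat form of $g^{w_\beta(1)}$. Because $\beta$ is simple and $\mathbf w(\beta)\in\Phi^+$ (previous lemma), all unipotents landing to the right of the Weyl component remain positive after normalization, so uniqueness of the Bruhat decomposition can be applied to the \emph{Weyl-group component} itself — a comparison insensitive to the unipotent coefficients — giving $\mathbf w^{w_\beta(1)}=\mathbf w$, hence $w_{\mathbf w(\beta)}=w_\beta$ and $\mathbf w(\beta)=\pm\beta$; positivity then forces $\mathbf w(\beta)=\beta$. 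I recommend you replace the coefficient-tracking argument with this comparison of Weyl components, or else actually carry out the cancellation analysis you defer, case by case in each root system.
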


\begin{proof}
If $\beta\in \{ \alpha_3,\dots, \alpha_l\}$, then $x_\beta(1), x_{-\beta}(1)\in \Gamma_\alpha$, therefore $w_\beta(1)$ commutes with~$g$. Also let us mention that from the right side of~$\mathbf w$ in the Bruhat decomposition of~$g$ the element $x_\beta(\cdot)$ is omitted, since $\mathbf w(\beta)\in \Phi^+$ (by the previous lemma).

Since $\beta$ is a simple root, $\mathbf w_\beta$ maps all positive roots (except~$\beta$) also to  positive roots.

Therefore,
\begin{multline*}
tx_{\alpha_1}(a_1)\dots x_\beta(a_\beta)\dots  x_{\alpha_m}(a_m) \mathbf w x_{\alpha_1}(b_1)\dots x_{\alpha_m}(b_m)=g
=g^{w_\beta(1)}=\\
=\widetilde tx_{\mathbf w_\beta(\alpha_1)}( a_1)\dots x_{\mathbf w_\beta(\beta)}( a_\beta)\dots x_{\mathbf w_\beta(\alpha_m)}(a_m)\mathbf w^{w_\beta(1)} x_{\mathbf w_\beta(\alpha_1)}(b_1)\dots   x_{\mathbf w_\beta(\alpha_m)}( b_m)=\\
=\widetilde tx_{\alpha_1}(\widetilde a_1)\dots  x_{-\beta} (a_\beta)\dots x_{\alpha_m}(\widetilde  a_m)\mathbf w^{w_\beta(1)}  x_{\mathbf w_\beta(\alpha_1)}(b_1)\dots   x_{\mathbf w_\beta(\alpha_m)}( b_m)=\\
\text{(since for any $\delta\in \Phi^+$ $[x_\delta(a),x_{-\beta}(a_\beta)]$ does not contain any unipotents with negative roots)}\\
=\widetilde tx_{\alpha_1}(\widetilde a_1)\dots x_{\alpha_m}(\widetilde  a_m) x_{-\beta} (a_\beta)\mathbf w^{w_\beta(1)}  x_{\mathbf w_\beta(\alpha_1)}(b_1)\dots   x_{\mathbf w_\beta(\alpha_m)}( b_m)=\\
=\widetilde tx_{\alpha_1}(\widetilde a_1)\dots x_{\alpha_m}(\widetilde a_m)w_\beta(1)x_{\beta}(a_\beta)\mathbf w  w_\beta(1)^{-1} x_{\mathbf w_\beta(\alpha_1)}(b_1)\dots   x_{\mathbf w_\beta(\alpha_m)}( b_m)=\\
=\widetilde tx_{\alpha_1}(\widetilde a_1)\dots x_{\alpha_m}(\widetilde a_m)w_\beta(1)\mathbf w x_{\mathbf w(\beta)}(a_\beta)  w_\beta(1)^{-1} x_{\mathbf w_\beta(\alpha_1)}(b_1)\dots   x_{\mathbf w_\beta(\alpha_m)}( b_m)=\\
=\widetilde tx_{\alpha_1}(\widetilde a_1)\dots x_{\alpha_m}(\widetilde a_m)\mathbf w^{w_\beta(1)}  x_{\mathbf w_\beta\mathbf w(\beta)}(a_\beta)  x_{\mathbf w_\beta(\alpha_1)}(b_1)\dots   x_{\mathbf w_\beta(\alpha_m)}( b_m).
\end{multline*}
Since $\mathbf w(\beta)\in \Phi^+$, $\beta$ is a simple root, then either $\mathbf w_\beta(\mathbf w(\beta))\in \Phi^+$ or $\mathbf w (\beta)=\beta$ (in this last case our lemma is already proven). In the first case  all unipotents from the right side of~$\mathbf w$ correspond to positive roots. Hence from the uniqueness of Bruhat decomposition it follows 
$$
\mathbf w =  \mathbf w^{w_\beta(1)}\Longrightarrow \mathbf w(\beta)=\pm \beta.
$$
Since $\mathbf w(\beta)\in \Phi^+$ we have $\mathbf w(\beta)=\beta$,
what was required.
\end{proof}

 \begin{lemma}
Suppose that $g$ is in Bruhat decomposition
$$
g=t x_{\alpha_1}(a_1)\dots x_{\alpha_m}(a_m)\mathbf w x_{\alpha_1}(b_1)\dots x_{\alpha_1}(b_m)\in C(\Gamma_\alpha),
$$
where $\alpha$ is a simple root for all systems except $\mathbf C_l$, $l\geqslant 2$ and a long simple root for 
the system $\mathbf C_l$, $l\geqslant 2$. And if $\Phi=\mathbf C_l$, $\alpha$ is short, then we suppose that 
$$
\alpha_1=e_1-e_2, \alpha_2=e_2-e_3,\dots, \alpha_{l-1}=e_{l-1}-e_l, e_l=2e_l\text{ and }\alpha=e_1+e_2.
$$
In all these cases $\mathbf w=\mathbf e_W$.
\end{lemma}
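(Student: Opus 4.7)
The three preceding lemmas give that $\mathbf{w}$ fixes every simple root $\alpha_i$, $i\geqslant 3$, pointwise, that $\mathbf{w}(\gamma)=\gamma$ for the maximal root $\gamma$, and (in the case $\alpha_1=\alpha$) that $\mathbf{w}(\alpha_1)\in\Phi^+$. I will show these conditions force $\mathbf{w}=\mathbf{e}_W$.

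Set $V_0=\langle\alpha_3,\ldots,\alpha_l\rangle$. Since $\mathbf{w}$ is an isometry of the root space fixing $V_0$ pointwise, it lies in the pointwise stabilizer $W_0$ of $V_0$ in $W$; by a standard fixed-point theorem of Steinberg, $W_0$ is the Weyl group of the sub-root-system $\Phi_0=\Phi\cap V_0^{\perp}$. The orthogonality hypothesis on $\alpha_3,\ldots,\alpha_l$ guarantees $\alpha_1\in\Phi_0$, and in each of the root systems of interest $\Phi_0$ has rank at most two, so $W_0$ is small enough to enumerate explicitly. I then impose $\mathbf{w}(\gamma)=\gamma$ and $\mathbf{w}(\alpha_1)\in\Phi^+$ to kill every non-identity element: the only non-trivial element of $W_0$ that can fix $\gamma$ is typically the reflection $s_{\alpha_1}$, which sends $\alpha_1$ to $-\alpha_1\notin\Phi^+$. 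Concretely: in type $A_l$, $W_0=\{\mathbf{e},s_{\alpha_1}\}$ and positivity alone does it; in type $B_l$ with $\alpha_1=e_1-e_2$, $W_0\cong W(B_2)$, the condition $\mathbf{w}(e_1+e_2)=e_1+e_2$ cuts $W_0$ down to $\{\mathbf{e},s_{\alpha_1}\}$, and positivity finishes; the remaining cases ($B_l$ with $\alpha_1$ short, $C_l$ with $\alpha_1$ long, $D_l$, $E_6$, $E_7$, $E_8$, $F_4$, $G_2$) are short explicit checks of the same flavour.

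For the $\mathbf C_l$-short case with $\alpha=e_1+e_2$ and the standard enumeration, the first preceding lemma still applies (giving $\mathbf{w}(2e_1)=2e_1$), and the third applies to $\alpha_3,\ldots,\alpha_l$ (giving $\mathbf{w}(\alpha_i)=\alpha_i$ for $i\geqslant 3$), so $\mathbf{w}$ fixes $e_1$ and $e_3,\ldots,e_l$, and only $\mathbf{w}(e_2)=\pm e_2$ is undetermined. The second preceding lemma cannot be invoked for $\alpha_1=e_1-e_2$ (since $\alpha_1+\alpha=2e_1\in\Phi$ forces $x_{\alpha_1}(1)\notin\Gamma_\alpha$), but its proof requires only that $x_\beta(1)\in\Gamma_\alpha$ for the positive root $\beta$ being tested; taking $\beta=2e_2\in\Gamma_\alpha$ (indeed $2e_2+\alpha=e_1+3e_2\notin\Phi$) and running the same Bruhat-uniqueness argument yields $\mathbf{w}(2e_2)\in\Phi^+$, forcing $\mathbf{w}(e_2)=e_2$ and hence $\mathbf{w}=\mathbf{e}_W$. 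The main obstacle will be the case-by-case verification in the exceptional and multiply-laced types, where the local structure of the Dynkin diagram around $\alpha$ varies and one must double-check that $\Phi_0$ is indeed the predicted rank-$2$ sub-root-system.
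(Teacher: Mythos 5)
Your overall strategy coincides with the paper's: both arguments observe that $\mathbf w$ fixes pointwise the span of $\alpha_3,\dots,\alpha_l$ together with the maximal root $\gamma$, which is a hyperplane, so $\mathbf w$ is either trivial or the reflection $\mathbf w_\delta$ in the (up to sign unique) root $\delta$ orthogonal to that hyperplane, and then one eliminates $\mathbf w_\delta$ system by system. (The paper works directly with the codimension-one fixed space rather than routing through Steinberg's fixed-point theorem for $V_0=\langle\alpha_3,\dots,\alpha_l\rangle$ and then intersecting with the stabilizer of $\gamma$; the difference is cosmetic.) Your treatment of the $\mathbf C_l$-short case is a genuine alternative to the paper's: the paper rules out $\mathbf w_{2e_2}$ by conjugating by $x_{e_1+e_2}(1)$ and watching the coefficient of $x_{e_1-e_2}$ on the left of $\mathbf w$, whereas you rerun the positivity lemma with $\beta=2e_2$. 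That does work, but not for the reason you give: the Bruhat-uniqueness argument needs not only $x_\beta(1)\in\Gamma_\alpha$ but also that the coefficient of $x_\beta$ to the right of $\mathbf w$ is undisturbed when the product is reordered. For a simple root this is automatic (a simple root is not a sum of positive roots); for $\beta=2e_2$ one must check that $2e_2$ added to any other positive root sent negative by $\mathbf w_{2e_2}$ (namely $e_2\pm e_j$, $j\geqslant3$) is never a root. True, but it has to be said.

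The genuine gap is $\mathbf G_2$ with $\alpha=\alpha_1$ the long simple root. There $V_0$ is empty, $\delta=\beta$ is the short simple root, and $s_\beta$ survives both of your filters: $s_\beta(2\alpha+3\beta)=2\alpha+3\beta$ and $s_\beta(\alpha)=\alpha+3\beta\in\Phi^+$, so it is neither "$s_{\alpha_1}$" nor killed by positivity of $\mathbf w(\alpha_1)$. Nor can the positivity mechanism be salvaged by a different test root: the only positive root sent to a negative one by the simple reflection $s_\beta$ is $\beta$ itself, and $x_\beta(1)\notin\Gamma_\alpha$ since $[x_\alpha(1),x_\beta(1)]\ne 1$. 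So this case is not "a short explicit check of the same flavour"; an argument acting on the left-hand side of the Bruhat form is required. The paper conjugates by $x_{\alpha+2\beta}(1)\in\Gamma_\alpha$, uses $x_{\alpha+2\beta}(1)\,\mathbf w_\beta\, x_{\alpha+2\beta}(1)^{-1}=x_{\alpha+2\beta}(1)x_{\alpha+\beta}(\pm1)\mathbf w_\beta$ to inject an extra $x_{\alpha+\beta}(\pm1)$ into the $U$-part, and derives $a_3=a_3\pm1$, a contradiction. Your proof needs this (or an equivalent) supplementary argument for $\mathbf G_2$.
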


\begin{proof}
Note that the roots $\alpha_3,\dots, \alpha_l, \gamma$ are linearly independent and generate in the space~$V$ of the root system~$\Phi$ a hyperspace~$V'$. The root $\alpha_1$ is orthogonal to the vectors $\{ \alpha_3,\dots, \alpha_l\}$.

Since according to the previous Lemmas~5 and~7 our $\mathbf w$ acts identically on the basis of the hyperplane~$V'$, therefore it either is an identical mapping, or is a reflection $\mathbf w_\delta$, where $\delta$ is a root orthogonal to all  $\alpha_3,\dots, \alpha_l, \gamma$. 

Let us search for such $\delta$ in different roots systems.

{\bf 1. The root system $\mathbf A_l$.} In $\mathbb R^{l+1}$, $\alpha_i=e_i-e_{i+1}$, $3\leqslant i\leqslant l$, $\gamma=e_1-e_{l+1}$.  Since all roots of~$\mathbf A_l$ have the form $e_j-e_k$, the required $\delta$ does not exist, therefore $\mathbf w=\mathbf e_W$.

{\bf 2. The root system $\mathbf B_l$, $l\geqslant 3$.} The root system is $\{ \pm e_i, \pm e_i\pm e_j\mid 1\leqslant i< j\leqslant l\}$, $\{ \alpha_3,\dots, \alpha_l\}=\{ e_3-e_4,\dots, e_{l-1}-e_l, e_l\}$ or $\{ e_1-e_2,\dots e_{l-2}-e_{l-1}\}$ depending of the length of~$\alpha$. The maximal root $\gamma$ is $e_1+e_2$. In the first case $\delta=e_1-e_2=\alpha$, in the second case $\delta=e_l=\alpha$. Both these cases are impossible, since we know that $\mathbf w(\alpha)\in \Phi^+$. 

{\bf 3. The root system $\mathbf C_l$, $l\geqslant 2$.} The root system is $\{ \pm 2e_i, \pm e_i\pm e_j\mid 1\leqslant i< j\leqslant l\}$, $\{ \alpha_3,\dots, \alpha_l\}=\{ e_3-e_4,\dots, e_{l-1}-e_l, 2e_l\}$ or $\{ e_1-e_2,\dots e_{l-2}-e_{l-1}\}$ also depending of the length of~$\alpha$. The maximal root $\gamma$ is $2e_1$. In the first case $\delta=2e_2$, in the second case $\delta=2e_l=\alpha_1$, which is impossible, since  $\mathbf w(\alpha_1)\in \Phi^+$. 

Let us suppose now that we have the same set of simple/positive roots, but $\alpha=e_1+e_2$. In this case also $\delta=2e_2$, 
$$
g=tx_{\alpha_1}(a_1)\dots x_{\alpha_m}(a_m)\mathbf w_\delta x_{e_2-e_3}(b_{e_2-e_3})\dots x_{e_2-e_l}(b_{e_2-e_l})x_{e_2+e_3}(b_{e_2+e_3})\dots x_{e_2+e_l}(b_{e_2+e_l})x_{2e_2}(b_{2e_2}),
$$
since only  positive roots $e_2-e_i$, $e_2+e_i$ and $2e_2$  are mapped to negative roots under the action of~$\mathbf w_\delta$. But for all these roots their sum with $\alpha=e_1+e_2$ is not a root, therefore $x_\alpha(1)$ commutes with this right part. Consequently, 
\begin{multline*}
g=g^{x_\alpha(1)}=(tx_\alpha(c^\alpha_t-1))x_{\alpha_1}(a_1)\dots x_{\alpha_i}(a_i')\dots x_{\alpha_m}(a_m')x_{e_1+e_2}(1)\mathbf w_{2e_2}x_{e_1+e_2}(1)^{-1} \cdot \\
\cdot x_{e_2-e_3}(b_{e_2-e_3})\dots x_{e_2-e_l}(b_{e_2-e_l})x_{e_2+e_3}(b_{e_2+e_3})\dots x_{e_2+e_l}(b_{e_2+e_l})x_{2e_2}(b_{2e_2})=\\
=(tx_\alpha(c^\alpha_t-1))x_{\alpha_1}(a_1)\dots x_{\alpha_i}(a_i')\dots x_{\alpha_m}(a_m')x_{e_1+e_2}(1)x_{e_1-e_2}(\pm 1)\mathbf w_{2e_2} \cdot \\
\cdot x_{e_2-e_3}(b_{e_2-e_3})\dots x_{e_2-e_l}(b_{e_2-e_l})x_{e_2+e_3}(b_{e_2+e_3})\dots x_{e_2+e_l}(b_{e_2+e_l})x_{2e_2}(b_{2e_2})=\\
=t x_{\alpha_1}(a_1)\dots x_{e_1-e_2}(a_{e_1-e_2}\pm 1)\dots x_{e_1+e_2}(c_t+a_{e_1+e_2})\dots x_{\alpha_i}(a_i'')\dots x_{\alpha_m}(a_m'')\mathbf w_{2e_2} \cdot \\
\cdot x_{e_2-e_3}(b_{e_2-e_3})\dots x_{e_2-e_l}(b_{e_2-e_l})x_{e_2+e_3}(b_{e_2+e_3})\dots x_{e_2+e_l}(b_{e_2+e_l})x_{2e_2}(b_{2e_2}),
\end{multline*}
since $\mathbf w_{2e_2}(e_1+e_2)=e_1-e_2$ and new $x_{e_1-e_2}(\dots)$ or $x_{e_1+e_2}(\dots)$ cannot appear from any other conjugations. Therefore this situation is impossible and $\mathbf w=\mathbf e_W$.

{\bf 4. The root system $\mathbf D_l$, $l\geqslant 4$.} The root system is $\{ \pm e_i\pm e_j\mid 1\leqslant i< j\leqslant l\}$, $\{ \alpha_3,\dots, \alpha_l\}=\{ e_3-e_4,\dots, e_{l-1}-e_l, e_{l-1}+e_l\}$, $\gamma=e_1+e_{l+1}$.  Since all roots of~$\mathbf D_l$ have the form $\pm e_j \pm e_k$, the required $\delta$ does not exist, therefore $\mathbf w=\mathbf e_W$. 

{\bf 5. The root system $\mathbf E_l$, $l=6,7,8$.} We will not present here all lists of roots, taking them from~\cite{Burbaki}. The explicit check shows that in all three cases $\delta=\alpha_1$, which is impossible, since  $\mathbf w(\alpha_1)\in \Phi^+$.

{\bf 6. The root system $\mathbf F_4$.} The roots of the system $\mathbf F_4$ are 
$$
\pm e_i, i=1,2,3,4,\quad \pm e_i\pm e_j, i,j=1,2,3,4, i\ne j,\quad \frac{1}{2}(\pm e_1\pm e_2\pm e_3\pm e_4);
$$
we suppose that 
$\Delta=\{ \frac{1}{2}(e_1-e_2-e_3-e_4), e_4, e_3-e_4,e_2-e_3.\}$
Therefore $\{ \alpha_3,\alpha_4\}=\{ e_3-e_4, e_2-e_3\}$ or $\{  \frac{1}{2}(e_1-e_2-e_3-e_4), e_4\}$, $\gamma=e_1+e_2$. In the first case $\delta=\frac{1}{2}(e_1-e_2-e_3-e_4)=\alpha_1$, in the second case such $\delta$ does not exist. The first case is also impossible, since we know that $\mathbf w(\alpha)\in \Phi^+$.

{\bf 7. The root system $\mathbf G_2$.}  If $\alpha$ is the long simple root of~$\mathbf G_2$, $\beta$ is the short simple root, then in any case $\delta$ is orthogonal to the maximal root $2\alpha+3\beta$, i.\,e. $\delta=\beta$. Certainly it is impossible if our initial $\alpha_1$ is $\beta$, so let us assume $\alpha_1=\alpha$.

Suppose that $\mathbf w=\mathbf w_\beta$, 
$$
g=t x_\alpha (a_1)x_\beta(a_2)x_{\alpha+\beta}(a_3)x_{\alpha+2\beta}(a_4)x_{\alpha+3\beta}(a_5) x_{2\alpha+3\beta}(a_6)\mathbf w_\beta x_\beta(b_2).
$$
Since $x_{\alpha+2\beta}(1)\in \Gamma_\alpha$, we can conjugate $g$ by $x_{\alpha+2\beta}(1)$:
\begin{multline*}
 g^{x_{\alpha+2\beta}(1)}=(tx_{\alpha+2\beta}(c^{\alpha+2\beta}_t-1)) x_\alpha (a_1)(x_\beta(a_2)x_{\alpha+3\beta}(\pm 2 a_2))(x_{\alpha+\beta}(a_3)x_{2\alpha+3\beta}(\pm 2a_3))\cdot\\
 \cdot x_{\alpha+2\beta}(a_4)x_{\alpha+3\beta}(a_5)
\cdot  x_{2\alpha+3\beta}(a_6) x_{\alpha+2\beta}(1) \mathbf w_\beta x_{\alpha+2\beta}(1)^{-1} x_\beta(b_2)=\\
=(tx_{\alpha+2\beta}(c^{\alpha+2\beta}_t-1)) x_\alpha (a_1)(x_\beta(a_2)x_{\alpha+3\beta}(\pm 2 a_2))(x_{\alpha+\beta}(a_3)x_{2\alpha+3\beta}(\pm 2a_3))\cdot\\
 \cdot x_{\alpha+2\beta}(a_4)x_{\alpha+3\beta}(a_5)
\cdot  x_{2\alpha+3\beta}(a_6) x_{\alpha+2\beta}(1) x_{\alpha+\beta}(\pm 1) \mathbf w_\beta  x_\beta(b_2)=\\
=t x_\alpha (a_1) x_{\beta}(a_2)x_{\alpha+\beta}(a_3\pm 1)x_{\alpha+2\beta}(\dots)x_{\alpha+3\beta}(\dots)x_{2\alpha+3\beta}(\dots) \mathbf w_\beta x_\beta(b_2),
\end{multline*}
which is impossible, since $a_3\ne a_3\pm 1$.
Therefore $\mathbf w=\mathbf e_W$.
\end{proof}

Therefore one can assume that if $g\in C_G(\Gamma_\alpha)$, then $g$ has a form 
$t x_{\alpha_1}(a_1)\dots x_{\alpha_m}(a_m)$, i.\,e., $g\in B$.

It remains to prove that  any $g\in B\cap C_G(\Gamma_\alpha)$  has a form  specified in Theorem~2.

We will prove this fact by inspection of all root systems consequently. 

\begin{observation} 
All calculations below remain valid for an arbitrary commutative ring with unity.
\end{observation}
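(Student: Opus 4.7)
The plan is to finish the proof of Theorem 2 starting from the point already reached: $g = t \cdot x_{\alpha_1}(a_1) \cdots x_{\alpha_m}(a_m) \in B \cap C_G(\Gamma_\alpha)$, with the positive roots $\alpha_1, \dots, \alpha_m$ listed in non-decreasing height. I would split the remaining work into two steps: (i) kill every coefficient $a_i$ whose root $\alpha_i$ lies outside the designated support (namely $\{\alpha\}$ in the generic case, or $\{e_1+e_2, 2e_1, 2e_2\}$ in the $\mathbf{C}_l$ short-root exception), and (ii) show that the leftover torus factor $t$ lies in $Z(G)$.

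For step (i) I would induct on the height of $\alpha_i$ and, at each height, search for a test root $\beta \in \Phi$ with $x_\beta(1) \in \Gamma_\alpha$ such that the Chevalley commutator formula (R2) produces a non-trivial $x_{\alpha_i + \beta}(\pm a_i)$ factor when one conjugates $x_{\alpha_i}(a_i)$ by $x_\beta(1)$. Applying the identity $g = g^{x_\beta(1)}$, rewriting $g^{x_\beta(1)}$ as a canonical ordered product over $\Phi^+$ via (R1)--(R2), and invoking the uniqueness of this normal form in $U$, one obtains a system of equations matching coefficients root by root; combined with the inductive vanishing at lower heights, each such equation forces $a_i = 0$. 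The main obstacle here is combinatorial: one must, root system by root system ($\mathbf{A}_l$, $\mathbf{B}_l$, $\mathbf{C}_l$, $\mathbf{D}_l$, $\mathbf{E}_{6,7,8}$, $\mathbf{F}_4$, $\mathbf{G}_2$), exhibit a suitable $\beta$ for every $\alpha_i$ outside the support and verify that the chosen $\beta$'s do not disturb coefficients already pinned down. The $\mathbf{C}_l$ exception arises precisely because no such $\beta \in \Gamma_\alpha$ exists for the three positive roots $e_1+e_2$, $2e_1$, $2e_2$ — every $\beta$ in $\Gamma_\alpha$ either commutes with the corresponding $x_{\alpha_i}(\cdot)$ or permutes these three root subgroups among themselves.

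For step (ii), once $g$ has been reduced to $t \cdot x_\alpha(a)$ (or the three-parameter analogue in the $\mathbf{C}_l$ case), the commutations $[t, x_\beta(1)] = 1$ collected along the way cover a set of roots $\beta$ that generates $\Phi$ as a lattice — the excerpt already places $\alpha_1$, $-\alpha_2$, $\pm\alpha_3, \dots, \pm\alpha_l$, and the maximal root $\gamma$ in $\Gamma_\alpha$, and step (i) will deliver further such commutations from the chosen test roots $\beta$. By Remark 1 this is enough to conclude $t \in Z(E_\pi(\Phi,R)) \subseteq Z(G)$, completing the theorem.

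Finally, the Observation that all of the calculations below remain valid over an arbitrary commutative ring with unity is justified by the fact that every tool invoked in the upcoming inspection — the Chevalley commutator formula (R2), the conjugation relations (R3)--(R6), and the uniqueness of the canonical ordered product in $U$ — has integer-coefficient, ring-independent content. No step requires field axioms, invertibility of generic elements, or the absence of zero divisors. The only field-specific ingredient used so far was the Bruhat decomposition appealed to in the lemmas above, and this is precisely what Sections 3 and 4 will replace, respectively, by Gauss decomposition over local rings and by localization over arbitrary commutative rings; the case-by-case inspection proposed here can then be reused verbatim.
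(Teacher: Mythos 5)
Your justification of the Remark is exactly the one the paper tacitly relies on: the lemmas that follow use only the integer-coefficient Chevalley commutator relations (R1)--(R6) and the uniqueness of the ordered-product normal form in $TU$ (resp.\ $U\cdot T\cdot V$), none of which requires field axioms, and the only genuinely field-dependent ingredient is the Bruhat decomposition used to force $\mathbf w=\mathbf e_W$, which Sections~3 and~4 indeed replace by Gauss decomposition and localization so that these lemmas can be cited verbatim over local and then arbitrary commutative rings. This matches the paper's (implicit) argument, so no further comment is needed.
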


\begin{lemma}\label{G2-fields}
If $\Phi={\mathbf G}_2$ and 
$$
g=t x_{\alpha_1}(a_1)\dots x_{\alpha_m}(a_m)\in C(\Gamma_\alpha),
$$
then $a_2=\dots =a_m=0$ and $t\in Z(G)$.
\end{lemma}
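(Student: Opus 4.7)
The plan is to exploit the uniqueness of the Bruhat-normal decomposition $g = t\cdot x_{\alpha_1}(a_1)\cdots x_{\alpha_m}(a_m)$ inside the Borel subgroup $B = TU$. For every generator $x_\delta(1) \in \Gamma_\alpha$ the identity $x_\delta(1)\, g = g\, x_\delta(1)$ can be rewritten using the relations (R1), (R2), (R6); equating the coefficients of each factor $x_{\alpha_i}(\cdot)$ and of the torus part yields linear equations in the parameters $a_i$ and in the characters $c^\delta_t = \chi_t(\delta)$ of $t$.

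First I would list the positive roots $\delta$ for which $x_\delta(1) \in \Gamma_\alpha$: by the Chevalley commutator formula these are precisely those positive $\delta$ such that no $i\alpha + j\delta$ with $i,j \geqslant 1$ is a root. For $\mathbf{G}_2$ with $\alpha$ long simple and $\beta$ short simple this yields the positive-root generators $x_\alpha(1), x_{\alpha+\beta}(1), x_{\alpha+2\beta}(1), x_{2\alpha+3\beta}(1)$, and the analogous analysis produces the admissible negative generators $x_{-\beta}(1)$, $x_{-(\alpha+2\beta)}(1)$, $x_{-(\alpha+3\beta)}(1)$. The first main step is conjugation by $x_{\alpha+2\beta}(1)$, exactly as already used in the preceding Lemma~8: pushing $x_{\alpha+2\beta}(1)$ to the right produces a term $x_{\alpha+3\beta}(c\cdot a_2)$ from its commutator with $x_\beta(a_2)$ and a term $x_{2\alpha+3\beta}(c' \cdot a_3)$ from its commutator with $x_{\alpha+\beta}(a_3)$. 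Uniqueness of the Bruhat form forces $a_2 = 0$ and $a_3 = 0$ whenever $c, c'$ are invertible in the base field.

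Second, with $a_2 = a_3 = 0$ in hand, I would conjugate successively by $x_{\alpha+\beta}(1)$ and $x_{2\alpha+3\beta}(1)$ to kill $a_4, a_5, a_6$ by the same mechanism (each conjugation produces commutators landing in root subgroups of strictly higher height, and uniqueness forces the remaining parameters to vanish). To obtain $t \in Z(G)$ I would use the negative-root generators $x_{-\beta}(1)$ and $x_{-(\alpha+3\beta)}(1)$: since conjugation by these takes $g$ out of $B$, I would instead compare the Gauss (equivalently Bruhat) decompositions of $x_{-\delta}(1)\,g$ and $g\, x_{-\delta}(1)$, extracting the characters $\chi_t(-\beta) = \chi_t(-(\alpha+3\beta)) = 1$. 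Together with $\chi_t(\alpha) = 1$ (from the $x_\alpha(1)$ conjugation), this triviality of $\chi_t$ on a $\mathbb Z$-basis of the root lattice implies, by Remark~\ref{Remark_torus}, that $t \in Z(G)$.

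The main obstacle I foresee is the characteristic-dependence of the Chevalley structure constants in $\mathbf{G}_2$, which involve $\pm 2$ and $\pm 3$. In characteristic $2$ or $3$, some of the commutators used above degenerate and the isolated equation on the corresponding $a_i$ is lost. To remedy this I would pair distinct generators of $\Gamma_\alpha$ whose relevant structure constants are coprime (for example $x_{\alpha+2\beta}(1)$ together with $x_{\alpha+\beta}(1)$, contributing $\pm 3$ and $\pm 1$ respectively), arranging that in every characteristic at least one of the resulting equations remains non-degenerate. Establishing this characteristic-uniformity is exactly what subsequently allows the remark that all the calculations remain valid over an arbitrary commutative ring with unity.
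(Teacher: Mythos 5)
Your overall strategy (conjugate $g$ by the elements of $\Gamma_\alpha$, use uniqueness of the $B=TU$ decomposition to read off linear relations on the $a_i$ and on the character $\chi_t$, then invoke Remark~\ref{Remark_torus}) is the same as the paper's, but the execution has two genuine gaps. First, you only treat the case where $\alpha$ is the \emph{long} simple root. In $\mathbf G_2$ the two simple roots are not Weyl-conjugate, and the set $\Gamma_\beta$ for the short root is a different, much smaller set ($x_\beta(1), x_{\alpha+3\beta}(1), x_{2\alpha+3\beta}(1), x_{-\alpha}(1), x_{-2\alpha-3\beta}(1)$), so that case requires a separate computation (the paper's Case~2, which in particular uses the Weyl element $w_{2\alpha+3\beta}(1)$ to kill $t_4,t_5$). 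Half of the lemma is missing.

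Second, your plan for killing $a_4,a_5,a_6$ by conjugating with $x_{\alpha+\beta}(1)$ and $x_{2\alpha+3\beta}(1)$ does not work: $x_{2\alpha+3\beta}(1)$ corresponds to the highest root and is central in $U$, so conjugation by it does nothing to the unipotent part; and no positive-root conjugation can touch the coefficient $a_6$ of the highest root at all. Moreover the only relation on $a_4$ obtainable from positive roots is $3a_4=0$ (from $[x_{\alpha+\beta}(1),x_{\alpha+2\beta}(a_4)]=x_{2\alpha+3\beta}(\pm 3a_4)$). The negative-root generators $x_{-\beta}(1)$ and $x_{-\alpha-2\beta}(1)$ are indispensable precisely here — they supply the complementary relations $2a_4=0$, $2a_3=0$ and $a_6=0$ — whereas you reserve them only for the torus part. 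Relatedly, while you correctly foresee the characteristic-$2$/$3$ degeneration and the need to pair generators with coprime structure constants, your stated example is miscomputed: $[x_{\alpha+\beta}(1),x_\beta(u)]$ contributes $x_{\alpha+2\beta}(\pm 2u)$, not $\pm u$ (the coefficient-$1$ relation on $a_2$ comes from $[x_\alpha(1),x_\beta(a_2)]$, which is why the paper starts with $x_\alpha(1)$). Since this lemma is later applied verbatim over arbitrary commutative rings, assembling the exact pairs of relations $2a_i=3a_i=0$ is the substance of the proof rather than a remark to be deferred, so the proposal as written cannot be accepted without these computations being carried out.
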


\begin{proof}
If $\alpha$ and $\beta$ are simple roots of the system $\mathbf G_2$, $\alpha$ is long and $\beta$ is short, then
(see~\cite{Steinberg}, Lemma~57):
\begin{align*}
[x_\alpha(t),x_\beta(u)]&=x_{\alpha+\beta}(tu)x_{\alpha+3\beta}(-tu^3)x_{\alpha+2\beta}(-tu^2)x_{2\alpha+3\beta}(t^2u^3),\\
[x_{\alpha+\beta}(t), x_\beta(u)]&=x_{\alpha+2\beta}(2tu)x_{\alpha+3\beta}(-3tu^2)x_{2\alpha+3\beta}(3t^2u^3),\\
[x_\alpha(t),x_{\alpha+3\beta}(u)]&=x_{2\alpha+3\beta}(tu),\\
[x_{\alpha+2\beta}(t),x_\beta(u)]&=x_{\alpha+3\beta}(-3tu),\\
[x_{\alpha+\beta}(t),x_{\alpha+2\beta}(u)]&=x_{2\alpha+3\beta}(3tu).
\end{align*}

{\bf Case 1,  $\alpha_1=\alpha$ is a long root}. Then 
$$
x_\alpha(1), x_{\alpha+\beta}(1), x_{\alpha+2\beta}(1), x_{2\alpha+3\beta}(1), x_{-\beta}(1), x_{-\alpha-2\beta}(1), x_{-\alpha-3\beta}(1)\in \Gamma_\alpha.
$$
If 
$$
g=tx_\alpha(t_1)x_\beta(t_2)x_{\alpha+\beta}(t_3)x_{\alpha+2\beta}(t_4)x_{\alpha+3\beta}(t_5)x_{2\alpha+3\beta}(t_6)\in C(\Gamma_\alpha),
$$
then 
\begin{multline*}
g^{x_\alpha(1)}=(tx_\alpha(c^\alpha_t-1))x_\alpha(t_1)(x_\beta(t_2)x_{\alpha+\beta}(t_2)x_{\alpha+3\beta}(-t_2^3)x_{\alpha+2\beta}(-t_2^2)x_{2\alpha+3\beta}(t_2^3))\cdot\\
\cdot x_{\alpha+\beta}(t_3)x_{\alpha+2\beta}(t_4) (x_{\alpha+3\beta}(t_5)x_{2\alpha+3\beta}(t_5))x_{2\alpha+3\beta}(t_6)=\\
=t x_\alpha (c^\alpha_t-1+t_1) x_\beta(t_2)x_{\alpha+\beta}(t_2+t_3)x_{\alpha+2\beta}(t_4-t_2^2)x_{\alpha+3\beta}(t_5-t_2^3)x_{2\alpha+3\beta}(t_6+t_5+t_2^3-3t_2^2t_3).
\end{multline*}
From the uniqueness of Bruhat decomposition we have
$$
c^\alpha_t=1,\quad t_2=0,\quad t_5=0,
$$ 
therefore
$$
[t,x_\alpha(1)]=1\text{ and }g=t x_\alpha(t_1)x_{\alpha+\beta}(t_3)x_{\alpha+2\beta}(t_4)x_{2\alpha+3\beta}(t_6).
$$
Since $x_{\alpha+\beta}(1)\in \Gamma_\alpha$, we have (similarly) $[t,x_{\alpha+\beta}(1)]=1$ and $3t_4=0$, therefore $t\in Z(G)$ according to Remark~\ref{Remark_torus}.

Now let us use $x_{-\beta}(1)$, taking into account that $[t,x_{_\beta}(1)]=1$ and $3t_4=0$:
\begin{multline*}
g^{x_{-\beta}(1)}=t  x_\alpha(t_1)(x_\alpha (3t_3)x_{\alpha+\beta}(t_3))(x_{\alpha+\beta}(2t_4)x_\alpha(-3t_4)x_{-\beta}(3t_4^2)x_{\alpha+2\beta}(t_4))x_{2\alpha+3\beta}(t_6)=\\
=tx_\alpha(t+3t_3)x_{\alpha+\beta}(t_3+2t_4)x_{\alpha+2\beta}(t_4)x_{2\alpha+3\beta}(t_6),
\end{multline*}
so $2t_4=0$ and together with $3t_4=0$ it gives $t_4=0$, and therefore 
$$
g=t x_\alpha(t_1)x_{\alpha+\beta}(t_3)x_{2\alpha+3\beta}(t_6),\quad 3t_3=0, t\in Z(G). 
$$
Let us now apply $x_{-\alpha-2\beta}(1)$:
\begin{multline*}
g^{x_{-\alpha-2\beta}(1)}=\\
=t  x_\alpha(t_1)(x_{-\beta}(\pm 2t_3)x_\alpha(\pm 3t_3^2)x_{-\alpha-3\beta}(\pm 3t_3)x_{\alpha+\beta}(t_3)) (x_{-\alpha-2\beta}(1)x_{2\alpha+3\beta}(t_6)x_{-\alpha-2\beta}(1)^{-1})=\\
=tx_\alpha(t_1)x_{-\beta}(\pm 2t_3)x_{\alpha+\beta}(t_3)x_{\alpha+\beta}(t_6)x_{-\beta}(-t_6)x_{-\alpha-3\beta}(t_6)x_\alpha(t_6^2)x_{2\alpha+3\beta}(t_6),
\end{multline*}
therefore $t_6=0$, then $2t_3=0$ and together with $3t_3=0$ it implies $t_3=0$.

So we proved that $g=tx_\alpha(t_1)$, where $t\in Z(G)$.

\smallskip

{\bf Case 2,  $\alpha_1=\beta$ is a short root}. Then 
$$
x_\beta(1), x_{\alpha+3\beta}(1),  x_{2\alpha+3\beta}(1), x_{-\alpha}(1), x_{-2\alpha-3\beta}(1)\in \Gamma_\beta.
$$
If 
$$
g=tx_\beta(t_2)x_\alpha(t_1)x_{\alpha+\beta}(t_3)x_{\alpha+2\beta}(t_4)x_{\alpha+3\beta}(t_5)x_{2\alpha+3\beta}(t_6)\in C(\Gamma_\beta),
$$
then 
$$
g^{x_{\alpha+3\beta}(1)}=(tx_{\alpha+3\beta}(c^{\alpha+3\beta}_t-1))x_\beta(t_2) x_\alpha(t_1)x_{2\alpha+3\beta}(t_1)
 x_{\alpha+\beta}(t_3)x_{\alpha+2\beta}(t_4) x_{\alpha+3\beta}(t_5)x_{2\alpha+3\beta}(t_6),
$$
therefore $[t,x_{\alpha+3\beta}(1)]=1$ and $t_1=0$.

Now
\begin{multline*}
g^{x_\beta(1)}=(tx_\beta(c^\beta_t-1))\cdot \\
\cdot x_\beta(t_2)(x_{\alpha+\beta}(t_3)x_{\alpha+2\beta}(2t_3)x_{\alpha+3\beta}(-3t_3)x_{2\alpha+3\beta}(3t_3^2)) (x_{\alpha+2\beta}(t_4)x_{\alpha+3\beta}(-3t_4))x_{\alpha+3\beta}(t_5)x_{2\alpha+3\beta}(t_6)=\\
=tx_\beta(c^\beta_t-1+t_2)x_{\alpha+\beta}(t_3)x_{\alpha+2\beta}(2t_3+t_4)x_{\alpha+3\beta}(-2t_3-3t_4+t_5)x_{2\alpha+3\beta}(3t_3^2+t_6),
\end{multline*}
so $[t,x_\beta(1)]=1$ (and consequently by Remark~\ref{Remark_torus} 
$t\in Z(G)$), $2t_3=3t_3^2=0$ (therefore $t_3^2=0$) and $3t_4=0$.

Let us apply $x_{-\alpha}(1)$, taking into account $t_3^2=0$:
$$
g^{x_{-\alpha}(1)}=tx_\beta(t_2)(x_{\alpha+\beta}(t_3)x_\beta(\pm t_3))x_{\alpha+2\beta}(t_4)x_{\alpha+3\beta}(t_5)x_{2\alpha+3\beta}(t_6) x_{\alpha+3\beta}(\pm t_6),
$$
consequently $t_3=t_6=0$.

Since $x_{2\alpha+3\beta}(1)$ and $x_{-2\alpha-3\beta}(1)$ are both in~$\Gamma_\beta$, then $[\mathbf w_{2\alpha+3\beta}(1),g]=1$, and since $\mathbf w_{2\alpha+3\beta}(\alpha+2\beta)=-\alpha-\beta$, $\mathbf w_{2\alpha+3\beta}(\alpha+3\beta)=-\alpha$, then $t_4=t_5=0$, what was required.
\end{proof}

\begin{lemma}\label{ADE-fields}
If $\Phi=\mathbf A_l, \mathbf D_l$ or $\mathbf E_l$, $l\geqslant 2$, 
$$
g=t x_{\alpha_1}(a_1)\dots x_{\alpha_m}(a_m)\in C(\Gamma_\alpha),
$$
then $a_2=\dots =a_m=0$ and $t\in Z(G)$.
\end{lemma}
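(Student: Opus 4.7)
The plan is to follow the template of Lemma \ref{G2-fields}, exploiting the fact that for a simply-laced root system $\Phi$ the Chevalley commutator formula collapses to
$$
[x_\gamma(s), x_\beta(u)] = x_{\gamma+\beta}(N_{\gamma,\beta}\,su) \quad \text{if } \gamma+\beta\in\Phi,
$$
with $N_{\gamma,\beta}=\pm 1$, and to the identity if $\gamma+\beta\notin\Phi\cup\{0\}$. Consequently $\Gamma_\alpha$ is characterized as $\{x_\beta(1):\beta\neq -\alpha,\ \alpha+\beta\notin\Phi\}$, which for positive $\beta$ is equivalent to $\langle\beta,\alpha\rangle\geqslant 0$. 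In particular, $x_{\alpha_1}(1)$, $x_{\pm\alpha_i}(1)$ for $i\geqslant 3$, and $x_\gamma(1)$ for the maximal root $\gamma$ all lie in $\Gamma_\alpha$, by the same reasoning as in Lemmas \ref{lemma_major_root}--\ref{lemma_roots_positive}.

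The first step is to conjugate $g=t\prod_{\beta\in\Phi^+}x_\beta(a_\beta)$ by $x_\alpha(1)$. The only nontrivial commutators $[x_\alpha(1),x_\beta(a_\beta)]$ come from positive roots $\beta$ with $\alpha+\beta\in\Phi$ (equivalently $\langle\beta,\alpha\rangle=-1$), and each produces a term $x_{\alpha+\beta}(\pm a_\beta)$. Re-ordering these into the fixed Bruhat ordering and matching coefficients of $g^{x_\alpha(1)}=g$ in the unique $TU$-factorization forces $[t,x_\alpha(1)]=1$ and kills $a_\beta$ for every positive $\beta\neq\alpha$ adjacent to $\alpha$. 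What remains to be handled are the coefficients $a_\beta$ for $\beta\in\Phi^+$ with $\alpha+\beta\notin\Phi$ and $\beta\neq\alpha$, that is, those $\beta$ orthogonal to $\alpha$ in the simply-laced setting.

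For these remaining $\beta$ I would conjugate $g$ by the negative-root elements $x_{-\gamma}(1)\in\Gamma_\alpha$, available for every $\gamma\in\Phi^+$ with $\gamma\neq\alpha$ and $\alpha-\gamma\notin\Phi$ (so certainly for $\gamma=\alpha_i$, $i\geqslant 3$). Moving $x_{-\gamma}(1)$ leftward past the positive unipotent tail of $g$ produces mixed terms $x_{\beta-\gamma}(\pm a_\beta)$, some of which land in $V$ when $\beta-\gamma\in\Phi^-$. Since $g$ has no $V$-part, uniqueness of the Bruhat (Gauss) factorization forces $a_\beta=0$ for each such $\beta$. Running this over $\gamma$ ranging through a suitable collection of negative-root generators of $\Gamma_\alpha$, combined with a height induction on $\beta$, eliminates all the remaining $a_\beta$ with $\beta\perp\alpha$. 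Finally, applying the analogous conjugations with $x_\beta(1)\in\Gamma_\alpha$ for simple roots $\beta=\alpha_3,\dots,\alpha_l$ gives $[t,x_\beta(1)]=1$ for a set of roots generating $\Phi$, so Remark \ref{Remark_torus} places $t$ in $Z(G)$.

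The main obstacle is the bookkeeping in the height-induction step: for each positive root $\beta\neq\alpha$ one must produce a conjugator $x_{\pm\gamma}(1)\in\Gamma_\alpha$ whose commutator with the positive-unipotent part of $g$ isolates $a_\beta$ (or at least injects an irreducible $V$-contribution), without interference from coefficients $a_{\beta'}$ of other unprocessed roots. Because the Weyl subgroup generated by $w_{\alpha_i}(1)$, $i\geqslant 3$, acts transitively on the set of positive roots orthogonal to $\alpha$, and because $w_{\alpha_i}(1)$ commutes with $g$ whenever $x_{\pm\alpha_i}(1)\in\Gamma_\alpha$, one can reduce the problem to a small number of \emph{representative} roots $\beta$ in each orbit and verify the cancellation directly, case by case, for $\mathbf A_l$, $\mathbf D_l$ and $\mathbf E_l$. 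In each case the computation is strictly simpler than the $\mathbf G_2$ argument since only single $\pm 1$ coefficients appear and no length-ratio subtleties arise.
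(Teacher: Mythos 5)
Your overall strategy --- conjugating $g$ by elements of $\Gamma_\alpha$ and comparing coefficients in the unique $TU$-factorization --- is the paper's strategy, and your first step (conjugation by $x_\alpha(1)$ forces $[t,x_\alpha(1)]=1$ and kills $a_\beta$ for every positive $\beta$ with $\alpha+\beta\in\Phi$) matches the paper. The gap is your assertion that what survives afterwards are only coefficients at roots \emph{orthogonal} to $\alpha$. In a simply-laced system a positive root $\beta\neq\alpha$ with $\alpha+\beta\notin\Phi$ satisfies $(\beta,\alpha)\geqslant 0$, and the subcase $(\beta,\alpha)>0$, i.e. $\beta-\alpha\in\Phi^{+}$ (the roots at $60^\circ$ to $\alpha$, such as the highest root $\alpha+\gamma$ of each $\mathbf A_2$-subsystem through $\alpha$), is neither orthogonal nor eliminated by your step 1. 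The paper kills these by conjugating by $x_{-\gamma}(1)\in\Gamma_\alpha$ with $\gamma=\beta-\alpha$: the commutator $[x_\beta(a_\beta),x_{-\gamma}(1)]=x_\alpha(\pm a_\beta)$ perturbs the coefficient of $x_\alpha$, so it lands in $U$, not in $V$, and your ``injects a $V$-contribution'' mechanism does not detect it. You have the right conjugators in your toolkit, but as written this whole class of roots is skipped.

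Two further points would fail as stated. The Weyl subgroup generated by $w_{\alpha_i}(1)$, $i\geqslant 3$, is \emph{not} transitive on the positive roots orthogonal to $\alpha$: in $\mathbf D_4$ with $\alpha=e_1-e_2$ the orthogonal positive roots are $e_1+e_2$, $e_3-e_4$, $e_3+e_4$, and $e_1+e_2$ is fixed by $\langle w_{\alpha_3},w_{\alpha_4}\rangle$, so the reduction to one representative per orbit does not go through. The paper instead treats each orthogonal root $\gamma$ directly by conjugating by $w_\gamma(1)$ (legitimate since $x_{\pm\gamma}(1)\in\Gamma_\alpha$), which sends $x_\gamma(a_\gamma)$ to $x_{-\gamma}(a_\gamma)$ while keeping all other positive roots positive, and then inducts on height --- this also avoids the delicate $\SL_2$-type rewriting that a bare $x_{-\gamma}(1)$-conjugation of $x_\gamma(a_\gamma)$ would require. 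Finally, commutation of $t$ with $x_{\alpha_1}(1)$ and $x_{\alpha_i}(1)$, $i\geqslant 3$, is not enough for Remark \ref{Remark_torus}, since $\{\alpha_1,\alpha_3,\dots,\alpha_l\}$ does not generate the root lattice; you also need $[t,x_{-\alpha_2}(1)]=1$, which the paper extracts from precisely the $x_{-\gamma}(1)$-conjugation in the $60^\circ$ case that your argument omits.
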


\begin{proof}
In this simply laced case all roots has the same length and are conjugated up to the action of the Weil group, therefore we can always suppose that $\alpha=\alpha_1$.

If $\beta\ne \alpha$ is some positive root, then either it is orthogonal to~$\alpha$, or $\alpha$ and $\beta$ are simple roots of~$\mathbf A_2$ (if the angle between them is $120^\circ$), or $\alpha$ and $\gamma=\beta-\alpha$ are simple roots of~$\mathbf A_2$ (if the angle between them is $60^\circ$).

In the second and third cases we have the roots $\alpha, \beta, \alpha+\beta$, forming the system~$\mathbf A_2$, where $x_\alpha(1), x_{\alpha+\beta}(1)$ and $x_{-\beta}(1)$ belong to~$\Gamma_\alpha$.

If
$$
g=tx_\alpha(t_1)\dots x_\beta(t_\beta)\dots x_{\alpha+\beta}(t_{\alpha+\beta})\dots,
$$
then
$$
g^{x_\alpha(1)}=tx_\alpha(c^\alpha_t-1+t_1)\dots x_\beta(t_\beta)x_{\alpha+\beta}(t_\beta)\dots x_{\alpha+\beta}(t_{\alpha+\beta})\dots,
$$
where $x_\alpha(\dots), x_\beta(\dots), x_{\alpha+\beta}(\dots)$ do not appear in any other  places. Therefore $[t,x_\alpha(1)]=1$ and $t_\beta=0$.

Now 
$$
g=tx_\alpha(t_1)\dots x_{\alpha+\beta}(t_{\alpha+\beta})\dots
$$
and
$$
g^{x_{-\beta}(1)}=(tx_{-\beta}(c^{-\beta}_t-1)) x_\alpha(t_1)\dots x_\alpha(t_{\alpha+\beta})x_{\alpha+\beta}(t_{\alpha+\beta})\dots,
$$ 
where $x_\alpha(\dots), x_{-\beta}(\dots), x_{\alpha+\beta}(\dots)$ do not appear in any other  places, for all other $x_\gamma(\dots)$, except $x_{-\beta}(c^{-\beta}_t-1)$, $\gamma$ are positive. Therefore $[t,x_{-\beta}(1)]=1$ and $t_{\alpha+\beta}=0$.

We see now that if for $\gamma \ne \alpha$ an element $x_\gamma(\dots)$ appears in~$g$, then $\gamma$ is orthogonal to~$\alpha$. 
But in this case both $x_\gamma(1), x_{-\gamma}(1)\in \Gamma_\alpha$, therefore $[w_\gamma(1),g]=e$. Starting with simple roots~$\gamma$ we see that
$$
 g^{w_\gamma(1)}=\widetilde t x_\alpha(t_1) x_{-\gamma}(t_\gamma)\dots x_{\mathbf w_\gamma(\delta)}(t_\delta)\dots,
$$
where all $\mathbf w_\gamma(\delta)\in \Phi^+$. Therefore $[t,x_\gamma(1)]=1$, $t_\gamma=0$.
The element $t$ commutes with all $x_\gamma(1)$ for simple roots $\gamma$, therefore by Remark~\ref{Remark_torus} we have $t\in Z(G)$.

Also we proved that in $g$ there are no elements $x_\gamma(t_\gamma)$ with simple $\gamma$, except $x_\alpha(t_1)$. Continuing with roots of the height~$2$, we do the same and notice that for all of them also $t_\gamma=0$. Continuing this procedure to the roots of the height $3,4,\dots$, we come out with the fact that for all $\gamma\in \Phi^+\setminus \alpha$ we have $t_\gamma=0$, what was required.
\end{proof}

\begin{lemma}\label{B2-fields}
For the root system $\mathbf B_2=\{ \alpha, \beta, \alpha+\beta, \alpha+2\beta\}$ and $g=tx_\alpha(t_1)x_\beta(t_2)x_{\alpha+\beta}(t_3)x_{\alpha+2\beta}(t_4)$:

\emph{(1)} if  $g\in C(\Gamma_\alpha)$, then $g=tx_\alpha(t_1)$ with $t\in Z(G)$;

\emph{(2)}  if $g\in C(\Gamma_{\alpha+\beta})$, then $g=tx_\alpha(t_1)x_{\alpha+\beta}(t_3)x_{\alpha+2\beta}(t_4)$ with $t\in Z(G)$.
\end{lemma}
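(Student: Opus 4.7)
The plan is to mimic the preceding $\mathbf G_2$ and simply-laced arguments: identify which elementary unipotents $x_\delta(1)$ lie in $\Gamma_\alpha$ (resp.\ $\Gamma_{\alpha+\beta}$), conjugate $g$ by each, and read off the constraints from the Chevalley commutator formulas and the uniqueness of the Bruhat decomposition. The $\mathbf B_2$ commutator identities I will use are
\[
[x_\alpha(s), x_\beta(u)] = x_{\alpha+\beta}(\pm su)\, x_{\alpha+2\beta}(\pm s u^2), \qquad [x_{\alpha+\beta}(s), x_\beta(u)] = x_{\alpha+2\beta}(\pm 2 su),
\]
and among positive root unipotents all other commutators are trivial because the relevant root sums fail to be roots of $\mathbf B_2$. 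This immediately yields $x_{-\beta}(1), x_{\pm(\alpha+2\beta)}(1)\in \Gamma_\alpha$ (since $\alpha-\beta$ and $\alpha\pm(\alpha+2\beta)$ are not roots) and $x_\alpha(1), x_{\alpha+\beta}(1), x_{\alpha+2\beta}(1)\in \Gamma_{\alpha+\beta}$ (for the analogous reason).

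For Case (1), I will conjugate $g$ successively by $x_\alpha(1)$, $x_{\alpha+\beta}(1)$, and $w_{\alpha+2\beta}(1)$. In the first conjugation, pushing $x_\alpha(-1)$ leftward past $x_\beta(t_2)$ introduces $x_{\alpha+\beta}(\pm t_2)$ and $x_{\alpha+2\beta}(\pm t_2^2)$ correction terms, and the equation $g^{x_\alpha(1)}=g$, in Bruhat normal form, forces $c_t^\alpha=1$ (so $[t, x_\alpha(1)]=1$) and $t_2=0$, because the structure constant $N_{\alpha,\beta}=\pm 1$ is a unit. The second conjugation is trivial on every positive root factor of the resulting $g$, so reduces to $[t, x_{\alpha+\beta}(1)]=1$; since $\{\alpha, \alpha+\beta\}$ is a $\mathbb Z$-basis of the root lattice of $\mathbf B_2$, Remark~\ref{Remark_torus} places $t$ in $Z(G)$. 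Finally, as $x_{\pm(\alpha+2\beta)}(1)\in\Gamma_\alpha$, the element $w_{\alpha+2\beta}(1)=x_{\alpha+2\beta}(1)x_{-(\alpha+2\beta)}(-1)x_{\alpha+2\beta}(1)$ commutes with $g$; its Weyl action fixes $\alpha$ and sends $\alpha+\beta\mapsto -\beta$, $\alpha+2\beta\mapsto -(\alpha+2\beta)$, so $g^{w_{\alpha+2\beta}(1)}$ contains $x_{-\beta}(\pm t_3)$ and $x_{-(\alpha+2\beta)}(\pm t_4)$. Bruhat-decomposing this places it outside $B$ as soon as $t_3$ or $t_4$ is nonzero, contradicting $g\in B$. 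Hence $t_3 = t_4 = 0$.

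Case (2) is shorter and does not need a Weyl-element trick. Conjugation by $x_\alpha(1)$ proceeds exactly as in Case (1) and yields $t_2=0$ together with $[t,x_\alpha(1)]=1$. Conjugation by $x_{\alpha+\beta}(1)$ then reduces to $[t,x_{\alpha+\beta}(1)]=1$, so $t\in Z(G)$ again by Remark~\ref{Remark_torus}. Since $x_{\alpha+2\beta}(1)$ commutes with each of $x_\alpha, x_{\alpha+\beta}, x_{\alpha+2\beta}$ on the nose, no further restriction on $t_3, t_4$ is produced by the remaining generators of $\Gamma_{\alpha+\beta}$, which is exactly the exceptional conclusion of the theorem.

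The main obstacle is Case (1) in characteristic $2$: there the coefficient $\pm 2$ in $[x_{\alpha+\beta}, x_\beta]$ vanishes, and the natural alternative of conjugating by $x_{-\beta}(1)\in\Gamma_\alpha$ only delivers $2t_3=2t_4=0$, which is vacuous. This is precisely why the third step of Case (1) uses the Weyl element $w_{\alpha+2\beta}(1)$ and a Bruhat-cell argument rather than a direct commutator calculation; the obstruction placing $x_{-\beta}(\pm t_3)$ and $x_{-(\alpha+2\beta)}(\pm t_4)$ in a non-Borel Bruhat cell is characteristic-free. The only careful bookkeeping is to verify, in every characteristic, that the conjugating elements I invoke actually lie in $\Gamma_\alpha$ (resp.\ $\Gamma_{\alpha+\beta}$), or in the subgroup it generates — a routine check based only on which root sums are roots of $\mathbf B_2$.
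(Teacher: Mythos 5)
Your proof is correct and uses the same toolkit as the paper: identify $\Gamma_\alpha$ and $\Gamma_{\alpha+\beta}$ from which root sums lie in $\mathbf B_2$, conjugate by those unipotents, compare normal forms, and finish Case (1) with the Weyl element $w_{\alpha+2\beta}(1)$ precisely because the coefficient $\pm 2$ in $[x_{\alpha+\beta},x_\beta]$ makes a direct commutator argument fail in characteristic $2$. The one place you genuinely diverge is the middle of Case (1): the paper's second conjugation is by $x_{-\beta}(1)\in\Gamma_\alpha$, which already yields $t_4=0$ and $2t_3=0$ from $[x_{\alpha+2\beta}(t_4),x_{-\beta}(1)]=x_{\alpha+\beta}(\pm t_4)x_\alpha(\pm t_4)$ and $[x_{\alpha+\beta}(t_3),x_{-\beta}(1)]=x_\alpha(\pm 2t_3)$, leaving only $t_3$ for the Weyl element; you conjugate by $x_{\alpha+\beta}(1)$ instead, which gives only $[t,x_{\alpha+\beta}(1)]=1$, and then let $w_{\alpha+2\beta}(1)$ kill $t_3$ and $t_4$ simultaneously via the cell argument (legitimate, since $w_{\alpha+2\beta}$ fixes $\alpha$ and sends $\alpha+\beta\mapsto -\beta$, $\alpha+2\beta\mapsto -(\alpha+2\beta)$, and $UT\cap UTV$ forces the $V$-part to vanish). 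Both routes are sound: yours trades the $2t_3=0$ bookkeeping for a slightly heavier reliance on the uniqueness of the $UTV$-form, which the paper invokes in its final step anyway; and both $\{\alpha,-\beta\}$ and your $\{\alpha,\alpha+\beta\}$ generate the root lattice, so Remark~\ref{Remark_torus} puts $t$ in $Z(G)$ either way. Case (2) matches the paper's argument exactly.
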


\begin{proof}
Let us remind the formula for the commutator of elementary unipotents for the simple roots in the system $\mathbf B_2$ (see \cite{Steinberg}, Lemma~33). If $\alpha$ is a long simple root, $\beta$ is a short simple root, then
\begin{align*}
[x_\alpha(t), x_\beta(u)]&=x_{\alpha+\beta}(\pm tu)x_{\alpha+2\beta}(\pm tu^2),\\
[x_{\alpha+\beta}(t),x_\beta(u)]&=x_{\alpha+2\beta}(\pm 2tu).
\end{align*}

{\bf Case 1, $g\in C(\Gamma_\alpha)$}.
In this case
$$
x_\alpha(1), x_{\alpha+\beta}(1), x_{\alpha+2\beta}(1), x_{-\beta}(1), x_{-\alpha-2\beta}(1)\in \Gamma_\alpha.
$$
Starting with $x_\alpha(1)$ we obtain
\begin{multline*}
g=tx_\alpha(t_1)x_\beta(t_2)x_{\alpha+\beta}(t_3)x_{\alpha+2\beta}(t_4)=\\
=g^{x_\alpha(1)}=(tx_\alpha(c^\alpha_t-1))x_\alpha(t_1)(x_\beta(t_2)x_{\alpha+\beta}(\pm t_2)x_{\alpha+2\beta}(\pm t_2^2)) x_{\alpha+\beta}(t_3)x_{\alpha+2\beta}(t_4),
\end{multline*}
therefore $[t,x_\alpha(1)]=1$, $t_2=0$. 

Then 
$$
g^{x_{-\beta}(1)}=(tx_{-\beta}(c_t^{-\beta}-1)) x_\alpha(t_1) (x_\alpha (\pm 2t_3)x_{\alpha+\beta}(t_3))(x_{\alpha+\beta}(\pm t_4) x_{\alpha}(\pm t_4) x_{\alpha+2\beta}(t_4)),
$$
therefore $[t,x_{-\beta}(1)]=1$ (and so $t\in Z(G)$ by Remark~\ref{Remark_torus}), $t_4=0$ and $2t_3=0$. 

Consequently, 
$$
g=tx_\alpha(t_1) x_{\alpha+\beta}(t_3),\quad t\in Z(G), 2t_3=0.
$$
Since $[w_{\alpha+2\beta}(1), g]=1$, then
$$
g=g^{w_{\alpha+2\beta}(1)}=tx_\alpha(t_1) x_{\mathbf w_{\alpha+2\beta}(\alpha+\beta)}(t_3)=tx_\alpha(t_1) x_{-\beta}(\pm t_3),
$$
therefore $t_3=0$.

The first case is complete.

\smallskip
 
{\bf Case 2, $g\in C(\Gamma_{\alpha+\beta})$}.
In this case only
$$
x_\alpha(1), x_{\alpha+\beta}(1), x_{\alpha+2\beta}(1)\in \Gamma_\beta.
$$
Starting with $x_\alpha(1)$ we obtain
$$
g^{x_\alpha(1)}=(tx_\alpha(c^\alpha_t-1))x_\alpha(t_1)(x_\beta(t_2)x_{\alpha+\beta}(\pm t_2)x_{\alpha+2\beta}(\pm t_2^2)) x_{\alpha+\beta}(t_3) x_{\alpha+2\beta}(t_4),
$$
therefore $t_2=0$, $[t,x_\alpha(1)]=1$ and $g=t x_\alpha(t_1)x_{\alpha+\beta}(t_3)x_{\alpha+2\beta}(t_4)$. 

Let us use $x_{\alpha+\beta}(1)$:
$$
g^{x_{\alpha+\beta}(1)}=(t x_{\alpha+\beta}(c^{\alpha+\beta}_t-1)) x_\alpha(t_1) x_{\alpha+\beta}(t_3)) x_{\alpha+2\beta}(t_4),
$$
therefore  $[t,x_{\alpha+\beta}(1)]=1$, and thus 
$$
g=t x_\alpha(t_1)x_{\alpha+\beta}(t_3)x_{\alpha+2\beta}(t_4),\quad t\in Z(G),
$$
what was required.
\end{proof}

\begin{lemma}\label{Lemma-BCF}
If $\Phi=\mathbf B_l, \mathbf C_l, \mathbf F_4$, $l\geqslant 3$, $g=t x_{\alpha_1}(t_1)\dots x_{\alpha_m}(t_m)$, $\alpha_1,\dots, \alpha_m\in \Phi^+$,  the roots $\alpha_1,\dots, \alpha_m$ are ordered by their heights, $g\in C(\Gamma_{\alpha})$, then:

\emph{(1)}  if  $\alpha$ is a long root or $\alpha$ is short and $\Phi=\mathbf F_4$ or $\mathbf B_l$ for $l\geqslant 3$, then $g=t x_{\alpha_1}(t_1)$, where $t\in Z(G)$;

\emph{(2)} if $\alpha=e_1+e_2$ in the standard system~$\mathbf C_l$, $l\geqslant 3$, then $g=t x_{e_1+e_2}(t_1)x_{2e_1}(t_2)x_{2e_2}(t_3)$, where $t\in Z(G)$.
\end{lemma}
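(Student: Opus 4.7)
The plan is to follow the template established in Lemmas~\ref{G2-fields}, \ref{ADE-fields} and~\ref{B2-fields}: conjugate $g$ by carefully chosen elements of $\Gamma_\alpha$, compare the result with $g$ in the unique normal form $t'\prod_i x_{\alpha_i}(t'_i)$ provided by the uniqueness part of the Bruhat decomposition restricted to $B=TU$, and read off equations that force $t_i=0$ for the unwanted $\alpha_i$ and that $t$ commutes with enough $x_\beta(1)$ to conclude $t\in Z(G)$ via Remark~\ref{Remark_torus}.

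The first step is to identify $\Gamma_\alpha$ explicitly for each $(\Phi,\alpha)$ with $\Phi\in\{\mathbf B_l,\mathbf C_l,\mathbf F_4\}$. By the commutator formula~(R2), $x_\beta(1)\in\Gamma_\alpha$ iff no positive integer combination $i\alpha+j\beta$, $i,j\geqslant 1$, belongs to $\Phi$. In every case $\Gamma_\alpha$ contains $x_\alpha(1)$, the unipotents $x_{\pm\delta}(1)$ for all roots $\delta$ orthogonal to $\alpha$, and a family of ``high'' roots $\gamma$ near the top of $\Phi^+$ with $\alpha+\gamma\notin\Phi$. In the special case $\Phi=\mathbf C_l$, $\alpha=e_1+e_2$, a direct calculation shows that $x_{2e_1}(u)$ and $x_{2e_2}(u)$ commute with $x_\alpha(1)$ for every $u$ (no positive integral combination of $e_1+e_2$ with $2e_1$ or $2e_2$ is a root), and this is exactly what forces the two extra factors to persist in the final normal form of~(2).

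Next, I induct on the height of $\gamma$ to show that $t_\gamma=0$ for every $\gamma\in\Phi^+\setminus S$, where $S=\{\alpha\}$ in case~(1) and $S=\{e_1+e_2,2e_1,2e_2\}$ in case~(2). Assuming the claim holds for all heights less than that of $\gamma$, I choose $\beta\in\Gamma_\alpha$ so that the expansion of $g^{x_\beta(1)}$ via the Chevalley commutator formula produces an additional $x_\gamma$-factor whose coefficient involves only $t_\delta$ already proven to be zero, so the uniqueness of the normal form forces $t_\gamma=0$. When $\beta\perp\alpha$ is available I also conjugate by the Weyl-type element $w_\beta(1)$ (both $x_{\pm\beta}(1)\in\Gamma_\alpha$ in this situation), which gives reflection-type constraints settling the characteristic-$2$ subtleties that occur in $\mathbf C_l$ and $\mathbf F_4$. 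The subsystem $\langle\alpha,\gamma\rangle\cap\Phi$ is always of type $\mathbf A_1\times\mathbf A_1$, $\mathbf A_2$ or $\mathbf B_2$, so the commutator bookkeeping at each step reduces to the computations already carried out in Lemmas~\ref{ADE-fields} and~\ref{B2-fields}. Simultaneously, each equation produces $[t,x_\beta(1)]=1$ for the chosen $\beta$, and collectively these $\beta$ cover a generating subset of $\Phi$ (containing $\Delta$), so Remark~\ref{Remark_torus} gives $t\in Z(G)$.

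The main obstacle is the sheer volume of case analysis, in particular for $\mathbf F_4$, where $\Gamma_\alpha$ has a complicated description and long and short $\alpha$ must be treated separately for all $48$ roots. For $\mathbf C_l$ with short $\alpha=e_1+e_2$ the delicate point is the \emph{negative} direction: one must confirm that no $x_\gamma$-factor with $\gamma\notin\{\alpha,2e_1,2e_2\}$ can survive, while $x_{2e_1}$ and $x_{2e_2}$ genuinely cannot be eliminated. The latter is immediate from the above description of $\Gamma_\alpha$, since every $x_\beta(1)\in\Gamma_\alpha$ and every $w_\beta(1)$ for $\beta\perp\alpha$ preserves the pair $\{X_{2e_1},X_{2e_2}\}$ setwise, so no conjugation equation can produce $t_{2e_1}=0$ or $t_{2e_2}=0$.
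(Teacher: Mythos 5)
Your overall template (conjugate by elements of $\Gamma_\alpha$, compare normal forms in $B=TU$, reduce the bookkeeping to the rank-two subsystems already treated, and finish with Remark~\ref{Remark_torus}) is the same as the paper's, and for \emph{long} $\alpha$ your argument essentially reproduces the paper's Step~1. But there is a genuine gap in your description of $\Gamma_\alpha$, and it undermines the part of your plan that carries the short-root cases. You assert that $\Gamma_\alpha$ always contains $x_{\pm\delta}(1)$ for every root $\delta$ orthogonal to $\alpha$, and you then lean on the Weyl-type elements $w_\delta(1)$ for such $\delta$ both to settle the ``characteristic-$2$ subtleties'' and to accumulate the relations $[t,x_\beta(1)]=1$ on a generating set of $\Phi$. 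In the doubly laced systems this is false precisely when $\alpha$ is short: two orthogonal short roots can sum to a long root, and the corresponding structure constant is $\pm 2$. For instance, in $\mathbf C_l$ with $\alpha=e_1+e_2$ one has $[x_{e_1-e_2}(1),x_{e_1+e_2}(1)]=x_{2e_1}(\pm 2)\neq 1$ in general, so $x_{\pm(e_1-e_2)}(1)\notin\Gamma_\alpha$ (and indeed the paper's explicit list of $\Gamma_{e_1+e_2}$ omits it); similarly in $\mathbf B_l$ with $\alpha=e_1$ the orthogonal short roots $e_i$, $i\geqslant 2$, satisfy $[x_{e_i}(1),x_{e_1}(1)]=x_{e_1+e_i}(\pm2)$ and are excluded from $\Gamma_{e_1}$. (Whether these elements happen to lie in $\Gamma_\alpha$ depends on whether $2=0$ in $R$ --- exactly the degeneracy you were invoking them to control.) For long $\alpha$ the claim is fine, since $\delta\perp\alpha$ with $\alpha+\delta\in\Phi$ would force $|\alpha+\delta|>|\alpha|$, which is impossible.

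The consequence is that for short $\alpha$ in $\mathbf B_l$, $\mathbf C_l$, $\mathbf F_4$ you cannot run the orthogonal-reflection step, and you have not exhibited an alternative supply of conjugators sufficient to kill the unwanted coefficients and to verify $[t,x_\beta(1)]=1$ on a set of roots generating $\Phi$. The paper handles these cases by entirely explicit conjugations by specific members of the true $\Gamma_\alpha$ --- e.g.\ $x_{e_l-e_i}(1)$, $x_{e_j+e_l}(1)$, $x_{e_k-e_i}(1)$ for $\mathbf B_l$ with $\alpha=e_l$, and $x_{e_1\pm e_i}(1)$, $x_{e_2\pm e_i}(1)$ for $\mathbf C_l$ with $\alpha=e_1+e_2$ --- crucially using $l\geqslant 3$ to guarantee enough such roots exist; no $w_\delta(1)$ with $\delta\perp\alpha$ is used there. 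Your height induction and your observation that $X_{2e_1},X_{2e_2}$ cannot be eliminated in case (2) are sound in spirit, but to close the argument you must replace the orthogonality criterion by the correct root-theoretic condition ($i\alpha+j\delta\notin\Phi$ for all $i,j\geqslant 1$, which is sufficient over every ring) and redo the short-root cases with conjugators drawn from the resulting, smaller, set.
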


\begin{proof}

{\bf Step 1.} 
Suppose that $\alpha=\alpha_1$ is a long simple root of~$\Phi$. In this case any  other positive root $\gamma$ of~$\Phi$ form together with~$\alpha$ the following configuration:

(1) $\alpha$ and $\gamma$ are orthogonal (and their sum is not a root);

(2) $\alpha$ and $\gamma$ or $\alpha$ and $\gamma-\alpha$ are simple roots of the system~$\mathbf A_2$;

(3) $\alpha$ and $\gamma$ or $\alpha$ and $\gamma-\alpha$ are simple roots of the system~$\mathbf B_2$, where $\alpha$ is long.

In the second case according to Lemma~\ref{ADE-fields} in $x_\gamma(t_\gamma)$ and $x_{\alpha+\gamma}(t_{\alpha+\gamma})$ (or $x_{\gamma-\alpha}(t_{\gamma-\alpha})$) we have $t_\gamma=t_{\gamma\pm \alpha}=0$, also $[x_\alpha(1),t]=1$, $[x_\gamma(1),t]=1$.

In the third case according to Lemma~\ref{B2-fields} in $x_\gamma(t_\gamma)$, $x_{\alpha+\gamma}(t_{\alpha+\gamma})$ and $x_{\alpha+2\gamma}(t_{\alpha+2\gamma})$ we have $t_\gamma=t_{\alpha+\gamma}=t_{\alpha+2\gamma}=0$, also $[x_\gamma(1),t]=1$.

Applying these arguments we obtain $g=tx_\alpha(t_1)\dots x_\gamma(t_\gamma)\dots$, where all $\gamma$ appeared in~$g$ are orthogonal to~$\alpha$.

After that we use the same arguments as in Lemma~\ref{ADE-fields} with conjugating~$g$ by $w_\gamma(1)$ starting from simple roots~$\gamma$ and finishing by the highest roots.

Therefore for all long roots~$\alpha$ the lemma is proved.

Now we can assume that $\alpha$ is short.

\smallskip

{\bf Step 2.} Let us look at the system~$\mathbf B_l$, $l\geqslant 3$.

The roots of this system are $\{ \pm e_i, \pm e_i\pm e_j \mid 1\leqslant i< j\leqslant l\}$, where $e_1,\dots, e_l$ is a standard basis of the Euclidean space. We can suppose in our case, that $\alpha=\alpha_1=e_l$, $\alpha_2=e_{l-1}-e_l$, \dots, $\alpha_l=e_1-e_2$. 

Let us start with conjugation by $x_\alpha(1)\in \Gamma_\alpha$:
\begin{multline*}
g^{x_\alpha(1)}=(tx_\alpha(c^\alpha_t-1))x_\alpha(t_1)\dots (x_{e_i}(t_{e_i})x_{e_i+e_l}(\pm 2t_{e_i}))\dots \\
\dots (x_{e_i-e_l}(t_{e_i-e_l})x_{e_i}(\pm t_{e_i-e_l})x_{e_i+e_l}(\pm t_{e_i-e_l}))\dots,
\end{multline*}
therefore $[x_\alpha(1),t]=1$ and $t_{e_i-e_l}=0$ for all $i=1,\dots, l-1$.

Similarly let us conjugate $g$ by $x_{e_l-e_i}(1)\in \Gamma_\alpha$:
\begin{multline*}
 g^{x_{e_l-e_i}(1)}=(tx_{e_l-e_i}(c_t^{e_l-e_i}-1))x_\alpha(t_1)\dots (x_{e_i+e_j}(t_{e_i+e_j})x_{e_j+e_l}(t_{e_i+e_j}))\dots \\
\dots (x_{e_i}(t_{e_i})x_{e_l}(\pm t_{e_i})x_{e_l+e_i}(\pm t_{e_i}^2))\dots,
\end{multline*}
therefore $[x_{e_l-e_i}(1),t]=1$ (and so $t\in Z(G)$) and $t_{e_i+e_j}=0$ for all $i\ne j$, $i,j\ne l$, and $t_{e_i}=0$ for all $i\ne l$.

Now
$$
g=t x_\alpha(t_1) \dots x_{e_i-e_j}(t_{e_i-e_j})\dots x_{e_i+e_l}(t_{e_i+e_l})\dots,\text{ where }i,j\ne l, i< j.
$$
Let us conjugate $g$ by $x_{e_j+e_l}(1)\in \Gamma_\alpha$:
$$
 g^{x_{e_j+e_l}(1)}=t x_\alpha(t_1)\dots (x_{e_i-e_j}(t_{e_i-e_j})x_{e_i+e_l}(t_{e_i-e_j}))\dots x_{e_i+e_l}(t_{e_i+e_l})\dots,
$$
therefore $t_{e_i-e_j}=0$ and
$$
g=t x_\alpha(t_1) x_{e_{l-1}+e_l}(t_{e_{l-1}+e_l})\dots x_{e_1+e_l}(t_{e_1+e_l}).
$$
Since $l\geqslant 3$, for every $i\ne l$ we have $k\ne i,l$. Taking for such~$k$ the element $x_{e_k-e_i}(1)\in \Gamma_\alpha$, we have
 $$
g^{x_{e_k-e_i}(1)}=t x_\alpha(t_1)\dots \dots (x_{e_i+e_l}(t_{e_i+e_l})x_{e_k+e_l}(t_{e_i+e_l}))\dots,
$$
therefore $t_{e_i+e_l}=0$ and $g=tx_\alpha(t_1)$, what was required.

\smallskip

{\bf Step 3.} Looking at the system~$\mathbf F_4$,  the same arguments as on the step 2 with conjugation consequently by $x_\alpha(1)$, $x_{-\alpha_2}(1)$, $x_{-\alpha_3}(1)$, $x_{-\alpha_4}(1)$, etc., yield the same result: $g=tx_\alpha(t_1)$, where $t\in Z(G)$.

\smallskip

{\bf Step 4.}
The last case is the most interesting: the root system $\mathbf C_l$, $l\geqslant 3$. The roots are
$$
\pm 2e_i\text{ and } \pm e_i \pm e_j,\text{ where } 1\leqslant i< j\leqslant l,
$$
 the set of positive roots is
$$
e_i-e_j,\ i> j,\quad e_i+e_j,\ i\ne j,\quad 2e_i.
$$
As above we suppose here that $\alpha=e_1+e_2$.

The set $\Gamma_\alpha$ consists of
$$
\{ e_1+e_2;  2e_1; 2e_2; \pm 2e_i, e_1\pm e_i, e_2\pm e_i, i\geqslant 3; \pm e_i\pm e_j, i\ne j, i,j \geqslant 3\}.
$$
Conjugating $g$ by $x_{e_1+e_i}(1)$, $i\geqslant 3$, we have
\begin{multline*}
g^{x_{e_1+e_i}(1)}=(tx_{e_1+e_i}(c^{e_1+e_i}_t-1))x_{e_1+e_2}(t_{e_1+e_2})\dots (x_{e_2-e_i}(t_{e_2-e_i})x_{e_1+e_2}(t_{e_2-e_i}))\dots\\
\dots (x_{e_k-e_i}(t_{e_k-e_i})x_{e_1+e_k}(t_{e_k-e_i})\dots,\quad 3\leqslant k< i,
\end{multline*}
and these additional $x_{e_1+e_2}(\dots)$ and $x_{e_1+e_k}(\dots)$ cannot appear from any other elements under this conjugation. Therefore $t_{e_k-e_i}=0$ for all $2\leqslant k < i\leqslant l$. Also $[t,x_{e_1+e_i}(1)]=1$. 

Conjugating $g$ by $x_{e_2+e_i}(1)$, $i\geqslant 3$, we have
\begin{multline*}
g^{x_{e_2+e_i}(1)}=(tx_{e_2+e_i}(c^{e_2+e_i}_t-1))(x_{e_1-e_2}(t_{e_1-e_2})x_{e_1+e_i}(t_{e_1-e_2}))\dots \\\dots(x_{e_1-e_i}(t_{e_1-e_i})x_{e_1+e_2}(t_{e_1-e_i}))\dots,
\end{multline*}
and these additional $x_{e_1+e_2}(\dots)$ and $x_{e_1+e_i}(\dots)$ cannot appear from any other elements under this conjugation. Therefore $t_{e_1-e_2}=0$ and $t_{e_1-e_i}=0$ for all $3\leqslant  i\leqslant l$. Also $[t,x_{e_2+e_i}(1)]=1$.

Conjugating $g$ by $x_{e_1+e_2}(1)$, we have
$$
g^{x_{e_1+e_2}(1)}=(tx_{e_1+e_2}(c^{e_1+e_2}_t-1)) x_{\alpha_1}(a_1)\dots x_{\alpha_m}(a_m).
$$
Therefore  $[t,x_{e_1+e_2}(1)]=1$. 

Conjugating $g$ by $x_{e_2-e_i}(1)$, $i\geqslant 3$, we have
\begin{multline*}
g^{x_{e_2-e_i}(1)}=(tx_{e_2-e_i}(c^{e_2-e_i}_t-1))\dots (x_{e_1+e_i}(t_{e_1+e_i})x_{e_1+e_2}(t_{e_1+e_i}))\dots (x_{2e_i}(t_{2e_i})x_{e_2+e_i}(\pm t_{2e_i})x_{2e_2}(\pm t_{2e_2}))\\
\dots(x_{e_i+e_j}(t_{e_i+e_j})x_{e_2+e_j}(t_{e_i+e_j}))\dots (x_{e_2+e_i}(t_{e_2+e_i})x_{2e_2}(\pm 2t_{e_2+e_i})),\quad j\geqslant 3, j\ne i.
\end{multline*}
 Therefore $t_{e_1+e_i}=0$ and $t_{2e_i}=0$ for $i\geqslant 3$, $t_{e_i+e_j}=0$ for all $3\leqslant  i,j\leqslant l$, $i\ne j$. Also $[t,x_{e_2-e_i}(1)]=1$. 

The same situation is with conjugating by $x_{e_1-e_i}(1)$, $i\geqslant 3$, which gives us $t_{e_2+e_i}=0$ for $i\geqslant 3$ and  $[t,x_{e_1-e_i}(1)]=1$. 

Therefore $t\in Z(G)$ and 
$$
g=t x_{e_1+e_2}(t_{e_1+e_2})x_{2e_1}(t_{2e_1})x_{2e_2}(t_{2e_2}).
$$
what was required. Direct (and easy) calculations show that such $g$ is always in $C(\Gamma_\alpha)$ for this root system.
\end{proof}

So Theorem 2 is proved.


\subsection{Double centralizers of unipotent elements in Chevalley groups over local rings}\leavevmode

In this section we will prove the same theorem for large subgroups of the Chevalley groups over local rings.
In the paper~\cite{bunina2022} we already obtained a similar result for local rings, but we supposed their that in some root systems these rings contain $1/2$ or~$1/3$. Now it is important to prove the result for all local rings, with no restrictions.

\begin{theorem}[c.f.~\cite{bunina2022}]
For any Chevalley group (or its large subgroup) $G=G_\pi(\Phi,R)$, where $\Phi$ is an irreducible root system of a rank $>1$, $R$ is a local ring, if for some $\alpha\in \Phi$ an element $g\in C_G( \Gamma_\alpha)$, then $g=c x_\alpha(t)$, where $t\in R$, $c\in Z(G)$, except the case $\Phi=\mathbf C_l$, $l\geqslant 2$, and $\alpha$ is short.

In the case $\Phi=\mathbf C_l=\{ \pm e_i\pm e_j\mid 1\leqslant i,j\leqslant l,i\ne j\}\cup \{ \pm 2e_i\mid 1\leqslant i\leqslant l\}$ and $\alpha=e_1+e_2$ if $g\in C_G(\Gamma_\alpha)$, then 
$$
g=c x_{e_1+e_2}(t_1)x_{2e_1}(t_2)x_{2e_2}(t_3),\quad c\in Z(G).
$$
\end{theorem}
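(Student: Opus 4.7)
The plan is to run the proof of Theorem~2 almost verbatim, replacing the Bruhat decomposition over a field with its counterpart over a local ring. Since Lemmas~\ref{G2-fields}--\ref{Lemma-BCF} already cover the case $g\in B$ over an arbitrary commutative ring with~$1$ (as the remark preceding Lemma~\ref{G2-fields} explicitly states), the only substantive task specific to the local setting is the analogue of Lemmas~4--8, namely the reduction of an arbitrary element $g\in C_G(\Gamma_\alpha)$ to the Borel subgroup~$B$.

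For this first step I would invoke the Bruhat-type decomposition available over a local ring~$R$. Since $R$ is local one has $G_\pi(\Phi,R)=E_\pi(\Phi,R)\cdot T_\pi(\Phi,R)$ by the classical Abe--Matsumoto--Stein theorem, and the elementary factor carries a Bruhat decomposition, so that every $g\in G_\pi(\Phi,R)$ admits a representation
$$
g=t\cdot x_{\alpha_{i_1}}(a_{i_1})\cdots x_{\alpha_{i_k}}(a_{i_k})\cdot \dot{\mathbf w}\cdot x_{\alpha_1}(b_1)\cdots x_{\alpha_m}(b_m),
$$
with $t\in T$, $\dot{\mathbf w}$ a fixed lift of $\mathbf w\in W$, the left unipotent factor supported on positive roots sent to negative ones by $\mathbf w^{-1}$, and $b_i=0$ whenever $\mathbf w(\alpha_i)\in\Phi^+$. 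Uniqueness of this representation, which is what the conjugation arguments really use, reduces via the natural reduction map $R\to R/\mathfrak{m}$ to Bruhat uniqueness over the residue field, which was already established in Theorem~2.

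With this in hand, I would repeat the arguments of Lemmas~4--8: conjugate $g$ by $x_\beta(1)\in\Gamma_\alpha$ for the maximal root $\beta=\gamma$, for $\beta=\alpha$, for simple roots $\beta$ orthogonal to $\alpha$, and by $w_\beta(1)=x_\beta(1)x_{-\beta}(-1)x_\beta(1)$ when both $x_{\pm\beta}(1)\in\Gamma_\alpha$. Each conjugation is rewritten using the Chevalley commutator formula together with the relations (R3)--(R6), and the resulting expression is compared with the Bruhat decomposition of $g$ itself. Uniqueness then forces successively $\mathbf w(\gamma)=\gamma$, $\mathbf w(\beta)\in\Phi^+$, and $\mathbf w(\beta)=\beta$ for the relevant~$\beta$. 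The root-system-by-root-system combinatorial analysis of Lemma~8, which relies only on the impossibility of equations of the form $a_i=a_i\pm 1$, then yields $\mathbf w=e_W$ in all cases except the short-root case of~$\mathbf C_l$. No characteristic restriction is required because the conjugating elements are chosen from $\Gamma_\alpha$, where the Chevalley commutators involve only combinatorial cancellations valid over any commutative ring.

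Once $g\in B$ is secured, Lemmas~\ref{G2-fields}--\ref{Lemma-BCF} apply verbatim and finish the proof, producing the stated form in the generic case and the triple-unipotent form in the $\mathbf C_l$ exception. The main obstacle I expect is justifying Bruhat uniqueness in a form sharp enough for the conjugation arguments: over a field it was immediate, while over a local ring one needs that after Chevalley rearrangement the conjugate $g^{x_\beta(1)}$ has its unipotent factors in exactly the subgroups allowed by the decomposition, so that the comparison of Bruhat expressions translates into equalities of parameters in $R$ itself (not merely in $R/\mathfrak{m}$). Handling a large subgroup $G\subseteq G_\pi(\Phi,R)$ adds no new difficulty: all computations take place inside the ambient Chevalley group, and the final statement is pulled back to $G$ using $Z(G_\pi(\Phi,R))\cap G\subseteq Z(G)$ together with $X_\alpha\subseteq G$.
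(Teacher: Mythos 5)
Your reduction of the problem to the case $g\in B$ rests on a Bruhat-type decomposition $G=\bigcup_w B\dot{\mathbf w}B$ over the local ring $R$, and this is where the argument breaks down: no such decomposition exists over a local ring that is not a field. Already in $\SL_2(\mathbb{Z}_p)$ the element $x_{-\alpha}(p)=\left(\begin{smallmatrix}1&0\\p&1\end{smallmatrix}\right)$ lies neither in $B$ (its lower-left entry is nonzero) nor in $B\dot{w}B$ (whose elements have lower-left entry a unit), so it admits no expression of the form you write down. Reducing "uniqueness" modulo $\mathfrak m$ cannot repair this, because the whole difficulty in passing from the residue field to $R$ is precisely to control the parameters lying in $\Rad R$, which the reduction map kills. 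Consequently the conjugation arguments of Lemmas~4--8, which compare two Bruhat normal forms of the same element, have nothing to be run on.

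The paper takes a different route for exactly this reason. It first reduces $g$ modulo $\Rad R$ and applies the field-case theorem to conclude that $\overline g=\overline c\, x_\alpha(\overline t)$ in $G_\pi(\Phi,R/\Rad R)$; this shows that in a Gauss decomposition $g=utv u'$ (with $G=UTVU$, valid over local rings by Abe, Abe--Suzuki, Matsumoto) all parameters occurring in $v$ and $u'$ lie in $\Rad R$. Using the identity $x_{-\gamma}(s)x_\gamma(t)=h_\gamma\bigl(\tfrac{1}{1+st}\bigr)x_\gamma(t(1+st))x_{-\gamma}\bigl(\tfrac{s}{1+st}\bigr)$, which is legitimate because $1+st\in R^*$ when $s\in\Rad R$, the trailing $U$-factor is absorbed, yielding a \emph{unique} expression $g=utv$ with $v$-parameters in the radical. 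Only then are the conjugations by elements of $\Gamma_\alpha$ performed, with uniqueness of the $UTV$ form (not of a Bruhat form) forcing the negative-root parameters to vanish and landing $g$ in $UT=B$. Your final step --- invoking Lemmas~\ref{G2-fields}--\ref{Lemma-BCF}, which hold over arbitrary commutative rings, once $g\in B$ --- is correct and coincides with the paper, but the bridge you build to get there is not available. To fix the proposal you would need to replace the Bruhat decomposition by the Gauss decomposition together with the preliminary reduction modulo the radical.
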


\begin{proof}
Let $\Gamma_\alpha=\{ x_\beta(1)\mid [x_\beta(1),x_\alpha(1)]=e\}$, $g\in C_G (\Gamma_\alpha)$. 

Let us denote the residue field $R/\Rad R$ by~$k$ and the images of the elements $g, x_\beta(1), t, u$ etc. in the quotient group $\overline G=G_\pi(\Phi, k)$ by $\overline g, x_\beta(\overline 1), \overline t, \overline u$, respectively.

Since $g\in C_G(\Gamma_\alpha)$, we have $\overline g\in C_{\overline G}(\overline \Gamma_\alpha)$, where $\overline \Gamma_\alpha=\{ x_\beta(\overline 1)\mid [x_\beta(\overline 1),x_\alpha(\overline 1)=\overline e\}$. Since $\overline G$ is the Chevalley group over a field, by Theorem~2  
$$
\overline g=\overline c x_\alpha (\overline t),\ \overline c\in Z(\overline G)\quad (\text{or }\overline c x_{e_1+e_2}(\overline t_1)x_{2e_1}(\overline t_2)x_{2e_2}(\overline t_3),\text{ if }\Phi=\mathbf C_l,\ l\geqslant 2,\ \alpha=e_1+e_2).
$$

As in the previous section we assume that $\alpha=\alpha_1$ is the first simple root.

In a Chevalley group $G$ over a local ring there exists a Gauss decomposition of the form $G=UTVU$ (see~\cite{Smolensky-Sury-Vavilov}, \cite{Abe1}, \cite{AS},~\cite{Iwahori-Matsumoto}). So we fix a representation of~$g$ as
$$
 g = x_{\alpha_{1}}(r_{1})\ldots x_{\alpha_{m}}(r_{m}) t x_{-\alpha_{1}}(s_{1})\ldots x_{-\alpha_{m}}(s_{m})x_{\alpha_{1}}(t_{1})\ldots x_{\alpha_{m}}(t_{m}),
$$
where  $\alpha_{i}$ are positive roots, $r_{i}, s_{i},t_i \in R$, $t\in T(R)$. Since the image of~$g$ under canonical homomorphism is
$$
 x_\alpha (\overline r_1) \overline t\quad (\text{or }x_{e_1+e_2}(\overline r_1)x_{2e_1}(\overline r_2)x_{2e_2}(\overline r_3)\overline t\text{ in the case }\Phi=\mathbf C_l, l\geqslant 2, \alpha=e_1+e_2).
$$
Therefore in all cases the elements $s_1,\dots, s_m, t_1,\dots, t_m\in \Rad R$.

According to the formula
$$
x_{-\gamma}(s) x_{\gamma} (t)=h_\gamma \left(\frac{1}{1+st}\right) x_\gamma (t(1+st)) x_{-\gamma}\left(\frac{s}{1+st}\right)\text{ for all }\gamma\in \Phi\eqno (*)
$$
(it is checked directly through a representation of $x_\gamma(\cdot)$, $x_{-\gamma}(\cdot)$, $h_\gamma(\cdot)$ by matrices from $\SL_2$). Since $1+st\in R^*$ if $s\in \Rad R$ or $t\in \Rad R$, we can move all $x_\beta(t_i)$ from the right side of decomposition to its left side. So we obtain a representation of~$g$ of the form
$$
g=x_{\alpha_{1}}(r_{1})\ldots x_{\alpha_{m}}(r_{m}) t x_{-\alpha_{1}}(s_{1})\ldots x_{-\alpha_{m}}(s_m),\quad s_1,\dots, s_m\in \Rad R.
$$
Note that, unlike the original Gauss decomposition, such a decomposition is uniquely defined. Indeed, suppose that
$$ 
u_{1}t_{1}v_{1} = u_{2}t_{2}v_{2}.
$$
If we move all the positive roots to one side:
$t_{1}v_{1}v_{2}^{-1}t_{2}^{-1} = u_{1}^{-1}u_{2}$,  then since
$TV \;\cap\; U = 1$, therefore $u_1=u_2$ and $v_1v_2^{-1}=t_2t_1^{-1}$. Since $T\cap V=1$, we have $v_1=v_2$, $t_1=t_2$, so this form of decomposition is unique.

Let us use also the formula
$$
x_\gamma(1) x_{-\gamma}(s) x_{\gamma} (1)^{-1}=h_\gamma \left(\frac{1}{1-s}\right) x_\gamma (s^2-s) x_{-\gamma}\left(\frac{s}{1-s}\right)\text{ for all }\gamma\in \Phi\eqno (**)
$$
which follows directly from~$(*)$.

As in previous section we suppose that the roots $\alpha_1,\dots, \alpha_m$ are ordered by their heights.

In our case of Gauss decomposition $UTV$ it is convenient  to suppose that for $g=utv\in UTV$ always
$$
u=x_{\alpha_1}(r_1)\dots x_{\alpha_m}(r_m)tx_{-\alpha_m}(s_m)\dots x_{-\alpha_1}(s_1).
$$

{\bf Case 1. Root systems with the roots of the same length: $\mathbf A_l$, $\mathbf D_l$, $\mathbf E_l$}.

Consider a root $\beta \in \Phi$ such that  $\alpha + \beta \notin \Phi$, and consequently the element $x_{\beta}(1)\in \Gamma_\alpha$. Since $g \in C(\Gamma_\alpha) $, then $g^{x_{\beta}(1)} = g$. Let us consider, how conjugation by this element acts on~$g$ and its  factors (assume that $\beta$ is positive):
\begin{multline*}
g=g^{x_{\beta}(1)} = x_{\alpha_{1}}(r_{1})^{x_{\beta}(1)}\ldots x_{\alpha_{m}}(r_{m})^{x_{\beta}(1)}t^{x_{\beta}(1)}x_{-\alpha_{m}}(s_{m})^{x_{\beta}(1)}\ldots x_{-\alpha_{1}}(s_{1})^{x_{\beta}(1)}=\\
= x_{\alpha_{1}}(r_{1}) \dots ( x_{\alpha_i}(r_i)x_{\alpha_i+\beta}(\pm r_i)) \dots x_{\alpha_m}(r_m) \cdot (x_\beta(c_t^\beta-1)  t)\cdot\\
\cdot  (x_{-\alpha_m}(s_m)x_{-\alpha_m+\beta}(\pm s_m)) \dots \left(h_\beta\left( \frac{1}{1-s_{-\beta}}\right)x_\beta(s_{-\beta}^2-s_{-\beta})x_{-\beta}\left( \frac{s_{-\beta}}{1-s_{-\beta}}\right)\right)
\dots x_{-\alpha_1}(s_1).
\end{multline*}
Let us  analyze the  obtained equality.

Note that from the left-hand side of $t$ there are only unipotent elements with positive roots, that is, an element of~$U$. From the right side of~$t$ there is an element  $h_\beta\left( \frac{1}{1-s_{-\beta}}\right)$ of the torus~$T$. Since torus obviously normalizes any $X_\alpha$, we can move this element to the left and obtain $t\cdot h_\beta\left( \frac{1}{1-s_{-\beta}}\right)$ instead of~$t$. 
Since all unipotents with positive roots which can appear by conjugation of any unipotent with a negative root~$-\gamma$ by~$x_{\beta}(1)$, have heights strictly smaller than~$\gamma$, then  one can move them to the left towards $T$ and $U$, and it is impossible during this movement to meet unipotents with opposite roots. This means that it is possible to move all unipotents with positive roots that are located to the right of~$t$, to the left of~$t$. Therefore $t$ and $h_\beta\left( \frac{1}{1-s_{-\beta}}\right)$ will not be changed. After that, $g^{x_\beta(1)}$ will be written in the form of $UTV$, that is
$$
t=t\cdot h_\beta\left( \frac{1}{1-s_{-\beta}}\right), 
$$
hence $s_{-\beta}=0$.

Therefore any new $x_\beta(\cdot)$ cannot appear from the left-hand or right-hand  part of~$t$ in $g^{x_\beta(1)}$.  Consequently, in the expression $t^{x_\beta(1)}=x_\beta(c_t^\beta-1)$ we necessarily have  $c_t^\beta=1$, that is
$$
[t,x_\beta(1)]=1.
$$

If $\beta\in \Gamma_\alpha\cap \Phi^+$ and $\gamma=\beta-\alpha\in \Phi$, then
\begin{multline*}
g=g^{x_{\beta}(1)} 
=u'tx_{-\alpha_m}(s_m)^{x_\beta(1)}\dots x_{-\alpha_i}(s_i)^{x_\beta(1)}\dots x_{-\gamma}(s_\gamma)^{x_\beta(1)}\dots x_{-\alpha_1}(s_1)=\\
=u't x_{-\alpha_m}(s_m)x_{-\alpha_m+\beta}(\pm s_{m})\dots x_{-\alpha_i}(s_i)x_{-\alpha_i+\beta}(\pm s_i)\dots x_{-\gamma}(s_{-\gamma})x_\alpha(\pm s_{-\gamma})\dots x_{-\alpha_1}(s_1).
\end{multline*}
In the part $V$ a new element $x_\alpha(\pm s_{-\gamma})$ appeared, and it is impossible to obtain $x_\alpha(\cdot)$ from any other conjugation $x_\delta(\cdot)^{x_\beta(1)}$. When we move $x_\alpha(\pm s_{-\gamma})$ and other $x_{-\delta+\beta}(\cdot)$, $-\delta+\beta\in \Phi^+$, to the left side, we also cannot obtain any new $x_\alpha(\cdot)$. Therefore $s_{-\gamma}=0$.

So we see that $s_{-\gamma}=0$ for all roots $\gamma\in \Phi^+$ such that $\alpha+\gamma\notin \Phi$, and for all roots $\gamma\in \Phi^+$ such that $\alpha+\gamma\in \Phi$, but $\alpha+2\gamma \notin \Phi$. But for simply laced root systems all positive roots have one of these properties. 

Therefore $g\in UT$ and we come to the situation of Lemma~\ref{ADE-fields}, which was proved for arbitrary commutative rings.
By this lemma $g=cx_{\alpha_1}(r_1)$, where $c\in Z(G)$. Consequently, Theorem~3 is proved for the root systems $\mathbf A_l, \mathbf D_l, \mathbf E_l$, $l\geqslant 2$.

\smallskip

{\bf Case 2. The root system $\mathbf G_2$}. 

Let us repeat that this root system has simple roots $\alpha,\beta$, positive roots $\alpha,\beta, \alpha+\beta, \alpha+2\beta, \alpha+3\beta, 2\alpha+3\beta$,
$$
\Gamma_\alpha=\{ x_\alpha(1), x_{\alpha+\beta}(1), x_{\alpha+2\beta}(1), x_{2\alpha+3\beta}(1), x_{-\beta}(1), x_{-\alpha-2\beta}(1), x_{-\alpha-3\beta}(1)\}
$$
and 
$$
\Gamma_\beta=\{ x_\beta(1), x_{\alpha+3\beta}(1), x_{2\alpha+3\beta}(1), x_{-\alpha}(1), x_{-2\alpha-3\beta}(1)\}.
$$
{\bf In the first case} $g\in C(\Gamma_\alpha)$ by the same arguments as in the previous case
$$
s_{-\alpha}=s_{-\alpha-\beta}=s_{-\alpha-2\beta}=s_{-2\alpha-3\beta}=r_{\beta}=r_{\alpha+2\beta}=r_{\alpha+3\beta}=0\text{ and } t\in Z(G),
$$
therefore 
$$
g=x_\alpha(r_\alpha)x_{\alpha+\beta}(r_{\alpha+\beta})x_{2\alpha+3\beta}(r_{2\alpha+3\beta})t x_{-\beta}(s_{-\beta})x_{-\alpha-3\beta}(s_{-\alpha-3\beta}),\quad t\in Z(G).
$$
Conjugating $g$ by $x_{2\alpha+3\beta}(1)$, we have
$$
g^{x_{2\alpha+3\beta}(1)}=x_\alpha(r_\alpha)x_{\alpha+\beta}(r_{\alpha+\beta})x_{2\alpha+3\beta}(r_{2\alpha+3\beta})t x_{-\beta}(s_{-\beta})x_{-\alpha-3\beta}(s_{-\alpha-3\beta})x_{\alpha}(\pm s_{-\alpha-3\beta}),
$$
therefore $s_{-\alpha-3\beta}=0$ and 
$$
g=x_\alpha(r_\alpha)x_{\alpha+\beta}(r_{\alpha+\beta})x_{2\alpha+3\beta}(r_{2\alpha+3\beta})t x_{-\beta}(s_{-\beta}),\quad t\in Z(G).
$$
Conjugating $g$ by $x_{-\beta}(1)$, we have
$$
g^{x_{-\beta}(1)}=x_\alpha(r_\alpha)x_{\alpha}(\pm 3r_{\alpha+\beta})x_{\alpha+\beta}(r_{\alpha+\beta})x_{2\alpha+3\beta}(r_{2\alpha+3\beta})t x_{-\beta}(s_{-\beta}),
$$
therefore $3r_{\alpha+\beta}=0$.

Conjugating $g$ by $x_{\alpha+2\beta}(1)$, we have
\begin{multline*}
g^{x_{\alpha+2\beta}(1)}=\\
=x_\alpha(r_\alpha)x_{\alpha+\beta}(r_{\alpha+\beta})x_{2\alpha+3\beta}(r_{2\alpha+3\beta}\pm 3r_{\alpha+\beta})t  x_{2\alpha+3\beta}(\pm 3s_{-\beta})x_{\alpha}(\pm 3 s_{-\beta}^2)x_{\alpha+\beta}(\pm 2 s_{-\beta}) x_{-\beta}(s_{-\beta})=\\
=x_\alpha(r_\alpha\pm 3s_{-\beta}^2)x_{\alpha+\beta}(r_{\alpha+\beta}\pm 2s_{-\beta})x_{2\alpha+3\beta}(r_{2\alpha+3\beta}\pm 3 r_{\alpha+\beta}\pm 3s_{-\beta}) tx_{_\beta}(s_{-\beta}),
\end{multline*}
since $3r_{\alpha+\beta}=0$, we have $2s_{-\beta}=3s_{-\beta}=0$, so $s_{-\beta}=0$.

Now $g\in UT$ and our result follows from Lemma~\ref{G2-fields}. 

\smallskip

{\bf In the second case} $g\in C(\Gamma_\beta)$ we have 
$$
s_{-\beta}=s_{-\alpha-3\beta}=s_{-2\alpha-3\beta}=r_\alpha=r_{2\alpha+3\beta}=0\text{ and }t\in Z(G),
$$
therefore
$$
g=x_\beta(r_\beta)x_{\alpha+\beta}(r_{\alpha+\beta})x_{\alpha+2\beta}(r_{\alpha+2\beta})x_{\alpha+3\beta}(r_{\alpha+3\beta})tx_{-\alpha}(s_{-\alpha})x_{-\alpha-\beta}(s_{-\alpha-\beta})x_{-\alpha-2\beta}(s_{-\alpha-2\beta}).
$$
Conjugating $g$ by $x_{-\alpha}(1)$, we have
\begin{multline*}
g^{x_{-\alpha}(1)}=x_\beta(r_\beta)(x_{\alpha+\beta}(r_{\alpha+\beta})x_\beta(\pm r_{\alpha+\beta})x_{\alpha+2\beta}(\pm r_{\alpha+\beta}^2) x_{2\alpha+3\beta}(\pm r_{\alpha+\beta}^3)x_{\alpha+3\beta}(r_{\alpha+3\beta}^3))\cdot\\
\cdot x_{\alpha+2\beta}(r_{\alpha+2\beta})x_{\alpha+3\beta}(r_{\alpha+3\beta})t x_{-\alpha}(s_{-\alpha})x_{-\alpha-\beta}(s_{-\alpha-\beta})x_{-\alpha-2\beta}(s_{-\alpha-2\beta}),
\end{multline*}
therefore $r_{\alpha+\beta}=0$.

In a similar way if we conjugate $g$ by $x_{\alpha+3\beta}(1)$, we obtain $s_{-\alpha-2\beta}=0$.

Therefore on this stage
$$
g=x_\beta(r_\beta)x_{\alpha+2\beta}(r_{\alpha+2\beta})x_{\alpha+3\beta}(r_{\alpha+3\beta})tx_{-\alpha}(s_{-\alpha})x_{-\alpha-\beta}(s_{-\alpha-\beta}).
$$
Conjugating now  $g$ by $x_{2\alpha+3\beta}(1)$, we have
\begin{multline*}
g^{x_{2\alpha+3\beta}(1)}=x_\beta(r_\beta)x_{\alpha+2\beta}(r_{\alpha+2\beta})x_{\alpha+3\beta}(r_{\alpha+3\beta})t (x_{\alpha+3\beta}(\pm s_{-\alpha})x_{-\alpha}(s_{-\alpha}))\cdot\\
\cdot (x_{-\alpha-\beta}(s_{-\alpha-\beta})x_{\alpha+2\beta}(\pm s_{-\alpha-\beta})x_\beta(\pm s_{-\alpha-\beta}^2)x_{-\alpha}(\pm s_{-\alpha-\beta}^3)x_{\alpha+3\beta}(\pm s_{-\alpha-\beta}^3)),
\end{multline*}
therefore $s_{-\alpha}=s_{-\alpha-\beta}=0$ and $g=x_\beta(r_\beta)x_{\alpha+2\beta}(r_{\alpha+2\beta})x_{\alpha+3\beta}(r_{\alpha+3\beta})t$.

Now again $g\in UT$ and the result if Theorem~3 follows from Lemma~\ref{G2-fields}.

\medskip

{\bf Case 3. The root systems $\mathbf B_l$, $l\geqslant 3$, and $\mathbf F_4$}. 

We remember that $\mathbf B_l$ consists of the roots $\{ \pm e_i, \pm e_i\pm e_j\mid 1\leqslant i,j\leqslant l, i\ne j\}$. If $\alpha$ is {\bf a long root} $e_1-e_2$, then 
$$
\Gamma_\alpha=\{ x_{e_1\pm e_2}(1), x_{-e_1-e_2}(1), x_{e_1\pm e_i}(1), x_{-e_2\pm e_i}(1), x_{\pm e_i\pm e_j}(1), x_{e_1}(1), x_{-e_2}(1), x_{\pm e_i}(1)\},
$$
where $3\leqslant i,j\leqslant l$, $i\ne j$.

Therefore by the same reasons as above
$$
s_{-e_1\pm e_2}=r_{e_1+e_2}=s_{-e_1\pm e_i}=r_{e_2\pm e_i}=r_{\pm e_i\pm e_j}=s_{-e_1}=r_{e_2}=r_{\pm e_i}=0, \qquad t\in Z(G),
$$
where $3\leqslant i,j\leqslant l$, $i\ne j$.

Therefore
\begin{multline*}
g=x_{e_1-e_2}(r_{e_1-e_2})\dots x_{e_1-e_l}(r_{e_1-e_l})x_{e_1+e_3}(r_{e_1+e_3})\dots x_{e_1+e_l}(r_{e_1+e_l})x_{e_1}(r_{e_1})t\cdot \\
\cdot x_{-e_2-e_3}(s_{-e_2-e_3})\dots x_{-e_2-e_l}(s_{-e_2-e_l})x_{-e_2+e_3}(s_{-e_2+e_3})\dots x_{-e_2+e_l}(s_{-e_2+e_l})x_{-e_2}(s_{-e_2}).
\end{multline*}

Conjugating $g$ by $x_{-e_2+e_i}(1)$, we have
\begin{multline*}
g^{x_{-e_2+e_i}(1)}=x_{e_1-e_2}(r_{e_1-e_2})\dots (x_{e_1-e_i}(r_{e_1-e_i})x_{e_1-e_2}(\pm r_{e_1-e_i}))\dots\\
\dots x_{e_1-e_l}(r_{e_1-e_l})x_{e_1+e_3}(r_{e_1+e_3})\dots x_{e_1+e_l}(r_{e_1+e_l})x_{e_1}(r_{e_1})t\cdot \\
\cdot x_{-e_2-e_3}(s_{-e_2-e_3})\dots x_{-e_2-e_l}(s_{-e_2-e_l})x_{-e_2+e_3}(s_{-e_2+e_3})\dots x_{-e_2+e_l}(s_{-e_2+e_l})x_{-e_2}(s_{-e_2}),
\end{multline*}
therefore for all $3\leqslant i\leqslant l$ we have $r_{e_1-e_i}=0$. 

Conjugating $g$ by $x_{-e_2-e_i}(1)$, we similarly obtain $r_{e_1+e_i}=0$ for all $3\leqslant i\leqslant l$.

So we see that 
\begin{multline*}
g=x_{e_1-e_2}(r_{e_1-e_2})x_{e_1}(r_{e_1})t\cdot \\
\cdot x_{-e_2-e_3}(s_{-e_2-e_3})\dots x_{-e_2-e_l}(s_{-e_2-e_l})x_{-e_2+e_3}(s_{-e_2+e_3})\dots x_{-e_2+e_l}(s_{-e_2+e_l})x_{-e_2}(s_{-e_2}).
\end{multline*}
The case when we conjugate  $g$ by $x_{e_1+e_i}(1)$ and then by $x_{e_1-e_i}(1)$ for $3\leqslant i \leqslant l$ is treated in the very similar way. We drop the corresponding calculations.  We obtain
$$
g=x_{e_1-e_2}(r_{e_1-e_2})x_{e_1}(r_{e_1})tx_{-e_2}(s_{-e_2}).
$$
Conjugating $g$ by $x_{-e_1-e_2}(1)$, we obtain $r_{e_1}=0$, and conjugating $g$  by $x_{e_1+e_2}(1)$, we obtain $s_{-e_2}=0$, what was required.

\smallskip

It was the case $\mathbf B_l$, $l\geqslant$, where $\alpha$ is a long root. Now let us suppose that $\alpha$ {\bf is short}, for example, $\alpha=e_1$.

In this case
$$
\Gamma_\alpha=\{ x_{e_1}(1), x_{e_1\pm e_i}(1), x_{\pm e_i\pm e_j}(1)\},\text{ where }2\leqslant i,j\leqslant l, i\ne j.
$$
Therefore
$$
s_{-e_1}=s_{-e_1\pm e_i}=r_{\pm e_i\pm e_j}=0\text{ for }2\leqslant i,j\leqslant l, i\ne j
$$
and 
\begin{multline*}
g=x_{e_1}(r_{e_1})\dots x_{e_l}(r_{e_l})x_{e_1-e_2}(r_{e_1-e_2})\dots x_{e_1-e_l}(r_{e_1-e_l})x_{e_1+e_2}(r_{e_1+e_2})\dots x_{e_1+e_l}(r_{e_1+e_l})t\cdot \\
\cdot x_{-e_2}(s_{-e_2})\dots x_{-e_l}(s_{-e_l}).
\end{multline*}

Conjugating $g$ by $x_{e_2-e_i}(1)$, we obtain
\begin{multline*}
g^{x_{e_2-e_i}(1)}=x_{e_1}(r_{e_1})\dots (x_{e_i}(r_{e_i}) x_{e_2}(\pm r_{e_i}) x_{e_2+e_i}(\pm r_{e_i})^2)\dots x_{e_l}(r_{e_l})\cdot\\
\cdot (x_{e_1-e_2}(r_{e_1-e_2})x_{e_1-e_i}(\pm r_{e_1-e_2}))\dots x_{e_1-e_l}(r_{e_1-e_l})x_{e_1+e_2}(r_{e_1+e_2})\dots\\
\dots  (x_{e_1+e_i}(r_{e_1+e_i})x_{e_1+e_2}(\pm r_{e_1+e_i}))\dots x_{e_1+e_l}(r_{e_1+e_l})t\cdot \\
\cdot (x_{-e_2}(s_{-e_2})x_{-e_i}(\pm s_{-e_2})x_{-e_2-e_i}(\pm s_{-e_2}^2))\dots x_{-e_l}(s_{-e_l}),
\end{multline*}
which directly implies 
$$
r_{e_i}=r_{e_1-e_2}=r_{e_1+e_i}=s_{-e_2}=0\text{ for all }3\leqslant i \leqslant l.
$$
Conjugating $g$ by $x_{e_2+e_i}(1)$ we similarly obtain 
$$
r_{e_1-e_i}=s_{-e_i}=0\text{ for all }3\leqslant i \leqslant l.
$$
 Therefore
$$
g=x_{e_1}(r_{e_1}) x_{e_2}(r_{e_2})x_{e_1+e_2}(r_{e_1+e_2})t.
$$
Conjugating $g$ now by $x_{e_3-e_2}(1)$, we obtain
$$
g^{x_{e_3-e_2}(1)}=x_{e_1}(r_{e_1}) ( x_{e_2}(r_{e_2})x_{e_3}(\pm r_{e_2})x_{e_2+e_3}(\pm r_{e_2}^2)) \cdot (x_{e_1+e_2}(r_{e_1+e_2})x_{e_1+e_3}(\pm r_{e_1+e_2}))\cdot t,
$$
therefore 
$g=x_{e_1}(r_{e_1})t$, $t\in Z(G)$, what was required.

\smallskip

{\bf The case} $\Phi=\mathbf F_4$ is treated in the very similar way. We drop the corresponding calculations

\medskip

{\bf 4. The case $\Phi = \mathbf C_l$, $l\geqslant 2$, $\alpha$ is long}.
Let (as above)
$$
\Phi=\{ \pm e_i\pm e_j, \pm 2e_i\mid 1\leqslant i,j \leqslant l, i\ne j\},\quad \alpha = 2e_1.
$$
Then 
$$
\Gamma_\alpha=\{ x_{2e_1}(1), x_{\pm 2e_i}(1), x_{e_1\pm e_i}(1), x_{\pm e_i\pm e_j}(1)\},\text{ where } 2\leqslant i,j \leqslant l, i\ne j.
$$
Therefore in $g$
$$
s_{-2e_1}=r_{2e_i}=s_{-2e_i}=s_{-e_1\pm e_i}=r_{\pm e_i\pm e_j}=0,\quad t\in Z(G)
$$
and
$$
g=x_{2e_1}(r_{2e_1})x_{e_1-e_2}(r_{e_1-e_2})\dots x_{e_1-e_l}(r_{e_1-e_l})x_{e_1+e_2}(r_{e_1+e_2})\dots x_{e_1+e_l}(r_{e_1+e_l})t,
$$
i.\,e. $g$ is the same as in Lemma~\ref{Lemma-BCF} and by the same argument $g=x_{2e_1}(r_{2e_1})t$, $t\in Z(G)$.

\medskip

{\bf 5. The case $\Phi = \mathbf C_l$, $l\geqslant 2$, $\alpha$ is short}.

We suppose that $\alpha=e_1+e_2$.
Then 
$$
\Gamma_\alpha=\{ x_{2e_1}(1), x_{2e_2}(1), x_{\pm 2e_i}(1), x_{e_1\pm e_i}(1), x_{e_2\pm e_i}(1), x_{\pm e_i\pm e_j}(1)\},\text{ where } 3\leqslant i,j \leqslant l, i\ne j,
$$
therefore in $g$
$$
s_{-2e_1}=s_{-2e_2}=r_{2e_i}=s_{-2e_i}=s_{-e_1\pm e_i}=s_{-e_2\pm e_i}=r_{\pm e_i\pm e_j}=0,\quad t\in Z(G),
$$
therefore $g\in UT$ and the same Lemma~\ref{Lemma-BCF} completes the proof.
\end{proof}

\subsection{Double centralizers of unipotent elements in Chevalley groups over arbitrary commutative rings}\leavevmode

Finally we are able to prove the same theorem for arbitrary commutative rings with unity.

\begin{theorem}\label{double_centr_arbitrary}
For any Chevalley group (or its large subgroup) $G=G_\pi(\Phi,R)$, where $\Phi$ is an irreducible root system of a rank $>1$, $R$ is an arbitrary commutative ring with~$1$, if for some $\alpha\in \Phi$ an element $g\in C_G(\Gamma_\alpha)$, then $g=c x_\alpha(t)$, where $t\in R$, $c\in Z(G)$, except the case $\Phi=\mathbf C_l$, $l\geqslant 2$, and $\alpha$ is short.

In the case $\Phi=\mathbf C_l=\{ \pm e_i\pm e_j\mid 1\leqslant i,j\leqslant l,i\ne j\}\cup \{ \pm 2e_i\mid 1\leqslant i\leqslant l\}$ and $\alpha=e_1+e_2$ if $g\in C_G(\Gamma_\alpha)$, then 
$$
g=c x_{e_1+e_2}(t_1)x_{2e_1}(t_2)x_{2e_2}(t_3),\quad c\in Z(G).
$$

\end{theorem}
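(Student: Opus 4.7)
The plan is to deduce Theorem \ref{double_centr_arbitrary} from Theorem 3 (the local case) by the localization method. By Proposition \ref{inlocal}, the ring $R$ embeds diagonally into $S = \prod_{\mathfrak{m}} R_{\mathfrak{m}}$, where $\mathfrak{m}$ ranges over all maximal ideals. Since $G_\pi(\Phi,-)$ is cut out in $\SL_N$ by polynomial equations in matrix entries, applying localization entrywise yields an injective group homomorphism $G_\pi(\Phi,R) \hookrightarrow \prod_{\mathfrak{m}} G_\pi(\Phi,R_{\mathfrak{m}})$. Under this embedding the elementary generators $x_\beta(1)$ of $\Gamma_\alpha$ map to the analogous generators of $\Gamma_\alpha$ over each $R_\mathfrak{m}$, so the condition $g \in C_G(\Gamma_\alpha)$ is preserved in every factor.

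Given $g \in C_G(\Gamma_\alpha)$, write $g_{\mathfrak{m}}$ for its image in $G_\pi(\Phi,R_{\mathfrak{m}})$. Since $g_{\mathfrak{m}}$ centralizes $\Gamma_\alpha$ in the local Chevalley group, Theorem 3 yields
$$
g_{\mathfrak{m}} = c_{\mathfrak{m}} \cdot x_\alpha(t_{\mathfrak{m}}), \qquad c_{\mathfrak{m}} \in Z(G_\pi(\Phi,R_{\mathfrak{m}})),\ t_{\mathfrak{m}} \in R_{\mathfrak{m}},
$$
in the generic case, and the three-parameter analogue $c_\mathfrak{m} x_{e_1+e_2}(t^{(1)}_\mathfrak{m}) x_{2e_1}(t^{(2)}_\mathfrak{m}) x_{2e_2}(t^{(3)}_\mathfrak{m})$ in the exceptional $\mathbf{C}_l$ case with $\alpha$ short.

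The next step is to produce a global parameter $t \in R$ (respectively a triple $t_1,t_2,t_3 \in R$) that specializes to the local parameters. Fix a faithful representation $\pi$ of $G$. On a suitable pair of weight vectors $v_\lambda, v_{\lambda+\alpha}$ in an $\alpha$-string of length $\leqslant 2$, the operator $\pi(x_\alpha(s))$ has an off-diagonal entry, say at position $(i,j)$, that equals $s$ up to a fixed nonzero integer scalar; moreover any element of $Z(G)$ acts as a unit scalar on this entry. Hence the value $t := g_{ij} \in R$ is an element of $R$ whose image in each $R_\mathfrak{m}$ coincides with $t_\mathfrak{m}$ (up to the fixed unit). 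The same device, applied to two further off-diagonal entries corresponding to the long roots $2e_1, 2e_2$, produces the parameters $t_2, t_3$ in the exceptional case.

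Finally, set $h := g \cdot x_\alpha(-t)$ (respectively $h := g \cdot x_{2e_2}(-t_3)x_{2e_1}(-t_2)x_{e_1+e_2}(-t_1)$). Then $h_\mathfrak{m} = c_\mathfrak{m}$ is central in every $G_\pi(\Phi,R_\mathfrak{m})$, so for each elementary root unipotent $x_\beta(s) \in G_\pi(\Phi,R)$ the commutator $[h,x_\beta(s)]$ has trivial image in every localization. By the injectivity of $G_\pi(\Phi,R) \hookrightarrow \prod_\mathfrak{m} G_\pi(\Phi,R_\mathfrak{m})$, that commutator is trivial in $G_\pi(\Phi,R)$, so $h$ centralizes every elementary generator and hence lies in $Z(G)$ by Remark \ref{Remark_torus}. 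The rearranged identity gives $g$ the asserted form. The main obstacle is the matrix-entry identification of the third paragraph: one must verify, for each irreducible $\Phi$ of rank $>1$ and a suitable faithful representation, that there is an index pair making $t_\mathfrak{m}$ literally the localization of a single entry of $g$ (and similarly for the three $\mathbf{C}_l$ parameters). Once this bookkeeping is in place, the theorem reduces at once to Theorem 3 together with the injectivity supplied by Proposition \ref{inlocal}.
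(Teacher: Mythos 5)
Your proposal is correct and follows essentially the same route as the paper: embed $R$ diagonally into $\prod_{\mathfrak m}R_{\mathfrak m}$, pass to the product of the local Chevalley groups, and apply the local-ring version of the theorem componentwise. In fact your third and fourth paragraphs (reading the parameter $t$ off a matrix entry of $g$ so that $t\in R$, and then checking that $h=g\,x_\alpha(-t)$ centralizes all elementary generators and hence lies in $Z(G)$ by Remark~\ref{Remark_torus}) supply a descent step that the paper's own proof leaves implicit --- it stops at $g=x_\alpha(t)\cdot C$ with $t\in\widetilde R$ and $C\in Z(\widetilde G)$ --- so your write-up is, if anything, the more complete of the two.
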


\begin{proof}
We embed our ring $R$ in the Cartesian product of all its localizations by maximal ideals:
$$
R\subset \widetilde R=\prod_{\mathfrak m\text{ is a maximal ideal of }R} R_{\mathfrak m}.
$$
Respectively the Chevalley group $G=G_\pi(\Phi,R)$ is naturally embedded into the Chevalley group
$$
\widetilde G=G_\pi(\Phi,\widetilde R)=\prod_{\mathfrak m\text{ is a maximal ideal of }R} G_\pi(\Phi, R_{\mathfrak m}).
$$

Suppose that some $g\in G$ commute with all elements of the set~$\Gamma_\alpha$. Since $G\subset \widetilde G$, we have $g\in \prod_{\mathfrak m} G_\pi(\Phi, R_{\mathfrak m})$ and we can represent~$g$ as $g=(g_{\mathfrak m})_{\mathfrak m\in \mathfrak M}$, $g_{\mathfrak m}\in G_\pi(\Phi,R_{\mathfrak m})$, where $\mathfrak M$ is the set of all maximal ideals of~$R$. 

Since for any different maximal ideals $\mathfrak m_1$ and $\mathfrak m_2$ if $x= (x_{\mathfrak m})_{\mathfrak m\in \mathfrak M}$, where $x_{\mathfrak m}=e_{R_{\mathfrak m}}$ for all $\mathfrak m \ne \mathfrak m_1$   and $y= (y_{\mathfrak m})_{\mathfrak m\in \mathfrak M}$, where   $y_{\mathfrak m}=e_{R_{\mathfrak m}}$ for all $\mathfrak m \ne \mathfrak m_2$, these $x$ and $y$ commute, then $[g,\Gamma_\alpha]=1$ implies 
$$
\forall \mathfrak m\in \mathfrak M\, \forall \beta\in \Gamma_\alpha \
[g_{\mathfrak m}, x_\beta (1_{R_{\mathfrak m}})]=e_{G_{\mathfrak m}}.
$$

Therefore if $g$ belongs to the centralizer of the set $\Gamma_\alpha$ in the whole Chevalley group $\widetilde G$, then each its component $g_{\mathfrak m}$ belongs to the centralizer of the corresponding set 
$$
\Gamma_{\alpha, \mathfrak m}=\{ x_\beta (1_{R_{\mathfrak m}})\mid [x_\beta(1_{R_{\mathfrak m}}),x_\alpha(1_{R_{\mathfrak m}})]=e\}.
$$
Since all $R_{\mathfrak m}$ are local rings, from the previous section we see that 
$$
\forall \mathfrak m\in \mathfrak M\ g_{\mathfrak m}=x_\alpha (t_{\mathfrak m}) \cdot C_{\mathfrak m},\text{ where }
t_{\mathfrak m}\in R_{\mathfrak m}\text{ and }C_{\mathfrak m}\in Z(G_\pi(\Phi, R_{\mathfrak m}))
$$
(or in the case $\mathbf C_l$, $l\geqslant 2$,
$$
\forall \mathfrak m\in \mathfrak M\ g_{\mathfrak m}=x_{e_1+e_2} (t_{\mathfrak m})x_{2e_1} (r_{\mathfrak m})x_{e_2} (s_{\mathfrak m}) \cdot C_{\mathfrak m},\text{ where }
t_{\mathfrak m}, r_{\mathfrak m}, s_{\mathfrak m}\in R_{\mathfrak m}\text{ and }C_{\mathfrak m}\in Z(G_\pi(\Phi, R_{\mathfrak m})).
$$

It means that 
$$
g=(x_\alpha (t_{\mathfrak m})\cdot C_{\mathfrak m})_{\mathfrak m\in \mathfrak M}=x_\alpha (t)\cdot C,\text{ where }t\in \widetilde R\text{ and } C\in Z(\widetilde G)
$$
(or in the case $\mathbf C_l$, $l\geqslant 2$,
$$
g=(x_{e_1+e_2} (t_{\mathfrak m})x_{2e_1} (r_{\mathfrak m})x_{e_2} (s_{\mathfrak m})\cdot C_{\mathfrak m})_{\mathfrak m\in \mathfrak M}=x_{e_1+e_2} (t)x_{e_1}(r)x_{2e_2}(s)\cdot C,\text{ where }t,r,s\in \widetilde R\text{ and } C\in Z(\widetilde G)
$$
The theorem is completely proved.
\end{proof}

\newpage

\section{Diophantine structure in large subgroups of Chevalley groups $G_\pi(\Phi,R)$}\leavevmode

In this section we show that many important subgroups of the Chevalley groups are Diophantine. We freely use notation from Preliminaries.

\subsection{One-parametric subgroups $X_\alpha$ are Diophantine in large subgroups of $G_\pi(\Phi,R)$}\leavevmode

We start with the following key result.

\begin{proposition}\label{theorM4.1}
Let $G$ be a large subgroup of $G_\pi(\Phi,R)$, where $\Phi$ is indecomposable root system of the rank $\ell > 1$, $\Phi\ne \mathbf C_2$, $R$ is an arbitrary commutative rings with~$1$. Then for any root $\alpha\in \Phi$  the subgroup $X_\alpha$ is Diophantine in~$G$ \emph{(}defined with constants  $\mathbf x=\{ x_\beta(1)\mid \beta \in \Phi\})$.
\end{proposition}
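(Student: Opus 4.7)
The plan is to realize $X_\alpha$ as a Diophantine subset by combining Theorem~\ref{double_centr_arbitrary} with a one-term Chevalley commutator trick. First, set $D_\alpha := C_G(\Gamma_\alpha)$, which is manifestly Diophantine over the constants $\mathbf{x}$, being defined by the finite conjunction of equations $g \cdot x_\delta(1) = x_\delta(1) \cdot g$ for each $x_\delta(1) \in \Gamma_\alpha$. By Theorem~\ref{double_centr_arbitrary}, $D_\alpha = Z(G) \cdot X_\alpha$ in every case except $\Phi = \mathbf{C}_l$ with $\alpha$ short, where it picks up two extra long-root factors $X_{2e_i} \cdot X_{2e_j}$. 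The task is thus to cut $D_\alpha$ down to precisely $X_\alpha$ by equations.

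The main idea is the following commutator construction. Choose roots $\gamma, \beta \in \Phi$ with $\gamma + \beta = \alpha$ such that no $i\gamma + j\beta$ with $(i,j) \neq (1,1)$, $i,j \geq 1$ lies in $\Phi$, and the Chevalley structure constant $N_{\gamma,\beta}$ is $\pm 1$. The commutator formula then degenerates to a single term $[x_\gamma(s), x_\beta(1)] = x_\alpha(\pm s)$. For any $y = c \cdot x_\gamma(s) \in D_\gamma = Z(G) \cdot X_\gamma$, the central factor $c$ cancels under commutation with $x_\beta(1)$, so $[y, x_\beta(1)] = x_\alpha(\pm s)$; varying $s$ over $R$ yields all of $X_\alpha$. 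This gives the Diophantine definition
\[
X_\alpha = \bigl\{[y, x_\beta(1)] : y \in D_\gamma \bigr\}
\]
in every generic case.

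For the exceptional $\mathbf{C}_l$ case with $l \geq 3$ and (after a Weyl translation) $\alpha = e_1 + e_2$, I would take $\gamma = e_1 - e_3$ and $\beta = e_2 + e_3$, which is available only because $l \geq 3$. The commutator formula is still single-term. However, by Theorem~\ref{double_centr_arbitrary} one has $D_\gamma = Z(G) \cdot X_\gamma \cdot X_{2e_1} \cdot X_{-2e_3}$; the factor $X_{2e_1}$ commutes trivially with $x_\beta(1)$, but the commutator of $X_{-2e_3}$ with $x_\beta(1)$ produces components in $X_{e_2 - e_3}$ and $X_{2 e_2}$. Crucially, $X_{e_2 - e_3}$ is \emph{not} contained in $D_\alpha$: a direct check shows that $x_{e_2-e_3}(s)$ fails to commute with $x_{e_1+e_3}(1) \in \Gamma_\alpha$. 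Intersecting the commutator image with $D_\alpha$ therefore kills the spurious contributions, giving
\[
X_\alpha = \bigl\{[y, x_\beta(1)] : y \in D_\gamma \bigr\} \cap D_\alpha.
\]
The case $\mathbf{C}_2$ lacks the needed third basis vector $e_3$ and is deferred to Proposition~\ref{theorM4.1-B2}.

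The remaining work is a case analysis verifying that a suitable pair $(\gamma, \beta)$ exists in every remaining root system. For simply-laced $\Phi$ any $\mathbf{A}_2$-subsystem containing $\alpha$ supplies the pair; for $\mathbf{B}_l$ and $\mathbf{F}_4$ the orthogonal structure of the root system supplies it for both long and short roots. I expect the principal obstacle to be $\mathbf{G}_2$, since the explicit commutator identities listed in the proof of Lemma~\ref{G2-fields} show that every decomposition of a \emph{short} root into a root-sum is multi-term. For this case I would first handle all long roots via single-term identities such as $[x_{\alpha+\beta}(s), x_{-\beta}(1)] = x_\alpha(\pm s)$ and $[x_\alpha(s), x_{\alpha+3\beta}(1)] = x_{2\alpha+3\beta}(\pm s)$, and then extract each short-root subgroup $X_\eta$ by pairing an appropriate multi-term commutator (whose principal term is $x_\eta$) with equations forcing the unwanted factors---now expressible as already-Diophantine long-root elements---to vanish.
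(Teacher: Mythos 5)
Your overall route is the paper's: take $D_\alpha=C_G(\Gamma_\alpha)$, invoke Theorem~\ref{double_centr_arbitrary}, and kill the central factor by commutating with a suitable $x_\beta(1)$; your generic single-term case and your intersection trick for the short roots of $\mathbf C_\ell$ are both correct (the latter is a mild variant of the paper's double commutator). But the concluding case analysis fails in three places. First, for a short root of $\mathbf B_\ell$, $\ell\geqslant 3$, \emph{no} suitable pair $(\gamma,\beta)$ exists: every decomposition $e_1=\gamma+\beta$ into roots has the form $(e_1\mp e_j)+(\pm e_j)$, and then $\gamma+2\beta=e_1\pm e_j\in\Phi$, so the commutator is never single-term; your claim that the orthogonal structure supplies a pair "for both long and short roots" is false. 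Second, you treat the long roots of $\mathbf C_\ell$ as generic, but every decomposition $2e_1=(e_1-e_j)+(e_1+e_j)$, while single-term, has structure constant $\pm 2$, so $\{[y,x_\beta(1)]:y\in D_\gamma\}=\{x_{2e_1}(\pm 2s)\}\neq X_{2e_1}$ when $2\notin R^*$. Both of these are repairable by the intersection device you already use for $\mathbf C_\ell$ (and which the paper uses, e.g. $X_{\alpha+\beta}=X_{\alpha+\beta}Z(G)\cap [X_\alpha,x_\beta(1)]\cdot X_{\alpha+2\beta}$ inside a $\mathbf B_2$-subsystem), but as written these cases are wrong.

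The serious gap is the short roots of $\mathbf G_2$. In $[x_\alpha(t),x_\beta(1)]=x_{\alpha+\beta}(t)x_{\alpha+3\beta}(-t)x_{\alpha+2\beta}(-t)x_{2\alpha+3\beta}(t^2)$ one unwanted factor, $x_{\alpha+2\beta}(-t)$, is itself a \emph{short}-root element, so it is not "expressible as already-Diophantine long-root elements"; moreover, equations forcing the unwanted factors to vanish force $t=0$ and collapse the set, while padding by $C_G(\Gamma_{\alpha+2\beta})=X_{\alpha+2\beta}Z(G)$ and intersecting with $C_G(\Gamma_{\alpha+\beta})$ only returns $X_{\alpha+\beta}Z(G)$, reintroducing the center you are trying to remove. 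The single-term alternative inside the $\mathbf A_2$ formed by the short roots, $[x_{\alpha+\beta}(t),x_\beta(1)]=x_{\alpha+2\beta}(\pm 2t)\cdots$, again carries the fatal factor $2$. The paper closes this case with a structural fact you do not use: for $\Phi=\mathbf G_2$ the root and weight lattices coincide, every Chevalley group of type $\mathbf G_2$ is adjoint and has trivial center, hence $C_G(\Gamma_\eta)=X_\eta$ outright for every root $\eta$. Without this observation (or some other mechanism for eliminating $Z(G)$ on short-root subgroups of $\mathbf G_2$) your argument does not complete.
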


\begin{proof}
By the Theorem~\ref{double_centr_arbitrary} for all root systems except $\mathbf C_\ell$, $\ell\geqslant 2$, for any $\alpha\in \Phi$ we have
$$
C_G(\Gamma_\alpha)=X_\alpha\cdot Z(G),
$$
where $Z(G)$ is the center of the group~$G$.

In all root systems $\Phi\ne \mathbf C_\ell$ of the rank $> 1$ there exist two roots $\alpha$ and $\beta$ forming together the basis of the root system~$\mathbf A_2$. 

Therefore,
$$
X_{\alpha+\beta} = [C_G(\Gamma_\alpha), x_\beta(1)],
$$
so the subgroup $X_{\alpha+\beta}$ is Diophantine in~$G$.

\medskip

{\bf 1. Roots systems $\mathbf A_\ell, \mathbf D_\ell, \mathbf E_\ell, \mathbf F_4$}. Since all roots of the same length are conjugated up to action of the group~$W$, we proved now that for the root systems $\mathbf A_\ell, \mathbf D_\ell, \mathbf E_\ell, \mathbf F_4$ all subgroups $X_\alpha$, $\alpha\in \Phi$,  are Diophantine in~$G$; for the root systems $\mathbf G_2$ and $\mathbf B_\ell$, $\ell\geqslant 3$, the subgroups $X_\alpha$, $\alpha$ is long, are Diophantine in~$G$.

\medskip

{\bf 2. Roots system $\mathbf G_2$}. In the case $\Phi=\mathbf G_2$ all Chevalley groups are adjoint, therefore their centers are always trivial (see~\cite{Abe-Hurley}). So for $\mathbf G_2$ we have $C_G(\Gamma_\alpha)=X_\alpha$ for all $\alpha\in \Phi$.

\medskip

{\bf 3. Roots systems $\mathbf B_\ell, \ell\geqslant 3$}. For the case $\mathbf B_\ell$, $\ell \geqslant 3$, we already proved that all $X_\alpha$ for long roots $\alpha$ and all $X_\beta Z(G)$ for short roots~$\beta$ are Diophantine in~$G$. Let us take two roots $\alpha,\beta\in \Phi$, where $\alpha$ is long, $\beta$ is short and they form the system $\mathbf B_2$. 

Since 
$$
[x_\alpha(t),x_\beta(1)]=x_{\alpha+\beta}(\pm t)x_{\alpha+2\beta}(\pm t), \quad \alpha+\beta\text{ is short}, \alpha+2\beta\text{ is long},
$$
let $w\in W$ be such that $w(\alpha)=\alpha+2\beta$, let for the sake of certainty both signs are $+$,
then 
$$
X_{\alpha+\beta} Z(G) \cap [X_\alpha ,x_\beta(1)]\cdot X_{\alpha+2\beta}
$$
is precisely $X_{\alpha+\beta}$. Therefore $X_{\alpha+\beta}$ and then $X_\beta$ are Diophantine in~$G$ as intersection of two Diophantine sets.

\medskip

{\bf 4. Roots systems $\mathbf C_\ell, \ell\geqslant 3$}.
Now we only need to prove our statement for the root system $\Phi=\mathbf C_\ell$, $\ell\geqslant 3$.
In this system
$$
C_G(\Gamma_{e_1-e_2})=X_{e_1-e_2}X_{2e_1}X_{-2e_2}C,
$$
and
$$
[x_{e_1-e_2}(t)x_{2e_1}(r)x_{-2e_2}(s)c,x_{e_2-e_3}(1)]=x_{e_1-e_3}(t)x_{-e_2-e_3}(s)x_{-2e_3}(s),
$$
after that 
$$
[ x_{e_1-e_3}(t)x_{-e_2-e_3}(s)x_{-2e_3}(s),x_{e_1+e_2}(1)]=x_{e_1-e_3}(s),
$$
therefore
$$
[[C_G(\Gamma_{e_1-e_2}), x_{e_2-e_3}(1)],x_{e_1+e_2}(1)]=X_{e_1-e_3},
$$
therefore $X_\alpha$ for any short $\alpha\in \Phi$ is Diophantine in~$G$. 

Now we know that $X_\alpha \cdot Z(G)$ for any long $\alpha\in \Phi$ and $X_\beta$ for any short $\beta \in \Phi$ are Diophantine in~$G$.  Let us again take two roots $\alpha,\beta\in \Phi$, where $\alpha$ is long, $\beta$ is short and they form the system $\mathbf C_2=\mathbf B_2$. Then the set
$$
X_{\alpha+2\beta}=X_{\alpha+2\beta} Z(G) \cap [X_\alpha Z(G),x_\beta(1)]\cdot X_{\alpha+\beta}
$$
is Diophantine in~$G$ as the intersection of two Diophantine sets. 
\end{proof}

\begin{proposition}\label{theorM4.1-B2-new}
Let $G$ be a large subgroup of $G_\pi(\mathbf C_2,R)$, $\mathbf C_2=\{ \pm e_1\pm e_2, \pm 2e_1, \pm 2e_2\}$, where $R$ is an arbitrary commutative ring with~$1$ and either $\pi =\ad$ or $1/2\in R$. Then for every $\gamma\in \Phi$ the subgroup  $X_\gamma$  is Diophantine in~$G$. 
\end{proposition}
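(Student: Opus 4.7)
My plan is to reduce everything to showing that the single short-root subgroup $X_{e_1+e_2}$ is Diophantine; once it is, the argument already used in Proposition~\ref{theorM4.1} for $\mathbf C_\ell$ with $\ell\geq 3$ lets us derive every other $X_\gamma$ from it. By Theorem~\ref{double_centr_arbitrary},
$$C_G(\Gamma_{e_1+e_2})\;=\;X_{e_1+e_2}\cdot X_{2e_1}\cdot X_{2e_2}\cdot Z(G),$$
and the core difficulty is peeling off the two long-root factors. In higher rank $\mathbf C_\ell$, $\ell\geq 3$, one uses a third short root $e_i-e_j$ with $i,j\geq 3$ and a nested commutator to isolate $X_{e_1+e_2}$, but no such direction is available in $\mathbf C_2$. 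The two hypotheses on $(\pi,R)$ provide two genuinely different substitutes, so I would split into cases.

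In the case $\pi=\ad$ the center $Z(G)$ is trivial, so by Theorem~\ref{double_centr_arbitrary} the long-root subgroups $X_{2e_1}=C_G(\Gamma_{2e_1})$ and $X_{2e_2}=C_G(\Gamma_{2e_2})$ are already Diophantine. The Chevalley commutator formula in $\mathbf C_2$ yields
$$[x_{2e_2}(u),x_{e_1-e_2}(1)]\;=\;x_{e_1+e_2}(\epsilon u)\,x_{2e_1}(\epsilon' u),\qquad \epsilon,\epsilon'\in\{\pm 1\},$$
so $\{[y,x_{e_1-e_2}(1)]:y\in X_{2e_2}\}\cdot X_{2e_1}$ is the Diophantine set $X_{e_1+e_2}\cdot X_{2e_1}$, and a symmetric computation with $[x_{2e_1}(u),x_{-e_1+e_2}(1)]\cdot X_{2e_2}$ gives the Diophantine set $X_{e_1+e_2}\cdot X_{2e_2}$. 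Since $X_{e_1+e_2}$, $X_{2e_1}$, $X_{2e_2}$ are root subgroups attached to pairwise non-summable roots, the product map $X_{e_1+e_2}\times X_{2e_1}\times X_{2e_2}\to U$ is injective, and intersecting the two Diophantine sets inside $G$ yields exactly $X_{e_1+e_2}$.

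In the case $1/2\in R$ the center may be nontrivial, but $\mathrm{char}\,R\neq 2$, so the torus element $h=h_{2e_1}(-1)=w_{2e_1}(-1)\,w_{2e_1}(1)^{-1}$ is well defined, lies in $E_\pi(\Phi,R)\subseteq G$, and is a word in the constants $x_{\pm 2e_1}(\pm 1)$. Using $\langle e_1+e_2,2e_1\rangle=1$, $\langle 2e_1,2e_1\rangle=2$, $\langle 2e_2,2e_1\rangle=0$ together with relation (R6), conjugation by $h$ inverts $X_{e_1+e_2}$ and centralizes each of $X_{2e_1}$, $X_{2e_2}$, and $Z(G)$. For $g=x_{e_1+e_2}(t)\,x_{2e_1}(r)\,x_{2e_2}(s)\,z\in C_G(\Gamma_{e_1+e_2})$ a direct computation gives $[h,g]=x_{e_1+e_2}(-2t)$; since $2\in R^*$, the map $t\mapsto -2t$ is surjective on $R$, so $\{[h,g]:g\in C_G(\Gamma_{e_1+e_2})\}$ equals $X_{e_1+e_2}$ and is Diophantine.

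Once $X_{e_1+e_2}$ is available in either case, $X_{2e_1}=\{[x,x_{e_1-e_2}(1)]:x\in X_{e_1+e_2}\}$ is Diophantine (the commutator coefficient on $X_{2e_1}$ is $\pm 1$), and the remaining $X_\gamma$ for $\gamma\in\mathbf C_2$ are produced by conjugating $X_{e_1+e_2}$ or $X_{2e_1}$ by the Weyl representatives $w_\beta(1)\in G$, using that $W(\mathbf C_2)$ acts transitively on short roots and on long roots separately. The main obstacle throughout is structural rather than computational: the short-root centralizer carries two long-root contaminants with no "third short direction" to kill them as in higher rank, so we must use either the vanishing of $Z(G)$ to get a clean direct-product intersection, or a torus involution acting by $-1$ on $X_{e_1+e_2}$ alone to extract it by averaging.
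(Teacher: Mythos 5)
Your proof is correct in substance but takes a genuinely different route from the paper's in both cases. In the adjoint case the paper also starts from $X_{2e_1}=C_G(\Gamma_{2e_1})$, but then isolates $X_{e_1+e_2}$ by a cancellation trick: it forms $[y,x_{e_2-e_1}(1)]\cdot w\,y^{-1}w^{-1}$ for $y\in X_{2e_1}$, where $w(2e_1)=2e_2$, so that the long-root tail $x_{2e_2}(\pm t)$ produced by the commutator is killed by the Weyl conjugate of $y^{-1}$. Your intersection $X_{e_1+e_2}X_{2e_1}\cap X_{e_1+e_2}X_{2e_2}=X_{e_1+e_2}$ (valid by uniqueness of expression in $U$, since the three roots are pairwise non-summable) achieves the same thing and is arguably cleaner, as it sidesteps the sign bookkeeping in the paper's formula. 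In the case $1/2\in R$ the paper instead first shows $X_\gamma Z(G)$ is Diophantine for short $\gamma$ and then extracts a long-root subgroup via $[X_{e_1+e_2}Z(G),\,x_{e_2-e_1}(1/2)]=X_{2e_2}$; your torus involution $h=h_{2e_1}(-1)$, which acts by $-1$ on $X_{e_1+e_2}$ and trivially on $X_{2e_1}$, $X_{2e_2}$ and $Z(G)$, so that $g\mapsto[h,g]$ maps $C_G(\Gamma_{e_1+e_2})$ onto $x_{e_1+e_2}(2R)=X_{e_1+e_2}$, is a different and quite elegant device. Both of your constructions use only constants that are words in the $x_\alpha(\pm 1)$, which is even slightly tighter than the paper's use of $x_{e_2-e_1}(1/2)$.

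One concrete slip in the final step: you assert $X_{2e_1}=\{[x,x_{e_1-e_2}(1)]:x\in X_{e_1+e_2}\}$ with "commutator coefficient $\pm 1$". For two short roots of $\mathbf C_2$ summing to a long root the structure constant is $\pm 2$ (compare the paper's relation $[x_{\alpha+\beta}(t),x_\beta(u)]=x_{\alpha+2\beta}(\pm 2tu)$ in $\mathbf B_2$), so this set is $x_{2e_1}(2R)$, not $X_{2e_1}$. This is harmless when $1/2\in R$, since then $2R=R$; in the adjoint case without $1/2$ the formula as written fails, but you do not need it there because you have already exhibited $X_{2e_1}=C_G(\Gamma_{2e_1})$ and $X_{2e_2}=C_G(\Gamma_{2e_2})$ as Diophantine at the outset, and Weyl conjugation then covers $X_{-2e_1}$, $X_{-2e_2}$ and the remaining short roots. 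So the conclusion stands once that parenthetical justification is replaced by the case-appropriate one.
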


\begin{proof}
{\bf Case 1.} If our Chevalley group is adjoint ($\pi =\ad$), then its center is trivial.
 For any long root $\gamma\in \Phi$ we know that $X_\gamma=X_\gamma Z(G)=C_G(\Gamma_\gamma)$ is Diophantine in~$G$.
 
 Let 
 $$
 [x_{2e_1}(t), x_{e_2-e_1}(u)]=x_{e_1+e_2}(\pm tu)x_{2e_2}(\pm tu^2),
 $$
 for example
 $$
 [x_{2e_1}(t), x_{e_2-e_1}(u)]=x_{e_1+e_2}( tu)x_{2e_2}( tu^2).
 $$
 Suppose also that for $w\in W$ we have $w(2e_1)=2e_2$. Then 
 $$
 X_{e_1+e_2}=\{ [y,x_{e_2-e_1}(1)]\cdot w y^{-1}w^{-1}\mid y\in X_{2e_1}\}.
 $$
 Therefore $X_\gamma $ are Diophantine in~$G$ for all short roots~$\gamma\in \Phi$.

 \medskip

 {\bf Case 2.} Let us suppose that $1/2\in R$. As above we will use that 
for  any long root $\gamma\in \Phi$ the set  $X_\gamma Z(G)$ is Diophantine in~$G$.
According to the previous case it is evident that the set  $X_\gamma Z(G)$ is Diophantine in~$G$ also for any short root.
Then using the relation
$$
[x_{e_1+e_2}(t),x_{e_2-e_1}(u)]=x_{2e_2}(\pm 2tu).
$$
we have
$$
[X_{e_1+e_2}Z(G), x_{e_2-e_1}(1/2)]=X_{2e_2},
$$
therefore $X_\gamma$ is Diophantine in~$G$ for any long~$\gamma$, and as in the case~1 for any short root~$\gamma$.
\end{proof}

In the case $\mathbf C_2$, $\pi=sc$, $1/2\notin R$ we need the special auxiliary set $Y$ to be Diophantine.

\begin{proposition}\label{theorM4.1-B2}
Let $G$ be a large subgroup of $G_{sc}(\mathbf C_2,R)$,  where $R$ is an arbitrary commutative rings without~$1/2$. Then the subgroup  $Y_{e_1+e_2}=\{ x_{e_1+e_2}(t)x_{2e_2}(t)\mid t\in R\}$  is Diophantine in~$G$. 
\end{proposition}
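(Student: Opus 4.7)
The plan is to display $Y_{e_1+e_2}$ as the image of the Diophantine set $C_G(\Gamma_{2e_1})$ under a commutator word-map. By Theorem \ref{double_centr_arbitrary}, applied to the long root $2e_1$ of $\mathbf{C}_2$, we have $C_G(\Gamma_{2e_1})=X_{2e_1}\cdot Z(G)$; this set is cut out in $G$ by the finite system of commutation equations $\{[y,x_\beta(1)]=e : x_\beta(1)\in\Gamma_{2e_1}\}$, hence is Diophantine with constants from $\{x_\beta(1)\mid \beta\in\Phi\}$.

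Next I would invoke the Chevalley commutator formula for the pair $(e_2-e_1,\,2e_1)$. The only positive-integer combinations $i(e_2-e_1)+j(2e_1)$ lying in $\Phi$ are $(1,1)\mapsto e_1+e_2$ and $(2,1)\mapsto 2e_2$, so
$$
[x_{e_2-e_1}(\tau),\,x_{2e_1}(u)] \;=\; x_{e_1+e_2}(\epsilon_1\tau u)\,x_{2e_2}(\epsilon_2\tau^2 u)
$$
for fixed signs $\epsilon_1,\epsilon_2\in\{\pm 1\}$ determined by the Chevalley basis, with the two factors commuting because no sum $i(e_1+e_2)+j(2e_2)$ with $i,j>0$ is a root. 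Choosing $\tau=\epsilon_1\epsilon_2\in\{\pm 1\}$ makes $\epsilon_1\tau=\epsilon_2$ while $\tau^2=1$, so the commutator specializes to $x_{e_1+e_2}(\epsilon_2 u)\,x_{2e_2}(\epsilon_2 u)$; as $u$ ranges over $R$, the reparameterization $u\mapsto\epsilon_2 t$ traces out precisely $Y_{e_1+e_2}$. Since the centre is absorbed by the commutator, this yields
$$
Y_{e_1+e_2}\;=\;\bigl\{[x_{e_2-e_1}(\tau),\,g]\,\bigm|\,g\in C_G(\Gamma_{2e_1})\bigr\},
$$
which is Diophantine in $G$ as the image of a Diophantine set under a word-map with our fixed constants (noting that $x_{e_2-e_1}(-1)$, should it be needed, is the group inverse of the constant $x_{e_2-e_1}(1)$).

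The main obstacle — and the reason the hypotheses $\pi=\mathrm{ad}$ and $1/2\in R$ had to be relaxed separately in Proposition \ref{theorM4.1-B2-new} — is that every natural commutator relation that would isolate $X_{e_1+e_2}$ or $X_{2e_2}$ individually from their product carries a coefficient of $2$. The arithmetic observation driving the present argument is that $\tau^2=1$ for $\tau\in\{\pm 1\}$, which keeps the $X_{2e_2}$-component linear in $u$ while the sign of the $X_{e_1+e_2}$-component can be tuned by the choice of $\tau$. Synchronising the two signs is exactly what delivers the twisted diagonal one-parameter subgroup $Y_{e_1+e_2}$ without inverting $2$ or assuming triviality of the centre.
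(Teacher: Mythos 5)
Your proposal is correct and follows essentially the same route as the paper: both identify $C_G(\Gamma_{2e_1})=X_{2e_1}Z(G)$ as Diophantine via Theorem~\ref{double_centr_arbitrary} and then realize $Y_{e_1+e_2}$ as the set of commutators of its elements with a fixed unipotent $x_{e_2-e_1}(\pm 1)$, the center being absorbed by the commutator. Your treatment of the structure constants (choosing $\tau=\epsilon_1\epsilon_2$ so that the two parameters synchronize) is in fact more careful than the paper's, which simply writes $[C_G(\Gamma_{2e_1}),x_{e_2-e_1}(1)]=Y_{e_1+e_2}$ with unspecified signs.
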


\begin{proof}
In the case $\mathbf C_2$
we know that 
$$
C_G(\Gamma_{\pm 2e_i})=X_{\pm 2e_i} C,\quad  i=1,2,\ C=Z(G)
$$
and
$$
C_G(\Gamma_{\pm e_1\pm e_2})=X_{\pm e_1\pm e_2}X_{\pm 2e_1}X_{\pm 2e_2}C,\quad C=Z(G).
$$

Also we remember
\begin{align*}
[x_{2e_1}(t), x_{e_2-e_1}(u)]&=x_{e_1+e_2}(\pm tu)x_{2e_2}(\pm tu^2),\\
[x_{e_1+e_2}(t),x_{e_2-e_1}(u)]&=x_{2e_2}(\pm 2tu).
\end{align*}

Therefore 
$$
[C_G(\Gamma_{2e_1}),x_{e_2-e_1}(1)]=Y_{e_1+e_2}.
$$

\end{proof}

\subsection{E-interpretability of Chevalley groups}\leavevmode

\begin{theorem}\label{theorM6.1}
 Let $G$ be a large subgroup of a Chevalley group $G_\pi(\Phi,R)$,  where $\Phi$ is indecomposable root system of the rank $\ell > 1$, $R$ is an arbitrary commutative rings with~$1$. Then the ring $R$ is
e-interpretable in~$G$ $($using constants from the set $C_\Phi = \{ x_\alpha(1) \mid \alpha \in \Phi\})$ on every $X_\alpha$, $\alpha\in \Phi$, except the case $G=G_{sc}(\mathbf C_2,R)$, $1/2\notin R$, where $R$ is e-interpretable in~$G$ on the set $Y_\alpha$ from Proposition~\ref{theorM4.1-B2}.
\end{theorem}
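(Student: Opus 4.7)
The strategy is to realize the e-interpretation of $R$ on the Diophantine set $X_\alpha$ (respectively $Y_\alpha$ in the exceptional case) via the parameterization
\[
\varphi_\alpha : R \longrightarrow X_\alpha, \qquad t \longmapsto x_\alpha(t),
\]
respectively $\psi : R \to Y_\alpha$, $t \mapsto x_{e_1+e_2}(t)\, x_{2e_2}(t)$ (which is a bijection since $e_1+e_2$ and $2e_2$ are non-summable positive roots of $\mathbf C_2$, so the two factors commute by (R2)). Propositions~\ref{theorM4.1}, \ref{theorM4.1-B2-new}, and~\ref{theorM4.1-B2} provide the Diophantine definition of the underlying set, with constants from $C_\Phi$. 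Equality is interpreted by equality in $G$, the zero of $R$ by the identity of $G$, and, crucially, by relation (R1) the map $\varphi_\alpha$ is a group isomorphism of $(R,+)$ onto $X_\alpha$. Hence the graph of addition is e-defined by the single equation $ab = c$ in $G$; the same holds for $\psi$.

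The substantive step is the e-definability of the multiplication of $R$. To achieve this, choose $\beta \in \Phi$ with $\beta \ne \pm\alpha$ and $\alpha+\beta \in \Phi$, which exists because $\mathrm{rank}\,\Phi \geqslant 2$. By the Chevalley commutator formula (R2),
\[
[x_\alpha(t),\, x_\beta(s)] \;=\; x_{\alpha+\beta}(\epsilon\, ts) \cdot v(t,s), \qquad \epsilon \in \{\pm 1\},
\]
where $v(t,s)$ is a product of elementary unipotents $x_{i\alpha+j\beta}(\cdot)$ with $i+j \geqslant 3$. Each of these root subgroups, as well as $X_{\alpha+\beta}$ itself, is Diophantine by Proposition~\ref{theorM4.1}, so intersecting the commutator with $X_{\alpha+\beta}$ isolates the factor $x_{\alpha+\beta}(\epsilon ts)$ as a Diophantine function of $(t,s)$. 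To transport the arguments and value between $X_\alpha$, $X_\beta$, and $X_{\alpha+\beta}$ we conjugate by fixed constants $w_\gamma(1) = x_\gamma(1)\, x_{-\gamma}(-1)\, x_\gamma(1)$, built entirely from $C_\Phi$; by (R5) these realize the Weyl reflections on the root subgroups up to a sign of $\pm 1$. Composing the resulting Diophantine bijections $X_\alpha \cong X_\beta$ and $X_{\alpha+\beta} \cong X_\alpha$ with the commutator relation produces a Diophantine ternary relation on $X_\alpha^3$ whose graph coincides with multiplication in $R$, the uniform sign $\epsilon$ and the Weyl-reflection signs being absorbed into the interpretation.

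The main obstacle is the bookkeeping in doubly-laced systems $\mathbf B_\ell$, $\mathbf C_\ell$, $\mathbf F_4$, $\mathbf G_2$, where (R2) contributes extra terms in $X_{2\alpha+\beta}$, $X_{\alpha+2\beta}$, etc.; this is handled by the intersection with $X_{\alpha+\beta}$ as above, exploiting that every $X_\gamma$ is already Diophantine. In the exceptional case $G_{sc}(\mathbf C_2,R)$ with $1/2 \notin R$, where $X_{e_1+e_2}$ alone is not known to be Diophantine, we work on $Y_\alpha$ instead: the commutator of $y(t) \in Y_\alpha$ with, say, $x_{e_2-e_1}(s)$ still yields an expression proportional to $ts$ in one of the Diophantine root subgroups, and the same projection-and-transport mechanism recovers multiplication on $Y_\alpha$. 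Finally, once $R$ is e-interpreted on one $X_\alpha$ for each length of root, conjugation by suitable products of the constants $w_\gamma(1)$ transports the interpretation to every other $X_\delta$, because by (R5) the Weyl group acts transitively on roots of a given length.
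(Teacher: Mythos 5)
Your treatment of the main cases follows essentially the same route as the paper: addition via (R1), multiplication by reading off the $X_{\alpha+\beta}$-component of a Chevalley commutator, and transport between root subgroups by Weyl elements $w_\gamma(1)$ or by the commutator-induced Diophantine isomorphisms (your ``projection'' is, to be precise, the intersection of the coset $[x_\alpha(t),x_\beta(s)]\cdot\prod_{i+j\geqslant 3}X_{i\alpha+j\beta}$ with $X_{\alpha+\beta}$, not of the single commutator element itself, but you clearly have the right ingredients in mind since you invoke the Diophantinity of the auxiliary root subgroups). That part is fine.

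There is, however, a genuine gap in the exceptional case $G=G_{sc}(\mathbf C_2,R)$ with $1/2\notin R$. You propose to commute $y(t)=x_{e_1+e_2}(t)x_{2e_2}(t)\in Y_{e_1+e_2}$ with $x_{e_2-e_1}(s)$ and claim this ``yields an expression proportional to $ts$.'' But since $2e_2+(e_2-e_1)\notin\Phi$, the $x_{2e_2}(t)$ factor drops out and the commutator is exactly $[x_{e_1+e_2}(t),x_{e_2-e_1}(s)]=x_{2e_2}(\pm 2ts)$: the structure constant is $\pm 2$, and with $1/2\notin R$ you cannot recover $ts$ from $2ts$. Worse, the value lands in $X_{2e_2}$, which in this case is not known to be Diophantine (only $X_{2e_2}Z(G)=C_G(\Gamma_{2e_2})$ is). The paper's actual mechanism is different and avoids the factor $2$ entirely: given $x=x_{e_1+e_2}(a)x_{2e_2}(a)$, one first lifts it to $x_1\in C_G(\Gamma_{2e_1})=X_{2e_1}Z(G)$ by the Diophantine condition $[x_1,x_{e_2-e_1}(1)]=x$ (forcing $x_1=x_{2e_1}(a)c$), conjugates $y$ by the constant $w_{2e_1}(1)$ to obtain $y_2=x_{e_2-e_1}(b)x_{2e_2}(b)$, computes $[x_1,y_2]=[x_{2e_1}(a),x_{e_2-e_1}(b)]=x_{e_1+e_2}(ab)x_{2e_2}(ab^2)$ (structure constants $\pm 1$ here), and finally intersects the coset $[x_1,y_2]\,C_G(\Gamma_{2e_2})$ with $Y_{e_1+e_2}$ to isolate $x_{e_1+e_2}(ab)x_{2e_2}(ab)$. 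Your sketch does not contain this lift-and-reproject step, and the specific commutator you name cannot replace it.
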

\begin{proof}
 There are four cases to consider: 
 
 --- roots included in the system $\mathbf A_2$,
 
 --- short roots of the system~$\mathbf G_2$, 
 
 --- roots included in the system $\mathbf B_2/\mathbf C_2$, where all $X_\alpha$ are Diophantine, 
 
 --- and $\mathbf C_2$ with the sets~$Y_\alpha$.

 \medskip

 {\bf Case 1.} Suppose that we have a root system~$\mathbf A_2=\langle \alpha,\beta\rangle$ and we want to interpret the ring~$R$ on~$X_{\alpha+\beta}$. We will turn the set $X_{\alpha+\beta}$ into a ring $\langle X_{\alpha+\beta},\oplus, \otimes\rangle$
as follows.

For  $x,y\in X_{\alpha+\beta}$ we define
$$
x\oplus y=x\cdot y.
$$
Note that if $x=x_{\alpha+\beta}(a)$, $y=x_{\alpha+\beta}(b)$, then $xy=x_{\alpha+\beta}(a+b)$, which corresponds to the addition in~$R$.

To define $x\otimes y$ for given $x,y\in X_{\alpha+\beta}$  we need some notation. Let $x_1,y_1\in G$ be
such that 
$$
x_1\in X_\alpha\text{ and }[x_1, x_\beta(1)] = x;\quad  y_1\in X_\beta\text{ and }[x_\alpha(1),y_1] = y.
$$
Note that such $x_1,y_1$ always exist and unique,  if $x=x_{\alpha+\beta}(a)$, $y=x_{\alpha+\beta}(b)$, then
$x_1=x_\alpha(a)$,  $y_1=x_\beta(b)$. Now define
$$
x\otimes y:= [x_1,y_1].
$$
Observe, that in this case
$$
[x_1, y_1]= [x_\alpha(a),x_\beta(b)]=x_{\alpha+\beta}(ab).
$$
so corresponds to the multiplication in~$R$. To finish the proof we need two claims.

{\bf Claim 1.} \emph{The map $a\mapsto x_{\alpha+\beta}(a)$ gives rise to a ring isomorphism $R\to 
\langle X_{\alpha+\beta},\oplus, \otimes\rangle$.}

This is clear from the argument above.

{\bf Claim 2}. \emph{The ring $ \langle X_{\alpha+\beta},\oplus, \otimes\rangle$
 is e-interpretable in~$G$.}

To see this, observe first that, as was mentioned above, $X_{\alpha+\beta}$ is Diophantine in~$G$.
The defined addition is clearly Diophantine in~$G$. Since the subgroups
$X_\alpha$ and $X_\beta$ are Diophantine in~$G$ the multiplication $\otimes$ is also Diophantine in~$G$. This proves the case~1.

\medskip

{\bf Case 2.} Suppose that we want to interpret a ring~$R$ on some short root of the system~$\mathbf G_2$. Since long roots of~$\mathbf G_2$ form the subsystem~$\mathbf A_2$, then the ring $R$ is already interpreted on all $X_\alpha$ for long roots~$\alpha$ and it is sufficient to find a Diophantine isomorphism $\mu: X_\alpha\to X_{\alpha+\beta}$, where $\alpha$ is long and $\beta$ is short.

We will use the relation
$$
[x_\alpha(t),x_\beta(u)]=x_{\alpha+\beta}(tu)x_{\alpha+3\beta}(-tu^3)x_{\alpha+2\beta}(-tu^2)x_{2\alpha+3\beta}(t^2u^3),
$$
that shows
$$
x_{\alpha+\beta}(t)=\mu (x_\alpha(t)) = X_{\alpha+\beta}\cap [x_{\alpha}(t),x_\beta(1)]X_{2\alpha+3\beta}X_{\alpha+3\beta}X_{\alpha+2\beta}.
$$
Since $\mu$ is Diophantine, and $R$  is e-interpretable in~$G$ on~$X_\alpha$, then  $R$  is e-interpretable in~$G$ also on~$X_{\alpha+\beta}$ and therefore on all $X_\gamma$ for all roots.

\medskip

{\bf Case 3.} Now let us consider the root system $\mathbf C_2$, where all $X_\gamma$, $\gamma \in \Phi$, are Diophantine. 

Of course, for  $x,y\in X_{\gamma}$ we define
$$
x\oplus y=x\cdot y
$$
and it is equivalent to addition in~$R$ for any root $\gamma\in \Phi$.
\medskip

To define $x\otimes y$ for given $x,y\in X_\gamma$ we will start with defining an isomorphism~$\mu$ between $X_\alpha$ and $X_{\alpha+\beta}$, where $\alpha$ is long, $\beta$ and $\alpha+\beta$ are short, with $\mu (x_\alpha (t))=x_{\alpha+\beta}(t)$.

This isomorphism $\mu$ can be determined for example by
$$
x_{\alpha+\beta}(t)=\mu(x_\alpha(t))=X_{\alpha+\beta} \cap [x_\alpha(t),x_\beta(1)]\cdot X_{\alpha+2\beta}.
$$

Now when $\mu$ is defined it is evidently sufficient to define the operation $\otimes$ only for one $X_\gamma$. We will do it for $\gamma=\alpha+\beta=e_1+e_2$.

Let $x,y\in X_{\alpha+\beta}$ and $x_1\in X_\alpha$ be defined as $\mu^{-1} (x)$, $y_2\in X_\beta$ be $y^{w_{e_1}^{-1}}$. Then we will define $x\otimes y$ as
$$
x\otimes y=X_{\alpha+\beta}\cap [x_1,y_2]\cdot X_{\alpha+2\beta}.
$$
If $x=x_{\alpha+\beta}(a)$, then $x_1=x_\alpha(a)$; if $y=x_{\alpha+\beta}(b)$, then $y_2=x_\beta(b)$. In this case $[x_1,y_2]=x_{\alpha+\beta}(ab)\cdot x_{\alpha+2\beta}(\pm ab^2)$ and 
$$
X_{\alpha+\beta}\cap x_{\alpha+\beta}(ab)\cdot x_{\alpha+2\beta}(\pm ab^2)\cdot X_{\alpha+2\beta}=x_{\alpha+\beta}(ab).
$$
Therefore the  ring $ \langle X_{\alpha+\beta},\oplus, \otimes\rangle\cong R$
 is e-interpretable in~$G$.

\medskip

{\bf Case 4.} 
If $\Phi= \mathbf C_2$ and the sets $X_\alpha$ are not Diophantine in~$G$, then by Proposition~\ref{theorM4.1-B2} the subgroup $Y_{e_1+e_2}=\{ x_{e_1+e_2}(t)x_{2e_2}(t)\mid t\in R\}$  is Diophantine in~$G$.

We e-interpret $R$ on $Y_{e_1+e_2}$ turning it into a ring $\langle Y_{e_1+e_2},\oplus, \otimes\rangle$
as follows.

For  $x,y\in Y_{e_1+e_2}$ we define
$$
x\oplus y=x\cdot y.
$$
Note that if $x=x_{e_1+e_2}(a)x_{2e_2}(a)$, $y=x_{e_1+e_2}(b)x_{2e_2}(b)$, then 
$$
xy=x_{e_1+e_2}(a)x_{2e_2}(a)x_{e_1+e_2}(b)x_{2e_2}(b)=(x_{e_1+e_2}(a)x_{e_1+e_2}(b))(x_{2e_2}(a)x_{2e_2}(b))=x_{e_1+e_2}(a+b)x_{2e_2}(a+b),
$$
 which corresponds to the addition in~$R$.

To define $x\otimes y$ for given $x,y\in Y_{e_1+e_2}$  we need to use several tricks. 

First, let $x_1\in G$ be
such that 
$$
x_1\in X_{2e_1}C=C(\Gamma_{2e_1})\text{ and }[x_1, x_{e_2-e_1}(1)] = x.
$$
Note that if $x=x_{e_1+e_2}(a)x_{2e_2}(a)$, then necessarily $x_1=x_{2e_1}(a) c$, $c\in Z(G)$.

Second, let $y_2$ be 
$$
y^{w_{2e_1}(1)}=x_{\mathbf w_{2e_1}(e_1+e_2)}(b)x_{\mathbf w_{2e_1}(2e_2)}(b)=x_{e_2-e_1}(b)x_{2e_2}(b).
$$
We see that
$$
[x_1,y_2]=[x_{2e_1}(a),x_{e_2-e_1}(b)x_{2e_2}(b)]=[x_{2e_1}(a),x_{e_2-e_1}(b)]=x_{e_2+e_1}(ab)x_{2e_2}(ab^2).
$$
If we set 
$$
x\otimes y:= ( [x_1,y_1]C(\Gamma_{2e_2})\cap Y_{e_1+e_2})=x_{e_1+e_2}(ab)x_{2e_2}(ab),
$$
since $[x_1,y_1]C(\Gamma_{2e_2})=x_{e_2+e_1}(ab)x_{2e_2}(t)c$, $t\in R$, $c\in Z(G)$.

Therefore the case 4 is also complete.
\end{proof}

Now we prove the converse of Theorem~\ref{theorM6.1}. The result, we believe, is known in folklore.

\begin{proposition}\label{prop6.4}
All Chevalley groups $G_\pi(\Phi,R)$, where $R$ is an arbitrary commutative ring are all e-interpretable in~$R$ (not using constants from $R$ other then integers). 
\end{proposition}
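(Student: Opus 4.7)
The strategy is to realize $G_\pi(\Phi,R)$ as the $R$-points of an affine group scheme defined over $\mathbb Z$, and then read off the e-interpretation directly from this polynomial description. As recalled at the end of Section 2.3, for a fixed faithful representation $\pi$ of dimension $n$ there exist finitely many polynomials $p_1,\dots,p_m\in\mathbb Z[a_{ij}\mid 1\le i,j\le n]$ such that for every commutative ring $R$ with $1$, the Chevalley group $G_\pi(\Phi,R)$ coincides with the set of matrices $(a_{ij})\in\SL_n(R)$ satisfying $\tilde p_1(a_{ij})=\dots=\tilde p_m(a_{ij})=0$, where $\tilde p_k$ denotes the reduction of $p_k$ modulo the structure map $\mathbb Z\to R$.

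Given this, the plan is to take as the interpretation domain the Diophantine subset
$$D=\bigl\{(a_{ij})\in R^{n^2}\ \big|\ \tilde p_1(a_{ij})=0,\ \dots,\ \tilde p_m(a_{ij})=0,\ \det(a_{ij})=1\bigr\}\subseteq R^{n^2},$$
which is cut out by polynomial equations over $\mathbb Z$ (the condition $\det=1$ is itself a single polynomial equation with integer coefficients), and to let $\varphi\colon D\to G_\pi(\Phi,R)$ be the tautological map sending a tuple to the corresponding matrix. The group operations are then handled by the standard polynomial formulas: the graph of multiplication is e-defined by the $n^2$ equations $c_{ij}=\sum_k a_{ik}b_{kj}$; the graph of inversion is e-defined by $\sum_k a_{ik}b_{kj}=\delta_{ij}$; the identity corresponds to the constant tuple $(\delta_{ij})$; and equality in $G_\pi(\Phi,R)$ is componentwise equality in $R^{n^2}$. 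Every coefficient appearing in any of these Diophantine formulas lies in $\mathbb Z\subseteq R$ and is therefore expressible as a term $1+\cdots+1$ or $-(1+\cdots+1)$ in the pure language of unitary commutative rings, so no constants from $R$ beyond the integers are required.

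There is essentially no obstacle: the argument is a routine unpacking of the fact that Chevalley groups are closed subschemes of $\mathrm{GL}_n$ defined over $\mathbb Z$. The only conceptual point worth checking is that the polynomials $p_k$ can indeed be taken with integer (as opposed to merely rational or $R$-valued) coefficients, which is precisely the content of the Chevalley--Demazure construction cited in Section 2.3 via the Chevalley basis and the integral structure of the representation $\pi$. Consequently $G_\pi(\Phi,R)$ is e-interpretable in $R$ using only the ring constants $0,1$ (equivalently, only integer constants), as claimed.
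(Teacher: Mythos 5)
Your proposal is correct and follows essentially the same route as the paper's own proof: both represent elements of $G_\pi(\Phi,R)$ as $n^2$-tuples over $R$, e-define the underlying set via the integer-coefficient polynomial equations cutting out the Chevalley group inside $\SL_n(R)$, and e-define multiplication by the integer polynomials $\sum_k x_{ik}y_{kj}$. Your additional remarks on inversion, the identity tuple, and the expressibility of integer constants in the pure ring language are routine elaborations that the paper leaves implicit.
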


\begin{proof}
 We represent an $n\times n$-matrix $x = (x_{ij})$ with entries in~$R$ by an $n^2$-tuple $\overline x$ over~$R$, where
$$
\overline x = (x_{11},\dots, x_{1n}, x_{21},\dots, x_{n1},\dots , x_{nn}).
$$
The matrix multiplication $\otimes $ on tuples from~$R^{n^2}$
is defined by
$$
\overline x \otimes \overline y = \overline z \Longleftrightarrow \bigwedge_{i,j=1}^n z_{ij}=P_{ij}(\overline x,\overline y),
$$
where $P_{ij}(\overline x;\overline y)$ is integer polynomial $\sum\limits_{k=1}^n x_{ik}y_{kj}$. The multiplication $\otimes$ is clearly Diophantine. To finish the description of the interpretations of the groups $G_\pi (\Phi,R)$ in~$R$ it suffices to define the corresponding subsets of $R^{n^2}$ by Diophantine formulas. 

But it is so by definition of Chevalley groups, which are all defined by finite system of polynomial equations with integer coefficients. 
\end{proof}

We are not able to show that the elementary Chevalley group $E_\pi(\Phi,R)$ is e-interpretable in~$R$ for any commutative ring~$R$. 

However, the following holds.

\begin{theorem}\label{Theorem6.6}
 If an elementary Chevalley group $E_\pi(\Phi,R)$ has bounded elementary generation, then $E_\pi(\Phi,R)$ is $e$-interpretable in~$R$.
\end{theorem}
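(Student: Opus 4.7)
The plan is to build on Proposition~\ref{prop6.4}, which already $e$-interprets $G_\pi(\Phi,R)$ in $R$ by identifying a matrix with the $n^2$-tuple of its entries. Under that interpretation the group operations of $G_\pi(\Phi,R)$ are Diophantine in $R$: matrix multiplication is polynomial in the entries, the identity has integer entries, and $h=g^{-1}$ is cut out by the equations $gh=e$. Hence to $e$-interpret $E_\pi(\Phi,R)$ in $R$ it suffices to show that its underlying set, viewed as a subset of $R^{n^2}$, is Diophantine; all operations will then restrict Diophantinely to this subset.

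The key input is the explicit polynomial form of the elementary unipotents. Since $\pi(x_\alpha)$ is nilpotent, the series
$$
x_\alpha(t)=\sum_{k\ge 0}\frac{t^k}{k!}\,\pi(x_\alpha)^k
$$
truncates to a finite sum whose matrix entries are integer polynomials in $t$. Consequently, for any fixed sequence of roots $\alpha_1,\dots,\alpha_L$ the entries of $x_{\alpha_1}(t_1)\cdots x_{\alpha_L}(t_L)$ are integer polynomials in $t_1,\dots,t_L$.

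Now I would invoke bounded elementary generation: fix $N$ such that every $g\in E_\pi(\Phi,R)$ is a product of at most $N$ elementary root unipotents. Enumerate $\Phi=\{\gamma_1,\dots,\gamma_m\}$ in a fixed order and claim that
$$
g\in E_\pi(\Phi,R)\ \Longleftrightarrow\ \exists\,(t_{i,k})\in R^{Nm}\ \text{such that}\ g=\prod_{i=1}^{N}\prod_{k=1}^{m}x_{\gamma_k}(t_{i,k}),
$$
both products being taken in the fixed order. The $(\Leftarrow)$ direction is immediate. For $(\Rightarrow)$, write $g=x_{\beta_1}(s_1)\cdots x_{\beta_N}(s_N)$, padding with factors $x_\alpha(0)=e$ if necessary to reach length exactly $N$; at each slot $i$ set $t_{i,k}=s_i$ for the unique $k$ with $\gamma_k=\beta_i$ and $t_{i,k}=0$ otherwise, so that the inner product collapses to $x_{\beta_i}(s_i)$. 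By the polynomial form above, this equivalence is a finite system of polynomial equations in the $t_{i,k}$ and the $n^2$ coordinates of $g$, hence Diophantine in $R$.

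The obstacle this argument circumvents is the apparent need for a disjunction over which root occurs at each position, since Diophantine formulas are purely conjunctive whereas ``$g$ is a product of elementary unipotents of unspecified roots'' looks inherently disjunctive. The device of inserting the full product $\prod_k x_{\gamma_k}(t_{i,k})$ at each of $N$ slots absorbs that disjunction into existential quantification over the $t_{i,k}$, because zeroing out all but one $t_{i,k}$ recovers an arbitrary single-root factor. Both hypotheses are used essentially: without boundedness we could not fix $N$, and without the nilpotence-driven polynomial form of $x_\alpha(t)$ the resulting conditions would not be polynomial equations over $R$.
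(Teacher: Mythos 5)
Your proof is correct, and it is the standard (and evidently intended) argument: the paper states Theorem~\ref{Theorem6.6} without proof, but the expected reasoning is exactly what you give, namely that bounded elementary generation lets one fix a length $N$ and express membership in $E_\pi(\Phi,R)\subseteq R^{n^2}$ by the single polynomial system $g=\prod_{i=1}^{N}\prod_{k=1}^{m}x_{\gamma_k}(t_{i,k})$, the entries of the right-hand side being integer polynomials in the $t_{i,k}$ because each $\pi(x_{\gamma_k})$ is nilpotent with integer matrix. Your padding device (absorbing the choice of root at each slot into the full product over all roots, with all but one parameter set to $0$) correctly eliminates the apparent disjunction, and combined with Proposition~\ref{prop6.4} this yields the e-interpretation.
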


\section{Diophantine problem in Chevalley groups}\leavevmode

In this section we  study  Diophantine problem  in Chevalley groups over rings.   Our  arguments are often similar to the corresponding ones   for classical linear groups from \cite{Myasnikov-Sohrabi2}, so we either state the results without proofs and refer the reader  to \cite{Myasnikov-Sohrabi2} or give a short sketch of the proof.

\subsection{General reductions}

We consider here the Diophantine problems of the type $\mathcal D_C(G_\pi(\Phi,R))$, where
$C$ is a countable subset of  $G_\pi(\Phi,R)$ equipped with an enumeration $\nu: \mathbb N \to C$.
Denote by~$R_C$ the set of all elements of~$R$ that occur in matrices from~$C$.
The enumeration $\nu$ gives rise to an enumeration $\mu: \mathbb N\to R_C$, where to construct~$\mu$ it suffices to enumerate matrices in~$C$ with respect to~$\nu$, for each matrix $\nu(n)$ enumerate its entries in some fixed
order, and combine all these into an enumeration~$\mu$.

Now we can prove the main result of the paper.

\begin{theorem}\label{Theorem6.5}
 If $\Phi$ is an indecomposable root system of a rank $> 1$, $R$ is an arbitrary commutative ring with~$1$, then  the Diophantine problem in any Chevalley group
$G_\pi(\Phi,R)$ is Karp equivalent to
the Diophantine problem in~$R$. More precisely:
\begin{itemize}
\item [1)] If $C$ is a countable subset of $G_\pi(\Phi,R)$  then $\mathcal D_C(G_\pi(\Phi,R))$ Karp reduces to $\mathcal D_{R_C} (R)$.
\item [2)] If $T$ is a countable subset of $R$ then there is a countable subset $C_T$ of $G_\pi(\Phi,R)$  such that $\mathcal D_{T} (R)$ Karp reduces to $\mathcal D_{C_T}(G_\pi(\Phi,R))$.
\end{itemize}
\end{theorem}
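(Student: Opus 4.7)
The plan is to obtain Theorem~\ref{Theorem6.5} as a direct consequence of the two e-interpretability results already established, namely Proposition~\ref{prop6.4} (the Chevalley group is e-interpretable in~$R$) and Theorem~\ref{theorM6.1} (the ring~$R$ is e-interpretable in a large subgroup of~$G_\pi(\Phi,R)$ with parameters from $C_\Phi$). The general transfer machinery is Lemma~\ref{lemmaM3.7}/Corollary~\ref{corolM3.8}, which converts e-interpretability into a Karp reduction of Diophantine problems once a compatible enumeration is fixed.

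For part~(1), I would start from Proposition~\ref{prop6.4}: the interpreting map realizes each $g \in G_\pi(\Phi,R)$ as its $n^2$-tuple of matrix entries, and the defining system of polynomial relations uses only integer coefficients, so the parameter set $P_\Gamma$ is trivial. Given a countable subset $C \subseteq G_\pi(\Phi,R)$ with enumeration $\nu$, the natural set of representatives is obtained by listing the entries of each matrix $\nu(n)$ in a fixed order; the resulting compatible enumeration of $C_\Gamma$ is precisely the enumeration $\mu$ of $R_C$ described in the statement of the theorem. Corollary~\ref{corolM3.8} then yields a polynomial-time reduction from $\mathcal{D}_C(G_\pi(\Phi,R))$ to $\mathcal{D}_{R_C}(R)$.

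For part~(2), I would apply Theorem~\ref{theorM6.1} in the opposite direction: $R$ is e-interpretable in~$G_\pi(\Phi,R)$ with parameter set~$C_\Phi$, where the interpreting domain is a single one-parametric subgroup $X_\alpha$ (or, in the exceptional $\mathbf{C}_2$-case without~$1/2$, the subgroup $Y_{e_1+e_2}$ from Proposition~\ref{theorM4.1-B2}), and the interpreting map is $x_\alpha(t) \mapsto t$ (respectively $x_{e_1+e_2}(t)x_{2e_2}(t)\mapsto t$). Given $T \subseteq R$ with an enumeration, I choose the canonical representative of each $t \in T$ to be the group element $x_\alpha(t)$ (or $x_{e_1+e_2}(t)x_{2e_2}(t)$) and set
\[
C_T \;=\; C_\Phi \;\cup\; \{\,x_\alpha(t) \mid t \in T\,\},
\]
equipped with the compatible enumeration that first lists $C_\Phi$ and then the representatives of $T$ in the induced order. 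Another appeal to Corollary~\ref{corolM3.8} delivers the Karp reduction $\mathcal{D}_T(R) \leqslant \mathcal{D}_{C_T}(G_\pi(\Phi,R))$.

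The only nontrivial bookkeeping, and what I expect to be the main nuisance rather than a genuine obstacle, is verifying that the chosen enumerations of $R_C$ and of $C_T$ are compatible (in the sense of Section~3) with the given enumerations of $C$ and $T$, so that the reductions produced by Lemma~\ref{lemmaM3.7} are genuinely polynomial-time and not merely computable. Both reductions are uniform once one fixes, on the group side, the canonical entry-listing of a matrix, and on the ring side, the canonical representative $t \mapsto x_\alpha(t)$; these choices make the translation of equations into the target language a straightforward syntactic substitution, which is visibly linear in the input size. No further case analysis on $\Phi$ is needed beyond already separating the exceptional $\mathbf{C}_2$-case, since Theorem~\ref{theorM6.1} uniformly covers every root system of rank $>1$.
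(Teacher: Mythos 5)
Your proposal is correct and follows essentially the same route as the paper: part (1) is deduced from Proposition~\ref{prop6.4} and part (2) from Theorem~\ref{theorM6.1}, with the transfer to Karp reductions handled by Lemma~\ref{lemmaM3.7}/Corollary~\ref{corolM3.8}. The paper states this in two lines; your additional bookkeeping on representatives and compatible enumerations (including the exceptional $\mathbf{C}_2$ case via $Y_{e_1+e_2}$) is exactly the content the paper delegates to the cited machinery.
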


\begin{proof}
1) follows directly from Proposition \ref{prop6.4} (see Lemma 7.1 from \cite{Myasnikov-Sohrabi2}). 2) comes from Theorem \ref {theorM6.1}.
\end{proof}

\begin{theorem}\label{Theorem6.8}
 Let $\Phi$ be an indecomposable root system of a rank $> 1$ and  $R$ an arbitrary commutative ring with~$1$. If the elementary Chevalley group $E_\pi(\Phi,R)$ has bounded elementary generation, then the Diophantine problem in 
$E_\pi(\Phi,R)$ is Karp equivalent to
the Diophantine problem in~$R$. 
\end{theorem}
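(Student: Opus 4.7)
The plan is to derive the Karp equivalence by invoking two e-interpretability results and then applying Corollary \ref{corolM3.8} in each direction. The hypothesis of bounded elementary generation enters only in the reduction from $E_\pi(\Phi,R)$ to $R$; the converse reduction follows from Theorem \ref{theorM6.1} applied to $E_\pi(\Phi,R)$ itself, which is by definition a large subgroup of $G_\pi(\Phi,R)$ (indeed, the smallest one).

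For the reduction $\mathcal D(E_\pi(\Phi,R))\leqslant \mathcal D(R)$, I would invoke Theorem \ref{Theorem6.6}: under bounded elementary generation with bound $N$, the elementary group $E_\pi(\Phi,R)$ is e-interpretable in $R$ using only integer constants. Concretely, enumerating the roots of $\Phi$ as $\alpha_1,\dots,\alpha_m$, one uses the polynomial interpreting map $R^{Nm}\to E_\pi(\Phi,R)$ given by $(t_{i,j})\mapsto \prod_{i=1}^N \prod_{j=1}^m x_{\alpha_j}(t_{i,j})$ in a fixed order; bounded generation forces surjectivity (any elementary unipotent at a slot not realizing an $\alpha_{j_i}$ contributes the identity when its parameter is set to zero), and the graphs of multiplication, inversion, and equality are Diophantine in $R$ exactly as in Proposition \ref{prop6.4}. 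Corollary \ref{corolM3.8} then yields, for any countable $C\subseteq E_\pi(\Phi,R)$ with an enumeration, the Karp reduction $\mathcal D_C(E_\pi(\Phi,R))\leqslant \mathcal D_{R_C}(R)$, where $R_C$ collects the matrix entries of fixed representatives of elements of $C$, just as in the proof of Theorem \ref{Theorem6.5}(1).

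For the converse $\mathcal D(R)\leqslant \mathcal D(E_\pi(\Phi,R))$, I would apply Theorem \ref{theorM6.1} directly with $G=E_\pi(\Phi,R)$: the ring $R$ is e-interpretable in $E_\pi(\Phi,R)$ using constants from $C_\Phi=\{x_\alpha(1)\mid \alpha\in\Phi\}\subseteq E_\pi(\Phi,R)$, on some $X_\alpha$ or, in the exceptional simply connected $\mathbf C_2$ case with $1/2\notin R$, on the Diophantine subset $Y_{e_1+e_2}$ from Proposition \ref{theorM4.1-B2}, which is still contained in $E_\pi(\Phi,R)$. Given a countable subset $T\subseteq R$, one then constructs $C_T\subseteq E_\pi(\Phi,R)$ by adjoining to $C_\Phi$ one preimage of each $t\in T$ under the interpreting map from the appropriate $X_\alpha$ or $Y_{e_1+e_2}$, and Corollary \ref{corolM3.8} provides $\mathcal D_T(R)\leqslant \mathcal D_{C_T}(E_\pi(\Phi,R))$.

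The main (and only real) obstacle is packaged inside Theorem \ref{Theorem6.6}, which I am free to assume as stated: it is exactly there that bounded elementary generation is indispensable. Without a uniform bound $N$ on the number of elementary factors, $E_\pi(\Phi,R)$ is an infinite ascending union of Diophantine subsets of the ambient matrix algebra rather than a single Diophantine subset, and no polynomial parametrization by a fixed number of ring variables exists; the e-interpretation of $G_\pi(\Phi,R)$ in $R$ from Proposition \ref{prop6.4} does not refine to $E_\pi(\Phi,R)$ in general. Once the width bound is available, the proof reduces to a direct imitation of Theorem \ref{Theorem6.5}.
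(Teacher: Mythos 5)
Your proposal is correct and follows essentially the same route as the paper: the paper's proof is the single line that the result follows from Theorem~\ref{Theorem6.6} (with the converse reduction supplied by Theorem~\ref{theorM6.1} applied to the large subgroup $E_\pi(\Phi,R)$, exactly as in Theorem~\ref{Theorem6.5}). Your additional sketch of the bounded-width polynomial parametrization merely unpacks Theorem~\ref{Theorem6.6}, which the paper states without proof, so there is nothing to object to.
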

\begin{proof}
The result follows from Theorem \ref{Theorem6.6}.
\end{proof}

\subsection{Diophantine problem in Chevalley groups over rings of algebraic integers and number fields}
By a \emph{number field} $F$ we mean a finite algebraic extension of~$\mathbb Q$. The \emph{ring of
algebraic integers}~$\mathcal{O}_F$ of a number field~$F$ is the subring of~$F$ consisting of all roots of
monic polynomials with integer coefficients.

It is a classical result that the Diophantine problem in~$\mathbb Z$ is undecidable~\cite{M52}.

\begin{theorem}[compare with Theorem 7.2 from \cite{Myasnikov-Sohrabi2}]\label{Theor7.2} 
If $\Phi$ is a indecomposable root system of a rank $>1$, then the Diophantine problem in all Chevalley groups $G_\pi(\Phi,\mathbb Z)$ is Karp equivalent to the Diophantine problem in~$\mathbb Z$, in particular, it is
undecidable.
\end{theorem}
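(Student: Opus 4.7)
The plan is to obtain Theorem \ref{Theor7.2} as a direct specialization of Theorem \ref{Theorem6.5} to the case $R=\mathbb Z$, combined with the classical theorem of Matiyasevich asserting that $\mathcal D(\mathbb Z)$ is undecidable. Since $\mathbb Z$ is countable and the group $G_\pi(\Phi,\mathbb Z)$ is countable, no restriction of the coefficients to a proper subset is needed: we can work with the standard enumeration of $\mathbb Z$ by its canonical integer representation, and with the induced enumeration of $G_\pi(\Phi,\mathbb Z)$ obtained by enumerating matrix entries (the group being a set of matrices satisfying finitely many polynomial equations with integer coefficients, as recalled in the construction of Chevalley groups and in Proposition \ref{prop6.4}).

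First I would spell out the forward reduction $\mathcal D(G_\pi(\Phi,\mathbb Z))\leqslant \mathcal D(\mathbb Z)$. By Proposition \ref{prop6.4}, the group $G_\pi(\Phi,\mathbb Z)$ is e-interpretable in $\mathbb Z$ using only integer constants (which come from the defining polynomials of the Chevalley scheme and from the structure constants in the Chevalley commutator formula). Hence by Corollary \ref{corolM3.8} any finite system of equations with constants from $G_\pi(\Phi,\mathbb Z)$ can be Karp-translated into a finite system of polynomial equations over $\mathbb Z$; this is part 1) of Theorem \ref{Theorem6.5} applied to $R=\mathbb Z$.

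Next, I would spell out the reverse reduction $\mathcal D(\mathbb Z)\leqslant \mathcal D(G_\pi(\Phi,\mathbb Z))$. By Theorem \ref{theorM6.1}, the ring $\mathbb Z$ is e-interpretable in $G_\pi(\Phi,\mathbb Z)$ using the finite set of constants $C_\Phi=\{x_\alpha(1)\mid \alpha\in \Phi\}$, interpreted on one of the one-parameter subgroups $X_\alpha$ (or on $Y_{e_1+e_2}$ in the exceptional case of $G_{sc}(\mathbf C_2,\mathbb Z)$, noting that $1/2\notin \mathbb Z$). Again by Corollary \ref{corolM3.8}, this yields a polynomial-time reduction of $\mathcal D(\mathbb Z)$ to $\mathcal D_{C_\Phi}(G_\pi(\Phi,\mathbb Z))$, and a fortiori to $\mathcal D(G_\pi(\Phi,\mathbb Z))$; this is part 2) of Theorem \ref{Theorem6.5}.

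Finally, combining the two reductions gives Karp equivalence, and since by Matiyasevich's theorem $\mathcal D(\mathbb Z)$ is undecidable, the same holds for $\mathcal D(G_\pi(\Phi,\mathbb Z))$. There is no genuine obstacle here: all the heavy lifting is done in Theorems \ref{theorM6.1} and \ref{Theorem6.5}, and the only thing to verify is that the enumerations of $\mathbb Z$ and of $G_\pi(\Phi,\mathbb Z)$ used in the two reductions are compatible in the sense of Lemma \ref{lemmaM3.7}, which is immediate since both reduce to the standard effective enumeration of integers.
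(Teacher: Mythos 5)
Your proposal is correct and follows exactly the route the paper intends: Theorem \ref{Theor7.2} is the specialization of Theorem \ref{Theorem6.5} (both reductions, via Proposition \ref{prop6.4} in one direction and Theorem \ref{theorM6.1} in the other) to $R=\mathbb Z$, combined with Matiyasevich's theorem that $\mathcal D(\mathbb Z)$ is undecidable. Your remarks on the standard enumerations of $\mathbb Z$ and $G_\pi(\Phi,\mathbb Z)$ and on the exceptional $G_{sc}(\mathbf C_2,\mathbb Z)$ case are accurate and consistent with the paper's treatment.
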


The following is one of the major conjectures in number theory.

\begin{conjecture} \label{con:majorNT}
The Diophantine problem in $\mathbb{Q}$, as well as in any number field~$F$, or any ring of algebraic integers $\mathcal{O}$, is undecidable.
\end{conjecture}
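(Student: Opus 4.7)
The conjecture splits into three increasingly difficult subproblems: undecidability of $\mathcal{D}(\mathcal{O})$ for the ring of integers in a number field~$F$, undecidability of $\mathcal{D}(F)$ for a number field~$F$, and undecidability of $\mathcal{D}(\mathbb{Q})$. The common thread is the reduction $\mathcal{D}(\mathbb{Z}) \leqslant \mathcal{D}(R)$, achieved by exhibiting $\mathbb{Z}$ (or some other structure already known to have undecidable Diophantine problem) as a Diophantine subset of~$R$. I would attack the three cases in order of increasing difficulty, trying to bootstrap each case from the previous one via an e-interpretation.

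For rings of algebraic integers~$\mathcal{O}_F$, the plan is to invoke the extensive body of positive results of Denef--Lipshitz, Pheidas, Shlapentokh, Videla and others, establishing Diophantine definability of $\mathbb{Z}$ in $\mathcal{O}_F$ for large classes of~$F$. The core technique is to combine norm forms of auxiliary quadratic (or higher-degree) extensions with Pell-type equations; bounding the size and divisibility of integral solutions isolates~$\mathbb{Z}$ as a Diophantine set. To cover the remaining cases (some totally real extensions and a few exotic ones) the plan would be to strengthen these norm-form arguments using elliptic divisibility sequences or quaternion algebras, following the program of Shlapentokh.

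For a number field~$F$, the strategy is to Diophantine-define $\mathcal{O}_F$ (or at least a subring with undecidable Diophantine problem) inside~$F$, and then quote the previous case. The most promising route uses an elliptic curve $E/F$ of Mordell--Weil rank~$1$: the set of elements of~$F$ arising as ``denominators'' of non-integral points of $E(F)$ is Diophantine, and a careful choice of~$E$ allows one to capture integrality at enough primes to pin down~$\mathcal{O}_F$. The main obstacle at this level is to guarantee the existence of an elliptic curve of rank exactly one whose Mordell--Weil group interacts well with the local behaviour at every finite place; this is not known in general.

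The case $\mathbb{Q}$ is the heart of the conjecture and the true obstacle. A direct proof via Diophantine definability of $\mathbb{Z}$ in $\mathbb{Q}$ would contradict Mazur's conjecture on the topology of Diophantine sets over~$\mathbb{Q}$, so progress here is likely to be conditional or to proceed indirectly, for example by showing that some $\mathcal{O}_F$ already known to have undecidable Diophantine problem is itself Diophantine in~$\mathbb{Q}$ (this would suffice by transitivity of e-interpretability). The main obstacle in every such approach is identical: we lack a single Diophantine predicate in~$\mathbb{Q}$ that detects integrality at all primes simultaneously. Koenigsmann's universal (i.e.\ $\forall$-) definition of $\mathbb{Z}$ in $\mathbb{Q}$, and Poonen's reductions of $\mathcal{D}(\mathbb{Q})$-undecidability to variants of Mazur's conjecture, represent the current state of the art; converting either into a positive existential definition is the principal missing step, and I would expect any realistic proof to go through an entirely new arithmetic-geometric construction rather than a refinement of existing techniques.
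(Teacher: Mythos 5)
The statement you were asked to prove is labelled a \emph{conjecture} in the paper, and the paper offers no proof of it: immediately after stating it, the authors record that for $\mathbb{Q}$ and its finite extensions the conjecture is ``wide open,'' and that for rings of algebraic integers $\mathcal{O}_F$ only partial cases are known (quadratic fields and totally real fields by Denef, fields of degree $>3$ with exactly two nonreal embeddings by Pheidas, abelian number fields by Shapiro--Shlapentokh). So there is no proof in the paper to compare yours against, and your submission is, as you yourself make clear at every stage, a research program rather than a proof. That honesty is appropriate, but it means the ``proof'' contains no completed argument: for $\mathcal{O}_F$ you defer the remaining cases to a strengthening of norm-form and elliptic-divisibility-sequence arguments that does not yet exist; for a number field $F$ you note that the rank-one elliptic curve your plan requires is not known to exist with the needed local properties; and for $\mathbb{Q}$ you concede that a positive-existential definition of $\mathbb{Z}$ would contradict Mazur's conjecture and that the missing step is ``an entirely new arithmetic-geometric construction.'' Each of these is the open problem restated, not a step toward closing it.

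One concrete inaccuracy worth fixing: you list ``some totally real extensions'' among the \emph{remaining} cases for rings of integers, but Diophantine definability of $\mathbb{Z}$ in $\mathcal{O}_F$ for totally real $F$ is one of the \emph{solved} cases (Denef, cited in the paper as \cite{Denef_3}), as are quadratic extensions of totally real fields (Denef--Lipshitz). The genuinely open cases are certain non-abelian fields with more than one pair of complex embeddings. Otherwise your account of the landscape --- Koenigsmann's universal definition of $\mathbb{Z}$ in $\mathbb{Q}$, Poonen's and Mazur--Rubin's conditional reductions, the role of Shafarevich--Tate-type hypotheses --- is consistent with the literature the paper cites, and your observation that transitivity of e-interpretability would let one bootstrap $\mathcal{D}(\mathbb{Q})$ from a Diophantine copy of some $\mathcal{O}_F$ is the standard (and still unrealized) strategy. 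In short: nothing to repair in the logic, because there is no logic to repair; the statement remains a conjecture for you exactly as it does for the authors.
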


For $\mathbb{Q}$ and any its finite extension $F$ the conjecture above is wide open. However, for the rings of algebraic integers $\mathcal{O}_F$ of the fields $F$ there are results where the undecidability of the Diophantine problem is confirmed. Namely,  it is known that $\mathbb{Z}$  is Diophantine in $\mathcal{O}_F$ if $[F : \mathbb{Q}] = 2$ 
or $F$  is totally real \cite{Denef1,Denef_3}, or $[F : \mathbb{Q}] > 3$  and $F$  has
exactly two nonreal embeddings into the field of complex numbers \cite{Pheidas_1988}, or $F$ is an Abelian number field \cite{Sha_Shla}.  We refer to two surveys  and a book \cite{M59,Poonen,M75} for details on this matter.

The following result moves the Diophantine problem in Chevalley groups over number fields or rings of algebraic integers from group theory to number theory. 

\begin{theorem}\label{Theorem7.2_2}
Let $\Phi$ be  an indecomposable root system of a rank $>1$ and $R$ either a number field or a ring of algebraic integers. Then Conjecture \ref{con:majorNT} holds for $R$ if and only if  the Diophantine problem in the Chevalley group $G_\pi(\Phi,R)$ is undecidable.
\end{theorem}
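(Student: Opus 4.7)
The plan is to derive the statement as a direct corollary of Theorem \ref{Theorem6.5}. Since both a number field $F$ and a ring of algebraic integers $\mathcal{O}_F$ are countable, one does not have to worry about restricting constants to uncountable sets: the ordinary Diophantine problems $\mathcal{D}(R)$ and $\mathcal{D}(G_\pi(\Phi,R))$ are the ones of interest, and both can be viewed as $\mathcal{D}_R(R)$ and $\mathcal{D}_{G_\pi(\Phi,R)}(G_\pi(\Phi,R))$ respectively. Note also that an effective enumeration of $R$ (built from a finite set of field or ring generators over $\mathbb{Q}$ or $\mathbb{Z}$, using the usual representation of algebraic numbers by minimal polynomials plus rational approximations, or in the number field case by vectors over $\mathbb{Q}$ in a fixed $\mathbb{Z}$-basis) induces an effective enumeration of $G_\pi(\Phi,R)\subseteq \SL_n(R)$ by tuples of elements of $R$.

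For the ($\Leftarrow$) direction I would apply part (1) of Theorem \ref{Theorem6.5} with $C = G_\pi(\Phi,R)$, so $R_C = R$. This gives a Karp reduction of $\mathcal{D}(G_\pi(\Phi,R))$ to $\mathcal{D}(R)$. Consequently, if $\mathcal{D}(R)$ were decidable (i.e.\ if Conjecture \ref{con:majorNT} failed for $R$), then $\mathcal{D}(G_\pi(\Phi,R))$ would also be decidable, contrapositively giving the implication. For the ($\Rightarrow$) direction I would apply part (2) of Theorem \ref{Theorem6.5} with $T = R$, obtaining a countable subset $C_T\subseteq G_\pi(\Phi,R)$ such that $\mathcal{D}(R)$ Karp reduces to $\mathcal{D}_{C_T}(G_\pi(\Phi,R))$. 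Since any system of equations over $C_T$ is \emph{a fortiori} a system over $G_\pi(\Phi,R)$, the problem $\mathcal{D}_{C_T}(G_\pi(\Phi,R))$ itself Karp reduces to $\mathcal{D}(G_\pi(\Phi,R))$, and composing the two reductions gives Karp reducibility of $\mathcal{D}(R)$ to $\mathcal{D}(G_\pi(\Phi,R))$. Hence undecidability of $\mathcal{D}(G_\pi(\Phi,R))$ forces undecidability of $\mathcal{D}(R)$, which is precisely the content of Conjecture \ref{con:majorNT} for $R$.

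There is no real obstacle: the heavy lifting has already been done in Sections 2--5, culminating in Theorem \ref{Theorem6.5}. The only subtle point to verify carefully is that the enumerations used on both sides of the Karp reduction are compatible with the natural enumerations coming from the algebraic/number-theoretic structure of $R$ (so that ``undecidability'' has its standard meaning in both settings). This is handled by Lemmas \ref{lemmaM3.2} and \ref{lemmaM3.3}: different effective enumerations of the finitely generated substructures involved are equivalent, so the notion of decidability does not depend on the particular choice.
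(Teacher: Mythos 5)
Your proposal is correct and follows exactly the route the paper intends: Theorem \ref{Theorem7.2_2} is stated as an immediate consequence of the Karp equivalence in Theorem \ref{Theorem6.5}, using that a number field or ring of algebraic integers is countable so both Diophantine problems can be taken with constants from the whole structure. Your handling of the two directions via parts (1) and (2) of Theorem \ref{Theorem6.5}, and the remark that the relevant enumerations are equivalent by Lemmas \ref{lemmaM3.2} and \ref{lemmaM3.3}, is precisely the intended argument.
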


\subsection{Diophantine problem in Chevalley groups over finitely generated commutative rings}

To move forward we need to recall some definitions.

The \emph{characteristic} of a ring with multiplicative identity (i.e.\ a \emph{unitary} ring) is the minimum positive integer $n$ such that $1 + \overset{n}{\dots} + 1 = 0$. By \emph{rank} of a ring $R$ we refer to the rank of $R$ seen as an abelian group (i.e.\ forgetting its multiplication operation): that is, the maximum number $m$ of nonzero elements $r_1, \dots, r_m\in R$ such that whenever $a_1r_1 + \dots + a_m r_m=0$ for some integers $r_1, \dots, r_m$, we have $r_ia_i=0$ for all $i=1, \dots, m$. If $R$ is an integral domain, then its rank coincides with its dimension as a $\mathbb{F}_p$-vector space if $R$ has positive characteristic $p$, and otherwise it coincides with the dimension of $R$ seen as a $\mathbb{Z}$-module.

The following result from \cite{M34} describes the current state of the Diophantine problem in finitely generated commutative rings. Note that in \cite{Denef1,Denef2}  Denef showed that the Diophantine problems in polynomial rings with coefficients in integral domains are undecidable.

\begin{theorem}[\cite{M34}] \label{th:9}

 Let $R$ be an infinite finitely generated associative commutative unitary ring.
Then  one of the following holds:

\begin{enumerate}
    \item If $R$ has positive characteristic $n> 0$, then the ring of polynomials  $\mathbb{F}_p[t]$ is e-interpretable in $R$ for some transcendental element $t$ and some prime integer $p$; and $\mathcal{D}(R)$ is undecidable.
    \item  If  $R$ has zero characteristic and it has infinite rank then the same conclusions as above hold:    the ring of polynomials  $\mathbb{F}_p[t]$ is e-interpretable in $R$ for some $t$ and $p$; and $\mathcal{D}(R)$ is undecidable. 
    \item  If  $R$ has zero characteristic and it has finite rank then a ring of algebraic integers $\mathcal{O}$ is e-interpretable in $R$. 
\end{enumerate}
\end{theorem}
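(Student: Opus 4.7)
The plan is to split the argument into three cases according to the characteristic and rank of $R$ and, in each case, exhibit an e-interpretation of an appropriate model ring whose Diophantine behaviour is understood. A preliminary observation underlies every case: $R$ is Noetherian (being a quotient of $\mathbb{Z}[x_1,\dots,x_n]$) and so has finitely many minimal prime ideals $\mathfrak{p}_1,\dots,\mathfrak{p}_k$, each finitely generated. Each quotient domain $D_i := R/\mathfrak{p}_i$ is e-interpretable in $R$ because $\mathfrak{p}_i$ is a finitely generated ideal, so membership in it is a finite system of linear equations over $R$; the quotient by a Diophantine equivalence relation is then an e-interpretation in the sense of Section~3. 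By transitivity of e-interpretability (Corollary~\ref{corolM3.9}), it suffices to locate the desired e-interpretation inside one of the $D_i$.

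For the positive characteristic case, $\operatorname{char}(R) = n > 0$ forces each $D_i$ to be an integral domain of some prime characteristic $p \mid n$, and since $R$ is infinite at least one $D_i$ is infinite. Noether normalization exhibits $D_i$ as a finite integral extension of a polynomial ring $\mathbb{F}_p[t_1,\dots,t_m]$ with $m \geq 1$. The key step is to e-define the subring $\mathbb{F}_p[t_1]$ inside $D_i$ using integrality (roots of monic polynomial equations) and explicit generators of the finite extension; the resulting e-interpretation of $\mathbb{F}_p[t_1]$ in $R$ combined with Denef's theorem that $\mathcal{D}(\mathbb{F}_p[t])$ is undecidable yields case~(1). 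Case~(2) reduces to case~(1): an infinite-rank domain $D_i$ of characteristic zero must contain an element transcendental over $\mathbb{Z}$, and reducing modulo a prime $p$ chosen so as to preserve infinitude of the quotient produces a finitely generated ring of positive characteristic to which case~(1) applies; the quotient by the principal, hence Diophantine, ideal $pR$ remains e-interpretable in $R$.

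For case~(3), zero characteristic and finite rank, some $D_i$ is a torsion-free integral domain of finite rank over $\mathbb{Z}$, i.e., an order in a number field $F = \operatorname{Frac}(D_i)$. The integral closure $\mathcal{O}_F$ of $\mathbb{Z}$ in $F$ is then e-interpretable in $D_i$ (and hence in $R$) via the Diophantine characterization of algebraic integrality as satisfaction of a monic polynomial equation with coefficients in $D_i$, together with the finiteness of $\mathcal{O}_F$ as a $\mathbb{Z}$-module inside $F$. The main obstacle throughout is ensuring that all the encodings are by \emph{positive primitive} formulas (finite systems of equations) rather than arbitrary first-order formulas: Noether normalization, integrality, and localization are existential in nature, which is favourable, but transferring them to genuine e-interpretations requires careful use of explicit finite presentations of modules and the Diophantine tricks of Denef, as carried out in Eisentraeger's thesis cited as \cite{M34}. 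With these ingredients assembled, the three cases combine to give the stated trichotomy.
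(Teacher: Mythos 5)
First, a point of reference: the paper does not prove this statement at all. Theorem \ref{th:9} is imported verbatim from \cite{M34} (Garreta--Miasnikov--Ovchinnikov) and used as a black box, so there is no in-paper argument to compare yours against. Judged on its own merits, your outline has the right architecture --- pass to the finitely many minimal primes (legitimate, since membership in a finitely generated ideal is Diophantine, and an infinite Noetherian ring necessarily has an infinite reduced quotient, hence an infinite $D_i$); Noether normalization in positive characteristic; reduction of the characteristic-zero, infinite-rank case to positive characteristic modulo a suitable prime; orders in number fields in the finite-rank case. But the two steps that carry all the weight are asserted rather than proved.

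Concretely: in case (1) you propose to ``e-define the subring $\mathbb{F}_p[t_1]$ inside $D_i$ using integrality.'' Integrality over the Noether normalization $\mathbb{F}_p[t_1,\dots,t_m]$ holds for \emph{every} element of $D_i$, so it cannot single out $\mathbb{F}_p[t_1]$; and ``$x$ satisfies a monic equation with coefficients in $\mathbb{F}_p[t_1]$'' is an infinite disjunction over degrees which moreover presupposes the very set being defined --- it is not a positive-primitive formula. This step is precisely the hard content of the theorem (the Denef--Pheidas--Shlapentokh--Eisentraeger circle of results) and cannot be dispatched in one sentence. The same defect recurs in case (3): ``satisfaction of a monic polynomial equation with coefficients in $D_i$'' is not a finite system of equations, and the maximal order $\mathcal{O}_F$ need not even be contained in the order $D_i$. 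What does work there is the conductor trick: for a suitable nonzero integer $c$ one has $c\,\mathcal{O}_F \subseteq D_i$, this is a finitely generated ideal of $D_i$ and hence a Diophantine subset, and the ring structure of $\mathcal{O}_F$ transports to it via $x\otimes y := xy/c$, which is Diophantine because $c$ is a non-zero-divisor. Finally, in case (2) you should verify that a prime $p$ with $D_i/pD_i$ infinite exists (true for all but finitely many $p$, but $pD_i$ can be the unit ideal) and that infinite rank of $R$ descends to some $D_i$; both hold, but neither is immediate from what you wrote.
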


This together with Theorem \ref{Theorem6.5} implies the following result which completely clarifies the situation with the Diophantine problem in Chevalley groups over infinite finitely generated commutative unitary rings. 

\begin{theorem}
Let $\Phi$ be an indecomposable root system of a rank $> 1$, $R$ is an arbitrary infinite finitely generated commutative ring with~$1$,  and 
$G_\pi(\Phi,R)$  the corresponding  Chevalley group. Then:

\begin{itemize}
\item [1)]  If $R$ has positive characteristic then the Diophantine problem in $G_\pi(\Phi,R)$ is undecidable.
\item[2)] If  $R$ has zero characteristic and it has infinite rank then the Diophantine problem in $G_\pi(\Phi,R)$ is undecidable. 
\item [3)] If  $R$ has zero characteristic and it has finite rank then the Diophantine problem in some ring of algebraic integers $\mathcal{O}$ is Karp reducible to the Diophantine problem in $G_\pi(\Phi,R)$. Hence if Conjecture \ref{con:majorNT} holds then the Diophantine problem in  $G_\pi(\Phi,R)$ is undecidable.

\end{itemize}

\end{theorem}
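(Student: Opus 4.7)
The plan is to derive this theorem as an immediate corollary of two deep results already in hand: the structural classification cited from~\cite{M34} above (describing the Diophantine problem in infinite finitely generated commutative rings), and the Karp equivalence $\mathcal D(R) \equiv \mathcal D(G_\pi(\Phi,R))$ supplied by Theorem~\ref{Theorem6.5}. The argument splits into the three cases dictated by that classification, and in each case the strategy is to transfer undecidability from the ring to the group via an e-interpretation and transitivity.

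For case (1), I first invoke the cited classification to obtain that $\mathbb F_p[t]$ is e-interpretable in~$R$ for some prime $p$ and some transcendental $t$, whence $\mathcal D(R)$ is undecidable. Theorem~\ref{Theorem6.5}(2) then supplies a countable $C_T \subseteq G_\pi(\Phi,R)$ (associated with any finite generating set $T$ of~$R$, for which $R$ is computable) so that $\mathcal D_T(R)$ Karp reduces to $\mathcal D_{C_T}(G_\pi(\Phi,R))$, transferring undecidability to the group. Case (2) proceeds in exactly the same way, using part~(2) of the classification instead of part~(1), since the conclusion is again that $\mathcal D(R)$ itself is undecidable.

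For case (3), the classification gives an e-interpretation of a ring of algebraic integers $\mathcal O$ in~$R$, while Theorem~\ref{theorM6.1} e-interprets $R$ in $G_\pi(\Phi,R)$ using constants from $C_\Phi = \{x_\alpha(1) \mid \alpha \in \Phi\}$. Transitivity of e-interpretability (Corollary~\ref{corolM3.9}) then yields e-interpretability of $\mathcal O$ in $G_\pi(\Phi,R)$, and Corollary~\ref{corolM3.8} delivers a Karp reduction $\mathcal D(\mathcal O) \leqslant_K \mathcal D_{C_\Gamma}(G_\pi(\Phi,R))$ for a finite parameter set $C_\Gamma$ containing $C_\Phi$ together with the tuples of group elements representing the generators of~$\mathcal O$ under the composed interpretation. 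Under Conjecture~\ref{con:majorNT} the Diophantine problem $\mathcal D(\mathcal O)$ is undecidable, so the undecidability propagates to $\mathcal D(G_\pi(\Phi,R))$.

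There is no real obstacle, only bookkeeping: one must check that the composition of Karp reductions preserves the polynomial-time bound and that the enumerations on the ring side and the group side remain compatible. This is precisely what the compatibility convention preceding Lemma~\ref{lemmaM3.7} and Corollaries~\ref{corolM3.8}--\ref{corolM3.9} are designed to handle, so the proof reduces to a transparent chain of reductions $\mathcal D(\mathbb F_p[t]) \leqslant_K \mathcal D(R)$ (resp.\ $\mathcal D(\mathcal O) \leqslant_K \mathcal D(R)$) composed with $\mathcal D(R) \leqslant_K \mathcal D(G_\pi(\Phi,R))$, followed by an application of the conjecture in case (3).
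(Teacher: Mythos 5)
Your proposal is correct and follows exactly the route the paper takes: the theorem is stated there as an immediate consequence of the classification of Diophantine problems in infinite finitely generated commutative rings from~\cite{M34} combined with the Karp equivalence of Theorem~\ref{Theorem6.5}, which is precisely the chain of reductions you describe. The extra detail you supply for case~(3) via transitivity of e-interpretability (Corollary~\ref{corolM3.9}) and Corollary~\ref{corolM3.8} is consistent with, and merely makes explicit, what the paper leaves implicit.
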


\subsection{Diophantine problem in Chevalley groups over algebraically closed fields}

Let $R$ be an algebraically closed field.
We need the following known results about $R$ (see \cite{Myasnikov-Sohrabi2} for details and references).

\begin{itemize}
    \item [1)] If  $A$ is a computable subfield of $R$  then the first-order theory $Th_A(R)$ of $R$ with constants from $A$ in the language  is decidable. In particular, the Diophantine problem $\mathcal{D}_A(R)$  is decidable.  
    \item [2)]  If  $A$ is a computable subfield of $R$ then   the algebraic closure $\bar A$ of $A$ in $R$ is  computable. 

\end{itemize}

\begin{theorem}\label{Theorem-alg-closed}
Let $\Phi$ be an indecomposable root system of a rank $> 1$, $R$ an algebraically closed field, and $G_\pi(\Phi,R)$ the corresponding Chevalley group. If $A$ is a computable subfield of $R$, then the Diophantine problem in $G_\pi(\Phi,R)$ with constants from $G_\pi(\Phi,A)$ is decidable (under a proper enumeration of $G_\pi(\Phi,A)$).

\end{theorem}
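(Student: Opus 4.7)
The plan is to combine the Karp reduction of Theorem \ref{Theorem6.5} with the cited decidability of $\mathcal D_A(R)$ for a computable subfield $A$ of an algebraically closed field $R$. In other words, I would first push the problem from the group down to the field via e-interpretability, and then invoke the field-theoretic decidability result.

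First, I would fix a computable enumeration $\nu\colon \mathbb N \to A$ witnessing computability of $A$, and from it build a natural effective enumeration $\tilde\nu$ of the countable set $C := G_\pi(\Phi,A)$. As recalled in Subsection~2.3 and exploited in the proof of Proposition \ref{prop6.4}, $G_\pi(\Phi,A)$ is cut out inside $\mathrm{M}_n(A)$ by a fixed finite system of polynomial equations with integer coefficients depending only on $\Phi$ and~$\pi$. Hence one may enumerate all $n^2$-tuples over $A$ via~$\nu$ and retain those satisfying the defining polynomials, a check that is decidable because $A$ is computable. This yields an effective enumeration $\tilde\nu$ of $G_\pi(\Phi,A)$ which is compatible with $\nu$ in the sense required by Lemma \ref{lemmaM3.7}: from the index of a matrix under $\tilde\nu$ one can algorithmically read off $\nu$-indices of its entries, and conversely.

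Next, I would apply part~1) of Theorem \ref{Theorem6.5} to this countable subset $C$. The associated set $R_C$ of ring-entries occurring in matrices from $C$ is precisely $A$, so the theorem supplies a polynomial-time algorithm that converts any finite system of equations over $G_\pi(\Phi,R)$ with constants from $C$ (presented via $\tilde\nu$) into a finite system of polynomial equations over $R$ with constants from $A$ (presented via $\nu$), preserving solvability. Composing this Karp reduction with the decision procedure for $\mathcal D_A(R)$ provided by the cited folklore result on algebraically closed fields yields the sought decision algorithm for $\mathcal D_C(G_\pi(\Phi,R))$ under the enumeration~$\tilde\nu$, which is the ``proper enumeration'' referred to in the statement.

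I do not expect any serious obstacle here: all the heavy lifting (the double centralizer theorem, the e-interpretability of $R$ inside $G$, and the converse Proposition \ref{prop6.4}) has already been carried out, and the model-theoretic input about $Th_A(R)$ is taken as given. The only step requiring attention, rather than actual difficulty, is the bookkeeping that ensures $\tilde\nu$ is genuinely compatible with $\nu$ so that Lemma \ref{lemmaM3.7} and Theorem \ref{Theorem6.5} apply in an effective manner; once this is in place the argument is a direct composition of reductions.
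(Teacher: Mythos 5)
Your proposal is correct and follows exactly the route the paper intends: the paper states this theorem without a separate proof, indicating in the introduction that it is obtained by combining Theorem \ref{Theorem6.5} (the Karp reduction of $\mathcal D_C(G_\pi(\Phi,R))$ to $\mathcal D_{R_C}(R)$, with $R_C=A$ here) with the cited decidability of $\mathcal D_A(R)$ for a computable subfield $A$ of an algebraically closed field. Your additional care in constructing the effective enumeration $\tilde\nu$ of $G_\pi(\Phi,A)$ from $\nu$ via the integer-coefficient defining polynomials is precisely the ``proper enumeration'' bookkeeping the statement alludes to.
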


\subsection{Diophantine problem in Chevalley groups over reals}

Let  $R = \mathbb{R}$ be  the field of real numbers and $A$  a countable (or finite) subset of $\mathbb{R}$. 

Our treatment of the Diophantine problem in Chevalley groups over $\mathbb{R}$ is based on the following two results on  the Diophantine problem in $\mathbb R$ which are known in the folklore. For details we refer to \cite{Myasnikov-Sohrabi2}.

\begin{proposition}[\cite{Myasnikov-Sohrabi2}, Proposition 7.4] \label{pr:dec-ord} 
Let $A$ be a finite or countable subset of $\mathbb{R}$.  Then the Diophantine problem in $\mathbb{R}$ with coefficients in $A$ is decidable if and only if the ordered subfield $F(A)$ is computable. Furthermore, in this case the whole first-order theory $Th_A(\mathbb{R})$ is decidable.  
\end{proposition}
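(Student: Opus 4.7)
The proposition splits into two halves plus an enhancement, and the natural route is to prove the forward implication directly from the Diophantine definability of the order on $\mathbb R$, and the reverse implication by invoking Tarski's effective quantifier elimination for the theory of real closed ordered fields, which simultaneously delivers the promised decidability of the full first-order theory $Th_A(\mathbb R)$. The crucial fact that powers the forward direction is that strict positivity is itself Diophantine: for $t\in\mathbb R$ one has
\[
t>0 \quad\Longleftrightarrow\quad \exists u\,\exists v\,\bigl(t=u^{2}\wedge tv=1\bigr),
\]
because a real number is positive exactly when it is a nonzero square. Hence any strict inequality $p(\bar a)>q(\bar a)$ with coefficients in $A$ can be rewritten as a finite system of equations over $A$.

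For the forward direction I would fix an enumeration $\nu$ of $A$; every element of $F(A)$ admits a representative of the form $p(\bar a)/q(\bar a)$ with $p,q\in\mathbb Z[\bar x]$ and $q(\bar a)\neq 0$, and such representatives can be effectively enumerated from $\nu$. Using the decidability of $\mathcal D_A(\mathbb R)$ one decides equality of two representatives (via the polynomial equation $p_1q_2=p_2q_1$), nonvanishing of a denominator (via $\exists z\,(q(\bar a)z=1)$), and strict order (via the displayed formula applied to $y-x$). Since addition, multiplication, and inversion are given by explicit formulas on representatives, these three decision procedures turn the enumeration of representatives into a presentation of $F(A)$ as a computable ordered field.

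For the reverse direction assume $F(A)$ is a computable ordered field. I would invoke Tarski's effective quantifier elimination for the theory $\mathrm{RCF}$ of real closed ordered fields: there is an algorithm that, given a formula in the language $\{+,\cdot,0,1,<\}$ with parameters from a computable ordered subfield $K\subseteq\mathbb R$, produces an equivalent quantifier-free Boolean combination of polynomial (in)equalities over $K$. Apply this to any sentence of $Th_A(\mathbb R)$, first rewriting its constants as rational expressions in elements of $A$ so that they live in $F(A)$; the algorithm returns a quantifier-free statement whose atoms are polynomial (in)equalities with coefficients in $F(A)$, and computability of $F(A)$ as an ordered field lets us evaluate each atom, hence decide the original sentence. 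This handles arbitrary first-order sentences with parameters in $A$, and in particular the existential ones encoding the Diophantine problem.

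The main obstacle is making sure that Tarski's decision procedure is genuinely effective when the parameters are drawn from the computable ordered field $F(A)$ rather than from $\mathbb Z$ or $\mathbb Q$: one must check that every step of the elimination (Sturm-style real root counting, sign evaluation of polynomials along a tower) requires only the computable arithmetic and order of $F(A)$, which is exactly the input hypothesis. The forward direction's subtlety is milder but still worth noting: the Diophantine definition of $>$ relies on $\mathbb R$ being formally real so that nonnegative elements coincide with squares, which is what makes the argument specific to the reals and would fail verbatim, for instance, over $\mathbb Q$ or over an arbitrary ordered field.
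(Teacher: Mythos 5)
The paper does not prove this proposition: it is quoted verbatim from Myasnikov--Sohrabi (\cite{Myasnikov-Sohrabi2}, Proposition 7.4) and described as folklore, so there is no in-paper argument to compare against. Your proposal is a correct and essentially standard proof of the statement. The forward direction is right: positivity in $\mathbb{R}$ is Diophantine via $t>0\Leftrightarrow \exists u\,\exists v\,(t=u^2\wedge tv=1)$, so a decision procedure for $\mathcal D_A(\mathbb{R})$ lets you decide equality, invertibility of denominators, and the order on the effectively enumerated fraction representatives $p(\bar a)/q(\bar a)$, which is exactly a constructivization of the ordered field $F(A)$ (one should note that coefficients of the auxiliary equations lie in $\langle A\rangle$ rather than $A$ itself, but Lemma \ref{lemmaM3.3} makes these two versions of the Diophantine problem equivalent). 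For the reverse direction your worry about running Tarski's elimination ``over $F(A)$'' can be dissolved entirely: perform quantifier elimination in the pure language of ordered rings, treating the constants from $A$ as free variables; this step is purely syntactic and needs no arithmetic in $F(A)$ at all. One then substitutes the constants into the resulting quantifier-free formula, whose atoms are integer-coefficient polynomial (in)equalities in elements of $F(A)$, and these are decided by the computability of $F(A)$ as an ordered field (their truth in $\mathbb{R}$ and in $F(A)$ coincide since quantifier-free formulas are absolute for substructures). This yields decidability of all of $Th_A(\mathbb{R})$, hence in particular of $\mathcal D_A(\mathbb{R})$, completing the equivalence and the ``furthermore'' clause.
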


Recall that a real $a \in \mathbb{R}$ is \emph{computable} if its standard decimal expansion $a = a_0.a_1a_2 \ldots$ is computable, i.e., the integer function $n \mapsto a_n$ is computable. In other words, $a$ is computable if and only if one can effectively approximate it by rationals with any precision. The set of all computable reals $\mathbb{R}^c$ forms a real closed subfield of $\mathbb{R}$, in particular $\mathbb{R}^c$  is first-order equivalent to $\mathbb{R}$.

In the following Proposition we collect some facts about computable ordered subfields of $\mathbb{R}$. 

\begin{proposition}[\cite{Myasnikov-Sohrabi2}, Proposition 7.5] \label{pr:main-R} 
The following holds:
\begin{itemize}
    \item [1)]  Every ordered computable subfield of $\mathbb{R}$ is contained in $\mathbb{R}^c$.
    
    \item [2)]  The ordered subfield $\mathbb{R}^c \leq \mathbb{R}$ with the induced order from $\mathbb{R}$ is not computable.
    \item [3)]    If $F$ is a computable ordered field, then its real closure is also computable. In particular, if $F$ is a computable subfield of $\mathbb{R}$ then the  algebraic closure  $\bar F$ of $F$ in $\mathbb{R}$ is a computable ordered field. 
    \item [4)] If $a_1, \ldots,a_m$ are computable reals then the ordered subfield $\mathbb{Q}(a_1, \ldots,a_m) \leq \mathbb{R}$ with the induced order from $\mathbb{R}$ is computable.
    
\end{itemize}
\end{proposition}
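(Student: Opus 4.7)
The plan is to handle the four parts in sequence, with (1), (2) reducing to folklore facts about $\mathbb{R}^c$ and (3), (4) invoking effective real algebra.

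For (1), let $F \le \mathbb{R}$ be an ordered computable subfield and $a \in F$. Since $\mathbb{Z} \to F$ via iterated $+1$ is computable, so is the embedding $\mathbb{Q} \hookrightarrow F$, and hence for every rational $p/q$ the trichotomy $a < p/q$, $a = p/q$, $a > p/q$ is decidable in $F$. A bisection search then produces, uniformly in $n$, a rational $q_n$ with $|a - q_n| < 2^{-n}$, exhibiting $a \in \mathbb{R}^c$. For (2), I would invoke the folklore non-computability of $\mathbb{R}^c$ as an ordered field, which reduces to the undecidability of equality among computable reals: were the induced order decidable under some enumeration of $\mathbb{R}^c$, then so would equality be, since $x = y$ iff $x \le y$ and $y \le x$; but deciding $\xi_e = 0$ for $\xi_e = \sum_{s\ge 0} 2^{-s}\,[\,\varphi_e(e)\text{ halts in }\le s\text{ steps}\,]$ would resolve the halting problem, a contradiction.

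For (3), I use the effective real-closure construction. Represent each element of the real closure $\widetilde F$ by a pair $(p, I)$, with $p \in F[x]$ and $I = [\alpha, \beta] \subset F$ an isolating interval containing a unique real root of $p$; root counting in any $F$-interval is effective via Sturm's theorem, which is computable because $F$ has decidable order. Field operations on such pairs are realized by resultants and subresultant chains; equality and order tests reduce to sign evaluations handled by further Sturm computations with iterated interval refinement. This assembles $\widetilde F$ into a computable ordered field; the \emph{in particular} clause follows because $\bar F \subset \mathbb{R}$ is precisely the real closure of $F$ inside $\mathbb{R}$, which is stable under the real-closure operation.

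For (4), I would proceed inductively along the tower $\mathbb{Q} \subset \mathbb{Q}(a_1) \subset \dots \subset \mathbb{Q}(a_1,\dots,a_m)$, building at each stage a computable ordered field $F_i$ containing $\{a_1,\dots,a_i\}$. Since every $a_j$ is a computable real, any element of $\mathbb{Q}(a_1,\dots,a_m)$ has a rational-function representative whose value in $\mathbb{R}$ is approximable to arbitrary precision, which makes strict inequalities semi-decidable from approximations. The main obstacle --- and the delicate point --- is certifying equalities, because deciding whether $a_i$ is algebraic over $F_{i-1}$ from approximations alone is impossible in general. The resolution, following \cite{Myasnikov-Sohrabi2}, is to dovetail the approximation-based semi-decision for inequality with an effective parallel search for a minimal polynomial of $a_i$ over $F_{i-1}$ of bounded degree, using (3) to embed any algebraic adjunction into the computable real closure of $F_{i-1}$; if no such polynomial exists, $a_i$ is treated as a transcendental indeterminate, with evaluation at the computable $a_i \in \mathbb{R}$ furnishing the order. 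Careful bookkeeping along the tower then produces a computable ordered presentation of $\mathbb{Q}(a_1,\dots,a_m)$.
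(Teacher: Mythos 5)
The paper gives no proof of this statement --- it is imported verbatim from \cite{Myasnikov-Sohrabi2} --- so your argument can only be judged on its own merits. Parts (1) and (3) are correct and standard: for (1) you should first locate an integer bound for $a$ by searching before bisecting, and for (3) the isolating-interval/Sturm-sequence construction of the real closure is the classical effective argument. The two remaining parts each have a genuine gap.

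In part (2) you reduce to ``deciding $\xi_e=0$,'' but a computable presentation $\nu$ of $\mathbb{R}^c$ only lets you decide equality of $\nu(i)$ and $\nu(j)$ for given \emph{indices} $i,j$. Nothing provides a computable map $e\mapsto \nu^{-1}(\xi_e)$ from programs for computable reals to their $\nu$-indices, so the halting problem is not actually reduced; your argument only rules out those presentations for which such an index function happens to be computable. The standard repair goes through part (1): under any computable presentation, $i\mapsto \nu(i)$ is a \emph{uniformly} computable sequence of reals enumerating all of $\mathbb{R}^c$, and a nested-interval diagonalization produces a computable real avoiding every $\nu(i)$, contradicting surjectivity.

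In part (4) your own text flags the danger without escaping it. The ``effective parallel search for a minimal polynomial of $a_i$ over $F_{i-1}$'' is not effective: certifying that a candidate $p$ satisfies $p(a_i)=0$ is precisely the equality test that cannot be performed from approximations (only $p(a_i)\neq 0$ is semi-decidable), so the dovetailing has no terminating certificate on the ``algebraic'' side and the construction is circular. The statement is rescued by non-uniformity: since $\mathbb{Q}(a_1,\dots,a_m)$ is finitely generated, one fixes as finite, hard-coded (non-computed) data which of the $a_i$ form a transcendence basis and the minimal polynomials of the remaining generators over the purely transcendental subfield. With that data, equality is decided purely symbolically --- algebraic independence handles the transcendental part, reduction modulo the minimal polynomials the algebraic part --- and the order is then determined by numerically approximating elements already certified nonzero, which terminates. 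No algorithm recovers this data uniformly from programs for the $a_i$; that is exactly why (4) asserts only the existence of a computable presentation.
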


\begin{corollary} \label{co:dec-R} The following holds:
\begin{itemize}
    \item The Diophantine problem in $\mathbb{R}$ with coefficients in  $\mathbb{R}^c$ is undecidable;
    \item The  Diophantine problem in $\mathbb{R}$ with coefficients in any finite subset of  $\mathbb{R}^c$ is  decidable;
    \item The  Diophantine problem in $\mathbb{R}$ with coefficients in $\{a\}$, where $a$ is not computable, is undecidable.
\end{itemize}
\end{corollary}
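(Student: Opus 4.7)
The plan is to derive each of the three bullet points directly from Propositions \ref{pr:dec-ord} and \ref{pr:main-R}, which together give the decidability criterion and the needed structural facts about computable ordered subfields of $\mathbb{R}$. No further machinery is needed; the whole statement is essentially a bookkeeping consequence.

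For the first bullet, I would argue as follows. By Proposition \ref{pr:dec-ord}, the Diophantine problem $\mathcal{D}_{\mathbb{R}^c}(\mathbb{R})$ is decidable if and only if the ordered subfield of $\mathbb{R}$ generated by $\mathbb{R}^c$ is computable; but this subfield is just $\mathbb{R}^c$ itself. Proposition \ref{pr:main-R}(2) states exactly that $\mathbb{R}^c$, with the induced ordering from $\mathbb{R}$, is not computable. Hence the Diophantine problem with coefficients in $\mathbb{R}^c$ is undecidable.

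For the second bullet, suppose $A = \{a_1, \ldots, a_m\}$ is a finite subset of $\mathbb{R}^c$. By Proposition \ref{pr:main-R}(4), the ordered subfield $\mathbb{Q}(a_1,\ldots,a_m) \leq \mathbb{R}$ is computable. Applying Proposition \ref{pr:dec-ord} in the other direction immediately yields that $\mathcal{D}_A(\mathbb{R})$ (in fact the entire first-order theory $Th_A(\mathbb{R})$) is decidable. For the third bullet, suppose $a \in \mathbb{R}$ is not computable. Any computable ordered subfield of $\mathbb{R}$ is contained in $\mathbb{R}^c$ by Proposition \ref{pr:main-R}(1); since $a \notin \mathbb{R}^c$, the field $\mathbb{Q}(a)$ cannot be a computable ordered subfield of $\mathbb{R}$. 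Proposition \ref{pr:dec-ord} then forces $\mathcal{D}_{\{a\}}(\mathbb{R})$ to be undecidable.

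There is no real obstacle here: all the content is packed into the two cited propositions, and the corollary is a three-line deduction. The only mild subtlety worth flagging in the write-up is the implicit identification, in the first and third bullets, of the set $A$ appearing in Proposition \ref{pr:dec-ord} with the ordered subfield $F(A)$ it generates—this is why one must invoke Proposition \ref{pr:main-R}(1), which rules out any computable subfield containing a non-computable real, rather than directly quoting non-computability of $\mathbb{Q}(a)$.
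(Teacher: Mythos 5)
Your proposal is correct and is exactly the intended derivation: the paper states Corollary \ref{co:dec-R} without proof as an immediate consequence of Propositions \ref{pr:dec-ord} and \ref{pr:main-R}, and your three-bullet deduction (using \ref{pr:main-R}(2) for the first item, \ref{pr:main-R}(4) for the second, and \ref{pr:main-R}(1) for the third) is precisely the bookkeeping the authors leave to the reader. No gaps.
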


Recall that a matrix $A \in \GL_n(\mathbb{R})$ is called \emph{computable} if all entries in $A$ are computable real numbers. 

Chevalley groups $G_\pi(\Phi,\mathbb{R})$ are matrix algebraic groups over $\mathbb{R}$, hence one can view their elements as matrices 

\begin{theorem} \label{th:main-R} 
Let $\Phi$ be an indecomposable root system of a rank $> 1$ and $G_\pi(\Phi,\mathbb{R})$ the Chevalley group over the field of real numbers $\mathbb{R}$. If $A$ is a computable ordered subfield of $\mathbb R$ then   the first-order theory  $Th(G_\pi(\Phi,\mathbb{R}))$ with constants from $G_\pi(\Phi,A)$ is decidable. In particular, 
the Diophantine problem in $G_\pi(\Phi,\mathbb{R})$ with constants from $G_\pi(\Phi,A)$ is decidable (under a proper enumeration of $G_\pi(\Phi,A)$).
 \end{theorem}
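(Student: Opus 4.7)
The plan is to exploit the fact that the Chevalley group $G_\pi(\Phi,\mathbb{R})$ is cut out inside $\SL_n(\mathbb{R})$ by a finite system of polynomial equations with integer coefficients, so it is absolutely interpretable in the ring $\mathbb{R}$ by first-order formulas. This is essentially the construction already carried out in Proposition \ref{prop6.4} for $e$-interpretability; what I need to observe in addition is that the very same interpretation allows one to translate \emph{arbitrary} first-order formulas (not just equations) about $G_\pi(\Phi,\mathbb{R})$ into first-order formulas about $\mathbb{R}$, while preserving computability of constants.

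First I would set up the interpretation explicitly. Each element $g \in G_\pi(\Phi,\mathbb{R}) \subseteq \SL_n(\mathbb{R})$ is encoded by the $n^2$-tuple $\overline{g}$ of its matrix entries. The set $D \subseteq \mathbb{R}^{n^2}$ of admissible tuples is defined by a finite conjunction of polynomial equations with integer coefficients (the defining equations of $G_\pi(\Phi,-)$ as a subgroup scheme of $\SL_n$). Matrix multiplication, inversion and equality are given by integer polynomial formulas. Hence the interpreting map $\varphi\colon D \to G_\pi(\Phi,\mathbb{R})$ uses no parameters at all.

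Next I would describe the translation. Given any first-order sentence $\Psi$ in the language of groups with constants from $G_\pi(\Phi,A)$, I would mechanically produce a first-order sentence $\Psi^{\ast}$ in the language of rings with constants from $A$ as follows: replace every matrix constant $g \in G_\pi(\Phi,A)$ by its $n^2$-tuple of entries (all of which lie in $A$); replace every group variable by a block of $n^2$ ring variables; relativise quantifiers to $D$; and rewrite each group equation $u=v$ as the conjunction of the $n^2$ polynomial equalities of the matrix entries of the corresponding ring terms. This translation is computable, and by construction $G_\pi(\Phi,\mathbb{R}) \models \Psi$ if and only if $\mathbb{R} \models \Psi^{\ast}$. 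Because $A$ is a computable ordered subfield of $\mathbb{R}$, the enumeration of $G_\pi(\Phi,A)$ obtained by enumerating $n^2$-tuples over $A$ is compatible with a given computable enumeration of $A$, so the translation $\Psi \mapsto \Psi^{\ast}$ is genuinely effective.

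Finally I would invoke Proposition \ref{pr:dec-ord}: since $A$ is computable as an ordered subfield of $\mathbb{R}$, the full first-order theory $Th_A(\mathbb{R})$ is decidable. Thus, to decide $\Psi$ one computes $\Psi^{\ast}$ and applies the decision procedure for $Th_A(\mathbb{R})$. The statement about the Diophantine problem follows at once, as Diophantine formulas are a particular case of first-order formulas. The only substantive point to watch is that the interpretation really is a first-order interpretation in the strict model-theoretic sense (with relativised quantifiers and integer constants only), and that the enumeration of $G_\pi(\Phi,A)$ is chosen compatibly with that of $A$; both of these are routine thanks to the explicit polynomial definition of $G_\pi(\Phi,R)$ and the computability of $A$, so there is no serious obstacle.
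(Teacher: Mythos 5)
Your proposal is correct and follows exactly the route the paper intends: the paper leaves this theorem without a written proof (referring to the analogous argument in \cite{Myasnikov-Sohrabi2}), but the intended argument is precisely the parameter-free first-order interpretation of $G_\pi(\Phi,\mathbb{R})$ in $\mathbb{R}$ via its integer-polynomial matrix realization (the content of Proposition~\ref{prop6.4}) combined with the decidability of $Th_A(\mathbb{R})$ from Proposition~\ref{pr:dec-ord}. Nothing essential differs, and your added care about relativising quantifiers and choosing a compatible enumeration of $G_\pi(\Phi,A)$ is exactly the right bookkeeping.
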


 \begin{theorem}\label{th:main-R-dop} 
Let $\Phi$ be an indecomposable root system of a rank $> 1$ and $G_\pi(\Phi,\mathbb{R}^c)$ the Chevalley group over the field of computable real numbers $\mathbb{R}^c$.  Then the  following holds:
\begin{itemize} 
\item [1)] 
 The Diophantine problem in the computable group $G_\pi(\Phi,\mathbb{R}^c)$  is undecidable. 
 \item [2)] For any finitely generated subgroup $C$ of $G_\pi(\Phi,\mathbb{R}^c)$  the Diophantine problem in $G_\pi(\Phi,\mathbb{R}^c)$ with coefficients in $C$ is decidable. 
 \end{itemize}
 
 \end{theorem}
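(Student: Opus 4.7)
The plan is to derive both parts of Theorem~\ref{th:main-R-dop} by assembling the machinery already developed in the paper. The two ingredients that do the work are Theorem~\ref{Theorem6.5} (Karp equivalence of $\mathcal{D}(G_\pi(\Phi,S))$ and $\mathcal{D}(S)$ for any commutative ring $S$) and the model-theoretic fact that $\mathbb{R}^c$ is a real closed subfield of $\mathbb{R}$, hence an elementary substructure $\mathbb{R}^c \prec \mathbb{R}$ by completeness of the theory of real closed fields. Consequently, for any subset $A \subseteq \mathbb{R}^c$, every positive primitive formula with parameters in $A$ has the same truth value in $\mathbb{R}^c$ and $\mathbb{R}$, and the same transfers to the corresponding Chevalley groups, since $G_\pi(\Phi,\mathbb{R}^c) \subseteq G_\pi(\Phi,\mathbb{R})$ is cut out by integer polynomial equations.

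For part~(1), I apply Theorem~\ref{Theorem6.5} to the ring $\mathbb{R}^c$: the Diophantine problem in $G_\pi(\Phi,\mathbb{R}^c)$ Karp reduces to $\mathcal{D}(\mathbb{R}^c)$ (with some countable set of coefficients corresponding to the one used for the group). Now $\mathcal{D}(\mathbb{R}^c)$ is precisely $\mathcal{D}_{\mathbb{R}^c}(\mathbb{R}^c)$, and by elementary equivalence $\mathbb{R}^c \prec \mathbb{R}$ it coincides with $\mathcal{D}_{\mathbb{R}^c}(\mathbb{R})$, which is undecidable by Corollary~\ref{co:dec-R}. So $\mathcal{D}(G_\pi(\Phi,\mathbb{R}^c))$ is undecidable.

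For part~(2), fix a finite generating set of $C$ and let $a_1,\dots,a_m \in \mathbb{R}^c$ enumerate all matrix entries occurring in these generators. Set $A = \mathbb{Q}(a_1,\dots,a_m) \subseteq \mathbb{R}^c$; by Proposition~\ref{pr:main-R}~(4) this $A$ is a computable ordered subfield of $\mathbb{R}$. By Theorem~\ref{th:main-R}, the full first-order theory $Th(G_\pi(\Phi,\mathbb{R}))$ with constants from $G_\pi(\Phi,A)$ is decidable, and since $C \subseteq G_\pi(\Phi,A)$, this yields decidability of $\mathcal{D}_C(G_\pi(\Phi,\mathbb{R}))$. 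To conclude, one translates a group equation with coefficients in $C$ into a system of polynomial equations over $A$ on matrix entries, together with the (integer) polynomial defining equations of $G_\pi(\Phi,-)$; since $\mathbb{R}^c \prec \mathbb{R}$ and all parameters lie in $A \subseteq \mathbb{R}^c$, this system is solvable in $\mathbb{R}^c$ iff it is solvable in $\mathbb{R}$. Hence the given equation has a solution in $G_\pi(\Phi,\mathbb{R}^c)$ iff it has a solution in $G_\pi(\Phi,\mathbb{R})$, and the decision procedure for the latter answers the former.

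The only potentially delicate point — and where I would be most careful — is the transfer step at the end of part~(2): namely, verifying that solvability of a group-word equation in $G_\pi(\Phi,\mathbb{R}^c)$ versus in $G_\pi(\Phi,\mathbb{R})$ really is a matter of existential first-order satisfiability in the ring with parameters in $A$, and that the enumeration of $C$ pulled back from the generators can be made effective in the sense of Lemma~\ref{lemmaM3.3}, so that the decision algorithm furnished by Theorem~\ref{th:main-R} can actually be invoked on inputs using coefficients from $C$. Both are routine but need to be written out in order to justify invoking the ambient results with the correct notion of ``constants from $C$''.
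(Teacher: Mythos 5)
Your overall strategy is the right one, and it is essentially what the paper intends (the paper states this theorem without a written proof, deferring to the analogous results in \cite{Myasnikov-Sohrabi2}): combine Theorem~\ref{Theorem6.5} with Corollary~\ref{co:dec-R}, Proposition~\ref{pr:main-R}, Theorem~\ref{th:main-R}, and the transfer principle $\mathbb{R}^c \prec \mathbb{R}$ coming from model completeness of real closed fields. Part~2) of your argument is correct: reducing a group equation with coefficients in $C \subseteq G_\pi(\Phi,A)$ to an existential sentence over the field with parameters in the computable ordered subfield $A=\mathbb{Q}(a_1,\dots,a_m)$, deciding it in $\mathbb{R}$ via Theorem~\ref{th:main-R}, and transferring solvability between $G_\pi(\Phi,\mathbb{R})$ and $G_\pi(\Phi,\mathbb{R}^c)$ because the defining data are existential with parameters in $\mathbb{R}^c$. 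The bookkeeping you flag (effective enumeration of $C$ in the sense of Lemma~\ref{lemmaM3.3}) is indeed routine here because $C$ is finitely generated, so Lemma~\ref{lemmaM3.2} makes the enumeration essentially canonical.

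There is, however, a genuine logical slip in part~1) as written. You invoke the reduction $\mathcal{D}(G_\pi(\Phi,\mathbb{R}^c)) \leqslant \mathcal{D}(\mathbb{R}^c)$ and then conclude undecidability of the group problem from undecidability of $\mathcal{D}(\mathbb{R}^c)$. That inference goes the wrong way: if $X$ reduces to $Y$ and $Y$ is undecidable, $X$ may still be decidable. What you need is item~2) of Theorem~\ref{Theorem6.5}: taking $T=\mathbb{R}^c$ (which is countable), there is a countable subset $C_T$ of $G_\pi(\Phi,\mathbb{R}^c)$ such that $\mathcal{D}_T(\mathbb{R}^c)$ Karp reduces to $\mathcal{D}_{C_T}(G_\pi(\Phi,\mathbb{R}^c))$; since $\mathcal{D}_{\mathbb{R}^c}(\mathbb{R}^c)$ is undecidable (by Corollary~\ref{co:dec-R} together with your transfer $\mathbb{R}^c \prec \mathbb{R}$), so is $\mathcal{D}_{C_T}(G_\pi(\Phi,\mathbb{R}^c))$ and hence the full Diophantine problem in the group. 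The fix is immediate because the theorem you cite is an equivalence, but as stated your deduction is invalid and should be rewritten using the correct direction of the reduction.
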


We say that a matrix $A \in \GL_n(\mathbb{R})$ is \emph{computable} if all entries in $A$ are computable reals, i.e., $A \in \GL_n(\mathbb{R}^c)$. Hence the computable matrices in $\SL_n(\mathbb{R})$ are precisely the matrices from $\SL_n(\mathbb{R}^c)$. Since elements of a Chevalley group 
$G_\pi(\Phi,\mathbb R)$ are represented by matrices from $\SL_n(\mathbb{R})$ we say that an element $g \in G_\pi(\Phi,\mathbb{R})$ is computable if it is represented by a computable matrix from $\SL_n(\mathbb{R}^c)$.

 \begin{theorem} \label{th:incomp-R}
  Let $\Phi$ be an indecomposable root system of a rank $> 1$ and $G_\pi(\Phi,\mathbb{R})$ is the corresponding Chevalley group over the field of reals $\mathbb{R}$. If an element  $g \in E_\pi(\Phi,\mathbb{R})$  is not computable then the  Diophantine problem for equations with coefficients in $\{x_\alpha(1) \mid \alpha \in \Phi\} \cup \{g\}$ is undecidable in any large subgroup of $G_\pi(\Phi,\mathbb{R})$.

\end{theorem}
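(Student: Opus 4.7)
The plan is to combine the e-interpretability of $\mathbb R$ in the large subgroup $G$ provided by Theorem~\ref{theorM6.1} with the matrix realization of the Chevalley group to extract a non-computable real from $g$ by Diophantine means over $\{g\}\cup C_\Phi$, where $C_\Phi=\{x_\alpha(1)\mid\alpha\in\Phi\}$. Combined with the undecidability of $\mathcal{D}_{\{a\}}(\mathbb{R})$ for any non-computable $a$ (Corollary~\ref{co:dec-R}), this will force the undecidability of the Diophantine problem in $G$ with coefficients in $C_\Phi\cup\{g\}$.

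More precisely, I would first fix a root $\alpha\in\Phi$ for which Theorem~\ref{theorM6.1} provides an e-interpretation $\varphi\colon X_\alpha\to\mathbb{R}$ (or $Y_\alpha\to\mathbb R$ in the exceptional $\mathbf{C}_2$ case) with all parameters in $C_\Phi$. Since $g$ is not computable as a matrix in $\SL_n(\mathbb{R})$, at least one entry $a:=g_{ij}$ is a non-computable real. By Corollary~\ref{co:dec-R} the problem $\mathcal{D}_{\{a\}}(\mathbb{R})$ is undecidable, and by Corollary~\ref{corolM3.8} applied to Theorem~\ref{theorM6.1} it Karp reduces to $\mathcal{D}_{C_\Phi\cup\{x_\alpha(a)\}}(G)$. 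The whole argument therefore reduces to showing that $x_\alpha(a)$ is Diophantine-definable in $G$ from $\{g\}\cup C_\Phi$.

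To secure that definability I would invoke the transitivity of e-interpretability (Corollary~\ref{corolM3.9}). The Chevalley group $G_\pi(\Phi,\mathbb R)$ is e-interpretable in $\mathbb R$ by Proposition~\ref{prop6.4}, with each element $h$ represented by its $n^2$-tuple of matrix entries $(h_{ij})$. Composing this interpretation with that of $\mathbb R$ in $G$ along $X_\alpha$ yields an e-interpretation of $G_\pi(\Phi,\mathbb R)$ in $G$, with parameters in $C_\Phi$, under which $h$ is represented by the tuple $(x_\alpha(h_{ij}))_{1\leqslant i,j\leqslant n}\in X_\alpha^{n^2}$. In particular, for every pair $(i,j)$ there is a Diophantine formula $\Psi_{ij}(h,y)$ over $C_\Phi$ whose solutions in $y$ are exactly $\{x_\alpha(h_{ij})\}$; substituting the constant $g$ for $h$, the formula $\Psi_{ij}(g,y)$ Diophantine-defines $x_\alpha(a)$ from $\{g\}\cup C_\Phi$.

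Chaining everything gives the Karp reductions $\mathcal{D}_{\{a\}}(\mathbb R)\leqslant\mathcal{D}_{C_\Phi\cup\{x_\alpha(a)\}}(G)\leqslant\mathcal{D}_{C_\Phi\cup\{g\}}(G)$, so the undecidability of the leftmost forces undecidability of the rightmost. The main obstacle will be making the composition of the two e-interpretations explicit at the Diophantine level, that is, actually writing down the formulas $\Psi_{ij}$: one must verify that the tautological correspondence between an abstract group element $h\in G$ and the tuple $(x_\alpha(h_{ij}))$ of its matrix entries can be captured by a finite system of group-theoretic equations over $C_\Phi$, so that individual components $x_\alpha(g_{ij})$ can be isolated from the constant $g$ without appeal to non-Diophantine first-order machinery. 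Since only constants from $E_\pi(\Phi,\mathbb R)$ appear anywhere in the proof, the conclusion transfers verbatim to any large subgroup of $G_\pi(\Phi,\mathbb R)$ containing $g$.
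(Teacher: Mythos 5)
Your reduction chain is correct up to its last link, and that last link is where the proof breaks. The passage from $\mathcal{D}_{C_\Phi\cup\{x_\alpha(a)\}}(G)$ to $\mathcal{D}_{C_\Phi\cup\{g\}}(G)$ needs the singleton $\{x_\alpha(g_{ij})\}$ to be Diophantine in $G$ over $C_\Phi\cup\{g\}$, and you try to get this from transitivity of e-interpretability (Corollary~\ref{corolM3.9}) applied to Proposition~\ref{prop6.4} and Theorem~\ref{theorM6.1}. But the composed interpretation only produces a Diophantine set of $n^2$-tuples in $X_\alpha^{n^2}$ together with Diophantine graphs of the group operations \emph{on such tuples}; the variable $h$ in your formula $\Psi_{ij}(h,y)$ would have to range over those tuples, not over actual elements of $G$. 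What you actually need is that the map $h\mapsto (x_\alpha(h_{ij}))_{i,j}$ from $G$ to its coded copy is itself Diophantine, i.e.\ (one half of) e-\emph{bi}-interpretability of $G$ with $\mathbb{R}$. The paper proves only mutual e-interpretability, which is strictly weaker, and nothing in it gives a Diophantine extraction of matrix entries from a group element. So the sentence beginning ``In particular, for every pair $(i,j)$ there is a Diophantine formula $\Psi_{ij}(h,y)$\dots'' is unjustified; you flag this as the main obstacle, but it is the whole content of the theorem, not a technical afterthought.

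A telltale sign is that your argument never uses the hypothesis $g\in E_\pi(\Phi,\mathbb{R})$, which is exactly what the intended argument (following \cite{Myasnikov-Sohrabi2}) exploits. Write $g=x_{\beta_1}(t_1)\cdots x_{\beta_k}(t_k)$. Since the matrix entries of such a product are integer polynomials in the parameters, \emph{every} tuple $(u_1,\dots,u_k)$ with $g=x_{\beta_1}(u_1)\cdots x_{\beta_k}(u_k)$ must have a non-computable coordinate, for otherwise $g$ would be computable. The set of all such tuples is Diophantine over $C_\Phi\cup\{g\}$ (it is cut out by the single group equation $y_1\cdots y_k=g$ with $y_s\in X_{\beta_s}$) and corresponds to a nonempty, topologically closed algebraic subset of $\mathbb{R}^k$. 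Since the ordering and the rational constants are Diophantine in the interpreted copy of $\mathbb{R}$, a decision algorithm for $\mathcal{D}_{C_\Phi\cup\{g\}}(G)$ would permit a nested rational-box search that effectively approximates a point of this set, producing computable parameters $u_s$ and hence a computable $g$ --- a contradiction. This route bypasses entry extraction entirely; without it, or without an actual proof of Diophantine entry extraction, your argument does not go through.
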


\subsection{Diophantine problem in Chevalley groups over $p$-adic numbers}

Similar to the case of reals  one can define computable $p$-adic numbers for every fixed prime $p$. Recall, that every $p$-adic number $a \in \mathbb{Q}_p$ has a unique presentation in the form $a = p^m\xi$, where $m \in \mathbb{Z}$ and $\xi$ is a unit in the ring $\mathbb{Z}_p$. In its turn, the unit $\xi$ is uniquely determined by a sequence of natural numbers $\{\xi(i)\}_{i \in \mathbb{N}}$, where 
$$
0\leq \xi(i) < p^{i+1}, \ \xi(i+1) = \xi(i) (\text{ mod } p^{i+1}), \ (i \in \mathbb{N}).
$$
The $p$-adic number $a = p^m\xi$ is computable if the sequence $i \to \xi(i)$ is computable. In this case the sequence $\{\xi(i)\}_{i \in \mathbb{N}}$ gives  an effective $p$-adic approximation of $\xi$. It is known (see, for example \cite{MR}), that the set $\mathbb{Q}_p^c$ of all computable $p$-adic numbers forms a subfield of $\mathbb{Q}_p$, such that $\mathbb{Q}_p \equiv \mathbb{Q}_p^c$.  Observe also that the ring $\mathbb{Z}_p$ is Diophantine in $\mathbb{Q}_p$. More precisely, if $p \neq 2$, then $\mathbb{Z}_p$ is defined in  $\mathbb{Q}_p$ by formula $\exists y (1+px^2 = y^2)$, while if $p= 2$ then $\mathbb{Z}_p$ is defined  by the formula $\exists y(1+2x^3 = y^3)$ (see \cite{M27}).

The following results were shown in \cite{MR}:
\begin{itemize} \label{th:fields}

    \item [a)]  $Th(\mathbb{Z}_p,a_1, \ldots,a_n)$ is decidable if and only if each of  $a_1, \ldots,a_n$ is a computable $p$-adic number.  
    \item [b)] $Th(\mathbb{Q}_p,a_1, \ldots,a_n)$ is decidable if and only if each of  $a_1, \ldots,a_n$ is a computable p-adic  number. 
      \item [c)] If   a $p$-adic integer $a$ is not computable then equations with constants from $\mathbb{Q} \cup \{a\}$ are undecidable in $\mathbb{Z}_p$. 
    \item [d)] If   a $p$-adic number $a \in \mathbb{Q}_p$ is not computable then equations with constants from $\mathbb{Q} \cup \{a\}$ are undecidable in $\mathbb{Q}_p$. 
\end{itemize}

\begin{theorem}\label{theor-padic-1}
Let $\Phi$ be an indecomposable root system of a rank $> 1$. Then the  following holds:
 \begin{itemize}
     \item [1)] Let $a_1, \ldots, a_m \in \mathbb{Q}_p^c$ and   $A=\mathbb{Q}(a_1, \ldots,a_m)$  is the subfield of $\mathbb{Q}_p$  generated by $a_1, \ldots, a_m$. Then    the first-order theory  $Th(G_\pi(\Phi,\mathbb{Q}_p))$ with constants from $G_\pi(\Phi,A)$ is decidable. In particular, 
the Diophantine problem in $G_\pi(\Phi,\mathbb{Q}_p)$ with constants from $G_\pi(\Phi,A)$ is decidable (under a proper enumeration of $G_\pi(\Phi,A)$).
     \item [2)] Let $a_1, \ldots, a_m \in \mathbb{Z}_p^c$ and   $A=\mathbb{Z}(a_1, \ldots,a_m)$  is the subring of $\mathbb{Z}_p$  generated by $a_1, \ldots, a_m$. Then    the first-order theory  $Th(G_\pi(\Phi,\mathbb{Q}_p))$ with constants from $G_\pi(\Phi,A)$ is decidable. In particular, 
the Diophantine problem in $G_\pi(\Phi,\mathbb{Q}_p)$ with constants from $G_\pi(\Phi,A)$ is decidable (under a proper enumeration of $G_\pi(\Phi,A)$).
 \end{itemize}
  \end{theorem}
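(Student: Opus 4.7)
The plan is to mirror the strategy used for reals in Theorem \ref{th:main-R}: leverage the absolute (parameter-free) interpretation of $G_\pi(\Phi,\mathbb{Q}_p)$ in $\mathbb{Q}_p$ furnished by Proposition \ref{prop6.4} to reduce decidability of $Th(G_\pi(\Phi,\mathbb{Q}_p))$ with constants from $G_\pi(\Phi,A)$ to decidability of $Th(\mathbb{Q}_p)$ with constants from $A$. That interpretation realizes the group as an $n^2$-tuple of ring elements satisfying finitely many integer-coefficient polynomial equations, with multiplication given by integer polynomials and no ring parameters needed. Consequently every first-order group sentence $\varphi$ with constants $g_1,\dots,g_k \in G_\pi(\Phi,A)$ translates computably to a first-order ring sentence $\varphi^\ast$ whose constants are the matrix entries of $g_1,\dots,g_k$, and these entries all lie in $A$. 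Hence it suffices to decide $Th(\mathbb{Q}_p)$ with constants from $A$ under a compatible enumeration.

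For part 1, the $a_i$ are computable $p$-adic numbers, so by the cited result of \cite{MR} (item b in the list preceding the theorem), $Th(\mathbb{Q}_p, a_1,\dots,a_m)$ is decidable. Since $A=\mathbb{Q}(a_1,\dots,a_m)$, every element $c\in A$ is effectively a ratio $p(\bar a)/q(\bar a)$ of integer polynomials with $q(\bar a)\ne 0$; any sentence with constant $c$ can be uniformly rewritten, without changing its truth in $\mathbb{Q}_p$, into one referring only to $a_1,\dots,a_m$ by introducing a fresh existentially quantified variable $y$ constrained by $y\cdot q(\bar a)=p(\bar a)$ and used in place of $c$. This rewriting is computable provided the enumeration of $A$ presents each element with a chosen defining rational term, which is what a \emph{proper} enumeration of $G_\pi(\Phi,A)$ will amount to. Therefore $Th_A(\mathbb{Q}_p)$ is decidable, and the interpretation transfers decidability back to the group.

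Part 2 is a simpler specialization. Since $\mathbb{Z}_p^c \subseteq \mathbb{Q}_p^c$, each $a_i$ is still a computable $p$-adic number, so $Th(\mathbb{Q}_p, a_1,\dots,a_m)$ is decidable by the same cited result. Now $A=\mathbb{Z}(a_1,\dots,a_m)$ consists of integer polynomial expressions in $\bar a$, so every constant from $A$ can be replaced directly by its defining polynomial in $\bar a$ with no auxiliary quantifier and no division needed. The remainder of the argument is identical to part 1: the interpretation of Proposition \ref{prop6.4} converts the group theory with constants in $G_\pi(\Phi,A)$ into the ring theory with constants in $A \subseteq \mathbb{Q}_p$, which is decidable by the above.

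The main technical point is bookkeeping around the ``proper enumeration'' of $G_\pi(\Phi,A)$: one fixes the natural effective enumeration of $A$ induced from the computable enumerations of $a_1,\dots,a_m$ (via terms over $\{a_1,\dots,a_m\}$ in the appropriate language, rational in part 1 and polynomial in part 2), and then the compatible enumeration of $G_\pi(\Phi,A)$ by $n^2$-tuples of entries from $A$. Under this enumeration both translation steps—from group sentences to ring sentences via the interpretation, and from ring sentences with constants in $A$ to ring sentences with constants in $\{a_1,\dots,a_m\}$—are uniformly computable, and the theorem follows in both parts.
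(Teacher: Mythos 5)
Your proposal is correct and follows essentially the same route as the paper, whose proof is a one-line citation of Theorem \ref{Theorem6.5} (i.e.\ the interpretation of the group in the ring from Proposition \ref{prop6.4}) together with items a) and b) from \cite{MR}; you simply spell out the translation of constants from $A$ back to $a_1,\dots,a_m$ and the enumeration bookkeeping that the paper leaves implicit.
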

  \begin{proof}
  It follows from Theorem \ref{Theorem6.5} and the results a) and b) above. 
  \end{proof}

We say that a matrix $A \in \GL_n(\mathbb{Q}_p)$ is \emph{computable} if all entries in $A$ are computable p-adic numbers, i.e., $A \in \GL_n(\mathbb{Q}_p^c)$. Hence the computable matrices in $\SL_n(\mathbb{Q}_p)$ are precisely the matrices from $\SL_n(\mathbb{Q}_p^c)$. Since elements of a Chevalley group 
$G_\pi(\Phi,\mathbb{Q}_p)$ are represented by matrices from $\SL_n(\mathbb{Q}_p)$ we say that an element $g \in G_\pi(\Phi,\mathbb{Q}_p)$ is computable if it is represented by a computable matrix from $\SL_n(\mathbb{Q}_p^c)$. Similarly, we define computable elements in a Chevalley group $G_\pi(\Phi,\mathbb{Z}_p)$.

  \begin{theorem}\label{theor-padic-2}
  Let $\Phi$ be an indecomposable root system of a rank $> 1$ and $G_\pi(\Phi,\mathbb{Q}_p)$ $(G_\pi(\Phi,\mathbb{Z}_p), p \neq 2)$ the corresponding Chevalley group over $\mathbb{Q}_p$ $(\mathbb{Z}_p)$. If an element  $g \in E_\pi(\Phi,\mathbb{Q}_p)$ $(g \in E_\pi(\Phi,\mathbb{Z}_p), p \neq 2)$ is not computable then the  Diophantine problem for equations with coefficients in $\{x_\alpha(1) \mid \alpha \in \Phi\} \cup \{g\}$ is undecidable in any large subgroup of $G_\pi(\Phi,\mathbb{Q}_p)$  $(G_\pi(\Phi,\mathbb{Z}_p), p \neq 2)$.  

\end{theorem}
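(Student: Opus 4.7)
The plan is to adapt, mutatis mutandis, the proof of Theorem \ref{th:incomp-R} from the real to the $p$-adic setting, replacing the computability theory of real numbers by that of $p$-adic numbers (the relevant facts being items (c) and (d) recorded before Theorem \ref{theor-padic-1}). Let $R = \mathbb{Q}_p$ (resp.\ $R = \mathbb{Z}_p$ with $p\ne 2$), let $H$ be a large subgroup of $G_\pi(\Phi,R)$, and suppose for contradiction that $\mathcal{D}_{C_\Phi\cup\{g\}}(H)$ is decidable. By Theorem \ref{theorM6.1}, $R$ is e-interpretable in $H$ on some root subgroup $X_\alpha$ (or on the auxiliary set $Y_\alpha$ of Proposition \ref{theorM4.1-B2} in the exceptional simply connected $\mathbf{C}_2$ case) using only the constants $C_\Phi = \{x_\beta(1)\mid \beta\in\Phi\}$; under this interpretation each $a\in R$ corresponds to $x_\alpha(a)\in X_\alpha$, and the correspondence is Diophantine.

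The key step is to Diophantinely isolate, using $g$ as a single extra constant, a non-computable $a_0\in R$ encoded by some $x_\alpha(a_0)\in X_\alpha$. Using a Bruhat decomposition when $R=\mathbb{Q}_p$ is a field, or the Gauss decomposition $g=utv$ of Section~3 when $R=\mathbb{Z}_p$ is a local ring, $g$ may be written as a product of elementary unipotents and a torus element whose parameters in $R$ are uniquely determined by $g$ together with a (computable) Weyl element. Since $\pi(g)\in\SL_n(R)$ is by hypothesis not computable as a matrix, at least one of these parameters, say $a_0$, must fail to be computable. The components of the decomposition lie in the Diophantine subgroups of $H$ guaranteed by Propositions \ref{theorM4.1}, \ref{theorM4.1-B2-new} and \ref{theorM4.1-B2}, and the uniqueness analyses of Section~3 show that $a_0$ is Diophantinely definable from $g$ and $C_\Phi$. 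Transferring this parameter to $X_\alpha$ via the Diophantine isomorphisms between the various $X_\beta$'s constructed in the proof of Theorem \ref{theorM6.1}, one obtains a finite system of equations in $C_\Phi\cup\{g\}$ whose unique solution $y\in X_\alpha$ is $x_\alpha(a_0)$.

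By Lemma \ref{lemmaM3.7}, any finite system of equations over $R$ with coefficients in $\mathbb{Q}\cup\{a_0\}$ then translates into an equisolvable finite system over $H$ with coefficients in $C_\Phi\cup\{g\}$; so decidability of $\mathcal{D}_{C_\Phi\cup\{g\}}(H)$ would yield decidability of the Diophantine problem in $R$ with coefficients from $\mathbb{Q}\cup\{a_0\}$, contradicting item (d) (for $R=\mathbb{Q}_p$) or item (c) (for $R=\mathbb{Z}_p$, $p\ne 2$) since $a_0$ is not computable.

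The hard part is the Diophantine extraction of $a_0$: one must pass from non-computability of the matrix $g$ to non-computability of a scalar parameter that is Diophantinely accessible through a canonical decomposition of $g$. The case analysis across the root systems, and in particular the $\mathbf{C}_\ell$ cases where one has to work with the enlarged centralizer described in Theorem \ref{double_centr_arbitrary} (and, for simply connected $\mathbf{C}_2$ over rings without $1/2$, with the set $Y_{e_1+e_2}$ in place of $X_{e_1+e_2}$), must be handled uniformly. Once this Diophantine handle on $a_0$ is in place, the rest of the argument is a routine application of e-interpretability together with items (c)--(d).
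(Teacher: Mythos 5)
The paper itself contains no proof of this theorem (nor of its real analogue, Theorem \ref{th:incomp-R}); by the preamble of Section~6 it is deferred to the analogous argument in \cite{Myasnikov-Sohrabi2}. Your overall architecture is the right one and is essentially forced: the only available source of undecidability is items (c)--(d) preceding Theorem \ref{theor-padic-1}, the only bridge is the e-interpretation of $R$ on $X_\alpha$ from Theorem \ref{theorM6.1} together with Lemma \ref{lemmaM3.7}, so everything reduces to e-defining $x_\alpha(a_0)$, for some non-computable $a_0\in R$, from the constants $C_\Phi\cup\{g\}$. You correctly identify this as the crux.

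But that crux is exactly where your argument has a genuine gap, and the justification you give for it does not hold up. First, uniqueness of the Bruhat/Gauss decomposition does not by itself make its factors Diophantine: to realize $g=u_1\,s\,\mathbf{w}_0\,u_2$ as a finite system of equations whose unique solution projects onto the parameters, every factor must range over an e-definable set, and while $U$, $V$ and the $X_\beta$ are Diophantine by Propositions \ref{theorM4.1}--\ref{theorM4.1-B2}, the torus $T$ is nowhere shown to be Diophantine in the paper. (This is repairable --- over a field $T$ coincides with the Diophantine set $\{s\in G \mid s x_\beta(1)s^{-1}\in X_\beta \text{ for all } \beta\in\Phi\}$, by an argument resting on Theorem \ref{double_centr_arbitrary} --- but it is a real step you must supply, and the ``uniqueness analyses of Section~3'' do not provide it.) Second, your inference that some extractable parameter is non-computable is only immediate for the unipotent parameters; the non-computability of $g$ could a priori sit entirely in the torus factor $s$, whose matrix is not among the parameters your decomposition hands you. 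One must additionally e-define the scalars $\chi_s(\beta)$ via $s x_\beta(1)s^{-1}=x_\beta(\chi_s(\beta))$ and argue that computability of all of these, together with the unipotent parameters, forces computability of the matrix $g$; this involves the finite index $[\Lambda_\pi:\Lambda_{ad}]$ and effective root extraction in $\mathbb{Q}_p$. Third, over $\mathbb{Z}_p$ the decomposition $G=UTVU$ is not unique, so one must first correct $g$ by a computable word in the $x_\beta(1)$ so that the corrected element lies in the big cell $UTV$, where uniqueness does hold. None of this makes your plan unworkable, but as written the Diophantine extraction of $a_0$ is asserted rather than proved. (A minor additional remark: since $1/2\in\mathbb{Q}_p$ always and $1/2\in\mathbb{Z}_p$ for $p\neq 2$, the exceptional set $Y_{e_1+e_2}$ of Proposition \ref{theorM4.1-B2} never actually arises here; Proposition \ref{theorM4.1-B2-new} applies instead.)
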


\bigskip

\end{document}